\documentclass[a4paper]{amsart}
\usepackage{amsmath}
\usepackage{amssymb,amsfonts,amsthm}
\usepackage[english]{babel}
\usepackage{fancyhdr}
\usepackage[utf8]{inputenc}
\usepackage{csquotes}
\usepackage{mathrsfs}
\usepackage{setspace}
\usepackage{cite}

\usepackage{framed}
\usepackage{geometry}
\geometry{hmargin=4cm,vmargin=4cm}
\usepackage[mathcal]{eucal}
\usepackage{dsfont}
\usepackage{bm}
\usepackage{algorithm2e}
\usepackage{hyperref}
\usepackage{harpoon}

\usepackage{pgf,tikz,tikz-cd}
\newsavebox{\pullback}
\sbox\pullback{%
\begin{tikzpicture}%
\draw (0,0) -- (1ex,0ex);%
\draw (1ex,0ex) -- (1ex,1ex);%
\end{tikzpicture}}
\usetikzlibrary{positioning, arrows.meta, decorations.text, decorations.markings}
\usetikzlibrary{shapes}
\usetikzlibrary{graphs,calc}
\usetikzlibrary{patterns}
\usetikzlibrary{positioning}
\usetikzlibrary{matrix,arrows.meta}
\usetikzlibrary{arrows}
\usetikzlibrary{calc,positioning}



\newcommand{\nocontentsline}[3]{}
\let\origcontentsline\addcontentsline
\newcommand\stoptoc{\let\addcontentsline\nocontentsline}
\newcommand\resumetoc{\let\addcontentsline\origcontentsline}

\newtheorem{thm}{Theorem}[subsection]
\newtheorem{prop}[thm]{Proposition}
\newtheorem{lem}[thm]{Lemma}
\newtheorem{cor}[thm]{Corollary}
\newcommand{\PrL}{\mathcal{P}\mathrm{r}^\mathrm{L}}
\newcommand{\+}{\mkern1mu}
\newcommand{\PrLotimes}{\mathcal{P}\mathrm{r}^{\mathrm{L},\otimes}}

\newcommand{\Mod}{\mathcal{M}\mathrm{od}}
\newcommand{\CAlg}{\mathrm{CAlg}}
\newcommand{\colim}{\mathrm{colim}}
\newcommand{\cofib}{\mathrm{cofib}}
\newcommand{\Spc}{\mathcal{S}\mathrm{pc}}
\newcommand{\Spt}{\mathcal{S}\mathrm{pt}}
\newcommand{\DAlog}{\mathcal{DA}^\mathrm{log}}

\newcommand{\SHlog}{\mathcal{SH}^\mathrm{log}}
\newcommand{\Catinf}{\mathcal{C}\mathrm{at}_\infty}
\newcommand{\Sh}{\mathcal{S}\mathrm{h}}
\newcommand{\Baar}{\mathrm{Bar}}
\newcommand{\mpspace}[3]{\+_{#1}^{}\mathrm{P}^{#2}_{#3}X}
\newcommand{\pBar}[3]{}

\theoremstyle{definition}
\newtheorem{defi}[thm]{Definition}
\newtheorem{rem}[thm]{Remark} 
\newtheorem{exmp}[thm]{Example}

\usepackage{tikz-cd}
\usetikzlibrary{decorations.markings}
\makeatletter

\tikzcdset{scale cd/.style={every label/.append style={scale=#1},
    cells={nodes={scale=#1}}}}
\tikzcdset{
open/.code={\tikzcdset{hook, circled};},
closed/.code={\tikzcdset{hook, slashed};},
open'/.code={\tikzcdset{hook', circled};},
closed'/.code={\tikzcdset{hook', slashed};},
circled/.code={\tikzcdset{markwith={\draw (0,0) circle (.375ex);}};},
slashed/.code={\tikzcdset{markwith={\draw[-] (-.4ex,-.4ex) -- (.4ex,.4ex);}};},
markwith/.code={
\pgfutil@ifundefined{tikz@library@decorations.markings@loaded}%
{\pgfutil@packageerror{tikz-cd}{You need to say %
\string\usetikzlibrary{decorations.markings} to use arrow with markings}{}}{}%
\pgfkeysalso{/tikz/postaction={/tikz/decorate,
/tikz/decoration={
markings,
mark = at position 0.5 with
{#1}}}}},
}
\makeatother

\title{The motivic fundamental groupoid at tangential basepoints}
\author{Sofian Tur-Dorvault}
\address{Institut Montpelliérain Alexander Grothendieck, Université de Montpellier, CNRS, Montpellier, France}
\email{sofian.tur-dorvault@umontpellier.fr}
\begin{document}

\begin{abstract}
   We give a general construction of the motivic fundamental groupoid at tangential basepoints, extending previous works of P. Deligne, A. B. Goncharov, and M. Levine, which were limited to ordinary basepoints or to specific varieties. Given a smooth variety over a field endowed with a simple normal crossings divisor, we encode its tangential basepoints using the language of logarithmic geometry. Building on the recent construction by F. Binda, D. Park, and P. A. Østvær of a stable $\infty$-category of $\mathbb{A}^1$-invariant logarithmic motives and its comparison with the usual category of motives, we define in a functorial manner the associated motivic pointed path spaces. In the presence of a motivic $t$-structure, truncating yields the motivic fundamental groupoid. In general, we construct Betti and de Rham realization functors for logarithmic motives (linearizing the construction of F. Binda, D. Park and P. A. Østvær for the Betti case) and we show that the periods of the motivic fundamental groupoid are given by regularized iterated integration of logarithmic differential $1$-forms, thus yielding a general version of Chen's theorem with tangential basepoints.
 
\end{abstract}
\maketitle
\tableofcontents
\newpage

\maketitle

\section{Introduction}
Let $U$ be a smooth scheme of finite type over $k$ a subfield of $\mathbb{C}$, and $x,y$ be rational points on $U$. Denote by $\pi_1^\mathrm{un}(U(\mathbb{C}),x,y)$ the prounipotent completion of the fundamental torsor of $U(\mathbb{C})$ at $x,y$. P. Deligne conjectured in \cite{10.1007/978-1-4613-9649-9_3} the existence of a motivic incarnation of $\pi_1^\mathrm{un}(U(\mathbb{C}),x,y)$, and in joint work with A. B. Goncharov, he showed in \cite{ASENS_2005_4_38_1_1_0} that $\mathcal{O}(\pi_1^\mathrm{un}(U(\mathbb{C}),x,y))$ arises as the degree-zero homology of the Betti realization of an algebra object in Voevodsky's triangulated category of mixed motives called the \textbf{motivic pointed path space of $X$ based at $x,y$}. In the presence of a motivic $t$-structure (for instance if $U$ is of mixed Tate type over a field satisfying Beilinson--Soulé's conjecture), this construction yields the motivic fundamental torsor $\pi_1^m(U,x,y)$ whose Betti realization is $\pi_1^\mathrm{un}(U(\mathbb{C}),x,y)$.

However, many interesting varieties have no rational points, or more generally no section over their base scheme, as is the case for the moduli spaces of curves of genus $g$ with $n$ marked points $\mathcal{M}_{g,n}$ viewed as $\mathbb{Z}$-schemes. In this situation, one must replace rational basepoints by \textbf{tangential basepoints}, that is, points on a smooth compactification with simple normal crossings divisor, equipped with a nonzero tangent vector normal to each component of the divisor. In certain specific cases, most notably for $\mathcal{M}_{0,4} \simeq \mathbb{P}^1_\mathbb{Z}\setminus \{0,1,\infty\}$, Deligne and Goncharov established \cite[Theorem 4.4]{ASENS_2005_4_38_1_1_0} the existence of the motivic fundamental torsor at tangential basepoints whose realizations were previously defined in \cite{10.1007/978-1-4613-9649-9_3}. However, their proof is transcendental in nature and does not provide an explicit construction.

\smallskip

In this article we generalize this work by giving a construction of the motivic pointed path space based at tangential basepoints on any smooth scheme over a field endowed with a simple normal crossings divisor.
More specifically, let $\mathcal{DA}(k,\Lambda)$ be the stable $\infty$-category of motives over a field $k$ with $\Lambda$ coefficients as defined by J. Ayoub \cite{Ayoub_2022}, D.-C. Cisinski and F. Déglise \cite{Cisinski_2019} and others, and $\mathrm{R}_\mathrm{B},\mathrm{R}_\mathrm{dR}$ its Betti and de Rham realization functors (with $\Lambda = \mathbb{Q},\; k \subset \mathbb{C}$ and $\Lambda = k, \; k$ of characteristic $0$ respectively). The main theorem we will prove is the following:

\begin{thm}[Theorem \ref{thm Betti real pi}, Theorem \ref{deRhamreal}, Theorem \ref{end} Definition \ref{pathspace}] \label{main}
    Let $X$ be a smooth scheme of finite type over an arbitrary field $k$, endowed with a simple normal crossings divisor $D$ and $\mathbf{x},\mathbf{y}$ be tangential basepoints for the pair $(X,D)$ (definition \ref{tangent}).
     \begin{itemize}
        \item We construct an algebra object $\displaystyle\,_\mathbf{x}^{}\mathrm{P}^m_\mathbf{y} (X\setminus D) \in \mathcal{DA}(k,\Lambda)$, the \textbf{motivic pointed path space of $X\setminus D$ based at} $\mathbf{x},\mathbf{y}$, such that the family $\{\+_\mathbf{x}^{}\mathrm{P}^m_\mathbf{y} (X\setminus D)\}_{\mathbf{x},\mathbf{y}}$ has the structure dual to that of a groupoid, in the sense of theorem \ref{bar hopf}.
        \item If $k \subset \mathbb{C}$ and $\Lambda = \mathbb{Q}$, its Betti realization satisfies \begin{equation*}
            \mathrm{H}^0(\mathrm{R}_\mathrm{B}\+_\mathbf{x}^{}\mathrm{P}^m_\mathbf{y}(X\setminus D)) \simeq \mathcal{O}(\pi_1^\mathrm{un}(X(\mathbb{C})\setminus D(\mathbb{C}),\mathbf{x},\mathbf{y}))
        \end{equation*} the prounipotent completion of the fundamental torsor of $X(\mathbb{C})\setminus D(\mathbb{C})$ at $\mathbf{x},\mathbf{y}$ in the sense of definition \ref{enfin}.
        \item If $k$ is of characteristic $0$, and $\Lambda = k$, its de Rham realization is described by $$\mathrm{R}_\mathrm{dR}\+_\mathbf{x}^{}\mathrm{P}^m_\mathbf{y}(X\setminus D) \simeq \+_\mathbf{x}\Baar_\mathbf{y}\mathbf{R}\Gamma\big(X,\Omega_{X/k}(\mathrm{log}D)\big),$$ the bar construction on the logarithmic de Rham complex of $(X,D)$ pointed at $\mathbf{x},\mathbf{y}$ in the sense of definition \ref{larefimportante}.
        \item The periods associated with the comparison isomorphism \begin{equation}\label{2}
            \mathrm{H}^0(\mathrm{R}_\mathrm{B}\+_\mathbf{x}^{}\mathrm{P}^m_\mathbf{y}(X\setminus D))\otimes_\mathbb{Q} \mathbb{C} \simeq \mathrm{H}^0(\mathrm{R}_\mathrm{dR}\+_\mathbf{x}^{}\mathrm{P}^m_\mathbf{y}(X\setminus D)) \otimes_k \mathbb{C}
        \end{equation} are given by regularized iterated integration of forms with logarithmic poles along $D$ in the sense of definition \ref{regularized}.
    \end{itemize}
\end{thm}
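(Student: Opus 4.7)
The plan is to lift the entire construction to the category of logarithmic motives $\DAlog$ of Binda--Park--Østvær, where a tangential basepoint $\mathbf{x}$ on $(X,D)$ is naturally encoded as a log-morphism from a log point $\mathrm{Spec}(k,\mathbb{N}^r)$ (with $r$ the number of local branches of $D$ through $\mathbf{x}$) into the fs log scheme associated with $(X,D)$, and then to transfer every statement to $\mathcal{DA}(k,\Lambda)$ along the symmetric-monoidal comparison functor $\DAlog \to \mathcal{DA}(k,\Lambda)$ mentioned in the abstract. Concretely, for fixed log basepoints $\mathbf{x},\mathbf{y}$ I would form a cosimplicial log scheme whose $n$-th term is essentially $\mathbf{x}\times_k X^{n-1}\times_k \mathbf{y}$, with cofaces and codegeneracies encoding path composition; its image under the log-motive functor yields a cosimplicial object in $\DAlog$ whose totalization defines the log-motivic path space, and applying the comparison functor produces $\,_\mathbf{x}^{}\mathrm{P}^m_\mathbf{y}(X\setminus D)$ as an algebra object in $\mathcal{DA}(k,\Lambda)$. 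The groupoid-dual structure then follows formally from the Segal-type decomposition intrinsic to the bar cosimplicial object, in the $\infty$-categorical framework of $\PrL$.

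For the Betti realization (when $k\subset\mathbb{C}$ and $\Lambda=\mathbb{Q}$) the strategy is first to construct a $\mathbb{Q}$-linear Betti realization functor $R_{\mathrm{B}}^{\mathrm{log}}:\DAlog\to\mathcal{D}(\mathbb{Q})$ that linearizes the topological Kato--Nakayama realization of Binda--Park--Østvær, sending $(X,D)$ to the singular cochains of its real-oriented blowup. One then checks that $R_{\mathrm{B}}^{\mathrm{log}}$ intertwines the log-motivic path space with a topological cobar model of the fundamental groupoid of the real blowup at $\mathbf{x},\mathbf{y}$, and invokes the theorem of Deligne--Goncharov to identify the $H^0$ with $\mathcal{O}(\pi_1^{\mathrm{un}})$; finally, one shows that the fundamental groupoid of the real blowup based at the preimages of $\mathbf{x},\mathbf{y}$ recovers the fundamental torsor of $X(\mathbb{C})\setminus D(\mathbb{C})$ at the corresponding tangential basepoints in the usual topological sense.

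For the de Rham realization I would construct a functor $R_{\mathrm{dR}}^{\mathrm{log}}$ on $\DAlog$ via the logarithmic de Rham complex, exploiting that $\mathbf{R}\Gamma(X,\Omega^\bullet_{X/k}(\mathrm{log}\,D))$ is $\mathbb{A}^1$-invariant and descends along log modifications. Applying $R_{\mathrm{dR}}^{\mathrm{log}}$ termwise to the cosimplicial path object and totalizing identifies the result with the Chen bar complex $\,_\mathbf{x}\Baar_\mathbf{y}\mathbf{R}\Gamma(X,\Omega_{X/k}(\mathrm{log}\,D))$, where the left and right augmentations arise from the log residues of forms at the log points $\mathbf{x}$ and $\mathbf{y}$. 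The periods statement finally follows by combining the two realizations through the comparison isomorphism in $\DAlog$ (compatible with the classical one in $\mathcal{DA}$), together with a version of Chen's $\pi_1$-de Rham theorem extended to normal-crossing boundaries and tangential basepoints: pairing a logarithmic form with a topological path in the real blowup agrees with regularized iterated integration.

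The principal obstacle is the construction and verification of the Betti and de Rham realization functors on $\DAlog$. For the Betti side one must carry out a careful topological analysis of the Kato--Nakayama real blowup, showing descent along log-smooth modifications and $\mathbb{A}^1$-invariance, and establishing symmetric monoidality; for the de Rham side one needs the log-Poincaré lemma together with functoriality of the log de Rham complex under log modifications. Once both functors are set up with their monoidal compatibilities, the remaining identifications follow the classical pattern of Deligne--Goncharov and Chen, the main analytic subtlety being the regularization of iterated integrals at tangential basepoints.
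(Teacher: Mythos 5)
Your proposal contains a genuine gap at its foundation: the encoding of a tangential basepoint $\mathbf{x}$ as ``a log-morphism from a log point $\mathrm{Spec}(k,\mathbb{N}^r)$ into the fs log scheme associated with $(X,D)$'' does not actually capture the tangent vector. A morphism $*_{\mathbb{N}^r}\to(X,D)$ of fs log schemes landing at a point $x$ on $r$ branches is essentially just the strict closed immersion $x^{\log}\hookrightarrow X$ (the restriction of the log structure to $x$); there is no extra freedom in it to record a nonzero normal vector, so distinct tangential basepoints at $x$ would all produce the same cosimplicial object and the same augmentation. The paper solves this by a different mechanism: it constructs the \emph{log normal space} $\mathrm{N}_x^{\log}(\underline{X},\underline{D})$ (Definition \ref{normalspace}) whose logarithmic $k$-rational points from the \emph{trivial} log point $\mathrm{Spec}(k)$ are in natural bijection with tangential basepoints at $x$ (Lemma \ref{ola}), and then uses log-cdh descent to establish the equivalence $h(\mathbb{A}_{\mathbb{N}})\simeq h(*_{\mathbb{N}})$ (Corollary \ref{tool}), whence $h(\mathrm{N}_x^{\log}(\underline{X},\underline{D}))\simeq h(x^{\log})$. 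Inverting this equivalence is what produces the augmentation $h(\mathbf{x}):h(X)\to\Lambda$ that carries the tangent vector data. Your proposal has no substitute for this step.

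There is a second gap, closely related. You propose to build the motivic path space as the totalization of a cosimplicial log scheme with the end faces given by the log basepoints, and then compare with the bar construction. But the paper is explicit (Subsection \ref{The cosimplicial pointed path space}) that such a cosimplicial object cannot be formed inside $\mathbf{lSch}_k$ with ordinary log morphisms; the $0$-th and $(n{+}1)$-th coface maps are necessarily \emph{virtual} morphisms of log schemes, and no category of log motives admitting virtual morphisms is currently available. Consequently, Proposition \ref{path space facile} (identifying the bar construction with the cosimplicial model) is only available in the paper for rational basepoints. The paper's actual definition of $\+^{}_\mathbf{x}\mathrm{P}^m_\mathbf{y}(X\setminus D)$ is directly the bar construction $\Lambda\otimes_{h(X)}\Lambda$ in $\CAlg(\DAlog(k,\Lambda))$ using the motivic tangential augmentations, bypassing the cosimplicial path space at the motivic level and only recovering it after applying a realization functor. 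Your proposal of first forming the cosimplicial log scheme and then taking its motive is therefore not a construction one can carry out.

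By contrast, the broad strokes of your realization strategy are aligned with the paper: the Betti realization linearizes the Kato--Nakayama topological realization, the de Rham realization is built from the log de Rham complex via a log mixed Weil theory, and the comparison of periods goes through a regularized Chen-type theorem. But the proposal understates the work needed on the Betti side (Lemma \ref{pi_1 usuel} showing that the fundamental torsor of the Kato--Nakayama space at the continued paths coincides with the fundamental torsor at tangential basepoints in the sense of Definition \ref{enfin} is a nontrivial continuation-of-paths argument, not a direct appeal to Deligne--Goncharov), and it omits entirely the machinery of manifolds with log corners and virtual morphisms thereof from Dupont--Panzer--Pym, which is what makes the de Rham augmentations and the regularized iterated integrals (Definition \ref{regularized}) well-defined and compatible with the Betti side.
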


\begin{rem}
The isomorphism \eqref{2} is a version with tangential basepoints of K. T. Chen's theorem on iterated integrals \cite[Theorem 2.6.1]{bams/1183539443}. To the best of our knowledge, this general form is new, the case of a curve appeared in \cite{Javier}.
\end{rem}

In the presence of a motivic $t$-structure, (in particular for mixed Tate motives) we deduce the definition of the \textbf{motivic fundamental torsor of $X\setminus D$ at} $\mathbf{x},\mathbf{y}$: $$\pi_1^m(X\setminus D,\mathbf{x},\mathbf{y}):= \mathrm{Spec}\big(\mathrm{H}_\mathrm{mot}^0(\+_\mathbf{x}^{}\mathrm{P}^m_\mathbf{y}(X\setminus D))\big)$$
The functor $\mathrm{H}^0_\mathrm{mot}$ is the $0$-truncation functor associated with the motivic $t$-structure.

\begin{rem}
If we take $(X,D) = (\mathbb{P}^1_\mathbb{Q},\{0,1,\infty\})$ and $\mathbf{x},\mathbf{y}$ any tangential basepoints, this gives a complete definition of $\pi_1^m(\mathbb{P}^1_\mathbb{Q}\setminus \{0,1,\infty\},\mathbf{x},\mathbf{y})$ in the category of mixed Tate motives over $\mathbb{Q}$. When $\mathbf{x},\mathbf{y} \in \{(0,\pm \frac{\partial}{\partial z}\big|_0),(1,\pm\frac{\partial}{\partial z}\big|_1),(\infty,\pm\frac{\partial}{\partial z^{-1}}\big|_\infty)\}$ with $z$ the standard coordinate around $0$, it is natural from the definition to view this object in the category of mixed Tate motives over $\mathbb{Z}$. In general, we hope that our results can be generalized over any base scheme.
\end{rem}

Another definition of the motivic fundamental torsor at tangential basepoints can be found in M. Levine's \cite{levine2005motivictubularneighborhoods} in the case of curves of mixed Tate type via his definition of \textit{motivic tubular neigbourhoods}. We expect this construction to be equivalent to ours. 

At the same level of generality as our theorem \ref{main}, Hain proved in \cite{Haintan} the existence of an ind-Hodge structure on $\mathcal{O}(\pi_1^\mathrm{un}(X(\mathbb{C})\setminus D(\mathbb{C}),\mathbf{x}))$, unconditionally by obtaining it as a limit Hodge structure after degenerating a family of ordinary rational points of $X\setminus D$ into the tangential basepoint $\mathbf{x}$. Although we do not compute the Hodge realization of the motivic pointed path space of $X\setminus D$ at $\mathbf{x}$, we hope that it coincides with Hain's, thus extending in a natural way his construction for ordinary basepoints \cite{hain1}.

\subsection{The case of ordinary basepoints}\label{blablabla}\;\smallskip

Let us recall the classical construction of the motivic fundamental group (we restrict to a single basepoint to simplify). For $U$ a smooth scheme of finite type over a field $k$, and $x \in U(k)$, Z. J. Wojtkowiak \cite{wojtkowiak:hal-01293611} defines the \textit{cosimplicial loop space at $x$}, defined by analogy with classical topology as the pullback of cosimplicial schemes \begin{center}
    \begin{tikzcd}
     \+_x^{}\mathrm{P}^\bullet_x U \arrow[r] \arrow[d] \arrow[dr, phantom, "\usebox\pullback" , very near start, color=black] & U^{\Delta[1]} \arrow[d]\\
        * \arrow[r,"{x,x}"] & U \times U.
    \end{tikzcd}
\end{center}

Deligne and Goncharov define the motivic pointed path space of $U$ at $x$, denoted $\+^{}_x\mathrm{P}^m_xU$, as the motivic complex associated with the simplicial motive $h(\+^{}_x\mathrm{P}^\bullet_xU)$, with $h$ the cohomological motive functor. It is isomorphic to the bar construction of the algebra object $h(U)$ augmented by the point $h(x) : h(U) \xrightarrow{}~h(*)$. When $U$ is of mixed Tate type over a field satisfying Beilinson--Soulé's conjecture, the motivic fundamental group is defined by $$\mathcal{O}(\pi_1^m(U,x)):= \mathrm{H}_\mathrm{mot}^0(\+^{}_x\mathrm{P}^m_xU).$$
Its Betti realization is identified via Beilinson's theorem \cite[Theorem 4.1]{goncharov2001multiplepolylogarithmsmixedtate}\cite[Proposition 3.4]{ASENS_2005_4_38_1_1_0} which shows that when $k \subset \mathbb{C}$ there is a natural isomorphism of Hopf algebras over $\mathbb{Q}$ $$ \mathrm{H}_\mathrm{B}^0(\+_x^{}\mathrm{P}^\bullet_x U)\simeq \mathrm{colim}(\mathbb{Q}[\pi_1(U(\mathbb{C}),x)]/I^{n+1})^\vee = \mathcal{O}(\pi_1^\mathrm{un}(U,x)),$$
where the left cohomology group is computed by taking the cohomology of the double complex induced by the cosimplicial structure.

Its de Rham realization is described by the bar construction \cite[Section 1.2]{esnault2007tatemotivesfundamentalgroup} on the augmented (by evaluation at $x$) commutative differential graded algebra of regular differential forms on $U$ denoted $\Omega_{U/k}$ when $k$ is of characteristic $0$. If we suppose $U$ to be affine to simplify, then there is a canonical isomorphism of Hopf algebras over $k$ $$\mathrm{H}_{\mathrm{dR}}^0(\+_x^{}\mathrm{P}^\bullet_x U) \simeq \mathrm{H}^0\big(\+_x \Baar_x(\Omega_{U/k}(U))\big)$$ where $\+_x \Baar_x(\Omega_{U/k}(U))$ is the differential graded Hopf algebra whose elements are denoted $$[\omega_1|\dots|\omega_k],\; \omega_i \in \Omega_{U/k}(U).$$

The Betti--de Rham comparison theorem for the motivic fundamental group is then obtained via K. T. Chen's theorem \cite[Theorem 2.6.1]{bams/1183539443} which states that there is a natural isomorphism of Hopf algebras over $\mathbb{C}$: $$\mathrm{H}^0\big(\+_x \Baar_x(\Omega_{U/k}(U))\big)\otimes_k \mathbb{C} \simeq \mathrm{colim}(\mathbb{C}[\pi_1(U(\mathbb{C}),x)]/I^{n+1})^\vee.$$ Every class in $\mathrm{H}^0\big(\+_x \Baar_x(\Omega_{U/k}(U))\big)\otimes_k \mathbb{C}$ is represented by elements $[\omega_1|\dots|\omega_k]$ where $\omega_i$ is a $1$-form \cite[Lemma 3.263]{Javier}, and the pairing underlying this theorem is induced by the iterated integration of $1$-forms $$\gamma \otimes [\omega_1|\dots|\omega_k] \mapsto \int_\gamma \omega_1\dots \omega_k,$$ where $\gamma$ is a smooth loop on $U$ based at $x$.

\subsection{The case of tangential basepoints}\;\smallskip

In the case of a tangential basepoint $\mathbf{x}$ on $X$ a smooth scheme of finite type over $k$ endowed with a simple normal crossings divisor $D$, this method fails overall. Indeed, since $\mathbf{x}$ cannot be defined by a geometric morphism $* \rightarrow X\setminus D$, it is impossible a priori to define the cosimplicial path space based at $\mathbf{x}$, or the augmentation on $\Omega_{X(\mathbb{C})\setminus D(\mathbb{C})}$ associated with $\mathbf{x}$ in such a straightforward way.

In the topological setting, Deligne explains in \cite{10.1007/978-1-4613-9649-9_3} how tangential basepoints can be viewed as ordinary points. For instance, for $X$ a smooth curve over $k \subset \mathbb{C}$ endowed with $D := \{x\}$ and $\mathbf{x} := (x,v)$ a tangential basepoint at $x$, let $X'$ be the gluing of $\mathrm{Bl}_x^\mathbb{R}X(\mathbb{C})$ the real-oriented blow-up of $X(\mathbb{C})$ at $x$ to $\mathrm{Bl}_0^\mathbb{R}\mathrm{T}_xX(\mathbb{C})$ the real-oriented blow-up of the tangent space at $0$ along the infinitesimal unit circles. The picture representing this construction is:
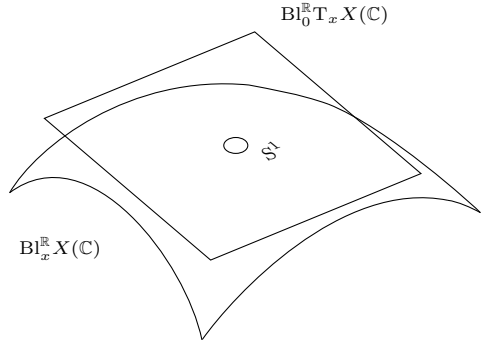
\begin{figure}[!h]
    \centering

    \begin{tikzpicture}[x=0.75pt, y=0.75pt, yscale=-1]
  height of 300

  \draw (100.04,125.62) node[below right, font=\scriptsize]{$\mathrm{Bl}_x^\mathbb{R}X(\mathbb{C})$};
  \draw (100,108) .. controls (140,78) and (188,143) 
                  .. (196,182) ;
  \draw (196.2,181.8) .. controls (211.8,159.6) and (265.71,101.03) 
                      .. (318.8,111.8) ;
  \draw (318.8,111.8) .. controls (323.67,112.42) and (330.33,115.42) 
                      .. (335,118) ;
  \draw (219.48,53.8) .. controls (230.71,56.06) and (244.01,58.07) 
                              .. (259,63.39) ;

  \draw (259,63.39) .. controls (288.81,75.27) and (332.14,114.93) 
                    .. (335,118) ;
  \draw (100,108) .. controls (113.56,84.56) and (160.04,48.06) 
                  .. (219.48,53.8) ;
  \draw (212.9,84.2) circle[x radius = 6, y radius = 4]
                         node[below right, font=\scriptsize, rotate=-337.86,
                           xslant=-0.54]{$S^1$};

  \draw (222.35, 27.07) -- (305.34, 98.35)
                      -- (200.34, 141.85)
                      -- (117.35, 70.57)
                      -- cycle ;
  \draw (230.51,8) node[below right, 
                           font=\scriptsize]{$\mathrm{Bl}_0^\mathbb{R}\mathrm{T}_xX(\mathbb{C})$};
\end{tikzpicture}
\caption{The topological space $\,X'$} \label{fig:M1}
\end{figure}

The topological space $X'$ has the homotopy type of $X(\mathbb{C})\setminus D(\mathbb{C})$, so Deligne defines the fundamental group of $X(\mathbb{C})\setminus D(\mathbb{C})$ at $\mathbf{x}$ as $$\pi_1(X',v)$$ where the tangential basepoint $\mathbf{x}$ is viewed as the point $$v \in \mathrm{Bl}_0^\mathbb{R}\mathrm{T}_xX(\mathbb{C})\setminus S^1 \simeq \mathrm{T}_xX(\mathbb{C})\setminus \{0\}.$$
This point of view, as well as similar approaches in the étale and crystalline contexts, allows Deligne to construct the expected realizations of the motivic fundamental group at a tangential basepoint in \cite{10.1007/978-1-4613-9649-9_3}.

\subsection{Content of the paper}

\noindent \smallskip

Our construction is inspired by Deligne’s Betti picture (Figure \ref{fig:M1}) for tangential basepoints, and it allows us to apply the cosimplicial loop space/bar construction approach in the setting of tangential basepoints. The main tools we will use are logarithmic geometry and the theory of motives for logarithmic schemes, as developed in the recent work of F. Binda, P. A. Østvær, and D. Park \cite{binda2021triangulatedcategorieslogarithmicmotives,park2023mathbba1homotopytheorylogschemes,binda2024logarithmicmotivichomotopytheory}. Moreover, we adopt the guiding principle that the motivic fundamental torsor should be functorial with respect to the datum $(X,\mathbf{x},\mathbf{y})$. The details of this functoriality are subtle and will be described later (see definition \ref{functo}).

Logarithmic geometry, developed under the influence of J.-M. Fontaine and L. Illusie, and further formulated by K. Kato \cite{kato}, is a variant of classical algebraic geometry designed to formalize the notion of ``algebraic varieties with boundaries". A log scheme is the datum $(\underline{X},\mathcal{M}_X)$ of a scheme $\underline{X}$ and of a sheaf of monoids (for the Zariski topology in our case) $\mathcal{M}_X$ endowed with a morphism of sheaves in monoids $\alpha_X: \mathcal{M}_X \rightarrow \mathcal{O}_X$ inducing an isomorphism $\alpha_X^{-1}(\mathcal{O}_X^\times) \simeq \mathcal{O}_X^\times.$ For instance any scheme $\underline{X}$ is endowed with a trivial log scheme structure, i.e. $\mathcal{M}_X:= \mathcal{O}_X^\times$. A classical family of examples of non-trivial log structures are given by the datum of pairs $(\underline{X},\underline{D})$ where $\underline{X}$ is a regular scheme endowed with $\underline{D}$ a simple normal crossings divisor, so that the log structure is given on $\underline{X}$ by $\mathcal{M}_X:= j_*\mathcal{O}_{X\setminus D}^\times$ where $j: \underline{X}\setminus \underline{D} \hookrightarrow \underline{X}$. We call these log schemes \textit{divisorial log schemes} \cite[\text{III.1.6.1}]{Ogus_2018}.

By the work of Kato and C. Nakayama in \cite{10.2996/kmj/1138044041}, we know that classical cohomology theories are extended to log schemes (log Betti, log de Rham, log étale cohomologies). These extensions compute the same groups as their classical counterparts when applied to a scheme with trivial log structure, and they have the expected property that for $(\underline{X},\underline{D})$ a divisorial log scheme, the log cohomology of $(\underline{X},\underline{D})$ computes the classical cohomology of $\underline{X}\setminus \underline{D}$ (this is true for a more general class of logarithmic schemes: log smooth schemes over a field).

Specifically, for any log scheme $X$ the log de Rham cohomology of $X$ is computed via the log de Rham complex $\Omega_X$ which is nothing but the complex of forms on $\underline{X}$ with logarithmic poles at $\underline{D}$ in the case of a divisorial log scheme $(\underline{X},\underline{D})$. The log Betti cohomology of $X$ is given by the singular cohomology of the Kato--Nakayama space associated with $X$, which is the real-oriented blow-up of $X(\mathbb{C})$ along $D(\mathbb{C})$ in the case of $(\underline{X},\underline{D})$.

These cohomology theories are encompassed within a logarithmic motivic homotopy theory. We recall this theory in detail in section \ref{motivic cat}. In particular, there is a stable $\infty$-category of $\mathbb{A}^1$-invariant log motives over a field $k$ with coefficients $\Lambda$ denoted $\DAlog(k,\Lambda)$ constructed by Park \cite{park2023mathbba1homotopytheorylogschemes}, equivalent to the category $\mathcal{DA}(k,\Lambda)$. In this category, we have the cohomological motive $h(X)$ of $X$ a (fine saturated) log scheme of finite type over $k$. 

In the category $\DAlog(k,\Lambda)$, there is an equivalence \begin{equation}\label{cdhcdhcdh}
    h(0^\mathrm{log}) \simeq h(\mathbb{A}^1,\{0\})
\end{equation} (corollary \ref{tool}) where $0^\mathrm{log}$ is the log scheme obtained by restricting the log structure of $(\mathbb{A}^1,\{0\})$ on~$0$. This phenomenon, when viewed in Betti realization is just a reflection of the topological retractibility of the real-oriented blow-up of $\mathbb{C}$ at $0$ on its infinitesimal unit circle, since the Kato--Nakayama space of $0^\mathrm{log}$ is $S^1$. This can be related to Deligne's trick to view tangential basepoints as ordinary points: for $\underline{X}$ a smooth curve over $k$, $\underline{D}\subset \underline{X}$ a finite set of closed points, and for $x\in \underline{D}(k)$, let us endow $\mathrm{T}_x\underline{X}$ with the log scheme structure $(\mathrm{T}_x\underline{X},\{0\})$. The log scheme $0^\mathrm{log}$ obtained by restriction of the log structure of $(\mathrm{T}_x\underline{X},\{0\})$ to $0$ is canonically isomorphic to the log scheme $x^\mathrm{log}$ obtained by restriction of the log structure of $(\underline{X},\underline{D})$ to $x$. Hence there is a diagram of logarithmic schemes  \begin{center}
    \begin{tikzcd}
    x^\mathrm{log} \arrow[d,hook] \arrow[r,hook] & (\mathrm{T}_x \underline{X},\{0\})\\
    (\underline{X},\underline{D}) & 
    \end{tikzcd}
\end{center}
This diagram is a logarithmic geometric version of the figure \ref{fig:M1}. Take a tangential basepoint $\mathbf{x} := (x,v)$ at $x$. By (\ref{cdhcdhcdh}), there is a sequence of morphisms $$h(\underline{X},\underline{D}) \rightarrow h(x^\mathrm{log}) \simeq h(\mathrm{T}_x \underline{X},\{0\}) \simeq h(\mathrm{T}_x \underline{X} \setminus \{0\}) \xrightarrow{h(v)} h(*) \simeq \Lambda.$$ This defines an augmentation on the algebra object $h(\underline{X},\underline{D})$ associated with $\mathbf{x}$, denoted $h(\mathbf{x})$. Note in particular that the log scheme $(\mathrm{T}_x \underline{X},\{0\})$ has the property that the set of morphisms of log schemes $$* \rightarrow (\mathrm{T}_x \underline{X},\{0\})$$ is in bijection with the set of rational points of $\mathrm{T}_x \underline{X}\setminus \{0\}$ i.e. with the set of tangential basepoint at $x$, since a morphism from the point with trivial log structure to any log scheme lands on the locus of points where the log structure is trivial.

Motivated by this example, in section \ref{virtual} we define, for any divisorial log scheme $(\underline{X},\underline{D})$ and $x \in \underline{X}(k)$, in a functorial manner, a log scheme $\mathrm{N}_x^\mathrm{log}(\underline{X},\underline{D})$ (definition \ref{normalspace}) whose underlying scheme is isomorphic to the normal space of $X$ to $D$ at $x$ and such that the set of its logarithmic rational points (i.e. morphism of log schemes $\mathrm{Spec}(k) \rightarrow \mathrm{N}_x^\mathrm{log}(\underline{X},\underline{D})$) is in bijection with the set of tangential basepoints at $x$.

We deduce from this that the category of tangentially pointed divisorial log schemes is equivalent to the category of \textit{pointed diagrams} that is the category of diagrams of the shape 
\begin{center}
    \begin{tikzcd}
    & * \arrow[d]\\
    x^\mathrm{log} \arrow[d,hook] \arrow[r,hook] & \mathrm{N}_x^\mathrm{log}(\underline{X},\underline{D})\\
    (\underline{X},\underline{D}) & 
    \end{tikzcd}
\end{center}

Once again by a consequence of (\ref{cdhcdhcdh}) (see corollary \ref{invariance}), each pointed diagram defines an augmentation in a functorial way; in section \ref{motivic cat} we show that the cohomological motive functor defines a contravariant functor from the category whose objects are divisorial log schemes over $k$ with the datum of a tangential basepoint $(\underline{X},\underline{D},\mathbf{x})$ to the $\infty$-category $\CAlg(\DAlog(k,\Lambda))^{/\Lambda}$ (definition \ref{mottan}).

\medskip

This allows us to define in section \ref{section The motivic fundamental group}, after complements on the bar construction in an $\infty$-categorical context, via theorem \ref{bar hopf} and definition \ref{pathspace} the motivic pointed path space in a similar way as the case of ordinary basepoints: 
\begin{defi}
    The motivic pointed path space is defined as
    $$\+^{}_\mathbf{x}\mathrm{P}^m_\mathbf{y}(\underline{X}\setminus \underline{D}):= \!_{h(\mathbf{y})}\Baar_{h(\mathbf{x})}(h(\underline{X},\underline{D})):= \Lambda\otimes_{h(\underline{X},\underline{D})}\Lambda \in \CAlg(\mathcal{DA}^\mathrm{log}(k,\Lambda)).$$
\end{defi}

We show in theorem \ref{bar hopf} that the family $\{\+^{}_\mathbf{x}\mathrm{P}^m_\mathbf{y}(\underline{X}\setminus \underline{D})\}$ is endowed with the structure of a homotopy Hopf algebroid. In particular, $\+^{}_\mathbf{x}\mathrm{P}^m_\mathbf{x}(\underline{X}\setminus \underline{D})$ is a Hopf algebra for all augmentation $\mathbf{x}$.

\medskip

In section \ref{realizations}, we check that the Betti and de Rham realizations of the motivic pointed path space yields the expected theorems \ref{bettilogintro} and \ref{bettirhamintro}.  Using the formalism of B. Drew in \cite{drew2018motivichodgemodules}, we build the Betti and de Rham realization functors $\mathrm{R_B},\mathrm{R_{dR}}$ from $\DAlog(k,\mathbb{Q})$ and $\DAlog(k,k)$, and we check the expected property of the Betti fundamental group.
\begin{thm}
[Theorem \ref{thm Betti real pi}] \label{bettilogintro}
Let $(\underline{X},\underline{D})$ be a divisorial log scheme and $\mathbf{x}, \mathbf{y}$ be tangential basepoints of $(\underline{X},\underline{D})$. There is a natural equivalence of algebras
    $$\mathrm{H}^0(\mathrm{R}_\mathrm{B}(\+^{}_\mathbf{x}\mathrm{P}^m_\mathbf{y}(\underline{X}\setminus \underline{D}))) \simeq \mathcal{O}(\pi^\mathrm{un}_1(\underline{X}(\mathbb{C})\setminus \underline{D}(\mathbb{C}), \mathbf{x},\mathbf{y})).$$
\end{thm}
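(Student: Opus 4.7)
The plan is to show that the Betti realization functor commutes with the bar construction defining $\+_\mathbf{x}^{}\mathrm{P}^m_\mathbf{y}(\underline{X}\setminus\underline{D})$, then to identify the resulting augmented algebra with purely topological data on $\underline{X}(\mathbb{C})\setminus\underline{D}(\mathbb{C})$, and finally to invoke the classical theorem of Beilinson on prounipotent path spaces.

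First, the Betti realization constructed in section \ref{realizations} will be a symmetric monoidal, colimit-preserving functor on $\DAlog(k,\mathbb{Q})$, so it preserves the relative tensor product of augmented commutative algebras. Consequently,
$$\mathrm{R}_\mathrm{B}\bigl(\+_\mathbf{x}^{}\mathrm{P}^m_\mathbf{y}(\underline{X}\setminus\underline{D})\bigr) \simeq \mathbb{Q}\otimes^{L}_{\mathrm{R}_\mathrm{B} h(\underline{X},\underline{D})}\mathbb{Q},$$
with the two copies of $\mathbb{Q}$ viewed as $\mathrm{R}_\mathrm{B} h(\underline{X},\underline{D})$-modules via the augmentations $\mathrm{R}_\mathrm{B} h(\mathbf{x})$ and $\mathrm{R}_\mathrm{B} h(\mathbf{y})$.

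Next I would identify $\mathrm{R}_\mathrm{B} h(\underline{X},\underline{D})$ with the rational cochain algebra of $\underline{X}(\mathbb{C})\setminus\underline{D}(\mathbb{C})$. By construction (linearizing the topological realization of Binda--Park--Østvær), $\mathrm{R}_\mathrm{B} h(\underline{X},\underline{D})$ is the cochain complex of the Kato--Nakayama space $\mathrm{KN}(\underline{X},\underline{D})$; since the log structure is divisorial, the open inclusion $\underline{X}(\mathbb{C})\setminus\underline{D}(\mathbb{C})\hookrightarrow\mathrm{KN}(\underline{X},\underline{D})$ is a homotopy equivalence. To handle the augmentations, I would trace the construction of $h(\mathbf{x})$ back to the pointed diagram
$$(\underline{X},\underline{D}) \hookleftarrow x^\mathrm{log} \hookrightarrow \mathrm{N}_x^\mathrm{log}(\underline{X},\underline{D}) \leftarrow *,$$
using Corollary \ref{invariance} to invert $h$ applied to the middle hook. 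Applying $\mathrm{KN}$, this diagram realizes to the classical Deligne picture of Figure \ref{fig:M1}, and the image of $*$ in the real-oriented blow-up is precisely the tangential basepoint viewed as a point of $\underline{X}(\mathbb{C})\setminus\underline{D}(\mathbb{C})$ under the above homotopy equivalence. Hence $\mathrm{R}_\mathrm{B} h(\mathbf{x})$ is evaluation at $\mathbf{x}$, and likewise for $\mathbf{y}$.

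Finally, taking $\mathrm{H}^0$ of the resulting pushout over $C^*(\underline{X}(\mathbb{C})\setminus\underline{D}(\mathbb{C}),\mathbb{Q})$ with augmentations at $\mathbf{x}$ and $\mathbf{y}$ yields, by Beilinson's theorem \cite[Proposition 3.4]{ASENS_2005_4_38_1_1_0}, the projective limit of duals $\colim_n\bigl(\mathbb{Q}[\pi_1(\underline{X}(\mathbb{C})\setminus\underline{D}(\mathbb{C}),\mathbf{x},\mathbf{y})]/I^{n+1}\bigr)^\vee$, which by Definition \ref{enfin} is $\mathcal{O}(\pi_1^\mathrm{un}(\underline{X}(\mathbb{C})\setminus\underline{D}(\mathbb{C}),\mathbf{x},\mathbf{y}))$; naturality is inherited from the functoriality of each step. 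The principal obstacle lies in the second paragraph: verifying that the Betti augmentation obtained by passing the logarithmic diagram $x^\mathrm{log}\hookrightarrow\mathrm{N}_x^\mathrm{log}(\underline{X},\underline{D})\hookleftarrow *$ through $\mathrm{R}_\mathrm{B}$ genuinely agrees with evaluation at the tangential basepoint in the open locus. This rests on a careful compatibility, up to canonical homotopy, between our logarithmic pointed diagram under Kato--Nakayama realization and Deligne's topological model of Figure \ref{fig:M1}, which must be matched through the inverses provided by the invariance equivalence \eqref{cdhcdhcdh}.
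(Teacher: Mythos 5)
Your overall strategy is the one the paper follows: commute $\mathrm{R}_\mathrm{B}$ past the relative tensor product because it is a colimit-preserving symmetric monoidal functor, identify the realized algebra via the Kato--Nakayama space, trace the motivic tangential augmentation through the pointed diagram and Corollary \ref{invariance} to the boundary point $v/||v||$ of the real-oriented blow-up, and finish with Beilinson's theorem. The second-paragraph compatibility you flag as the ``principal obstacle'' is indeed handled as you suspect, by exhibiting the topological retraction of the real blow-up onto the product of infinitesimal circles as a homotopy inverse of the realized closed immersion.

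There is, however, a genuine gap at the very end that you do not notice. Beilinson's theorem, applied to a topological space with honest basepoints, produces $\mathcal{O}(\pi_1^{\mathrm{un}}(X^\mathrm{KN},\mathbf{x},\mathbf{y}))$ where $\mathbf{x},\mathbf{y}$ are the boundary points of the Kato--Nakayama space determined by $v/||v||$ and $w/||w||$. You write ``which by Definition \ref{enfin} is $\mathcal{O}(\pi_1^\mathrm{un}(\underline{X}(\mathbb{C})\setminus\underline{D}(\mathbb{C}),\mathbf{x},\mathbf{y}))$'' as if this were a matter of unwinding the definition, but Definition \ref{enfin} defines a torsor via homotopy classes of smooth paths into $\underline{X}(\mathbb{C})$ with prescribed limiting tangent data at the endpoints, and this is not a priori the fundamental torsor of any topological space at honest points. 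Bridging the two requires the separate argument of Lemma \ref{pi_1 usuel}: continuation of smooth paths based at tangential basepoints extends them to paths in $X^\mathrm{KN}$ landing on the boundary circles, and one checks (using the homotopy equivalence $\underline{X}(\mathbb{C})\setminus\underline{D}(\mathbb{C})\hookrightarrow X^\mathrm{KN}$ together with a conjugation by auxiliary paths to an interior basepoint, and the explicit composition law for classes of paths with tangential endpoints) that this induces an isomorphism $\pi_1(\underline{X}(\mathbb{C})\setminus\underline{D}(\mathbb{C}),\mathbf{x},\mathbf{y}) \simeq \pi_1(X^\mathrm{KN},\mathbf{x},\mathbf{y})$. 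Without this step the two ``$\pi_1^{\mathrm{un}}$'' in your final sentence are different objects, and the theorem is not yet proved.

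Relatedly, your phrase ``the image of $*$ in the real-oriented blow-up is precisely the tangential basepoint viewed as a point of $\underline{X}(\mathbb{C})\setminus\underline{D}(\mathbb{C})$'' is not correct as stated: the point $v/||v||$ lies on the boundary circle $(x^\mathrm{log})^\mathrm{KN}$, which is disjoint from $\underline{X}(\mathbb{C})\setminus\underline{D}(\mathbb{C})$. The cochain-level augmentation it induces on $C^*(\underline{X}(\mathbb{C})\setminus\underline{D}(\mathbb{C}),\mathbb{Q})$ is well defined only up to homotopy through the quasi-isomorphism with $C^*(X^\mathrm{KN},\mathbb{Q})$, which is precisely why the paper carries out the whole comparison at the level of $X^\mathrm{KN}$ and only returns to $\underline{X}(\mathbb{C})\setminus\underline{D}(\mathbb{C})$ through Lemma \ref{pi_1 usuel} at the last moment.
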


On the de Rham side, we show that the de Rham realization of the motivic pointed path space is $$\+_\mathbf{x}\Baar_\mathbf{y}\textbf{R}\Gamma(\underline{X},\Omega_{(\underline{X},\underline{D})/k}).$$ The augmentations needed to define the bar construction can be viewed via the interpretation of tangential basepoints as \textit{virtual morphisms}, an extension of the available morphisms between log schemes (a notion defined by N. L. Howell in \cite{howell}). This interpretation developed by C. Dupont, E. Panzer and B. Pym in \cite{dupont2024regularizedintegralsmanifoldslog} is compared to the preceding in section \ref{virtual}. Using this comparison, and results from \cite{dupont2024regularizedintegralsmanifoldslog} we compute the Betti--de Rham comparison theorem applied to the motivic pointed path space, and compute the resulting periods:
\begin{thm}[Theorem \ref{end}] \label{bettirhamintro}
    The Betti--de Rham comparison isomorphism applied to $\+^{}_\mathbf{x}\mathrm{P}^m_\mathbf{y}(\underline{X}\setminus \underline{D})$ is induced by the pairing 
        $$
            \mathbb{Q}[\pi_1(\underline{X}(\mathbb{C})\setminus \underline{D}(\mathbb{C}),\mathbf{x},\mathbf{y})] \otimes_\mathbb{Q} \mathrm{H}^0(\+_\mathbf{x}\Baar_\mathbf{y}(\Omega_{(\underline{X},\underline{D})/k}(\underline{X}))) \rightarrow \mathbb{C}$$
given by
           $$ \gamma \otimes [\omega_1|\cdots|\omega_k]  \mapsto  \int_\gamma \omega_1\cdots\omega_k$$
    where the integral sign is the regularized iterated integral of $\omega_1,\dots,\omega_k$ along $\gamma$.
\end{thm}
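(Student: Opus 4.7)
The plan is to trace the comparison isomorphism \eqref{2} through each step of the construction of the motivic pointed path space and identify it with the regularized iterated integration pairing. Since both realization functors $\mathrm{R}_\mathrm{B}$ and $\mathrm{R}_\mathrm{dR}$ are symmetric monoidal and preserve colimits, they commute with the bar construction $\Lambda \otimes_{h(\underline{X},\underline{D})} \Lambda$. Combined with Theorem \ref{bettilogintro} and the de Rham computation of the third bullet of Theorem \ref{main}, this reduces the comparison isomorphism \eqref{2} to the statement that the bar construction on $\mathbf{R}\Gamma_{\mathrm{B}}(\underline{X}(\mathbb{C})\setminus \underline{D}(\mathbb{C}),\mathbb{Q})$ (with augmentations induced by $\mathbf{x},\mathbf{y}$) compares after tensoring with $\mathbb{C}$ to the bar construction on $\mathbf{R}\Gamma(\underline{X},\Omega_{(\underline{X},\underline{D})/k})$ through the classical Betti--de Rham comparison for log schemes (Kato--Nakayama).

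Next, I would identify the augmentations on both sides coming from the motivic construction with the concrete ones used in the theory of regularized iterated integrals. On the motivic side, the augmentation $h(\mathbf{x}):h(\underline{X},\underline{D})\to \Lambda$ is built by factoring through $h(\mathrm{N}^\mathrm{log}_x(\underline{X},\underline{D}))$ and evaluating the tangent vector $v$ at the logarithmic rational point of $\mathrm{N}^\mathrm{log}_x(\underline{X},\underline{D})$ defined by $\mathbf{x}$. Applying $\mathrm{R}_\mathrm{dR}$, this translates on the logarithmic de Rham complex to the \emph{virtual morphism} associated with $\mathbf{x}$ in the sense of Howell and Dupont--Panzer--Pym, thanks to the comparison established in section \ref{virtual}. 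Similarly on the Betti side, $\mathrm{R}_\mathrm{B}(h(\mathbf{x}))$ corresponds to evaluation at $v$ viewed as a point of the Kato--Nakayama space, matching the topological picture of Figure \ref{fig:M1}.

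With both augmentations identified, the last step is to apply the regularized version of Chen's theorem from \cite{dupont2024regularizedintegralsmanifoldslog}: on the degree-zero cohomology of the bar complex $\+_\mathbf{x}\Baar_\mathbf{y}(\Omega_{(\underline{X},\underline{D})/k}(\underline{X}))$, every class is represented by combinations of symbols $[\omega_1|\cdots|\omega_k]$ with $\omega_i$ logarithmic $1$-forms, and the pairing with a path $\gamma\in \pi_1(\underline{X}(\mathbb{C})\setminus \underline{D}(\mathbb{C}),\mathbf{x},\mathbf{y})$ is the regularized iterated integral $\int_\gamma \omega_1\cdots\omega_k$. Unwinding the definition of the comparison isomorphism for cohomology of $(\underline{X},\underline{D})$ as pairing of singular chains on the Kato--Nakayama space with logarithmic forms, and extending multiplicatively through the bar construction, produces exactly this pairing; the only subtlety is that the endpoints live on the boundary circles of the real-oriented blow-up, forcing the regularization prescription of \cite{dupont2024regularizedintegralsmanifoldslog}.

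The main obstacle is the second step: verifying that the \emph{abstract} motivic augmentation $h(\mathbf{x})$, defined through the contraction $h(x^\mathrm{log})\simeq h(\mathrm{N}^\mathrm{log}_x(\underline{X},\underline{D}))$ of corollary \ref{invariance} composed with evaluation at $v$, really translates after $\mathrm{R}_\mathrm{dR}$ (resp.\ $\mathrm{R}_\mathrm{B}$) to the analytically defined virtual augmentation on logarithmic forms (resp.\ to evaluation at $v$ on the Kato--Nakayama space). This compatibility is what makes the concrete integration pairing appear; granting it and the results of \cite{dupont2024regularizedintegralsmanifoldslog}, the remainder of the argument is a formal computation with bar constructions and symmetric monoidal realization functors.
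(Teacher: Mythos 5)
Your first two steps match the paper's strategy: reduce to bar constructions via the colimit-preserving symmetric monoidal realizations, and identify the abstract motivic augmentation $h(\mathbf{x})$ with the virtual/Kato--Nakayama augmentations (which is indeed what Theorems \ref{thm Betti real pi} and \ref{deRhamreal} establish). But your ``last step'' --- \emph{apply the regularized version of Chen's theorem from \cite{dupont2024regularizedintegralsmanifoldslog}} --- does not exist as a citable result. That reference defines regularized integration on manifolds with log corners and proves a Stokes/pairing formalism, but it does \emph{not} prove a Chen-type theorem relating iterated regularized integrals to the cohomology of a bar complex and the prounipotent fundamental group at tangential basepoints. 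The paper itself states that this form of Chen's theorem is new; Theorem \ref{end} \emph{is} that result, so invoking it as a black box is circular.

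The missing content is substantial. The paper's proof has to (a) pass from the abstract bar construction to the explicit double-complex models via the cosimplicial path space $\+_\mathbf{x}\mathrm{P}^\bullet_\mathbf{y}X^\mathrm{KN}$ and the augmented symmetric semi-simplicial object $((X^\mathrm{KN})^n,(Y^\bullet)^\mathrm{KN})$, track the isomorphisms (with signs) on both the singular-chain and logarithmic de Rham sides, and reduce the pairing to $\langle \sigma_{\gamma,k},\mathrm{pr}_1^*\omega_1 \wedge \cdots \wedge \mathrm{pr}_k^*\omega_k\rangle$; and (b) prove Proposition \ref{structure}, i.e.\ that the extended singular simplex $\sigma_{\gamma,k}$ upgrades to an ordinary morphism of augmented symmetric semi-simplicial manifolds with log corners $(\Delta^k,(\partial^\bullet\Delta^k)^\mathrm{bas};s) \rightarrow ((X^\mathrm{KN})^k,(Y^\bullet)^\mathrm{KN})$, which uses the continuity principle and a local coordinate analysis of $\gamma$ near the tangential endpoints. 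It is exactly step (b) that forces the regularization $s$ of the simplex to agree with the tangential datum $\mathbf{x},\mathbf{y}$ and makes the pairing equal the regularized iterated integral of Definition \ref{regularized}. Your sketch skips both (a) and (b); you flag the augmentation compatibility as the ``main obstacle,'' but that is actually dealt with in the earlier realization theorems, whereas the genuinely new work is Proposition \ref{structure} and the explicit sign-tracking, which your proposal does not touch.
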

This generalizes Chen's theorem to forms with log poles. When $X = (\mathbb{P}_\mathbb{Q}^1,\{0,1,\infty\})$, this recovers the regularization of multiple zeta values described in \cite{Javier}.

\subsection{Remarks and relations with other works}\;\smallskip

Virtual logarithmic geometry was defined by N. L. Howell in \cite{howell} and later reformulated by G. Shuklin in \cite{shuklin2024voevodskymotiveassociatedlog}, with already the wish to show that it is integrated in a motivic formalism. In particular, Shuklin is able to define in a functorial way the motive of a \textit{virtual logarithmic scheme} in Voevodsky's category of étale motives. It is possible to define the motivic pointed path space in this context using the correspondence between tangential basepoints and virtual points mentioned earlier. The Betti realization yields the correct object, but we do not know how to compute its de Rham realization. It is unclear if Binda, Park and Østvær's definition of the motive of a log scheme coincide with Shuklin's so we can not prove that such a construction would be equivalent to ours. 

Nevertheless, if a category of log motives with virtual morphisms were to be constructed, it would simplify greatly the constructions in the present article, since the construction for rational points would be transposed directly.

\newpage
\stoptoc
\subsection*{Notations and conventions}

We follow Ogus' reference book \cite{Ogus_2018} regarding most logarithmic geometry notations except for the Kato--Nakayama space of a log-scheme $X$ that we denote by $X^{\mathrm{KN}}$. We only consider log schemes with Zariski log structures. We denote by $\mathbb{A}_{\mathbb{N}^r}:= (\mathbb{A}^r = \mathrm{Spec}(k[t_1,\cdots,t_r], \{t_1\cdots t_r = 0\})$. We also let $*_{\mathbb{N}^r}$ denote the log structure of $\mathbb{A}_{\mathbb{N}^r}$ restricted to $0 \in \mathbb{A}^r$.

To simplify the notations, we will often refer to a divisorial log scheme $(\underline{X},\underline{D})$ simply by ``$X$", so that although we described the motivic pointed path space as ``$\!^{}_\mathbf{x}\mathrm{P}^m_\mathbf{y}(\underline{X}\setminus \underline{D})$" in the introduction, we will denote it by ``$\!^{}_\mathbf{x}\mathrm{P}^m_\mathbf{y}X$". Note that since the motives of $X$ and $\underline{X}\setminus \underline{D}$ coincide, this is consistent.

For all $\infty$-categorical notations, unless specified otherwise, we refer to Lurie's foundational articles \cite{lurie2008higher} and \cite{ha}. Note that for $\mathcal{C}$ an $\infty$-category, and $X$ an object of $\mathcal{C}$, we denote by $\mathcal{C}^{/X}$ (resp. $\mathcal{C}^{//X}$) the category of objects under $X$ (resp. twice under $X$), that is endowed with a morphism (resp. two morphisms) from $X$. We will also denote by curved letters $\mathcal{C}$ the $\infty$-categories and bold letters $\mathbf{C}$ the $1$-categories.

Finally, we will use the following notations, where $S$ is a noetherian scheme (or fs log-scheme when it makes sense) of finite Krull dimension:\\

$\begin{array}{c|l}
    \mathbf{Cat}_1 & \text{the 1-category of small categories}\\
    \Catinf & \text{the $\infty$-category of  $\infty$-categories}\\
    \Lambda & \text{a commutative ring}\\
    \mathbf{Sch}_S & \text{the category of separated noetherian schemes over $S$ of}\\
    & \text{finite Krull dimension}\\
    \mathbf{lSch}_S & \text{the category of separated Noetherian fs log schemes over $S$}\\ 
     & \text{of finite Krull dimension}\\
    \mathbf{Sm}_S & \text{the category of smooth schemes of finite type over $S$}\\
    \mathbf{lSm}_S & \text{the category of log smooth log schemes of finite type over $S$}\\
    \mathcal{M}\mathrm{od}_\Lambda & \text{the $\infty$-category of chain complexes of $\Lambda$-modules}\\
    \Spc & \text{the $\infty$-category of spaces}\\
    \Spt & \text{the $\infty$-category of spectra}\\
    \operatorname{\mathcal{S}h_t(\mathcal{C},\mathcal{V}}) & \text{the $\infty$-category of $t$-sheaves with values in the $\infty$-category $\mathcal{V}$}
\end{array}$\\

\subsection*{Acknowledgements}\;\smallskip

This article was written during my PhD at the IMAG in Montpellier with the support of the ANR Cyclades (Projet ANR-23-CE40-0011). I would like to thank my advisor, Clément Dupont, for his constant guidance and support throughout the writing process, as well as for his careful and precise comments, which allowed me to bring this article to its current form. My warmest thanks go to Swann Tubach for countless discussions on motives and six-functor formalisms, to Nikola Tomic for answering numerous questions on $\infty$-categories, and to Ulysse Mounoud for our wide-ranging discussions on algebraic geometry. I also benefited from conversations with Federico Binda, Damien Calaque, Joost Nuiten and Doosung Park.

\resumetoc
\newpage
\section{Tangential basepoints in log geometry}\label{virtual}
In the first subsection of this section, we describe a logarithmic geometric point of view on tangential basepoints. We define the logarithmic normal space at a rational point of a smooth scheme over a field $k$ endowed with a simple normal crossings divisor, such that its logarithmic rational points are tangential basepoints, and we use it to translate Deligne's approach \ref{fig:M1} in this context via the notion of \textit{pointed diagrams} defined in definition \ref{pointeddia1}. In particular, this point of view helps us describe the functoriality of tangential basepoints. Pointed diagrams will allow us to define motivic tangential basepoints in the next section. In the second subsection we connect this point of view with the approach of Dupont, Panzer and Pym in \cite{dupont2024logarithmicmorphismstangentialbasepoints,dupont2024regularizedintegralsmanifoldslog}. Namely we prove that the category of pointed diagrams is equivalent to the category of \textit{virtually pointed divisorial log schemes}. This will help us compute the realizations of the motivic pointed path space in the last section of this article.

\subsection{The log normal space and pointed diagrams}\label{normal}
\subsubsection{The log normal space at a point}\;\smallskip

We recall the classical definition of tangential basepoints. For a pair $(\underline{X},\underline{D})$ given by a smooth scheme $\underline{X}$ over $k$ endowed with a simple normal crossings divisor $\underline{D} = \bigcup_i \underline{D}_i$ with $\underline{D}_i$ its irreducible components, $\underline{X}$ decomposes into $$\underline{X} = \underline{X}_0 \sqcup \underline{X}_1 \sqcup \cdots \sqcup \underline{X}_j \sqcup \cdots$$ where $\underline{X}_j$ is the locus of intersection of exactly $j$ irreducible components of $\underline{D}$. For $x \in \underline{X}_j$, the normal space to $\underline{D}$ at $x$ is defined as the $k$-vector space \begin{equation}\label{schematic}
    \mathrm{N}_x(\underline{X},\underline{D}):= \mathrm{T}_x\underline{X}/\mathrm{T}_x\underline{X}_j
\end{equation}
where $\mathrm{T}_x\underline{X}, \mathrm{T}_x \underline{X}_j$ are the tangent ($k$-vector) spaces of $\underline{X},\underline{X}_j$ at $x$.

\smallskip

For each $\underline{D}_i$ containing $x$, we let $$\mathrm{N}_x(\underline{X},\underline{D}_i):= \mathrm{T}_x\underline{X}/\mathrm{T}_x\underline{D}_i$$ so that there is a decomposition $$\mathrm{N}_x(\underline{X},\underline{D}) = \bigoplus_i \mathrm{N}_x(\underline{X},\underline{D}_i).$$ 

\smallskip
The definition of tangential basepoints we use is the following from \cite{10.1007/978-1-4613-9649-9_3}:

\begin{defi} \label{tangent}
    A tangential basepoint of $(\underline{X},\underline{D})$ is the datum $(x,v)$ of a rational point $x\in \underline{X}(k)$ and a vector $v \in \mathrm{N}_x(\underline{X},\underline{D})$ such that its projection on $\mathrm{N}_x(\underline{X},\underline{D}_i)$ for each $\underline{D}_i$ containing $x$ is nonvanishing. We denote this tangential basepoint by $\mathbf{x}:= (x,v)$.
\end{defi}

Note that there is a unique tangential basepoint at a rational point $x$ of $\underline{X}\setminus \underline{D}$; it is $(x,0)$. A tangential basepoint at a point lying on a single irreducible component of $\underline{D}$ is the same thing as the datum of a nonzero normal vector to this component at this point. 

\bigskip

We describe a logarithmic translation of this definition. Let $\mathbf{Div}_k$ be the category of divisorial log schemes over $k$. This is nothing but the category of log schemes $(\underline{X},\underline{D})$ with $\underline{X}$ a smooth scheme over $k$ with a simple normal crossings divisor $\underline{D}$, whose morphisms $f: (\underline{X},\underline{D}) \rightarrow (\underline{Y},\underline{E})$ are given by morphisms of $k$-schemes $\underline{f}: \underline{X} \rightarrow \underline{Y}$ that satisfy $\underline{f}^{-1}(\underline{E}) \subset \underline{D}$. To simplify, we will often denote by $X$ the whole datum of $(\underline{X},\underline{D})\in \mathbf{Div}_k.$ Recall that by \cite[A.5.10]{binda2021triangulatedcategorieslogarithmicmotives}, the category $\mathbf{Div}_k$ is equivalent to the category of log smooth log schemes whose underlying scheme is smooth over $k$.

\medskip

Let $X \in \mathbf{Div}_k$, and $x: \mathrm{Spec}(k) \rightarrow X$ a rational point. We denote by $$x^\mathrm{log}:= (\mathrm{Spec}(k), \mathcal{M}_{X}|_x)$$ the log scheme whose underlying scheme is $\mathrm{Spec}(k)$ with the log structure pulled back from $X$ via $x$.

\begin{rem}
Note that the structure monoid $\mathcal{M}_{X}|_x \neq \mathcal{M}_{X,x}$ in general. Indeed, if we let $\beta$ denote the composition $\mathcal{M}_{X,x} \rightarrow \mathcal{O}_{X,x} \rightarrow k$, then by definition $\mathcal{M}_{X}|_x$ is obtained as the pushout of the diagram \begin{center}
\begin{tikzcd}
\beta^{-1}(k^\times) \simeq \mathcal{O}_{X,x}^\times \arrow[r] \arrow[d] & k^\times\\
\mathcal{M}_{X,x} &
\end{tikzcd}
\end{center}
\end{rem}

It is proved in \cite[Theorem 3.5]{dupont2024logarithmicmorphismstangentialbasepoints} that for $X:= (\underline{X},\underline{D}) \in \mathbf{Div}_k$ and $x\in \underline{X}(k)$, there is a bijection between the set of tangential basepoints at $x$ and the set of group morphisms $$(\mathcal{M}_{X}|_x)^\mathrm{gp} \rightarrow k^\times$$ which restrict to the identity on $k^\times$. IIn coordinates, if we let $x_1,\dots, x_j$ be local coordinates of $\underline{D}$ at $x$, then there is a (non-canonical) isomorphism $$\mathcal{M}_{X}|_x \simeq k^\times x_1^\mathbb{N}\cdots x_j^\mathbb{N}$$ and the datum of a morphism $(\mathcal{M}_{X}|_x)^\mathrm{gp} \rightarrow k^\times$ which restricts to the identity on $k^\times$ is equivalent to the data of $\lambda_1,\dots, \lambda_j  \in k^\times,$ where $\lambda_i$ is the image of $x_i$. This datum is associated to the tangential basepoint $$\bigg(x,\lambda_1 \frac{\partial}{\partial x_1}\bigg|_x + \dots + \lambda_j \frac{\partial}{\partial x_j}\bigg|_x\bigg).$$ 

This monoid is related to the schematic normal space in the following way:

\begin{lem}\label{plus}
    There is a canonical isomorphism of schemes over $k$
    $$\mathrm{N}_x(\underline{X},\underline{D}) \xrightarrow{\sim} \mathrm{Spec}(k[\mathcal{M}_{X}|_x]\otimes_{k[k^\times]} k)$$ where $\mathrm{N}_x(\underline{X},\underline{D})$ is seen as the affine space associated with (\ref{schematic}).
\end{lem}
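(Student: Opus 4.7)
The plan is to exploit the canonical $\mathbb{N}^j$-grading coming from the characteristic monoid $\overline{\mathcal{M}}_{X,x}$, and to identify graded pieces. Assume $x$ lies on exactly $j$ irreducible components $\underline{D}_1, \ldots, \underline{D}_j$ of $\underline{D}$. Since $(\underline{X}, \underline{D})$ is divisorial, the characteristic monoid is canonically $\overline{\mathcal{M}}_{X,x} \simeq \mathbb{N}^j$, where the $i$-th generator corresponds to $\underline{D}_i$; passing to the pushout defining $\mathcal{M}_X|_x$, one gets a canonical surjection $p : \mathcal{M}_X|_x \twoheadrightarrow \mathbb{N}^j$ whose kernel is $k^\times$. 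This provides a canonical $\mathbb{N}^j$-grading on the monoid algebra $k[\mathcal{M}_X|_x]$, which descends to the tensor product $A := k[\mathcal{M}_X|_x] \otimes_{k[k^\times]} k$ because $k[k^\times]$ sits in degree $0$.

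The plan is then to compute the graded pieces of $A$. The degree zero part contains exactly $k^\times$, so $(k[\mathcal{M}_X|_x])_0 = k[k^\times]$ and $A_0 = k$. For each basis vector $e_i$, the fiber $p^{-1}(e_i) \subset \mathcal{M}_X|_x$ is a $k^\times$-torsor; hence $A_{e_i}$ is a one-dimensional $k$-vector space, with any element of $p^{-1}(e_i)$ as a generator. I would then define a canonical map $A_{e_i} \to I_{D_i}/(I_{D_i}\mathfrak{m}_x) = \mathrm{N}_{D_i/X}^\vee|_x$ by sending the class of $f \in p^{-1}(e_i)$ to $\alpha_X(f) \bmod I_{D_i}\mathfrak{m}_x$. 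The compatibility check is the main computational point: if $f,g \in \mathcal{M}_{X,x}$ have the same image $e_i$ in $\overline{\mathcal{M}}_{X,x}$, then $g = uf$ with $u \in \mathcal{O}_{X,x}^\times$, and the identification $\mathcal{O}_{X,x}^\times \simeq k^\times$ used in forming $\mathcal{M}_X|_x$ is precisely evaluation $u \mapsto u(x)$; thus $\alpha_X(g) \equiv u(x)\,\alpha_X(f) \pmod{I_{D_i}\mathfrak{m}_x}$, which matches the $k$-linear scaling of $[f]$ by $u(x)$ in $A_{e_i}$. This makes the map a well-defined canonical isomorphism of one-dimensional $k$-vector spaces.

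Finally, since $\mathcal{M}_X|_x$ is the free $k^\times$-extension of $\mathbb{N}^j$, the multiplication in $A$ induces an isomorphism $\mathrm{Sym}_k\bigl(\bigoplus_i A_{e_i}\bigr) \xrightarrow{\sim} A$ of $\mathbb{N}^j$-graded $k$-algebras. Combining this with the canonical identification $\bigoplus_i A_{e_i} \simeq \bigoplus_i \mathrm{N}_{D_i/X}^\vee|_x = \mathrm{N}_x(\underline{X},\underline{D})^\vee$ yields a canonical isomorphism of $k$-algebras
$$A \;\simeq\; \mathrm{Sym}_k\bigl(\mathrm{N}_x(\underline{X},\underline{D})^\vee\bigr),$$
and taking $\mathrm{Spec}$ gives the desired canonical isomorphism of $k$-schemes. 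I expect the only real obstacle to be the compatibility check in the second step; once the grading is in place and that identification is nailed down, the rest is formal manipulation of monoid algebras and symmetric algebras.
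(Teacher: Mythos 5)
Your proof is correct, and it takes essentially the same route as the paper: both identify the monoid algebra with a polynomial ring in local coordinates cutting out the $D_i$, send each such coordinate to its class in the conormal fiber, and appeal to the $k^\times$-torsor structure on local equations to see the construction is canonical. The only difference is stylistic — the paper chooses coordinates $x_1,\dots,x_j$ up front and then verifies independence via $x'_i = \lambda_i x_i$ in $\mathcal{M}_X|_x$, whereas you set up the $\mathbb{N}^j$-grading first and build the map coordinate-free on graded pieces, with the well-definedness check playing the role of the paper's independence remark; this is a slightly cleaner packaging of the same argument.
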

\begin{proof}
The normal space is defined by $$\mathrm{N}_x(\underline{X},\underline{D}):= \mathrm{Spec(Sym}((\mathrm{T}_xX/\mathrm{T}_xX_j)^\vee)),$$ and the $k$-algebra $k[\mathcal{M}_{X}|_x]\otimes_{k[k^\times]} k$ is identified with $$k[x_1,\dots, x_n]$$ where $x_1,\dots, x_j$ is any choice of local coordinates of $D$ at $x$.
There is a morphism of schemes over $k$ $$\mathrm{N}_x(\underline{X},\underline{D}) \rightarrow \underline{\mathrm{N}}^\mathrm{log}_x(\underline{X},\underline{D})$$ defined by the morphism of $k$-algebras $$k[\mathcal{M}_{X}|_x]\otimes_{k[k^\times]} k \rightarrow \mathrm{Sym}((\mathrm{T}_xX/\mathrm{T}_xX_j)^\vee$$ which associates to $x_i$ the element $dx_i|_x \in (\mathrm{T}_xX/\mathrm{T}_xX_j)^\vee.$ Note that this construction does not depend on the choice of coordinates we made since another choice $\{x'_i\}$ would satisfy $x'_i = \lambda_i x_{i}$ in $\mathcal{M}_{X}|_x$ with $\lambda_i \in k^\times$.
The normal space satisfies $$\mathrm{N}_x(\underline{X},\underline{D}) \simeq \mathrm{Spec}(k[dx_1|_x,\dots ,dx_j|_x])\simeq \mathbb{A}^j_k,$$ so that we indeed defined an isomorphism.
\end{proof}

This lemma motivates the following definition:

\begin{defi}\label{normalspace}
For $(\underline{X},\underline{D}) \in \mathbf{Div}_k$ and $x \in \underline{X}(k)$, we call \textbf{log normal space to} $D$ \textbf{at} $x$ the log scheme $\mathrm{N}^\mathrm{log}_x(\underline{X},\underline{D}) \in \mathbf{lSch}_k$ whose underlying scheme is defined by $$\underline{\mathrm{N}}^\mathrm{log}_x(\underline{X},\underline{D}):= \mathrm{Spec}(k[\mathcal{M}_{X}|_x]\otimes_{k[k^\times]} k),$$ and whose log structure is induced by the morphism of monoids $$\mathcal{M}_{X}|_x \rightarrow k[\mathcal{M}_{X}|_x] \rightarrow k[\mathcal{M}_{X}|_x]\otimes_{k[k^\times]} k.$$
\end{defi}

\begin{rem}
From the description given in the proof of lemma \ref{plus}, we see that the log structure induced on $\mathrm{N}_x(\underline{X},\underline{D})$ is a divisorial log structure given by $$dx_1|_x\dots dx_j|_x = 0.$$ The log scheme obtained is simply $\mathbb{A}_{\mathbb{N}^j}.$ Hence, note that $\mathrm{N}^\mathrm{log}_x(\underline{X},\underline{D}) \in \mathbf{Div}_k$.
\end{rem}

\begin{exmp}
\label{example Normal log}
    \begin{itemize}
        \item For $(\underline{X},\underline{D}) = \mathbb{A}_\mathbb{N}$, $x = 0 \in \mathbb{A}^1$, then $$\mathrm{N}^\mathrm{log}_x(\underline{X},\underline{D}) = \mathbb{A}_\mathbb{N}$$
        \item For $(\underline{X},\underline{D}) = \mathbb{A}_{\mathbb{N}^2}$, $x = (1,0)$, then $$\mathrm{N}^\mathrm{log}_x(\underline{X},\underline{D}) = \mathbb{A}_\mathbb{N}.$$
        \item For any $(\underline{X},\underline{D})$, if $x\in X\setminus D$, $$\mathrm{N}^\mathrm{log}_x(\underline{X},\underline{D}) = *.$$
    \end{itemize}
\end{exmp}

The constructions $x^\mathrm{log}$ and $\mathrm{N}^\mathrm{log}_x(\underline{X},\underline{D})$ are related by a functorial (strict) closed immersion of log schemes over $k$ $$x^\mathrm{log} \hookrightarrow \mathrm{N}^\mathrm{log}_x(\underline{X},\underline{D})$$ defined as follows: its underlying morphism of schemes is given by the morphism of $k$-algebras $$k[\mathcal{M}_{X}|_x] \otimes_{k[k^\times]} k \rightarrow k$$ coming from the morphism of monoids defining the log structure on $x^\mathrm{log}$ $$\mathcal{M}_{X}|_x \rightarrow k,$$ and the morphism of log structures is described by \begin{center}
    \begin{tikzcd}
    \mathcal{M}_{X}|_x \arrow[r,equal] \arrow[d] & \mathcal{M}_{X}|_x \arrow[d]\\
    k[\mathcal{M}_{X}|_x]\otimes_{k[k^\times]} k \arrow[r] & k
    \end{tikzcd}
\end{center}
which shows that the morphism of log schemes thus defined is strict.

\bigskip

The log scheme $\mathrm{N}^\mathrm{log}_x(\underline{X},\underline{D})$ is the correct logarithmic version of the schematic normal space to $D$ at $x$ associated with the normal space (\ref{schematic}). In this direction, we show that the set of morphisms from the log scheme $\mathrm{Spec}(k)$ with trivial log structure to the log normal space to $D$ at $x$ are in canonical bijection with the set of tangential basepoints of $(\underline{X},\underline{D})$ at $x$.

\begin{lem}\label{ola}
    For $(\underline{X},\underline{D}) \in \mathbf{Div}_k$ and $x \in \underline{X}(k)$, there is a bijection between the set of tangential basepoints at $x$ and the set of logarithmic rational points of $\mathrm{N}^\mathrm{log}_x(\underline{X},\underline{D})$, that is the set of morphisms of logarithmic schemes $$\mathrm{Spec}(k) \rightarrow \mathrm{N}^\mathrm{log}_x(\underline{X},\underline{D}).$$
\end{lem}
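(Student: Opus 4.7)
The plan is to reduce the statement to the fact that a log morphism whose source carries the trivial log structure must factor through the trivial locus of the target, and then to identify this trivial locus as an explicit torus whose rational points have already been matched with tangential basepoints in \cite[Theorem 3.5]{dupont2024logarithmicmorphismstangentialbasepoints}.

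First, I would establish the following general principle: for any fs log scheme $Y$, a morphism of log schemes $f \colon \mathrm{Spec}(k) \to Y$ (with $\mathrm{Spec}(k)$ endowed with the trivial log structure) is the same data as a morphism of underlying schemes $\underline{f} \colon \mathrm{Spec}(k) \to \underline{Y}$ that lands in the open locus $Y^\circ \subset \underline{Y}$ where the structure map $\alpha_Y \colon \mathcal{M}_Y \to \mathcal{O}_Y$ takes values in units. Indeed, the compatibility square forces every local section $m$ of $f^{-1}\mathcal{M}_Y$ to satisfy $\underline{f}^\sharp(\alpha_Y(m)) \in k^\times$, so $\alpha_Y(m) \notin \mathfrak{m}_{\underline{f}(\mathrm{Spec}(k))}$, meaning the log structure is trivial at the image point. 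Conversely, on $Y^\circ$ the log structure is canonically $\mathcal{O}_{Y^\circ}^\times$, hence any scheme morphism into $Y^\circ$ upgrades uniquely to a log morphism via restriction of $\underline{f}^\sharp$.

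Second, I would identify the trivial locus of $\mathrm{N}_x^{\mathrm{log}}(\underline{X},\underline{D})$. By the remark following Definition \ref{normalspace}, this log scheme is (non-canonically) isomorphic to $\mathbb{A}_{\mathbb{N}^j}$ where $j$ is the number of branches of $\underline{D}$ through $x$, and its trivial locus is the complement of the divisor $\{x_1 \cdots x_j = 0\}$. Canonically, this trivial locus is
$$\mathrm{Spec}\bigl(k[(\mathcal{M}_X|_x)^{\mathrm{gp}}]\otimes_{k[k^\times]}k\bigr),$$
obtained by inverting every element of $\mathcal{M}_X|_x$ in the defining algebra of $\underline{\mathrm{N}}_x^{\mathrm{log}}(\underline{X},\underline{D})$.

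Third, I would use the universal property of monoid algebras and of pushouts of commutative rings to rewrite the set of $k$-algebra maps $k[(\mathcal{M}_X|_x)^{\mathrm{gp}}]\otimes_{k[k^\times]}k \to k$ as the set of monoid morphisms $(\mathcal{M}_X|_x)^{\mathrm{gp}} \to k^\times$ whose restriction along $k^\times \hookrightarrow (\mathcal{M}_X|_x)^{\mathrm{gp}}$ is the identity. Combining the first three steps produces a canonical bijection between log morphisms $\mathrm{Spec}(k) \to \mathrm{N}_x^{\mathrm{log}}(\underline{X},\underline{D})$ and such monoid morphisms, and I would invoke \cite[Theorem 3.5]{dupont2024logarithmicmorphismstangentialbasepoints} to conclude that the latter set is in bijection with tangential basepoints at $x$; explicitly, the morphism sending $x_i \mapsto \lambda_i$ corresponds to $(x, \sum_i \lambda_i \partial/\partial x_i|_x)$.

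The main difficulty is bookkeeping rather than a substantive mathematical obstacle: one must ensure that each identification (the trivial locus, the pushout description, the monoid morphism reinterpretation) is canonical so the resulting bijection is intrinsic to $(\underline{X},\underline{D},x)$ and not dependent on a choice of local coordinates. Once the coordinate-free formulation of \cite[Theorem 3.5]{dupont2024logarithmicmorphismstangentialbasepoints} is taken for granted, this is a direct chain of natural isomorphisms.
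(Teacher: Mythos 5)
Your proposal follows essentially the same route as the paper's proof: reduce to the locus where the log structure is trivial (via the observation that log morphisms out of a trivial-log-structure scheme must land there), identify that locus as $\mathrm{Spec}\bigl(k[(\mathcal{M}_X|_x)^{\mathrm{gp}}]\otimes_{k[k^\times]}k\bigr)$, describe its rational points as group morphisms $(\mathcal{M}_X|_x)^{\mathrm{gp}} \to k^\times$ restricting to the identity on $k^\times$, and then cite \cite[Theorem 3.5]{dupont2024logarithmicmorphismstangentialbasepoints}. The argument is correct and matches the paper step for step.
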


\begin{proof}
For $X$ a fs log scheme, we denote by $\partial X$ the boundary of $X$, i.e. the closed subscheme of $X$ such that for $x\in X,$ $x\in \partial X$ if and only if $\mathcal{M}_{X,x} \neq \mathcal{O}_{X,x}^\times$. It is the locus of points at which the log structure is non-trivial. For $Y$ a log scheme with trivial log structure, any morphism $Y \rightarrow X$ to a fs log scheme lands in the open locus of points with trivial log structure $X\setminus \partial X$. Thus there is a bijection between the set of logarithmic rational point of $\mathrm{N}^\mathrm{log}_x(\underline{X},\underline{D})$ and the set of rational points of $\underline{\mathrm{N}}^\mathrm{log}_x(\underline{X},\underline{D})\setminus \partial\underline{\mathrm{N}}^\mathrm{log}_x(\underline{X},\underline{D}).$ 

To identify the open locus with trivial log strucure of the log normal space to $D$ at $x$, note that it is the open subset on which any function $m \in \mathcal{M}_{X}|_x$ is inverted. Hence it is simply described as the spectrum of the localization of $k[\mathcal{M}_{X}|_x]\otimes_{k[k^\times]} k$ at these elements (or at a set of generators of $\mathcal{M}_{X}|_x$). Therefore the open locus is $$\mathrm{Spec}(k[(\mathcal{M}_{X}|_x)^\mathrm{gp}]\otimes_{k[k^\times]} k).$$ A rational point of this scheme is given in a unique way by a group morphism $$(\mathcal{M}_{X}|_x)^\mathrm{gp} \rightarrow k^\times$$ which restricts to the identity on $k^\times$. 
By the discussion above definition \ref{normalspace}, this is what we wanted to prove.
\end{proof}

\begin{rem}

If we let $\mathrm{N}_x(\underline{X},\underline{D}_i):= \mathrm{Spec}(\mathrm{Sym}(\mathrm{T}_x\underline{X}/\mathrm{T}_x\underline{D}_i)^\vee)$ there is a decomposition $$\mathrm{N}_x(\underline{X},\underline{D}) = \prod_i \mathrm{N}_x(\underline{X},\underline{D}_i),$$ so that the log structure is obtained via the product $$\prod_i (\mathrm{N}_x(\underline{X},\underline{D}_i),0).$$ Since the locus with non-trivial log structure of $(\mathrm{N}_x(\underline{X},\underline{D}_i),0)$ is reduced to the point $\{0\}$, this point of view explains in another way that a logarithmic rational point of $\mathrm{N}^\mathrm{log}_x(\underline{X},\underline{D})$ is the same thing as a tangential basepoint at $x$: it is the datum of a vector $v \in \mathrm{N}_x(\underline{X},\underline{D})$ whose projection on each $\mathrm{N}_x(\underline{X},\underline{D}_i)$ does not vanish.
\end{rem}

\subsubsection{The functoriality of tangential basepoints}\;\smallskip

Since the construction $\mathrm{N}^\mathrm{log}_x(\underline{X},\underline{D})$ is functorial, it allows us to describe the functoriality for tangential basepoints. 

\begin{defi}\label{functo}
A morphism $f : (\underline{X},\underline{D}) \rightarrow (\underline{Y},\underline{E})$ of divisorial log schemes over $k$ satisfies $f(\mathbf{x}) = \mathbf{y}$, with $\mathbf{x}$ and $\mathbf{y}$ tangential basepoints on $ (\underline{X},\underline{D})$ and $(\underline{Y},\underline{E})$ respectively if $f(x) = y$, and if the induced morphism of log schemes $\mathrm{N}_x^\mathrm{log}(\underline{X},\underline{D}) \rightarrow \mathrm{N}_y^\mathrm{log}(\underline{Y},\underline{E})$ satisfies that the following diagram commutes: \begin{center}
    \begin{tikzcd}
    \mathrm{Spec}(k) \arrow[d,"\mathbf{x}"] \arrow[rd, "\mathbf{y}"] & \\
    \mathrm{N}^\mathrm{log}_x(\underline{X},\underline{D}) \arrow[r] & \mathrm{N}^\mathrm{log}_y(\underline{Y},\underline{E})
    \end{tikzcd}
\end{center}
where the morphisms from $\mathrm{Spec}(k)$ are the log rational points associated with $\mathbf{x}$ and $\mathbf{y}$ by the lemma \ref{ola}.
\end{defi}

\smallskip

In a simple case, suppose $(\underline{X},\underline{D}) = (\mathbb{A}^n,x_1\dots x_j= 0),$ $(\underline{Y},\underline{E}) = (\mathbb{A}^m,y_1\dots y_k = 0)$ and let $f : (\underline{X},\underline{D}) \rightarrow (\underline{Y},\underline{E})$ be a morphism of divisorial log schemes such that $f(0) = 0$. There are monomials in the coordinates $x_1,\dots, x_j$ denoted by $M_1, \dots, M_k \in K[x_1,\dots, x_j]$ such that $$f^*y_i = M_i(x_1,\dots, x_j).$$ If we let $$\mathbf{0}:= \bigg(0,\lambda_1 \frac{\partial}{\partial x_1}\bigg|_0 + \dots + \lambda_j \frac{\partial}{\partial x_j}\bigg|_0\bigg)$$ be a tangential basepoint of $(\underline{X},\underline{D})$ at $0$, then it is easy to see that the tangential basepoint defined by the composition \begin{center}
    \begin{tikzcd}
    \mathrm{Spec}(k) \arrow[d,"\mathbf{x}"] & \\
    \mathrm{N}^\mathrm{log}_x(\underline{X},\underline{D}) \arrow[r] & \mathrm{N}^\mathrm{log}_y(\underline{Y},\underline{E})
    \end{tikzcd}
\end{center}
is $$f(\mathbf{0}):=\bigg(0,M_1(\lambda_1,\dots, \lambda_j) \frac{\partial}{\partial y_1}\bigg|_{0} + \dots + M_k(\lambda_1,\dots,\lambda_j) \frac{\partial}{\partial y_k}\bigg|_{0}\bigg).$$

\begin{rem}
For a normal vector $t$ of $D$ at $x$, the normal vector associated with $t$ by this functoriality by $f: (\underline{X},\underline{D}) \rightarrow (\underline{Y},\underline{E})$ is not in general $df_x(t)$. For instance if $f: (\mathbb{A}^1,0) \rightarrow (\mathbb{A}^1,0)$ is defined by $f(x) = x^n$ with $ n \geq 2$, then $df_0(t) = 0$. Instead, if $t = \lambda \partial_x |_0$, then $f(\mathbf{0}) = (0, \lambda^n \partial_x |_0)$.
\end{rem}

\begin{rem}
This functoriality seems to be aligned with, and was partly inspired by the point of view which consists in viewing tangential basepoints via morphisms $\mathrm{Spec}(k((x))) \rightarrow X$, see for instance \cite[Lemma 10.1]{petrov2024universalitygaloisactionfundamental}. We will see in the next subsection another motivation for this functoriality.
\end{rem}

\subsubsection{The category of pointed diagrams}\;\smallskip

Let us now define the category of pointed diagrams which will be our framework when we define motivic tangential basepoints in subsection \ref{motivic tangential basepoints}. Note that the diagram of log schemes over $k$ \begin{equation}\label{diag}
        \begin{tikzcd}
            x^\mathrm{log} \arrow[d,hook]  \arrow[r,hook] & \mathrm{N}^\mathrm{log}_x(\underline{X},\underline{D})\\
            X & 
        \end{tikzcd}
    \end{equation}
    is functorial with respect to morphisms of divisorial log schemes over $k$.

\begin{defi}\label{pointeddia1}
We call the \textbf{category of pointed diagrams} the category whose objects are diagrams of log schemes over $k$ of the form
\begin{center}
        \begin{tikzcd}
        & \mathrm{Spec}(k) \arrow[d,"\mathbf{x}"]\\
            x^\mathrm{log} \arrow[d,hook]  \arrow[r,hook] & \mathrm{N}^\mathrm{log}_x(\underline{X},\underline{D})\\
            X & 
        \end{tikzcd}
    \end{center}
where $(\underline{X},\underline{D}) \in \mathbf{Div}_k$, and $\mathbf{x}: \mathrm{Spec}(k) \rightarrow \mathrm{N}^\mathrm{log}_x(\underline{X},\underline{D})$ is a tangential basepoint at a rational point $x$. To simplify, we denote such diagrams by $(\underline{X},\underline{D},\mathbf{x})$.

A morphism of pointed diagrams $(\underline{X},\underline{D},\mathbf{x}) \rightarrow (\underline{Y},\underline{E},\mathbf{y})$ is a morphism of diagrams of log schemes over $k$ given by a morphism of divisorial log schemes $f: (\underline{X},\underline{D}) \rightarrow (\underline{Y},\underline{E})$ sending $x$ to $y$, the induced morphism of log schemes $x^\mathrm{log} \rightarrow y^\mathrm{log}$, the induced morphism $\mathrm{N}^\mathrm{log}_x(\underline{X},\underline{D}) \rightarrow \mathrm{N}^\mathrm{log}_y(\underline{Y},\underline{E})$ and the identity on $\mathrm{Spec}(k)$. In particular the diagram \begin{center}
    \begin{tikzcd}
    \mathrm{Spec}(k) \arrow[d,"\mathbf{x}"] \arrow[rd, "\mathbf{y}"] & \\
    \mathrm{N}^\mathrm{log}_x(\underline{X},\underline{D}) \arrow[r] & \mathrm{N}^\mathrm{log}_y(\underline{Y},\underline{E})
    \end{tikzcd}
\end{center}
must commute, which just means that $f(\mathbf{x}) = \mathbf{y}$.
\end{defi}

Hence, it is clear that the category of pointed diagrams is equivalent to the category of tangentially pointed smooth schemes over $k$ with simple normal crossings divisor, that is the category whose objects are the datum of a divisorial log scheme $(\underline{X},\underline{D})$ over $k$ and a tangential basepoint for $(\underline{X},\underline{D})$, and whose morphisms are morphisms of divisorial log schemes over $k$ which are compatible with the fixed tangential basepoints in the sense described above. The reason why we focus on the category of pointed diagrams lies in the fact that pointed diagrams are a higher dimensional and functorial analogue of Deligne's gluing procedure to view tangential basepoints as points. Indeed if we apply the Kato--Nakayama space functor to the diagram \ref{diag}
with $X:= (\underline{X},\underline{D})$ a smooth curve over $k$, we get
\begin{center}
    \begin{tikzcd}
    S^1 \arrow[r,hook] \arrow[d,hook] & \mathrm{Bl}^\mathbb{R}_0\mathbb{C}\\
    \mathrm{Bl}_D^\mathbb{R}\underline{X}(\mathbb{C}) &
    \end{tikzcd}
\end{center}
where, as in the introduction of this article, $\mathrm{Bl}^\mathbb{R}$ stands for the real-oriented blow-up. Deligne's picture is simply the topological pushout of this diagram.

\subsection{Virtual points}\label{pointed}\;\smallskip

Now we connect our approach of tangential basepoints via logarithmic geometry to the point of view developed in \cite{dupont2024logarithmicmorphismstangentialbasepoints}, based on the notion of \textit{virtual morphisms of log schemes} first defined in \cite{howell} and \cite{shuklin2024voevodskymotiveassociatedlog}. This class of maps between log schemes enlarge the class of classical morphisms of logarithmic schemes and introduces for instance, maps from a log scheme with trivial log structure which do not land in the locus of points with trivial log structure (see the proof of lemma \ref{ola}). This, as we will see is the case for virtual points.

\begin{defi}\label{virtualmo}{\cite[Definition 2.6]{dupont2024logarithmicmorphismstangentialbasepoints}}
    Let $X,Y \in \mathbf{lSch}_k$, a \textbf{virtual morphism} $X \rightarrow Y$ is a pair $(\underline{\phi},\phi^\flat)$ consisting of a morphism of schemes $\underline{\phi}: \underline{X} \rightarrow \underline{Y}$, and a morphism $\phi^\flat: \underline{\phi}^{-1}\mathcal{M}^\mathrm{gp}_Y \rightarrow \mathcal{M}^\mathrm{gp}_X$ of sheaves of groups, making the following diagram commute: \begin{center}
        \begin{tikzcd}
        \underline{\phi}^{-1}\mathcal{O}^{\times}_Y \arrow[d,hook] \arrow[r,"\underline{\phi}^\sharp"] & \mathcal{O}^{\times}_X \arrow[d,hook]\\
            \underline{\phi}^{-1}\mathcal{M}^\mathrm{gp}_Y \arrow[r,"\phi^\flat"] & \mathcal{M}^\mathrm{gp}_X
            \end{tikzcd}
    \end{center}
    We denote by $\+^\mathbf{v}\mathbf{lSch}_k$ the category of fs log schemes with virtual morphisms between them.
\end{defi}

A morphism of log schemes induces in a unique way a virtual morphism of log schemes so that $\mathbf{lSch}_k$ is a subcategory of $\+^\mathbf{v}\mathbf{lSch}_k$. In order to distinguish morphisms in $\mathbf{lSch}_k$ from those in $\+^\mathbf{v}\mathbf{lSch}_k$ we call the former \emph{ordinary morphisms of log schemes}.

\begin{defi}
We let $\mathbf{Div}_{k}^{\mathbf{v}*/}$ be the category of virtually pointed divisorial log schemes over $k$. It is the category of divisorial log schemes $(\underline{X},\underline{D})$ with a virtual morphism $\mathrm{Spec}(k) \rightarrow (\underline{X},\underline{D})$ and in which the morphisms are ordinary morphisms of log schemes commuting with the fixed virtual points. 
\end{defi}

We relate the category of pointed diagrams and the category of virtually pointed divisorial log schemes over $k$ using \cite[Theorem 3.5]{dupont2024logarithmicmorphismstangentialbasepoints}:

\begin{thm}\label{clement}\cite[Theorem 3.5]{dupont2024logarithmicmorphismstangentialbasepoints}
    For $X \in \mathbf{Div}_k$, there is a bijection between tangential basepoints of $X$ and virtual points of $X$.
\end{thm}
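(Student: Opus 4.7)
The plan is to unwind both notions, landing them in the same set, namely group homomorphisms $(\mathcal{M}_X|_x)^{\mathrm{gp}} \to k^\times$ restricting to the identity on $k^\times$, which has already been identified with tangential basepoints at $x$ (in the discussion preceding Definition \ref{normalspace}). The bridge between the two sides will be the pushout description of $\mathcal{M}_X|_x$ recalled after the definition of $x^{\mathrm{log}}$.

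\medskip

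First I would unpack the data of a virtual point $\phi = (\underline{\phi},\phi^\flat) : \mathrm{Spec}(k) \to X$. The morphism of schemes $\underline{\phi}$ is simply the datum of a rational point $x \in \underline{X}(k)$. Since $\mathrm{Spec}(k)$ is a point with structure sheaf $\mathcal{O}_{\mathrm{Spec}(k)}^\times = k^\times$, the sheaf-theoretic datum $\phi^\flat : \underline{\phi}^{-1}\mathcal{M}_X^{\mathrm{gp}} \to \mathcal{O}_{\mathrm{Spec}(k)}^\times$ collapses to a group homomorphism $\phi^\flat : \mathcal{M}_{X,x}^{\mathrm{gp}} \to k^\times$, and the commutativity condition in Definition \ref{virtualmo} says precisely that the composition $\mathcal{O}_{X,x}^\times \hookrightarrow \mathcal{M}_{X,x}^{\mathrm{gp}} \xrightarrow{\phi^\flat} k^\times$ agrees with the evaluation map $\underline{\phi}^\sharp : \mathcal{O}_{X,x}^\times \to k^\times$ induced by $x$.

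\medskip

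Next I would invoke the pushout square defining $\mathcal{M}_X|_x$, in which $\mathcal{O}_{X,x}^\times$ maps to $k^\times$ by evaluation at $x$ and to $\mathcal{M}_{X,x}$ by the inclusion. Since groupification is a left adjoint, applying $(-)^{\mathrm{gp}}$ preserves the pushout, so $(\mathcal{M}_X|_x)^{\mathrm{gp}}$ is the pushout in abelian groups of $k^\times \leftarrow \mathcal{O}_{X,x}^\times \hookrightarrow \mathcal{M}_{X,x}^{\mathrm{gp}}$. The universal property of this pushout then identifies the set of group homomorphisms $\psi : \mathcal{M}_{X,x}^{\mathrm{gp}} \to k^\times$ whose restriction to $\mathcal{O}_{X,x}^\times$ is the evaluation at $x$ with the set of group homomorphisms $\tilde\psi : (\mathcal{M}_X|_x)^{\mathrm{gp}} \to k^\times$ whose restriction to $k^\times$ is the identity.

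\medskip

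Finally, I would chain these two bijections: virtual points based at $x$ are the same as homomorphisms $\phi^\flat$ of the first kind, which by the pushout step are the same as homomorphisms $\tilde\psi$ of the second kind, which by the discussion before Definition \ref{normalspace} are exactly tangential basepoints at $x$. Taking the disjoint union over all $x \in \underline{X}(k)$ yields the claimed bijection and the verification that it is natural in $X$ (as a divisorial log scheme) is immediate from the functoriality of each construction. There is no real obstacle here; the only point that requires care is to keep track of the direction of the arrows in Definition \ref{virtualmo} (so that what a priori is a map out of $\mathcal{M}_{X,x}^{\mathrm{gp}}$ factors through the pushout in the correct direction) — everything else is a formal manipulation of universal properties.
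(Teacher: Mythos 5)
Your argument is correct and takes the same route the paper indicates: the paper does not prove this theorem itself (it is cited from \cite{dupont2024logarithmicmorphismstangentialbasepoints}), but the proof of Proposition~\ref{pointeddia} explicitly identifies the same intermediate object you use, namely group homomorphisms $(\mathcal{M}_X|_x)^{\mathrm{gp}}\to k^\times$ restricting to the identity on $k^\times$, and your pushout argument correctly establishes the virtual-point side of that identification. One caveat worth flagging: the tangential-basepoint side of the bijection, which you invoke from the discussion preceding Definition~\ref{normalspace}, is in the paper attributed to the very same external theorem; so your argument really supplies only the virtual-point leg, and a self-contained proof would still need to flesh out the other leg (the paper sketches it in local coordinates via $\mathcal{M}_X|_x\simeq k^\times x_1^{\mathbb{N}}\cdots x_j^{\mathbb{N}}$, so a homomorphism restricting to the identity on $k^\times$ is a tuple $(\lambda_1,\dots,\lambda_j)\in(k^\times)^j$, which one identifies with the tangential basepoint $\lambda_1\frac{\partial}{\partial x_1}\big|_x+\cdots+\lambda_j\frac{\partial}{\partial x_j}\big|_x$, plus a verification of coordinate-independence).
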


Hence, tangential basepoints can be described by both virtual morphisms on divisorial log schemes over $k$ and pointed diagrams. Nevertheless, pointed diagrams offer the advantage of being constructed using ordinary morphisms only, which is what we will need to work in a log motivic category. We have the functorial version of the theorem \ref{clement}:

\begin{prop}\label{pointeddia}
   The category $\mathbf{Div}^{\mathbf{v}*/}_k$ is equivalent to the category of pointed diagrams.
\end{prop}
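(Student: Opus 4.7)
The plan is to construct a functor $F : \mathbf{Div}^{\mathbf{v}*/}_k \to (\text{pointed diagrams})$ that is the identity on the underlying divisorial log schemes and on their ordinary morphisms, and to verify that it is an equivalence by exhibiting the inverse assignment on decorations. On objects, given a virtually pointed $(X, \mathbf{x})$, Theorem \ref{clement} translates the virtual point $\mathbf{x}$ into a tangential basepoint at some $x \in \underline{X}(k)$; Lemma \ref{ola} then rewrites this tangential basepoint as a log rational point $\mathrm{Spec}(k) \to \mathrm{N}_x^{\mathrm{log}}(\underline{X},\underline{D})$. Combining with the functorial diagram \eqref{diag} yields a pointed diagram. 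The inverse extracts the group morphism $(\mathcal{M}_X|_x)^{\mathrm{gp}} \to k^\times$ underlying both pictures and repackages it as a virtual point.

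For morphisms, I would show that the two compatibility constraints coincide. Unwinding Definition \ref{virtualmo}, a virtual point $\mathbf{x} : \mathrm{Spec}(k) \to X$ is the same datum as a group morphism $\varphi_\mathbf{x} : (\mathcal{M}_X|_x)^{\mathrm{gp}} \to k^\times$ restricting to the identity on $k^\times$; post-composing with an ordinary $f : X \to Y$ corresponds on the group side to precomposition with the stalk map $f^\flat_y : (\mathcal{M}_Y|_y)^{\mathrm{gp}} \to (\mathcal{M}_X|_x)^{\mathrm{gp}}$ induced by $f$. Hence $f$ intertwines $\mathbf{x}$ and $\mathbf{y}$ in $\mathbf{Div}^{\mathbf{v}*/}_k$ iff $\varphi_\mathbf{x} \circ f^\flat_y = \varphi_\mathbf{y}$. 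On the other side, Definition \ref{normalspace} constructs $\mathrm{N}_{(-)}^{\mathrm{log}}$ functorially from the same characteristic-monoid data, so the induced map $\mathrm{N}_x^{\mathrm{log}}(X) \to \mathrm{N}_y^{\mathrm{log}}(Y)$ is precisely $\mathrm{Spec}$ applied to $k[f^\flat_y] \otimes_{k[k^\times]} k$. Restricted to the open loci of trivial log structure identified in the proof of Lemma \ref{ola}, this is dual to $f^\flat_y$ on groups, and the commutativity of the triangle appearing in Definition \ref{functo} becomes the identical equation $\varphi_\mathbf{x} \circ f^\flat_y = \varphi_\mathbf{y}$.

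Essential surjectivity of $F$ is then immediate from Theorem \ref{clement}, and fullness and faithfulness are tautological since $F$ does not alter the underlying morphism in $\mathbf{Div}_k$ and the two decoration compatibilities reduce to the same equation. The main bookkeeping point, and the only genuine check beyond the cited results, is the functoriality of $\mathrm{N}^{\mathrm{log}}_{(-)}$ in pointed divisorial log schemes and the explicit identification of its restriction to the trivial-log-structure locus with the $k^\times$-equivariant dual of $f^\flat_y$; once this identification is in place, every step above is a direct unwinding of definitions.
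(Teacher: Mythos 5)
Your proof is correct and follows essentially the same route as the paper's: both reduce a pointed diagram and a virtual point to the common datum of a group homomorphism $(\mathcal{M}_X|_x)^{\mathrm{gp}} \to k^\times$ restricting to the identity on $k^\times$, and observe that compatibility of a morphism $f$ with either decoration amounts to the single equation $\varphi_{\mathbf{x}} \circ f^\flat_y = \varphi_{\mathbf{y}}$. The paper leaves the naturality/functoriality check implicit by citing the structure of the proof of Theorem \ref{clement}, whereas you spell it out; that is a welcome elaboration but not a different argument.
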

\begin{proof}
The proof of \cite[Theorem 3.5]{dupont2024logarithmicmorphismstangentialbasepoints} is based on the fact that the datum of a pointed diagram $(\underline{X},\underline{D},\mathbf{x})$, and of a virtual morphisms $\mathrm{Spec}(k) \rightarrow (\underline{X},\underline{D})$ landing on $x$ are both equivalent in a natural way to morphisms of groups $\mathcal{M}_{X}|_x^\mathrm{gp} \rightarrow k^\times$ that restrict to the identity on $k^\times.$
\end{proof}

Based on the previous proposition, we will simply denote by the thick letter $\mathbf{x}$ the equivalent datum of:\begin{itemize}
    \item a tangential basepoint at $x\in \underline{X}(k),$
    \item a pointed diagram $(\underline{X},\underline{D},\mathbf{x}),$
    \item a virtual morphism $\mathrm{Spec}(k) \rightarrow X.$
    \end{itemize}

There is a last (partial) representation of tangential basepoints when $k \subset \mathbb{C}$. If $\mathbf{x}: * \rightarrow X$ is a virtual point, then, since the Kato--Nakayama space construction is functorial with respect to virtual morphisms \cite[Proposition 4.2]{dupont2024logarithmicmorphismstangentialbasepoints}, this datum determines a point on $X^\mathrm{KN}$.
More precisely since $X^\mathrm{KN}$ is the real-oriented blow-up of $\underline{X}$ along $\underline{D}$, this determines a point of $S^j$ where $j$ is the number of irreducible components of $\underline{D}$ at $x$. Each coordinate of $S^j$ determines a direction normal to one of the irreducible component of $\underline{D}$.

\begin{exmp}\label{yes}
If $X = \mathbb{A}_\mathbb{N}$ with coordinate $t$, if the image of a virtual point $\mathbf{x}$ is the origin, then it is determined by a constant $\lambda \in k^\times.$ The associated tangential basepoint is $(x,\lambda\frac{\partial}{\partial t}\big|_x)$, and its associated point on $\mathbb{A}_\mathbb{N}^\mathrm{KN}$ is given by the coordinate $\frac{\lambda}{|\lambda|}$ on the infinitesimal unit circle at $0$
\end{exmp}

In the previous example, we note that two virtual morphisms determined by two constants $\lambda, a\lambda$ where $a$ is a strictly positive constant have the same associated point on the Kato--Nakayama space. Hence a point on the Kato--Nakayama is not as such equivalent to the datum of a tangential basepoint. This is actually related to a classical fact in the theory of the fundamental group. The Betti side of the Hodge structure on the prounipotent completion of the fundamental group only sees the direction of the tangential basepoints, since a change of parametrisation of any path based at a tangential basepoint can change the tangent vector by multiplication by an arbitrary strictly positive constant. In section \ref{deRhamreal2}, we will recall from \cite[Section 8.4]{dupont2024regularizedintegralsmanifoldslog} how the structure of \textbf{manifold with log corners} of the Kato--Nakayama space allows to complete this approach so that the missing datum is encoded in the manifold with log corners version of virtual morphisms.

\newpage
\section{Log motives and motivic tangential basepoints}\label{motivic cat}
In this section, after a brief reminder of the constructions of different $\infty$-categories of log motives as stated in \cite{binda2021triangulatedcategorieslogarithmicmotives}, and the properties of their $\mathbb{A}^1$-invariant versions, introduced in \cite{park2023mathbba1homotopytheorylogschemes}, we explain how \textit{log-cdh-descent} allows us to define \textit{motivic tangential basepoints}. These are the augmentations of the cohomological motive of a divisorial log scheme associated with a tangential basepoint. This will lie at the heart of our definition of the motivic pointed path space in the next section. In the first two subsections we simply recall what is defined in \cite{park2023mathbba1homotopytheorylogschemes,park2024logmotivicexceptionaldirect,park2024motivicsixfunctorformalismlog}. In the last subsection \ref{motivic tangential basepoints}, we define the motivic tangential basepoints and we spend some time to ensure that our construction is functorial as expected. In this section, $k$ is a fixed field.

\subsection{Log motivic categories}\label{log motives}

\begin{defi}\cite[Definition 2.2.4]{park2023mathbba1homotopytheorylogschemes}
Let $\mathbf{C}$ be a (1-)category with $\mathcal{P}$ a class of morphisms which contains all isomorphisms and which is stable by pullback. A $\mathcal{P}$-premotivic $\infty$-category over $\mathbf{C}$ is a functor. $$\mathcal{T}: \mathrm{N}(\mathbf{C})^\mathrm{op} \rightarrow \CAlg(\PrL)$$ such that 
\begin{itemize}
    \item For all $f\in \mathcal{P}$, $f^*:= \mathcal{T}(f)$ has a left adjoint $f_\sharp$,    \item for every cartesian square \begin{center}
        \begin{tikzcd}
            X' \arrow[r, "g'"] \arrow[d,"f'"] & X \arrow[d,"f"]\\
            S' \arrow[r,"g"] & S
         \end{tikzcd}
        
    \end{center}
    the morphism $$f'_\sharp g'^* \rightarrow g^*f_\sharp$$ is an equivalence,
    \item for every morphism $f \in \mathcal{P}$, the $\mathcal{P}$-projection $$f_\sharp((-)\otimes f^*(-)) \rightarrow f_\sharp(-) \otimes (-)$$ is an equivalence.
\end{itemize}
\end{defi}

\begin{defi}
    For $\mathcal{T}$ any $\mathcal{P}$-premotivic $\infty$-category over $\mathbf{C}$, and for $S\in \mathbf{C}$ we let $$M_S:\mathcal{P}/S \rightarrow \mathcal{T}(S)$$ be the \textbf{homological motive} functor
    where we denote by $\mathcal{P}/S$ the full subcategory of $\mathbf{C}/S$ with objects the morphisms $X \rightarrow S \in \mathcal{P}$. We also denote by
$$ h_S: (\mathbf{C}/S)^\mathrm{op} \rightarrow \mathcal{T}(S)$$ the \textbf{cohomological motive} functor. They are defined for $f: X\rightarrow S \in \mathcal{P}$  by $$M_S(X):= f_\sharp f^* \mathbf{1}_S$$
and for $f \in \mathbf{C}/S$, if we let $f_*$ denote a right adjoint of $f^*$ (which exists since $f^*$ is colimit preserving by hypothesis) by $$ h_S(X):= f_*f^*\mathbf{1}_S,$$ where $\mathbf{1}_S \in \mathcal{T}(S)$ is the unit for the monoidal structure.
\end{defi}

These two constructions are dual to each other in the sense that \begin{center}
    $h_S(X) \simeq \underline{\operatorname{Map}}(M_S(X),\mathbf{1}_S).$
\end{center}

\begin{rem}
 When $S = \mathrm{Spec}(k)$, for $k$ a field, and the context is clear we will drop the $S$ in the notations $h_S, M_S$.
\end{rem}

Let us give the definitions leading to the premotivic $\infty$-categories with which we will work.

\begin{defi} \begin{itemize}
\item Let $\textit{sét}$ be the \textbf{strict étale topology} on $\mathbf{lSm}_S$, that is the Grothendieck topology generated by strict étale morphisms.
    \item Let $\textit{sNis}$ be the \textbf{strict Nisnevich topology} on $\mathbf{lSm}_S$, that is the Grothendieck topology generated by covers of the form $(p: Y\rightarrow X, j: X' \rightarrow X)$ where $p$ and $j$ appear in a cartesian square 
    \begin{center}
        \begin{tikzcd}
            Y' \arrow[r, hook] \arrow[dr, phantom, "\usebox\pullback" , very near start, color=black] \arrow[d] & Y \arrow[d,"p"]\\
            X' \arrow[r,hook,"j"] & X
        \end{tikzcd}
    \end{center}where $j$ is an open immersion and $p$ is a strict étale morphism inducing an isomorphism $$p^{-1}(X\setminus X') \simeq X\setminus X'.$$
    \item Let $\mathrm{dNis}$ (resp. $\text{dét}$) be the \textbf{dividing Nisnevich topology} (resp. the \textbf{dividing étale topology}) on $\mathbf{lSm}_S$, that is the Grothendieck topology generated by $\mathrm{sNis}$ (resp. $\text{sét}$) and dividing covers, that is log étale, proper monomorphisms $p: Y \rightarrow X$.
    \item Let $\mathrm{ver}_S$ be the Grothendieck topology on $\mathbf{lSm}_S$ generated by open immersions $U\rightarrow V$ such that $U\setminus \partial_S U \rightarrow V\setminus \partial_S V$ is an isomorphism, where $\partial_S$ defines the vertical boundary over $S$, i.e. the points with an open neighborhood on which the structure morphism to $S$ admits a Zariski local vertical chart \cite[Definition I.4.3.1]{Ogus_2018}.
\end{itemize}
\end{defi}
The strict étale and Nisnevich topology are natural log versions of the classical étale and Nisnevich topology on $\mathbf{Sm}_S$ and the dividing topology encompasses log blow-ups. When $S$ has trivial log structure, the $\mathrm{ver}_S$ topology is generated by open immersions $U\setminus \partial U \rightarrow U$ so that sheaves in this topology are canonically equivalent to presheaves on $\mathbf{Sm}_S$.

\begin{defi}\cite[Remark 4.12]{robalo2013noncommutativemotivesiuniversal}
    Let $\mathcal{C}$ be a presentable symmetric monoidal $\infty$-category, and $X$ an object of $\mathcal{C}$. The stabilisation $\mathrm{Stab}_X(\mathcal{C})$ of $\mathcal{C}$ by $X$ is defined as $$\mathrm{lim}(\cdots \mathcal{C} \xrightarrow{\Omega_X} \mathcal{C} \xrightarrow{\Omega_X} \mathcal{C}) \simeq \colim(\mathcal{C} \xrightarrow{\Sigma_X} \mathcal{C} \xrightarrow{\Sigma_X} \mathcal{C} \cdots),$$ where $\Omega_X:= \underline{\operatorname{Map}}(X,-)$ is the internal mapping space, where $\Sigma_X = - \otimes X$ and where the limit is taken in $\Catinf$ and the colimit in $\PrL$. 
\end{defi}

\medskip

The recent work of Park culminated in \cite{park2024motivicsixfunctorformalismlog}, where he managed to extend the classical six-functor formalism derived from the $\mathbf{Sm}$-premotivic $\infty$-category $$\mathbf{Sch}^\mathrm{op} \rightarrow \mathcal{DA}(-,\Lambda):= \mathrm{Stab}_{\mathbb{G}_m/1}(\mathrm{L}_{\mathbb{A}^1}\mathcal{S}\mathrm{h}_\mathrm{Nis}(\mathbf{Sm}_-,\mathcal{M}\mathrm{od}_\Lambda)),$$ where $\mathbb{G}_m/1 := \mathrm{cofib}(M_S(*) \xrightarrow{1} M_S(\mathbb{G}_m))$ and $\mathrm{L}_{\mathbb{A}^1}$ denotes the $\mathbb{A}^1$ localization, into a full six-functor formalism derived from a $\mathbf{lSm}$-premotivic $\infty$-category over $\mathbf{lSch}$. This structure will be the main framework for our constructions.

\begin{defi}\cite[Definition 2.5.5]{park2023mathbba1homotopytheorylogschemes}
    The $\mathbf{lSm}$-premotivic $\infty$-category of motivic sheaves on $\mathbf{lSch}$ with coefficients in $\Lambda$ is defined by (the $\otimes$ highlights the symmetric monoidal structure) $$\mathcal{DA}^\mathrm{log}(-,\Lambda)^\otimes:= \mathrm{Stab}_{\mathbb{G}_m^\mathrm{log}/1}(\mathrm{L}_{\mathbb{A}^1}\mathrm{L}_\mathrm{ver}\mathcal{S}\mathrm{h}_\mathrm{dNis}(\mathbf{lSm}_-,\mathcal{M}\mathrm{od}_\Lambda))^\otimes \in \CAlg(\PrL)$$ and the $\mathbf{lSm}$-premotivic stable homotopy $\infty$-category of motivic sheaves by $$\mathcal{SH}^\mathrm{log}(-)^\otimes:= \mathrm{Stab}_{\mathbb{G}_m^\mathrm{log}/1}(\mathrm{L}_{\mathbb{A}^1}\mathrm{L}_{\mathrm{ver}}\Sh_{\mathrm{dNis}}(\mathbf{lSm}_{-},\Spt))^\wedge \in \CAlg(\PrL)$$ Where $\mathbb{G}_m^\mathrm{log}/1:= \cofib(M_S(*) \xrightarrow{1} M_S(\mathbb{G}_m^\mathrm{log})) \in  \mathrm{L}_{\mathbb{A}^1}\mathrm{L}_{\mathrm{ver}}\Sh_{\mathrm{dNis}}(\mathbf{lSm}_-,\Mod_\Lambda))$, with $\mathbb{G}^\mathrm{log}_m:= (\mathbb{P}^1,\{0,\infty\})$.
\end{defi}

\medskip

From \cite[Construction 2.5.4]{park2023mathbba1homotopytheorylogschemes}, when $S \in \mathbf{Sch}$, there is a natural adjunction \begin{center}
\begin{tikzcd}
\lambda: \mathbf{Sm}_S \arrow[r,shift left=.5ex]
&
\mathbf{lSm}_S \arrow[l,shift left=.5ex]: \omega
\end{tikzcd}
\end{center} defined by $\lambda(X) = (X,\mathcal{O}^\times_X)$ for $X\in \mathbf{Sm}_S$ and $\omega(X) = X \setminus \partial X$ for $X\in \mathbf{lSm}_S$. It is easy to see that since the induced functor $$\lambda^*: \mathcal{P}\mathrm{sh}(\mathbf{lSm_S},\Mod_\Lambda) \rightarrow \mathcal{P}\mathrm{sh}(\mathbf{Sm}_S,Mod_\lambda)$$
satisfies $\lambda^*y_X \simeq y_{\omega(X)}$ where $y$ denotes the Yoneda functor, so that it induces to a functor 
   $$ \begin{array}{ccccc}
       \lambda^* & :  & \DAlog(S,\Lambda) & \rightarrow & \mathcal{DA}(S,\Lambda) \\
       &  & M_S(X) & \mapsto & M_S(X\setminus \partial X)
    \end{array}$$

\begin{prop}\label{log motives motives}\cite[Construction 2.5.4]{park2023mathbba1homotopytheorylogschemes}
The functor $\lambda^*$ is an equivalence $$\lambda^*: \DAlog(S,\Lambda) \xrightarrow{\sim} \mathcal{DA}(S,\Lambda).$$
    
\end{prop}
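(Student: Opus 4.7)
The plan is to verify that $\lambda^*$ passes to the three successive localizations that produce $\DAlog(S,\Lambda)$ from presheaves, and that the resulting localized categories on $\mathbf{Sm}_S$ coincide with those defining $\mathcal{DA}(S,\Lambda)$. I will exhibit at each step an equivalence descending from the presheaf level, using the adjunction $\lambda \dashv \omega$ recalled in the statement.

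At the level of presheaves of $\Lambda$-modules, the adjunction $\lambda \dashv \omega$ produces an adjoint pair $\lambda^* \dashv \omega^*$ with $\lambda^* y_X \simeq y_{\omega X}$ and $\omega^* y_Y \simeq y_{\lambda Y}$. Because $\omega \circ \lambda = \mathrm{id}_{\mathbf{Sm}_S}$, the counit $\lambda^* \omega^* \to \mathrm{id}$ is an equivalence, so $\omega^*$ is fully faithful. To promote this to an equivalence after ver-localization, I will observe that for every $X \in \mathbf{lSm}_S$ the counit $\lambda \omega X = (X \setminus \partial_S X, \mathcal{O}^\times) \hookrightarrow X$ is an open immersion satisfying $\omega(\lambda \omega X) \xrightarrow{\sim} \omega X$, hence is a single-morphism ver-cover. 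Ver-sheafification therefore forces $y_{\lambda \omega X} \xrightarrow{\sim} y_X$, which means the unit $y_X \to \omega^* \lambda^* y_X = y_{\lambda \omega X}$ becomes an equivalence on every representable; by cocontinuity of both functors this extends to all of $\mathrm{L}_\mathrm{ver}\Sh_\mathrm{dNis}(\mathbf{lSm}_S, \Mod_\Lambda)$.

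It remains to match topologies and then the higher localizations. Dividing covers are log \'etale proper monomorphisms which become isomorphisms after applying $\omega$, hence they are already identified by the ver-localization; strict Nisnevich covers correspond bijectively under $\omega$ to ordinary Nisnevich covers on $\mathbf{Sm}_S$. This yields an equivalence $\mathrm{L}_\mathrm{ver}\Sh_\mathrm{dNis}(\mathbf{lSm}_S, \Mod_\Lambda) \simeq \Sh_\mathrm{Nis}(\mathbf{Sm}_S, \Mod_\Lambda)$. For the remaining steps, $\omega(\mathbb{A}^1_S) = \mathbb{A}^1_S$ so $\mathbb{A}^1$-invariance passes through $\lambda^*$, and $\omega(\mathbb{G}_m^\mathrm{log}) = \omega((\mathbb{P}^1, \{0, \infty\})) = \mathbb{G}_m$, so the stabilizing object $\mathbb{G}_m^\mathrm{log}/1$ maps to $\mathbb{G}_m/1$. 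Since $\lambda^*$ is symmetric monoidal and colimit-preserving, the stabilizations commute, and the four compatibilities assemble into the desired equivalence $\lambda^* : \DAlog(S, \Lambda) \xrightarrow{\sim} \mathcal{DA}(S, \Lambda)$.

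The main obstacle will be the first step: to turn the existence of the ver-cover $\lambda \omega X \to X$ into an equivalence $y_{\lambda \omega X} \simeq y_X$ of ver-sheaves, one must check that the descent data associated with this single-morphism cover degenerates, which ultimately rests on the fact that the fibre product of a strict open immersion with itself inside $\mathbf{lSm}_S$ is itself. Once this is in place, the remaining compatibilities amount to straightforward checks on generators, using that $\omega$ preserves the fibre products appearing in strict Nisnevich squares and commutes with products of smooth $S$-schemes.
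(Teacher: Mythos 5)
Your strategy---exploit the adjunction $\lambda\dashv\omega$ at the presheaf level to get $\lambda^*\dashv\omega^*$ with fully faithful $\omega^*$, show the unit becomes an equivalence after ver-localization, then match topologies and stabilizing objects---is precisely what the paper's one-line remark sketches. However, your second paragraph contains an identification that is simply false: $\omega^* y_Y \simeq y_{\lambda Y}$, and consequently $\omega^*\lambda^* y_X = y_{\lambda\omega X}$. The valid identity $\lambda^* y_X \simeq y_{\omega X}$ is the general fact $f^* y_c \simeq y_{g(c)}$ whenever $f\dashv g$; applying the analogous formula to $\omega^*$ would require $\omega$ to admit a \emph{right} adjoint, which it does not. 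Concretely, $(\omega^*\lambda^* y_X)(Y) = \operatorname{Hom}_{\mathbf{Sm}_S}(\omega Y, \omega X)$, whereas $y_{\lambda\omega X}(Y) = \operatorname{Hom}_{\mathbf{lSm}_S}(Y, \lambda\omega X) = \operatorname{Hom}_{\mathbf{Sch}}(\underline{Y}, \omega X)$: the first consists of maps defined only on the open locus $\omega Y$, the second of maps defined on all of $\underline{Y}$, and these differ whenever a map fails to extend across $\partial Y$. The mistake is also visible in the direction of the arrows: the counit $\lambda\omega X \hookrightarrow X$ induces $y_{\lambda\omega X} \to y_X$, not the reverse, so the unit $y_X \to \omega^*\lambda^* y_X$ cannot be a map $y_X \to y_{\lambda\omega X}$; your proposed formula is internally inconsistent.

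The conclusion survives because the correct ingredient is already in your last paragraph. One has $(\omega^*\lambda^*F)(Y) = F(\lambda\omega Y)$, and the unit at $F$ evaluated at $Y$ is restriction along $\lambda\omega Y \hookrightarrow Y$; for a ver-sheaf $F$ this is an isomorphism precisely because $\lambda\omega Y \hookrightarrow Y$ is a monic ver-cover so its \v{C}ech nerve degenerates---your closing observation. This immediately shows $\omega^*\lambda^*F \simeq F$ for ver-sheaves, so $\omega^*$ is an equivalence onto ver-sheaves with inverse $\lambda^*$ (equivalently, one checks that $\omega^*\lambda^* y_X$, which is already a ver-sheaf, has the universal property of the ver-sheafification of $y_X$, using full faithfulness of $\omega^*$). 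With this repair, your remaining steps---dividing covers trivialize under $\omega$, strict Nisnevich covers match Nisnevich covers, $\omega(\mathbb{A}^1)=\mathbb{A}^1$, and $\mathbb{G}_m^\mathrm{log}/1 \mapsto \mathbb{G}_m/1$---go through as you describe and agree with the paper's sketch.
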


\begin{rem}
    This is straightforward once we not that $\omega$ trivializes dividing and ver covers, and sends $\mathbb{G}_m^\mathrm{log}$ on $\mathbb{G}_m$.
\end{rem}

We keep separate notations $\DAlog$ and $\mathcal{DA}$ even on log schemes with trivial log structures to distinguish the realization functors on each of these categories, see section 6. We will also refer to the étale version of the previous construction in the case of rational coefficients: 

\begin{defi}
    For $S\in \mathbf{lSch}$, the $\infty$-category of étale motivic sheaves on $\mathbf{lSm}_S$ with $\mathbb{Q}$ coefficients is defined by $$\mathcal{DA}_\text{ét}^\mathrm{log}(S,\mathbb{Q})^\otimes:= \mathrm{Stab}_{\mathbb{G}_m^\mathrm{log}/1}(\mathrm{L}_{\mathbb{A}^1}\mathrm{L}_{\mathrm{ver}}\Sh_\text{dét}(\mathbf{lSm}_-,\Mod_\mathbb{Q}))^\otimes \in \CAlg(\PrL).$$
\end{defi}
    When $S\in \mathbf{Sch}$, the same proof as the proposition \ref{log motives motives} gives the same result in this context, so that there is a commutative diagram in $\CAlg(\PrL)$ \begin{center}
    \begin{tikzcd}
        \DAlog(S,\mathbb{Q}) \arrow[d,"\sim"] \arrow[r,"\mathrm{L}_\text{ét}"] &\mathcal{DA}_\text{ét}^\mathrm{log}(S,\mathbb{Q})\arrow[d,"\sim"]\\
        \mathcal{DA}(S,\mathbb{Q}) \arrow[r,"\mathrm{L}_\text{ét}"] &  \mathcal{DA}_\text{ét}(S,\mathbb{Q})
    \end{tikzcd}
        
    \end{center}
    with the functor induced by the étale sheafification on the horizontal arrows.

\begin{rem}
Since the category of étale log motives is not defined in Park's work and for the sake of generality, we will work mainly in $\DAlog(S,\Lambda)$, and go to $\DAlog_\text{ét}(S,\mathbb{Q})$ only to define the motivic fundamental groupoid in the mixed Tate case (see subsection \ref{fundamentalgroup}). One should keep in mind that the definitions we make should work exactly the same in $\DAlog_\text{ét}(S,\mathbb{Q})$, so that the rest of the ideas developed in this article work when $\DAlog$ is replaced with $\DAlog_\text{ét}$. Note that we took $\mathbb{Q}$ coefficients to avoid introducing hypersheaves, hence to simplify the exposition. With more general coefficients, the right definition would involve étale hypersheaves rather than étale sheaves, and these definitions do not coincide in general. They do however with $\mathbb{Q}$ coefficients.
\end{rem}

\subsection{log-cdh-descent for log motives}\label{logcdh}\;\smallskip

For $S \in \mathbf{lSch}$, in \cite{park2024logmotivicexceptionaldirect}, Park shows that $\DAlog(S,\Lambda)$ satisfies a log version of cdh-descent:
\begin{defi}
    A commutative square of log-schemes over $S$ \begin{center}
        \begin{tikzcd}
    Z' \arrow[r,"i'"] \arrow[d,"p'"] & X' \arrow[d,"p"]\\
    Z \arrow[r,"i"] & X
        \end{tikzcd}
    \end{center}
    is a log-cdh distinguished square if it is a cartesian square, $i$ is a strict closed immersion, $p$ is proper, and the induced morphism $p^{-1}(X\setminus i(Z)) \rightarrow X \setminus i(Z)$ is an isomorphism.
\end{defi}

\begin{prop}\label{cdh}\cite[Corollary 3.6.8]{park2024logmotivicexceptionaldirect} 
For any log-cdh distinguished square as in definition 7.1, the commutative square of functors \begin{center}
    \begin{tikzcd}
        id \arrow[r] \arrow[d] & p_*p^* \arrow[d]\\
        i_*i^* \arrow[r] & q_*q^*
    \end{tikzcd}
\end{center}
with $q:= i \circ p'$, is cartesian.
\end{prop}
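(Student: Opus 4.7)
The plan is to reduce cartesianness of the square to the localization triangle attached to the open/closed decomposition complementary to $i$, combined with proper base change, and then to invoke the defining hypothesis that $p$ restricts to an isomorphism over the open complement of $Z$.

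Write $j : U \hookrightarrow X$ for the open immersion complementary to $i$, and form the cartesian pullbacks $j' : U' \hookrightarrow X'$ and $\tilde{p} : U' \to U$; by assumption $\tilde{p}$ is an isomorphism. In the six-functor formalism for $\DAlog$ developed in \cite{park2024motivicsixfunctorformalismlog}, every strict closed immersion fits into a recollement, and in particular there is a natural cofiber sequence of endofunctors of $\DAlog(X, \Lambda)$
$$j_! j^* \longrightarrow \mathrm{id} \longrightarrow i_* i^*.$$
Applied to a test object $F$, this identifies $\mathrm{fib}(F \to i_* i^* F) \simeq j_! j^* F$; applied instead to $p_* p^* F$ (still over $X$), it identifies $\mathrm{fib}(p_* p^* F \to i_* i^* p_* p^* F) \simeq j_! j^* p_* p^* F$. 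By proper base change along the given cartesian square (applicable because $p$ is proper), $i^* p_* \simeq p'_* i'^*$, hence $i_* i^* p_* p^* F \simeq i_* p'_* p'^* i^* F \simeq q_* q^* F$, which identifies the bottom-right corner of the square with $q_* q^* F$ and shows that the right-hand vertical arrow agrees with the unit map.

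It therefore suffices to show that the natural map $j_! j^* F \to j_! j^* p_* p^* F$ is an equivalence. Since $j_!$ is fully faithful (the counit $j^* j_! \to \mathrm{id}$ is an equivalence, a standard feature of the open part of the recollement), it is enough to check that $j^* F \to j^* p_* p^* F$ is an equivalence. A second application of proper base change, now for the cartesian square pulled back along $j$, gives $j^* p_* \simeq \tilde{p}_* j'^*$; together with $j'^* p^* \simeq \tilde{p}^* j^*$, this yields $j^* p_* p^* F \simeq \tilde{p}_* \tilde{p}^* j^* F$. Since $\tilde{p}$ is an isomorphism, the unit $j^* F \to \tilde{p}_* \tilde{p}^* j^* F$ is an equivalence, concluding the argument.

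The main obstacle is ensuring that the required inputs of the six-functor formalism (the strict-closed recollement and proper base change for proper morphisms) genuinely hold in the log motivic setting $\DAlog$, and this is precisely what Park establishes in \cite{park2024logmotivicexceptionaldirect, park2024motivicsixfunctorformalismlog}; once these are granted, the present proposition is a direct packaging of those facts. A minor delicacy is to make sure that the recollement cofiber sequence is actually available with $j_!$ rather than just $j_\sharp$, but for an open immersion in the log motivic six-functor formalism these coincide, so no extra work is required.
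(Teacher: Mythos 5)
The paper does not give its own proof of this proposition; it cites it directly from Park (Corollary 3.6.8 of \cite{park2024logmotivicexceptionaldirect}), so there is no in-paper argument to compare against. Your proof is correct and is the standard derivation of abstract-blow-up descent from a six-functor formalism: reduce via the localization triangle $j_!j^* \to \mathrm{id} \to i_*i^*$ to showing that $j_!j^*$ applied to the unit $F \to p_*p^*F$ is an equivalence, identify the bottom-right corner with $q_*q^*$ by proper base change, and conclude from the second proper base change together with the hypothesis that $\tilde{p}$ is an isomorphism. You correctly flag that the only non-formal inputs are localization for strict closed immersions and proper base change for proper morphisms in $\DAlog$, both established in Park's papers, and that $j_! \simeq j_\sharp$ for a (strict) open immersion. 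One small terminological slip: the equivalence $j^*j_! \simeq \mathrm{id}$ expressing full faithfulness of $j_!$ is the \emph{unit} of the adjunction $j_! \dashv j^*$, not the counit (the counit is $j_!j^* \to \mathrm{id}$, which is the left-hand term of the localization triangle). This does not affect the validity of the argument.
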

From this proposition, if we denote by $f: X \rightarrow S$, $g: X' \rightarrow S$, $h: Z' \rightarrow S$ the structure morphisms, knowing that the pushforward and the pullback along a log smooth morphism are right adjoints, and hence preserves cartesian squares, the following square is cartesian: \begin{center}
    \begin{tikzcd}
        f_*f^* \arrow[r] \arrow[d] &(f\circ i)_*(f\circ i)^* \arrow[d]\\
         g_*g^*  \arrow[r] & h_*h^*
    \end{tikzcd}
\end{center}
Evaluating at $\mathbf{1}_S$, we thus have that the square \begin{center}
    \begin{tikzcd}
        h_S(X) \arrow[r] \arrow[d] &   h_S(Z) \arrow[d]\\
        h_S(X')\arrow[r] & h_S(Z')
    \end{tikzcd}
\end{center}
is cartesian. This implies the following in the case where $S = \mathrm{Spec}(k)$:

\begin{cor}\label{tool}
     The inclusion of the boundary of $\mathbb{A}_\mathbb{N}$ induces an equivalence in $\mathcal{DA}^\mathrm{log}(k,\Lambda)$ $$h(\mathbb{A}_\mathbb{N})\xrightarrow{\sim} h(*_\mathbb{N}).$$
\end{cor}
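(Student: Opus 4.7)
The plan is to deduce the corollary as a direct consequence of log-cdh-descent (Proposition \ref{cdh}) combined with $\mathbb{A}^1$-invariance. First I would write down the commutative square
\begin{center}
    \begin{tikzcd}
    *_\mathbb{N} \arrow[r, hook] \arrow[d] & \mathbb{A}_\mathbb{N} \arrow[d, "p"]\\
    \mathrm{Spec}(k) \arrow[r, hook, "0"] & \mathbb{A}^1
    \end{tikzcd}
\end{center}
in $\mathbf{lSch}_k$, where $\mathbb{A}^1$ is endowed with the trivial log structure, $p$ is the identity on underlying schemes (induced by the natural inclusion $\mathcal{O}^\times_{\mathbb{A}^1} \hookrightarrow \mathcal{M}_{\mathbb{A}_\mathbb{N}}$), and $0$ is the origin. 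The fibered product $\mathbb{A}_\mathbb{N} \times_{\mathbb{A}^1} \mathrm{Spec}(k)$ is precisely $*_\mathbb{N}$: the underlying scheme is $\mathrm{Spec}(k)$ and the pulled-back log structure is $\mathcal{M}_{\mathbb{A}_\mathbb{N}}|_0$, which is by definition the log structure of $*_\mathbb{N}$.

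Next I would verify the log-cdh distinguished conditions: the bottom arrow is a strict closed immersion (trivial log structure on both sides), $p$ is proper since its underlying morphism is the identity of $\mathbb{A}^1$, and above $\mathbb{G}_m = \mathbb{A}^1 \setminus \{0\}$ the log structure of $\mathbb{A}_\mathbb{N}$ is trivial, so $p^{-1}(\mathbb{G}_m) \to \mathbb{G}_m$ is an isomorphism of log schemes.

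Applying Proposition \ref{cdh} (in the form derived immediately after it) to this log-cdh distinguished square yields the cartesian square of cohomological motives
\begin{center}
    \begin{tikzcd}
        h(\mathbb{A}^1) \arrow[r, "h(0)"] \arrow[d] & h(*) \arrow[d]\\
        h(\mathbb{A}_\mathbb{N}) \arrow[r] & h(*_\mathbb{N})
    \end{tikzcd}
\end{center}
in $\DAlog(k,\Lambda)$. By $\mathbb{A}^1$-invariance, which is built into $\DAlog(k,\Lambda)$ through the Bousfield localization $\mathrm{L}_{\mathbb{A}^1}$, the top horizontal arrow $h(0)$ is an equivalence (it is a one-sided inverse of the equivalence $h(*) \to h(\mathbb{A}^1)$ coming from $\mathbb{A}^1 \to *$). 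Since a cartesian square in which one edge is an equivalence forces the parallel edge to be an equivalence as well, the bottom arrow $h(\mathbb{A}_\mathbb{N}) \to h(*_\mathbb{N})$ is also an equivalence, which is exactly the claim.

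The argument is short and essentially mechanical given the tools developed in the excerpt; the only delicate point is the verification that the square is log-cdh distinguished, in particular that $p: \mathbb{A}_\mathbb{N} \to \mathbb{A}^1$ (whose underlying morphism is an isomorphism, but which is non-trivial on log structures) satisfies the hypotheses of Proposition \ref{cdh} and that the fibered product of log structures recovers $*_\mathbb{N}$.
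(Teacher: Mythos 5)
Your proposal takes the same route as the paper: the same log-cdh distinguished square with $p\colon \mathbb{A}_\mathbb{N}\to\mathbb{A}^1$ forgetting the log structure and $i = 0\colon\mathrm{Spec}(k)\hookrightarrow\mathbb{A}^1$, the same invocation of Proposition \ref{cdh} to obtain the cartesian square on cohomological motives, and the same use of $\mathbb{A}^1$-invariance to identify the top arrow with an equivalence. The extra verifications (that the fibered product recovers $*_\mathbb{N}$ and that the square is indeed log-cdh distinguished) are correct and welcome.

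There is, however, a flaw in the justification of the final step. You assert that in a cartesian square in which one edge is an equivalence, the parallel edge is also an equivalence. This is false in general: the direction that works for pullbacks is that the \emph{right} vertical (resp.\ \emph{bottom} horizontal) being an equivalence forces the \emph{left} vertical (resp.\ \emph{top} horizontal) to be one, since pulling back an equivalence yields an equivalence. But in your square the known equivalence is the \emph{top} horizontal $h(\mathbb{A}^1)\xrightarrow{\sim} h(*)$ and the desired one is the \emph{bottom} horizontal $h(\mathbb{A}_\mathbb{N})\to h(*_\mathbb{N})$, and this implication fails for pullbacks in general (one can build counterexamples already in $\mathbf{Set}$ or in any abelian category with $D=C\oplus K'$). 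What rescues the argument is that $\mathcal{DA}^{\log}(k,\Lambda)$ is a \emph{stable} $\infty$-category, so the cartesian square is automatically cocartesian; for a pushout $h(*_\mathbb{N})\simeq h(\mathbb{A}_\mathbb{N})\sqcup_{h(\mathbb{A}^1)}h(*)$, the equivalence $h(\mathbb{A}^1)\xrightarrow{\sim}h(*)$ does indeed force $h(\mathbb{A}_\mathbb{N})\xrightarrow{\sim}h(*_\mathbb{N})$. The paper makes this explicit by parenthetically noting the square is ``(and cocartesian by stability of $\mathcal{DA}^{\log}(k,\Lambda)$)''. You should add this word: without it the last step is unjustified as written.
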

\begin{proof}
We apply the previous discussion to the following log-cdh square: \begin{center}
        \begin{tikzcd}
    *_\mathbb{N} \arrow[r,"i'"] \arrow[d,"p'"] & \mathbb{A}_\mathbb{N} \arrow[d,"p"]\\
    * \arrow[r,"i"] & \mathbb{A}^1
        \end{tikzcd}
    \end{center}
    with $i$ the inclusion of the zero point, and $p$ the morphism forgetting the log structure.
Therefore we see that the square  \begin{center}
    \begin{tikzcd}
        h(\mathbb{A}^1) \arrow[r] \arrow[d] &   h(*) \arrow[d]\\
        h(\mathbb{A}_\mathbb{N})\arrow[r] & h(*_\mathbb{N})
    \end{tikzcd}
\end{center}
is cartesian (and cocartesian by stability of $\mathcal{DA}^\mathrm{log}(k,\Lambda)$). Since the top arrow is an equivalence, the lemma is proved.
\end{proof}

It is not clear in general that the functor $h$ satisfies Künneth's isomorphism. It obviously does on log smooth log schemes over $k$ since $h$ is the dual of $M$, but we will need it for the log point $*_{\mathbb{N}^{n+m}} \simeq *_{\mathbb{N}^{n}} \times_k *_{\mathbb{N}^{m}}$ which is non-log-smooth on $k$. This is the content of the following lemma.

\begin{lem}
    Let $n,m$ be two nonnegative integers. The natural morphism $$h(*_{\mathbb{N}^n}) \otimes h(*_{\mathbb{N}^m}) \rightarrow h(*_{\mathbb{N}^{n+m}})$$ is an isomorphism.
\end{lem}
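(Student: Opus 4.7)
The plan is to factor the natural morphism through the log smooth log schemes $\mathbb{A}_{\mathbb{N}^n}$, combining an iterated form of Corollary \ref{tool} with the Künneth property of $h$ on log smooth log schemes (noted just before the statement of the lemma).

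My first step is to establish, by induction on $n \geq 1$, that the canonical strict closed immersion $*_{\mathbb{N}^n} \hookrightarrow \mathbb{A}_{\mathbb{N}^n}$ induces an equivalence $h(\mathbb{A}_{\mathbb{N}^n}) \xrightarrow{\sim} h(*_{\mathbb{N}^n})$ in $\DAlog(k, \Lambda)$. The case $n=1$ is precisely Corollary \ref{tool}. For the inductive step, I take the product of the log-cdh distinguished square used in the proof of Corollary \ref{tool} with $\mathbb{A}_{\mathbb{N}^{n-1}}$; this is again log-cdh distinguished, since each defining condition (cartesianness, strictness of the closed immersion, properness, and being an isomorphism off that closed immersion) is preserved under products with an fs log scheme. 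Proposition \ref{cdh} then yields a cartesian square of cohomological motives whose top row, $h(\mathbb{A}^1\times\mathbb{A}_{\mathbb{N}^{n-1}})\to h(\mathbb{A}_{\mathbb{N}^{n-1}})$, is an equivalence by $\mathbb{A}^1$-invariance, so the bottom row gives $h(\mathbb{A}_{\mathbb{N}^n})\xrightarrow{\sim} h(*_{\mathbb{N}}\times\mathbb{A}_{\mathbb{N}^{n-1}})$. Iterating, at step $k$ multiplying Corollary \ref{tool}'s square by $*_{\mathbb{N}}^{\times(k-1)}\times\mathbb{A}_{\mathbb{N}^{n-k}}$ to convert one more $\mathbb{A}_{\mathbb{N}}$ factor into $*_{\mathbb{N}}$, and using the identification $*_{\mathbb{N}}^{\times k}\simeq *_{\mathbb{N}^k}$ of fs log schemes, completes the induction. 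The $\mathbb{A}^1$-invariance at each stage, even when the base is not log smooth, follows from the fact that $\DAlog(X,\Lambda)$ is $\mathbb{A}^1$-local for any fs log scheme $X$ as part of the premotivic structure.

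The second step is Künneth on log smooth log schemes, which yields a canonical equivalence $h(\mathbb{A}_{\mathbb{N}^n})\otimes h(\mathbb{A}_{\mathbb{N}^m})\xrightarrow{\sim} h(\mathbb{A}_{\mathbb{N}^{n+m}})$. Combining with step one, naturality of the lax symmetric monoidal structure of $h$ with respect to the strict closed immersions $*_{\mathbb{N}^k}\hookrightarrow\mathbb{A}_{\mathbb{N}^k}$ produces the commutative square
\[
\begin{tikzcd}
h(\mathbb{A}_{\mathbb{N}^n})\otimes h(\mathbb{A}_{\mathbb{N}^m}) \arrow[r,"\sim"] \arrow[d,"\sim"'] & h(\mathbb{A}_{\mathbb{N}^{n+m}}) \arrow[d,"\sim"] \\
h(*_{\mathbb{N}^n})\otimes h(*_{\mathbb{N}^m}) \arrow[r] & h(*_{\mathbb{N}^{n+m}})
\end{tikzcd}
\]
in which three of the four arrows are equivalences, forcing the bottom natural map to be one as well.

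The main technical input, beyond Corollary \ref{tool} and log-smooth Künneth, is the stability of log-cdh distinguished squares under arbitrary products with fs log schemes, which underpins the iteration in step one; this is straightforward but requires a careful check of each defining property.
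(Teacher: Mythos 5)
Your proof is correct, but it takes a genuinely different route from the paper. The paper proves Künneth directly on the cartesian square formed by $*_{\mathbb{N}^{n+m}}$, $*_{\mathbb{N}^n}$, $*_{\mathbb{N}^m}$ over $\mathrm{Spec}(k)$: since all four maps have proper underlying scheme morphisms, it invokes Park's proper base change and proper projection formula (\cite[Corollary 3.7.6, Theorem 3.7.8, Theorem 1.2.2]{park2024logmotivicexceptionaldirect}) and runs the standard Künneth argument from the six-functor formalism. You instead bootstrap from the log smooth case, where Künneth is free from dualizability of $M(X)$, and transport it to $*_{\mathbb{N}^n}$ by first establishing $h(\mathbb{A}_{\mathbb{N}^n}) \xrightarrow{\sim} h(*_{\mathbb{N}^n})$ via an iteration of Proposition~\ref{cdh}: at each step you take a product of the base log-cdh distinguished square from Corollary~\ref{tool} with a mixed factor $*_{\mathbb{N}}^{\times(k-1)}\times\mathbb{A}_{\mathbb{N}^{n-k}}$, using $\mathbb{A}^1$-invariance of $\DAlog$ over the non-log-smooth base to kill the $\mathbb{A}^1$-factor in the top row. (Note that in doing so you are essentially proving the $\mathbb{A}_{\mathbb{N}^n}$-case of Corollary~\ref{invariance} first, reversing the paper's order of deduction.) What the paper's approach buys is brevity, at the price of citing the hardest part of Park's exceptional direct image formalism; what yours buys is logical economy, resting only on the log-cdh descent statement and $\mathbb{A}^1$-invariance, at the price of a multi-step iteration and the verifications you flag (stability of log-cdh distinguished squares under products with fs log schemes, which does go through since strictness, properness, cartesianness, and the isomorphism condition off the closed locus are all preserved; and full faithfulness of $pr^*$ for $pr: \mathbb{A}^1\times W \to W$ over an arbitrary fs base $W$, which holds by checking the unit on generators $M_W(V)$, $V \in \mathbf{lSm}_W$, using $\mathbf{lSm}$-base change and the built-in $\mathrm{L}_{\mathbb{A}^1}$ localization).
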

\begin{proof}
Let us consider the following cartesian square:
  \begin{center}
    \begin{tikzcd}
       *_{\mathbb{N}^{n+m}} \arrow[r,"p_2"] \arrow[d,"p_1"] &  *_{\mathbb{N}^{m}} \arrow[d,"f_2"]\\
        *_{\mathbb{N}^{n}} \arrow[r,"f_1"] & *.
    \end{tikzcd}
\end{center}
The classical proof of Künneth's isomorphism in the context of a six functors formalism (see \cite[Theorem 1.2]{Scholze6}) relies on the projection formulae for the pushforwards $$(p_1)_*((-) \otimes p_1^* (-)) \simeq (p_1)_*(-) \otimes (-) $$
and $$(f_1)_*((-) \otimes f_1^* (-)) \simeq (f_1)_*(-) \otimes (-) $$
and on the base change isomorphism $$f_1^*(f_2)_* \simeq (p_1)_*p_2^*.$$
Since all of the maps in the cartesian square are proper (i.e. their underlying morphism of schemes is proper), the three previous isomorphisms holds by \cite[Corollary 3.7.6, Theorem 3.7.8, Theorem 1.2.2]{park2024logmotivicexceptionaldirect}.
\end{proof}

\begin{cor}\label{invariance}
    Let $(\underline{X},\underline{D}) \in \mathbf{Div}_k$, and $x\in \underline{X}(k)$, the natural closed immersion $x^\mathrm{log} \rightarrow \mathrm{N}_x^\mathrm{log}(\underline{X},\underline{D})$ induces a natural equivalence $$h(\mathrm{N}_x^\mathrm{log}(\underline{X},\underline{D})) \xrightarrow{\sim} h(x^\mathrm{log})$$
\end{cor}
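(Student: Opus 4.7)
The plan is to reduce the statement to the one-dimensional case (Corollary \ref{tool}) by a Künneth argument, using the explicit local description of $\mathrm{N}_x^\mathrm{log}(\underline{X},\underline{D})$ as a product.

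First, let $j$ denote the number of irreducible components of $\underline{D}$ passing through $x$. By the remark following Definition \ref{normalspace} together with Example \ref{example Normal log}, choosing local coordinates $x_1,\dots,x_j$ of $\underline{D}$ at $x$ yields a canonical isomorphism
$$\mathrm{N}_x^\mathrm{log}(\underline{X},\underline{D}) \simeq \mathbb{A}_{\mathbb{N}^j} \simeq \underbrace{\mathbb{A}_\mathbb{N} \times_k \cdots \times_k \mathbb{A}_\mathbb{N}}_{j\text{ times}},$$
and under this identification the closed immersion $x^\mathrm{log} \hookrightarrow \mathrm{N}_x^\mathrm{log}(\underline{X},\underline{D})$ becomes the product of the $j$ boundary inclusions $*_\mathbb{N} \hookrightarrow \mathbb{A}_\mathbb{N}$, identifying $x^\mathrm{log}$ with $*_{\mathbb{N}^j}$.

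Next, since $\mathbb{A}_\mathbb{N}$ is log smooth over $k$, the cohomological motive functor $h$ satisfies Künneth on iterated products of $\mathbb{A}_\mathbb{N}$ (this is the dual of the monoidality of $M$ on log smooth log schemes, as noted just before the Künneth lemma for log points). Therefore there is a canonical equivalence
$$h(\mathbb{A}_{\mathbb{N}^j}) \simeq h(\mathbb{A}_\mathbb{N})^{\otimes j}.$$
Applying Corollary \ref{tool} termwise then gives $h(\mathbb{A}_\mathbb{N})^{\otimes j} \xrightarrow{\sim} h(*_\mathbb{N})^{\otimes j}$, and the preceding lemma provides the equivalence $h(*_\mathbb{N})^{\otimes j} \simeq h(*_{\mathbb{N}^j})$.

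Composing these equivalences yields $h(\mathrm{N}_x^\mathrm{log}(\underline{X},\underline{D})) \xrightarrow{\sim} h(x^\mathrm{log})$. The only thing to verify is that this composite is indeed the map induced by the closed immersion $x^\mathrm{log} \hookrightarrow \mathrm{N}_x^\mathrm{log}(\underline{X},\underline{D})$; this follows from the naturality of Künneth together with the fact that, on each factor, the equivalence in Corollary \ref{tool} is by construction the one induced by the boundary inclusion $*_\mathbb{N} \hookrightarrow \mathbb{A}_\mathbb{N}$. Naturality of $\mathrm{N}_x^\mathrm{log}$ with respect to morphisms in $\mathbf{Div}_k$ yields the asserted naturality in $(\underline{X},\underline{D},x)$. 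The main (mild) obstacle is ensuring coordinate-independence of the identification with $\mathbb{A}_{\mathbb{N}^j}$, which follows because two choices of local coordinates differ by invertible factors and the induced automorphism of $\mathbb{A}_{\mathbb{N}^j}$ preserves the closed immersion $*_{\mathbb{N}^j} \hookrightarrow \mathbb{A}_{\mathbb{N}^j}$.
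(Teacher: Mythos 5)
Your proof is correct and follows essentially the same route as the paper's: identify $\mathrm{N}_x^\mathrm{log}(\underline{X},\underline{D})$ with $\mathbb{A}_{\mathbb{N}^j}$ and $x^\mathrm{log}$ with $*_{\mathbb{N}^j}$, then reduce to Corollary \ref{tool} via Künneth, using the preceding lemma to handle the non-log-smooth log point $*_{\mathbb{N}^j}$. The paper states this very tersely, and your version usefully spells out the decomposition $\mathbb{A}_{\mathbb{N}^j} \simeq \mathbb{A}_\mathbb{N}^{\times j}$, the naturality check, and the coordinate-independence of the identification.
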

\begin{proof}
By the description of the log normal space given after the definition \ref{normalspace}, we know that $\mathrm{N}_x^\mathrm{log}(\underline{X},\underline{D}) \simeq \mathbb{A}_{\mathbb{N}^j}$ and $x^\mathrm{log} \simeq *_{\mathbb{N}^j}$ so that this corollary reduces to the corollary \ref{tool} thanks to the previous lemma.
\end{proof}

\subsection{Motivic tangential basepoints} \label{motivic tangential basepoints}\;\smallskip

We can now build a functor $$\mathbf{Div}_{k}^{\mathbf{v}*/,op} \rightarrow \CAlg(\DAlog(k,\Lambda))^{/\Lambda}$$ allowing 
an extension of the classical construction of the augmentation $h(X) \rightarrow \Lambda$ associated to a point $x\in \underline{X}(k)$ to the case of tangential basepoints and thus enhancing the functor induced by the cohomological motive functor at the level of arrows $$\operatorname{Arr}(\mathbf{Div}_{k}^\mathrm{op}) \rightarrow \operatorname{Arr}(\CAlg(\DAlog(k,\Lambda))) := \CAlg(\DAlog(k,\Lambda))^{\Delta[1]}.$$

\begin{defi}
    We let \begin{itemize}
        \item $\Delta[1]$ be the category \begin{center}
    \begin{tikzcd}
        * \arrow[r] & *
    \end{tikzcd}
\end{center}
\item $\widetilde{\Delta[2]}$ be the category \begin{center}
    \begin{tikzcd}
        * \arrow[r] & * \arrow[r] & * \arrow[l, "\sim"] \arrow[r] & * 
    \end{tikzcd}\end{center}
     \item $\Delta[2]$ be the category \begin{center}
    \begin{tikzcd}
        * \arrow[r] & * \arrow[r] & *
    \end{tikzcd}\end{center}
     \item $\overset{\leftarrow}{\Delta[3]}$ be the category \begin{center}
    \begin{tikzcd}
        * \arrow[r] & *  & * \arrow[r] \arrow[l] & *.
    \end{tikzcd}\end{center}
    \end{itemize}
\end{defi}

\begin{rem}\label{sequence}
    There is a sequence of functors $$\Delta[1] \rightarrow \Delta[2] \xleftarrow[]{\sim} \widetilde{\Delta[2]}$$ where the first morphism is the $1$-face map and the second contracts the middle morphism.
\end{rem} 
The category $\mathbf{Div}_{k}^{\mathbf{v}*/}$, viewed as the category of pointed diagrams (proposition \ref{pointeddia}), is a subcategory of $\mathbf{Div}_k^{\overset{\leftarrow}{\Delta[3]}}$, so that there is a natural functor induced by the cohomological motive functor $$\mathbf{Div}^{\mathbf{v}*/,op}_{k} \rightarrow \mathbf{Div}_k^{\overset{\leftarrow}{\Delta[3]},op} \rightarrow \CAlg(\DAlog(k,\Lambda))^{\overset{\leftarrow}{\Delta[3]}}.$$ Since the image of this composition lands in $\DAlog(k,\Lambda)^{\widetilde{\Delta[2]}}$, there is an induced morphism $$\mathbf{Div}_{k}^{\mathbf{v}*/,op} \rightarrow \CAlg(\DAlog(k,\Lambda))^{\widetilde{\Delta[2]}}$$ by the following lemma:
\begin{lem}
    The functor $$\CAlg(\DAlog(k,\Lambda))^{\widetilde{\Delta[2]}} \rightarrow \CAlg(\DAlog(k,\Lambda))^{\overset{\leftarrow}{\Delta[3]}}$$ induced by the inclusion $\overset{\leftarrow}{\Delta[3]} \rightarrow \widetilde{\Delta[2]}$ is fully faithful.
\end{lem}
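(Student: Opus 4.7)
The strategy is to reduce the claim to the identification of $\Delta[2]$ as a localization of $\overset{\leftarrow}{\Delta[3]}$. Writing the objects of $\overset{\leftarrow}{\Delta[3]}$ as $a \to b \leftarrow c \to d$, the contraction $q : \widetilde{\Delta[2]} \to \Delta[2]$ of remark \ref{sequence} is an equivalence of 1-categories --- it identifies the two isomorphic middle objects of $\widetilde{\Delta[2]}$ --- and hence a fortiori a categorical equivalence of $\infty$-categories. Thus for every $\infty$-category $\mathcal{E}$ the restriction $q^* : \mathcal{E}^{\Delta[2]} \xrightarrow{\sim} \mathcal{E}^{\widetilde{\Delta[2]}}$ is an equivalence, and composing with the inclusion $i : \overset{\leftarrow}{\Delta[3]} \hookrightarrow \widetilde{\Delta[2]}$ identifies the functor of the lemma with precomposition along
\[
    p := q \circ i : \overset{\leftarrow}{\Delta[3]} \longrightarrow \Delta[2],
\]
which sends $a, b, c, d$ to $0, 1, 1, 2 \in \Delta[2]$ and collapses the cospan arrow $c \to b$ to $\mathrm{id}_1$. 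It therefore suffices to show that $p^* : \mathcal{E}^{\Delta[2]} \to \mathcal{E}^{\overset{\leftarrow}{\Delta[3]}}$ is fully faithful for every $\mathcal{E}$.

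For this, I would argue that $p$ exhibits $\Delta[2]$ as the Dwyer--Kan localization of $\overset{\leftarrow}{\Delta[3]}$ at the single morphism $c \to b$. Granting this, the universal property of localization immediately gives fully faithfulness of $p^*$, with essential image the functors sending $c \to b$ to an equivalence in $\mathcal{E}$; specializing $\mathcal{E} = \CAlg(\DAlog(k,\Lambda))$ finishes the proof.

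The main obstacle is this localization claim, since the Dwyer--Kan localization of a 1-category at a morphism generally differs from its naive 1-categorical localization. I would verify it directly on mapping spaces: for $F, G : \Delta[2] \to \mathcal{E}$, a natural transformation $\eta : p^*F \to p^*G$ consists of components $\eta_a, \eta_b, \eta_c, \eta_d$ together with naturality data for each of the three non-identity arrows of $\overset{\leftarrow}{\Delta[3]}$. The datum at $c \to b$, which $p^*F$ and $p^*G$ send to $\mathrm{id}_{F(1)}$ and $\mathrm{id}_{G(1)}$ respectively, is precisely a coherent identification $\eta_b \simeq \eta_c$, which lives in a contractible space; the remaining data reproduces a natural transformation $F \to G$. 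This contractibility then propagates through the homotopy limit formula for mapping spaces in $\mathcal{E}^{\overset{\leftarrow}{\Delta[3]}}$, yielding an equivalence $\operatorname{Map}_{\mathcal{E}^{\Delta[2]}}(F, G) \xrightarrow{\sim} \operatorname{Map}_{\mathcal{E}^{\overset{\leftarrow}{\Delta[3]}}}(p^*F, p^*G)$ and hence the lemma.
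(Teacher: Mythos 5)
Your approach is correct but genuinely different from the paper's. The paper proves the claim by showing that the nerve functor preserves the pushout exhibiting $\widetilde{\Delta[2]}$ as the $1$-categorical localization of $\overset{\leftarrow}{\Delta[3]}$ at the middle arrow; the technical input is Haine's theorem on mapping spaces in pushouts of $\infty$-categories along fully faithful inclusions. You instead replace $\widetilde{\Delta[2]}$ by the equivalent $\Delta[2]$ (a nice simplification) and verify the Dwyer--Kan localization statement directly on mapping spaces. Both land on the same intermediate claim — that collapsing the middle morphism is a Dwyer--Kan localization — but by different means.

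Two points in your last paragraph need sharpening. First, the space of homotopies $\eta_b \simeq \eta_c$ is not contractible when $\eta_b$ and $\eta_c$ are both fixed — it is a path space fiber, hence a torsor over a loop space (or empty). What is contractible is the space of \emph{pairs} $(\eta_c, h : \eta_b \simeq \eta_c)$, a based path space; this is the datum you actually want to contract away. Second, the closing sentence ``this contractibility then propagates through the homotopy limit formula'' is doing real work that should be spelled out, because it also requires substituting $\eta_b$ for $\eta_c$ in the naturality square at $c \to d$. The cleanest way to make your argument rigorous is to observe that $p$ factors through the inner horn $\Lambda^2_1 \hookrightarrow \Delta^2$ (since its image avoids the long edge $02$ and the nondegenerate $2$-simplex), that $\Lambda^2_1 \hookrightarrow \Delta^2$ is a categorical equivalence, and that both $\mathrm{N}(\overset{\leftarrow}{\Delta[3]})$ and $\Lambda^2_1$ are $1$-dimensional simplicial sets, so that
$$\mathcal{E}^{\Lambda^2_1} \simeq \mathcal{E}^{\Delta^1} \times_{\mathcal{E}} \mathcal{E}^{\Delta^1}, \qquad \mathcal{E}^{\overset{\leftarrow}{\Delta[3]}} \simeq \mathcal{E}^{\Delta^1} \times_{\mathcal{E}} \mathcal{E}^{\Delta^1} \times_{\mathcal{E}} \mathcal{E}^{\Delta^1},$$
with $p^*$ inserting the constant diagram $\mathrm{const} : \mathcal{E} \to \mathcal{E}^{\Delta^1}$ as the middle factor. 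Full faithfulness of $p^*$ then follows from full faithfulness of $\mathrm{const}$, which is standard since $\mathrm{const} \dashv \mathrm{ev}_0$ with $\mathrm{ev}_0 \circ \mathrm{const} \simeq \mathrm{id}$. With this reformulation your proof is complete and arguably more elementary than the paper's, at the price of exploiting the specific low-dimensional shape of the index categories rather than a general pushout principle.
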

\begin{proof}
    The functor $\overset{\leftarrow}{\Delta[3]} \rightarrow \widetilde{\Delta[2]}$ is the $1$-categorical localization of the category $\overset{\leftarrow}{\Delta[3]}$ by the middle morphism. Note that this is given by the pushout of the following diagram \begin{center}
        \begin{tikzcd}
            \Delta[1] \arrow[d] \arrow[r,hook] & \overset{\leftarrow}{\Delta[3]}\\
            \Delta[1]^{\simeq}
        \end{tikzcd}
    \end{center}
    where the horizontal morphism selects the middle morphism and $\Delta[1]^\simeq$ is the groupoid associated to $\Delta[1]$.

     $\widetilde{\Delta[2]}$ is still the pushout of this diagram in $\Catinf$ after application of the nerve functor. This is not straightfoward since the nerve functor does not preserve pushouts in general, see \cite[Figure 1.1]{Hackney} for a counterexample, but it can be checked by computing the pushout explicitely.
     
     More precisely, if we denote by $\mathcal{C}$ the pushout of the diagram above, there is a canonical arrow $\mathcal{C} \rightarrow \widetilde{\Delta[2]}$. Note that the objects of $\mathcal{C}$ are given by the pushouts of objects of the categories involved in the diagram, as can be checked by applying the homotopy category functor $ho(-)$ to it (note that as a left adjoint to the nerve functor it preserves colimits). Now, since the functor $\displaystyle\Delta[1] \rightarrow \overset{\leftarrow}{\Delta[3]}$ is fully faithful, we can describe all the mapping spaces of $\mathcal{C}$ using \cite[Theorem 0.1]{haine2025fullyfaithfulfunctorspushouts} which are all straightforwardly shown to be weakly equivalent to the corresponding mapping space in $\widetilde{\Delta[2]}$ (they are actually homeomorphic since they are reduced to a point, or empty). Hence the functor $\mathcal{C}\rightarrow \widetilde{\Delta[2]}$ is an equivalence of $\infty$-categories.

    This shows that, as $\infty$-categories, $\widetilde{\Delta[2]}$ is the Dwyer--Kan localization of $\overset{\leftarrow}{\Delta[3]}$ along the middle morphism, see \cite[Proposition 2.4.8]{Land}. The lemma now comes from the universal property of localization \cite[Definition 2.4.2]{Land}.
\end{proof}

Hence, the cohomological motive functor defines a functor after composing with the sequence from remark \ref{sequence}:
\begin{eqnarray*}
\mathbf{Div}_{k}^{\mathbf{v}*/,op} & \rightarrow & \CAlg(\DAlog(k,\Lambda))^{\widetilde{\Delta[2]}}\\
& \simeq & \CAlg(\DAlog(k,\Lambda))^{\Delta[2]} \\
& \rightarrow & \CAlg(\DAlog(k,\Lambda))^{\Delta[1]} \simeq \mathrm{Arr}(\CAlg(\DAlog(k,\Lambda))).
\end{eqnarray*}
Since $\DAlog(k,\Lambda)^{/\Lambda}$ is the faithful subcategory of $\mathrm{Arr}(\CAlg(\DAlog(k,\Lambda)))$ consisting of arrows whose codomain is $\Lambda$, and since this category is obtained as a fiber product of $\CAlg(\DAlog(k,\Lambda))^{\Delta[1]}$ and the point category on $\CAlg(\DAlog(k,\Lambda))$, this defines a functor $$\mathbf{Div}^{\mathbf{v}*/,op}_{k} \rightarrow \CAlg(\DAlog(k,\Lambda))^{/\Lambda}.$$

\begin{rem}
Note that this construction is in fact defined only up to homotopy. More precisely, it depends on a choice of a quasi-inverse $\Delta[2] \rightarrow \Tilde{\Delta[2]}$ to the contraction of the middle morphism. Two different choices leads to homotopy equivalent constructions.
\end{rem}

\begin{defi}\label{mottan}
   We call \textbf{motivic tangential augmentation} (or \textbf{motivic tangential basepoint}) functor the functor $$\begin{array}{ccc}
        \mathbf{Div}_{k}^{\mathbf{v}*/,op} & \rightarrow & \CAlg(\DAlog(k,\Lambda))^{/\Lambda}\\
        (\underline{X},\underline{D},\mathbf{x})  & \mapsto & \begin{tikzcd}
    h(x^\mathrm{log}) \arrow[r] & h(\mathrm{N}_x^\mathrm{log}(\underline{X},\underline{D})) \arrow[l,"\sim"] \arrow[r,] & h(*) = \Lambda\\
    h(X) \arrow[u] & &
\end{tikzcd}
    \end{array}$$
     given by the composition of the sequence of arrows on the right. We denote by $h(\mathbf{x}): h(X) \rightarrow \Lambda$ the resulting arrow.
\end{defi}
\begin{rem}
    In a more direct fashion, from a diagram \begin{center}\begin{tikzcd}
    h(x^\mathrm{log}) & h(\mathrm{N}_x^\mathrm{log}(\underline{X},\underline{D})) \arrow[l,"\sim"] \arrow[r,] & h(*) = \Lambda\\
    h(X) \arrow[u] & &
\end{tikzcd}\end{center} by a choice of homotopy inverse of the central equivalence, we get a diagram \begin{center}\begin{tikzcd}
    h(x^\mathrm{log}) \arrow[r,"\sim"] & h(\mathrm{N}_x^\mathrm{log}(\underline{X},\underline{D})) \arrow[r] & h(*) = \Lambda\\
    h(X) \arrow[u] & &
\end{tikzcd}\end{center} that we compose to get an augmentation $h(X) \rightarrow \Lambda$.
\end{rem}

This construction actually works in the same way in any "category of log motives" in which the cohomological motives of the standard log point, and of $\mathbb{A}_\mathbb{N}$ are equivalent, and in which Künneth's isomorphism holds for the cohomological motives of products of the log point.

\begin{rem}\label{usual}
Note that if $\mathbf{x}$ is just given by a rational point $x \in \underline{X}(k)\setminus \underline{D}(k)$, then the motivic augmentation defined by $\mathbf{x}$ is the classical augmentation $h(x)$. Indeed the pointed diagram associated with $\mathbf{x}$ is \begin{center}
        \begin{tikzcd}
        & * \arrow[d,equal]\\
            * \arrow[d,"x"]  \arrow[r,equal] & *\\
            X & 
        \end{tikzcd}
    \end{center}
    so that the morphism \begin{center}\begin{tikzcd}
    & \Lambda\\
    \Lambda \arrow[r,equal] & \Lambda \arrow[u,equal] \\
    h(X) \arrow[u,"h(x)"] &
\end{tikzcd}\end{center} is the expected one.
\end{rem}

\newpage
\section{The motivic fundamental group}\label{section The motivic fundamental group}
In this section, we describe in detail properties of the bar construction in an $\infty$-categorical context, an we use it to construct the motivic pointed path space based at tangential basepoints. In particular, we highlight the resulting homotopy Hopf algebroid structure of the datum of the motivic pointed path spaces based at tangential basepoints. We compare this construction with Wojtkowiak's approach \cite{wojtkowiak:hal-01293611} recalled in the introduction, which will be useful to compute the realizations of the motivic pointed path space in the next section. We end this section with the mixed Tate case which leads to the definition of the motivic fundamental torsor at tangential basepoints. The key points of this section are the theorem \ref{bar hopf}, definition \ref{pathspace} and proposition \ref{path space facile}. In this section, $k$ is still a fixed field.

\subsection{The bar construction} \label{The bar construction} \;\smallskip

Let us first summarize facts about the bar construction. In this section, nothing is new except the description of the homotopy Hopf algebroid structure in an $\infty$-categorical context. The main reference here is \cite[Sections 4.4.2, 5.2.2]{ha}.\smallskip

Let $\mathcal{C}$ be a cocomplete symmetric monoidal $\infty$-category such that the tensor product $\otimes: \mathcal{C} \times \mathcal{C} \rightarrow \mathcal{C}$ commutes with colimits in each variable.

\begin{prop}\cite[Construction 4.4.2.7, Example 4.4.2.11]{ha}  For $A \in \CAlg(\mathcal{C})$, $M,N\in \mathrm{Mod}_A(\mathcal{C})$, there is a canonical simplicial diagram $\Baar_A(M,N)_\bullet$ in $\mathrm{Mod}_A(\mathcal{C})$ informally given by: \begin{center}

    \begin{tikzcd}
        \cdots \;\; M\otimes A \otimes A \otimes N \ar[r,yshift = 8] \ar[r,yshift = 4, <-] \ar[r,yshift = 0] \arrow[r,yshift = -4, <-] \arrow[r,yshift = -8] & M\otimes A \otimes N \ar[r,yshift = 4] \ar[r,yshift = 0, <-] \arrow[r,yshift = -4] & M\otimes N
    \end{tikzcd}

\end{center}
such that its colimit is the relative tensor product of $M$ and $N$ over $A$: $$M\otimes_A N:= \Baar_A(M,N):= \colim_\Delta \Baar_A(M,N)_\bullet.$$
It is the bar construction of $A$ on $M$ and $N$.
    
\end{prop}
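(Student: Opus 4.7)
The plan is to construct $\Baar_A(M,N)_\bullet$ via the formalism of $\infty$-operads, following Lurie. The classical picture in a $1$-category is unambiguous: the $n$-simplex should be $M \otimes A^{\otimes n} \otimes N$, with outer face maps coming from the actions of $A$ on $M$ and $N$, inner face maps given by the multiplication of $A$, and degeneracies induced by the unit $\mathbf{1} \to A$. All the real work lies in producing a genuine functor $\Delta^{\mathrm{op}} \to \mathcal{C}$ that carries the higher coherences needed to make sense of these operations up to contractible choice.

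First, I would encode $(A,M,N)$ as an algebra over Lurie's ``bimodule'' $\infty$-operad $\mathrm{BM}^\otimes$, whose algebras in a symmetric monoidal $\infty$-category $\mathcal{C}$ are precisely triples consisting of an associative algebra $A$, a right $A$-module $M$, and a left $A$-module $N$. A natural functor $\Delta^{\mathrm{op}} \to \mathrm{BM}^\otimes$ sends $[n]$ to the ``word'' $(M, A, \dots, A, N)$ with $n$ copies of $A$ in the middle; composing with the $\mathrm{BM}^\otimes$-algebra structure on $(A,M,N)$ produces the simplicial object $\Baar_A(M,N)_\bullet$ in a coherent and functorial way. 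Since the outermost $M$ (resp.\ $N$) still carries a residual left (resp.\ right) $A$-action commuting with every face and degeneracy, and since $\otimes$ preserves colimits in each variable, the geometric realization $\colim_\Delta \Baar_A(M,N)_\bullet$ inherits a canonical $A$-bimodule structure, and in particular defines an object of $\Mod_A(\mathcal{C})$.

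Second, I would verify the universal property characterizing the relative tensor product. Truncating $\Baar_A(M,N)_\bullet$ to its first two levels yields the two parallel arrows $M \otimes A \otimes N \rightrightarrows M \otimes N$ given by the two $A$-actions, whose coequalizer recovers the ordinary definition of the relative tensor product; the higher simplicial levels supply exactly the coherences required in the $\infty$-categorical setting. Mapping out into any $P \in \Mod_A(\mathcal{C})$ then computes, by the cofinality properties of colimits indexed by $\Delta^{\mathrm{op}}$, the space of $A$-balanced maps $M \otimes N \to P$, which is the expected universal property of $M \otimes_A N$. The main obstacle is precisely the coherence issue raised at the outset: specifying the simplicial object and the induced module structure on its colimit up to contractible choice. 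Lurie's operadic machinery handles this uniformly, so the proof ultimately reduces to an unpacking of \cite[Construction 4.4.2.7, Example 4.4.2.11]{ha}.
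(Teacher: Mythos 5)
The paper gives no proof of this proposition: it is recalled verbatim from Lurie's \cite[Construction 4.4.2.7, Example 4.4.2.11]{ha}, with only the informal degreewise description of faces and degeneracies appended afterward. Your proposal is essentially an unpacking of that same citation, via the $\mathrm{BM}^{\otimes}$-operad and the functor $\Delta^{\mathrm{op}} \to \mathrm{BM}^{\otimes}$ sending $[n]$ to the word $(\mathfrak{m}, \mathfrak{a}, \dots, \mathfrak{a}, \mathfrak{n})$, so it takes the same route. One caveat: your last paragraph on the universal property is heuristic rather than a proof — in Lurie's treatment $M \otimes_A N$ is defined as an operadic left Kan extension and its identification with $\colim_\Delta \Baar_A(M,N)_\bullet$ is a theorem (not a ``cofinality'' observation about $\Delta^{\mathrm{op}}$ or about spaces of ``$A$-balanced maps''), but since the paper itself treats $\colim_\Delta \Baar_A(M,N)_\bullet$ as the definition and cites Lurie for the rest, this does not create a gap relative to what the paper asserts.
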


Degreewise $\Baar_A(M,N)_n \simeq M \otimes A^n \otimes N$ with face maps induced by the product \begin{center}
    \begin{tikzcd}
        A\otimes A \arrow[r] & A
    \end{tikzcd}
\end{center}
and the actions on $M$ and $N$ \begin{center}
    \begin{tikzcd}
        M \otimes A \arrow[r] & M, & A\otimes N \arrow[r] & A.
    \end{tikzcd}
\end{center} The degeneracies are given by the unit map \begin{center}
    \begin{tikzcd}
        \mathbf{1} \arrow[r] &  A.
    \end{tikzcd}
\end{center}

For $A \in \CAlg(\mathcal{C})$, the relative tensor product over $A$ endows the category $\mathrm{Mod}_A(\mathcal{C})$ with a symmetric monoidal $\infty$-category structure \cite[Theorem 4.5.2.1]{lurie2008higher}.

\bigskip

We will apply the relative tensor product in a more restrictive case, that is when $A$ is a biaugmented commutative algebra object in $\mathcal{C}$ with augmentations (i.e. algebra morphisms) $\begin{tikzcd} A \ar[r,yshift = 2, "x"] \ar[r,yshift = -2, "y"'] & \mathbf{1}
\end{tikzcd}$. We denote by $\mathbf{1}_x, \mathbf{1}_y$ the unit element of $\mathcal{C}^\otimes$ viewed as commutative $A$-algebras through the maps $x$ and $y$. 

\begin{defi}
    We call bar construction of $A$ based at $x$ and $y$ the object \begin{center}
    $\;_x\Baar_y(A):= \Baar_A(\mathbf{1}_y,\mathbf{1}_x) \in \mathrm{Mod}_A(\mathcal{C}).$
\end{center}
\end{defi}

\begin{rem}
The choice for the order of the entries is a convention explained in remark \ref{ordre}.
\end{rem}

We recall that this object comes with a natural structure of commutative $A$-algebra object in $\mathcal{C}$. Indeed, by applying \cite[Proposition 3.2.4.7, Corollary 3.2.4.8]{ha} to $\mathrm{Mod}_A(\mathcal{C})$, we see that $\CAlg_A(\mathcal{C}) = \CAlg(\mathrm{Mod}_A(\mathcal{C}))$ has coproducts and that these coproducts coincide with iterated relative tensor products. 

In particular, \begin{center}
    $\;_x\Baar_y(A) = \mathbf{1}_y\otimes_A \mathbf{1}_x = \mathbf{1}_y\sqcup_A \mathbf{1}_x \in \CAlg_A(\mathcal{C}).$
\end{center}

This way, it is easy to see that the bar construction defines a functor \begin{center}
    $\Baar:\CAlg(\mathcal{C})^{//\mathbf{1}} \rightarrow \CAlg(\mathcal{C})$
\end{center}

Note that the $A$-algebra structure is defined by the map $\+_x\nabla_y$ described by \begin{eqnarray*}
    \;_x\Baar_y(A) \otimes_A \,_x\Baar_y(A) & = & (\mathbf{1}_y \otimes_A \mathbf{1}_x) \otimes_A (\mathbf{1}_y \otimes_A \mathbf{1}_x)\\
    & \simeq & (\mathbf{1}_y \otimes_A \mathbf{1}_y) \otimes_A (\mathbf{1}_x \otimes_A \mathbf{1}_x)\\
    & \rightarrow & \mathbf{1}_y \otimes_A \mathbf{1}_x\\
    & = & \!_x\Baar_y(A)
\end{eqnarray*}
and by the natural map \begin{center}
    $\!_x\eta_y:\mathbf{1} \rightarrow A \rightarrow \mathbf{1}_y \sqcup_A \mathbf{1}_x$.
\end{center}

\medskip

Let us recall the definition of a homotopy Hopf algebra in an $\infty$-category in the sense of \cite[Definition 4.0.1, Corollary 4.1.4]{item_d5d0d09024574dc1a6af56fd0ed1dc4e}, which is a bialgebra with a homotopy antipode. We remind the reader that the category of bialgebra objects in $\mathcal{C}$ is defined to be $\mathrm{Alg}(\CAlg(\mathcal{C})^\mathrm{op}).$

\begin{defi}
A homotopy Hopf algebra object $H \in \mathcal{C}$ is a bialgebra object endowed with a map $$s: H \rightarrow H$$ such that $H$ is an Hopf algebra in the homotopy category of $\mathcal{C}$ with antipode $s$.
\end{defi}

We now prove a general structure result for the bar construction:
\begin{thm}\label{groupoid}
    The datum of the objects and morphisms $\{\+_x\Baar_y(A)\}_{x,y}$ for every augmentations $x,y$ of $A$ is enhanced into a homotopy Hopf algebroid, i.e. \begin{itemize}
        \item For all augmentation $x$, $\+_x\Baar_x(A)$ has the structure of a homotopy Hopf algebra in $\mathcal{C}.$
        \item For three augmentations $x,y,z$, there is a deconcatenation map $$ \+^{}_x\Delta_z^y \;: \;_x\Baar_z(A) \rightarrow  \;_y\Baar_z(A) \otimes \,_x\Baar_y(A)$$ which is the coproduct on $\;_x\Baar_x(A)$ when $x = y = z$.
        \item For all augmentations $x,y$, there is an involution $$\!_xs_y: \;_x\Baar_y(A) \rightarrow \;_y\Baar_x(A)$$ which is the homotopy antipode of $\;_x\Baar_x(A)$ when $x = y$.
        \item If we let $\epsilon_x: \+_x\Baar_x(A) \rightarrow \mathbf{1}$ denote the counit of $\+_x\Baar_x(A)$ for all augmentation $x$, then the morphisms $$\{\+_x\nabla_y, \+_x\eta_y, \+^{}_x\Delta_z^y, \+_xs_y, \epsilon_x\}_{x,y,z}$$ satisfy the homotopy counit, coinverse and coassociativity axioms which respectively states that in the homotopy category $\mathrm{ho}(\mathcal{C})$ \begin{itemize}
            \item $(\epsilon_y \otimes \mathrm{Id})\circ \+^{}_x\Delta_y^y = \mathrm{Id}$ and $(\mathrm{Id} \otimes \epsilon_x)\circ \+^{}_x\Delta_y^x = \mathrm{Id},$
            \item $\+_x\nabla_y\circ(\+_ys_x \otimes \mathrm{Id})\circ\+^{}_x\Delta_y^x = \+_y\eta_x\circ\epsilon_x$ and $\+_y\nabla_x\circ(\mathrm{Id} \otimes \+_xs_y )\circ\+^{}_x\Delta_y^x = \+_x\eta_y \circ \epsilon_x$
            \item for $x,y,z,t$ four augmentations of $A$, $(\+^{}_y\Delta_t^z \otimes \mathrm{Id})\circ\+^{}_x\Delta_t^y = (\mathrm{Id} \otimes \+^{}_x\Delta_z^y)\circ\+^{}_x\Delta_t^z$.
        \end{itemize}
    \end{itemize} 
\end{thm}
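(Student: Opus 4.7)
The plan is to build every structure map directly from the identification
$$\+_x\Baar_y(A) \;=\; \mathbf{1}_y \otimes_A \mathbf{1}_x \;=\; \mathbf{1}_y \sqcup_A \mathbf{1}_x \;\in\; \CAlg(\mathcal{C}),$$
where the second equality records that the relative tensor product of commutative algebras is the coproduct in $\CAlg_A(\mathcal{C})$. The multiplication $\+_x\nabla_y$ and unit $\+_x\eta_y$ come for free from the commutative-algebra structure of this pushout (as already noted in the paragraph preceding the theorem). The counit $\epsilon_x \colon \mathbf{1}_x \sqcup_A \mathbf{1}_x \to \mathbf{1}$ is the codiagonal induced by the pair of identity maps $\mathrm{id}\colon \mathbf{1}_x \to \mathbf{1}_x$. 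The involution $\+_xs_y \colon \mathbf{1}_y \sqcup_A \mathbf{1}_x \xrightarrow{\sim} \mathbf{1}_x \sqcup_A \mathbf{1}_y$ is the symmetry isomorphism swapping the two factors; when $x=y$ this symmetry squares to the identity in $\operatorname{ho}(\mathcal{C})$ and so yields an honest involution.

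The central construction is the deconcatenation
$$\+^{}_x\Delta_z^y \colon\; \mathbf{1}_z \sqcup_A \mathbf{1}_x \;\longrightarrow\; T \;:=\; (\mathbf{1}_z \sqcup_A \mathbf{1}_y) \sqcup (\mathbf{1}_y \sqcup_A \mathbf{1}_x),$$
where the outer $\sqcup$ is the coproduct in $\CAlg(\mathcal{C})$, which agrees with the $\otimes$ appearing in the statement. I would invoke the universal property of the source pushout, specifying the pair of maps $\mathbf{1}_z \hookrightarrow \mathbf{1}_z \sqcup_A \mathbf{1}_y \hookrightarrow T$ and $\mathbf{1}_x \hookrightarrow \mathbf{1}_y \sqcup_A \mathbf{1}_x \hookrightarrow T$. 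The required compatibility on $A$ holds because both composites $A \to T$ factor through $\mathbf{1}_y$, via $A \xrightarrow{y} \mathbf{1}_y$ followed by the inclusion of one of the two copies of $\mathbf{1}_y$ that appear in $T$. Since $\mathbf{1}_y \simeq \mathbf{1}$ is initial in $\CAlg(\mathcal{C})$, the two resulting maps $\mathbf{1}_y \to T$ both coincide with the unique such morphism, and the compatibility is automatic.

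The remaining step is to verify the counit, coinverse and coassociativity axioms in $\operatorname{ho}(\mathcal{C})$. Each asserts the equality of two morphisms out of a pushout $\mathbf{1}_w \sqcup_A \mathbf{1}_{w'}$; by the universal property it suffices to test on the two pushout factors. For the counit axiom one checks that, on the $\mathbf{1}_x$ factor, the composite reduces to the inclusion, and that on the $\mathbf{1}_y$ factor both sides reduce to the unique map out of $\mathbf{1} \simeq \mathbf{1}_y$. For coassociativity, the two iterated deconcatenations starting from $\mathbf{1}_t \sqcup_A \mathbf{1}_x$ are both characterised by inclusion into the extreme pushout factors of $(\mathbf{1}_t \sqcup_A \mathbf{1}_z) \sqcup (\mathbf{1}_z \sqcup_A \mathbf{1}_y) \sqcup (\mathbf{1}_y \sqcup_A \mathbf{1}_x)$, with the intermediate copies of $\mathbf{1}_y$ and $\mathbf{1}_z$ being forced by initiality. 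The coinverse axiom is handled in the same spirit by combining the symmetry with the codiagonal $\mathbf{1}_x \sqcup_A \mathbf{1}_x \to \mathbf{1}$.

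The main obstacle is conceptual rather than computational: since $A$ carries no comultiplication, the Hopf-algebroid coproduct on $\+_x\Baar_y(A)$ must be produced out of nothing more than the biaugmentation of $A$ and the initiality of $\mathbf{1}$ in $\CAlg(\mathcal{C})$. Once this observation is in place, all axioms reduce to a direct application of the universal property of pushouts in $\CAlg(\mathcal{C})$, which is precisely what allows the verification to be carried out at the level of the homotopy category without wrestling with higher coherences.
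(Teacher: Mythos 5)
Your construction of the deconcatenation map is the same as the paper's, merely translated: the paper inserts the augmentation $A \to \mathbf{1}_y$ in the middle of $\mathbf{1}_z \otimes_A A \otimes_A \mathbf{1}_x \to \mathbf{1}_z \otimes_A \mathbf{1}_y \otimes_A \mathbf{1}_x$ and then regroups, which is exactly your pushout argument phrased in terms of relative tensor products. Your verification of the counit, coinverse and coassociativity axioms by restricting to the two factors of the coproduct in $\CAlg_A(\mathcal{C})$ also matches what the paper does in its lemmas (the paper's identification of the composite ``by what it does on the two components'' is the same universal-property argument, written out). That part is fine.

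There is, however, one genuine gap. You say you construct the coproduct and then verify coassociativity ``at the level of the homotopy category without wrestling with higher coherences.'' But the paper's notion of homotopy Hopf algebra, which the first bullet of the theorem asserts for $\+_x\Baar_x(A)$, is \emph{not} purely a homotopy-category structure: it is a genuine bialgebra object, i.e.\ an object of $\mathrm{Alg}(\CAlg(\mathcal{C})^{\mathrm{op}})$, together with a homotopy antipode. So the comultiplication must be promoted to a coherently associative structure in $\CAlg(\mathcal{C})^{\mathrm{op}}$, with all higher compatibilities; a single map plus coassociativity-in-$\mathrm{ho}(\mathcal{C})$ does not suffice. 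The paper supplies this via Lurie's enhancement of the bar construction into a functor $\Baar\colon \CAlg^{/\mathbf{1}}(\mathcal{C})^{\mathrm{op}} \to \mathrm{Alg}(\mathcal{C}^{\mathrm{op}})$ (HA, Theorem 5.2.2.17), and then observes that, because $\CAlg^{/\mathbf{1}}(\CAlg(\mathcal{C})) \simeq \CAlg^{/\mathbf{1}}(\mathcal{C})$, the algebra and coalgebra structures are compatible. Your proposal omits exactly this step. The remaining structure maps (the deconcatenations over three or four augmentations, the involution, the antipode identity) are indeed only required up to homotopy and your argument covers them; it is specifically the coherent bialgebra structure on $\+_x\Baar_x(A)$ that needs the additional input.

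A smaller point of care: when you reduce a verification to ``testing on the two pushout factors,'' this is valid because the coproduct in $\CAlg_A(\mathcal{C})$ has a product, not merely a pullback, mapping-space universal property. To invoke this you need the target to carry a compatible $A$-algebra structure and the restricted maps to be $A$-algebra maps; this holds here, but is glossed over in your write-up and is exactly what the paper's explicit $\otimes_A$-manipulations are keeping track of.
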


This theorem shows that the data of the pointed bar constructions of $A$, taken over all augmentations of $A$, carries a structure dual to that of a groupoid in the homotopy category of $\mathcal{C}$. Here, by a groupoid structure we mean the one carried for instance by the system of pointed path spaces of a topological space $X$: namely, the collection of all pointed path spaces of $X$, together with the associative composition of paths, the existence of inverse paths, and the unit path at each point of $X$. It is in this sense that we will interpret and work with the notion of a homotopy Hopf algebroid in the following sections. In particular, in subsection \ref{fundamentalgroup} we will define the motivic analogue of the fundamental groupoid in the presence of a motivic $t$-structure.

Note that we have defined the structure of a homotopy Hopf algebroid only in a weak homotopical sense. In an $\infty$-categorical framework, one would usually formalize such a structure by verifying all possible higher coherences, so our notion is not as rigorous as it could be in that setting. However, lifting this definition to an $\infty$-categorical level lies beyond the scope of this article, as we only require the structure to be well defined in the homotopy category of~$\mathcal{C}$.

The rest of this subsection is dedicated to the proof of theorem \ref{groupoid}. This proof consists in the lemmas \ref{premlem} \ref{deco}, \ref{medlem}, \ref{bar hopf} and \ref{lastlem}. 

Let us first describe the $\infty$-categorical structure of bialgebra that completes the structure of commutative algebra object of the pointed bar construction:

\begin{lem}\label{premlem}
     Let $A \in \CAlg(\mathcal{C})$. For each augmentation $x: A \rightarrow \mathbf{1}$, $\+_x\Baar_x(A)$ has a natural associative bialgebra structure.
\end{lem}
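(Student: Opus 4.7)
The plan is to identify $\+_x\Baar_x(A)$ with a loop object in the opposite of the $\infty$-category of commutative algebras, and to deduce the bialgebra structure from the standard $\infty$-categorical fact that loop objects carry a canonical monoid (in fact group) structure.

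More precisely, by definition $\+_x\Baar_x(A) = \mathbf{1}_x \otimes_A \mathbf{1}_x$, which by the discussion preceding the statement is the pushout of $\mathbf{1} \xleftarrow{x} A \xrightarrow{x} \mathbf{1}$ computed in $\CAlg(\mathcal{C})$. Passing to the opposite $\infty$-category, this pushout becomes the pullback $\mathbf{1} \times_A \mathbf{1}$ in $\CAlg(\mathcal{C})^{\mathrm{op}}$, where $A$ is now pointed by the map $\mathbf{1} \to A$ dual to the augmentation $x$. In other words, $\+_x\Baar_x(A)$ is precisely the loop object $\Omega_x A$ of the pointed object $(A, x) \in (\CAlg(\mathcal{C})^{\mathrm{op}})_{\mathbf{1}/}$. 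I would then invoke the general fact that loop objects in an $\infty$-category admitting finite limits are canonically group objects, and in particular associative monoids (see for instance Lurie \cite{ha}, \cite{lurie2008higher}); applied here, this endows $\+_x\Baar_x(A)$ with the structure of an associative monoid in $\CAlg(\mathcal{C})^{\mathrm{op}}$.

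Translating this monoid structure back across the $\mathrm{op}$ yields, on $B := \+_x\Baar_x(A)$, a coassociative comultiplication $\Delta : B \to B \otimes B$ and a counit $\epsilon : B \to \mathbf{1}$, both of which are by construction morphisms in $\CAlg(\mathcal{C})$, i.e.\ morphisms of commutative algebras. Combined with the commutative algebra structure on $B$ obtained previously as the coproduct $\mathbf{1}_x \sqcup_A \mathbf{1}_x$ in $\CAlg_A(\mathcal{C})$, this is exactly an associative bialgebra structure: the bialgebra compatibility axioms are built in, since $\Delta$ and $\epsilon$ being algebra morphisms is equivalent to the product and unit being coalgebra morphisms. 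Naturality in $A$ follows from the functoriality of the loop object construction. The only real point requiring care is the translation between pushouts in $\CAlg(\mathcal{C})$ and pullbacks in the opposite category; beyond this bookkeeping, I do not anticipate any substantial technical obstacle.
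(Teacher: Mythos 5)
Your proposal is correct, and it takes a genuinely different route from the paper's. The paper applies \cite[Theorem 5.2.2.17]{ha} \emph{with $\CAlg(\mathcal{C})$ in place of $\mathcal{C}$}, which lands directly in $\mathrm{Alg}(\CAlg(\mathcal{C})^{\mathrm{op}})$, i.e.\ bialgebras; the remaining step is to check (via \cite[Corollary 3.2.3.2]{ha}, commutation of the forgetful functor $\CAlg(\mathcal{C})\to\mathcal{C}$ with geometric realizations) that this recovers the underlying bar construction. You instead bypass HA 5.2.2.17 entirely: starting from the already-established identification $\+_x\Baar_x(A)\simeq\mathbf{1}_x\sqcup_A\mathbf{1}_x$ in $\CAlg(\mathcal{C})$, you read the pushout as a loop object $\Omega_x A$ in $\CAlg(\mathcal{C})^{\mathrm{op}}$ and invoke the loop-objects-are-group-objects principle. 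Both proofs hinge on the same prior observation (the bar construction of a commutative algebra is a pushout in $\CAlg(\mathcal{C})$); the paper's is a clean citation of a dedicated theorem, while yours is arguably more elementary and has the side benefit of producing the \emph{grouplike} structure for free, which is exactly what underlies the homotopy antipode appearing later in lemma \ref{bar hopf}.

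The one place to be careful is the phrase ``loop objects in an $\infty$-category admitting finite limits are canonically group objects, and in particular associative monoids,'' which you cite loosely to Lurie. It is true, but needs to be assembled: the \v{C}ech nerve of the point $\mathbf{1}\to A$ in $\CAlg(\mathcal{C})^{\mathrm{op}}$ is a groupoid object (HTT 6.1.2.11), a groupoid object with terminal degree-zero term is a group object in the Segal sense (HTT 7.2.2.1), and Segal monoid objects agree with $\mathrm{Alg}$-objects for the \emph{cartesian} monoidal structure (HA 2.4.2.5). You should therefore also make explicit that the relevant monoidal structure on $\CAlg(\mathcal{C})^{\mathrm{op}}$, the one appearing in the paper's definition $\mathrm{Alg}(\CAlg(\mathcal{C})^{\mathrm{op}})$ of bialgebras, is cartesian; this is precisely \cite[Proposition 3.2.4.7]{ha} (the tensor product on $\CAlg(\mathcal{C})$ is the coproduct), which the paper has already invoked before the lemma. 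With these citations supplied, your argument closes.
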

The complete and careful proof of this fact is \cite[Theorem 5.2.2.17]{ha}, where the bar construction is enhanced into a functor \begin{center}
    $\Baar: \CAlg^{/\mathbf{1}}(\mathcal{C})^\mathrm{op} \rightarrow \mathrm{Alg}(\mathcal{C}^\mathrm{op})$
\end{center}
which shows that the bar construction over a single augmentation has a natural coalgebra structure. The algebra and coalgebra structures are compatible since the algebra structure can be directly derived from the previous functor:
Since $$\CAlg^{/\mathbf{1}}(\CAlg(\mathcal{C})) \simeq \CAlg^{/\mathbf{1}}(\mathcal{C})$$ \cite[Corollary 2.4.3.10]{ha}, we can apply the bar construction directly in $\CAlg(\mathcal{C})$ which gives a functor $$\Baar': \CAlg^{/\mathbf{1}}(\mathcal{C}) \simeq \CAlg^{/\mathbf{1}}(\CAlg(\mathcal{C}))^\mathrm{op}  \rightarrow \mathrm{Alg}(\CAlg(\mathcal{C})^\mathrm{op}).$$ Since the forgetful functor $\CAlg(\mathcal{C}) \rightarrow \mathcal{C}$ commutes with colimits of simplicial diagrams \cite[Corollary 3.2.3.2]{ha}, we have that $\Baar = \Baar'$.

\smallskip

The coproduct inducing the bialgebra structure of the bar construction can be derived from the more general deconcatenation map:

\begin{lem}\label{deco}
    Suppose $A$ has three augmentations $x,y,z$. Then there is a natural deconcatenation map in $\mathrm{CAlg}(\mathrm{Mod}_A(\mathcal{C}))$ \begin{center}
    \begin{tikzcd}
       \+^{}_x\Delta_z^y \;: \;_x\Baar_z(A) \arrow[r] & \;_y\Baar_z(A) \otimes \,_x\Baar_y(A)
    \end{tikzcd}
\end{center}
\end{lem}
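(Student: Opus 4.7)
The plan is to invoke the universal property of the pushout defining $\+_x\Baar_z(A)$. First I would recall that, by construction, $\+_x\Baar_z(A) = \mathbf{1}_z \otimes_A \mathbf{1}_x$ is the coproduct $\mathbf{1}_z \sqcup_A \mathbf{1}_x$ in the category $\CAlg(\mathrm{Mod}_A(\mathcal{C})) = \CAlg_A(\mathcal{C})$. Hence producing a morphism out of it in that category amounts to equipping the target with an $A$-algebra structure, providing two $A$-algebra maps out of $\mathbf{1}_z$ and $\mathbf{1}_x$, and exhibiting a homotopy relating their restrictions along the structure map from $A$.

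Next I would endow the target $\+_y\Baar_z(A) \otimes \+_x\Baar_y(A)$ with the $A$-algebra structure induced by the left factor through the $z$-augmentation (an alternative through the right factor via $x$ will be seen below to produce an equivalent structure). The two arms of the pushout will be the evident inclusions into the respective relative tensor products, composed with the unit of the other factor:
\[ \mathbf{1}_z \hookrightarrow \mathbf{1}_z \otimes_A \mathbf{1}_y = \+_y\Baar_z(A) \xrightarrow{\mathrm{id} \otimes 1} \+_y\Baar_z(A) \otimes \+_x\Baar_y(A), \]
\[ \mathbf{1}_x \hookrightarrow \mathbf{1}_y \otimes_A \mathbf{1}_x = \+_x\Baar_y(A) \xrightarrow{1 \otimes \mathrm{id}} \+_y\Baar_z(A) \otimes \+_x\Baar_y(A). \]

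The hard part will be exhibiting the required homotopy on $A$, i.e.\ identifying the two compositions $A \xrightarrow{z} \mathbf{1}_z \to \+_y\Baar_z(A) \otimes \+_x\Baar_y(A)$ and $A \xrightarrow{x} \mathbf{1}_x \to \+_y\Baar_z(A) \otimes \+_x\Baar_y(A)$. My expectation is that the intermediate augmentation $y$ provides the needed bridge: inside $\+_y\Baar_z(A) = \mathbf{1}_z \otimes_A \mathbf{1}_y$ the structure map from $A$ admits, by the pushout square, a canonical homotopy between the factorization through $\mathbf{1}_z$ via $z$ and the factorization through $\mathbf{1}_y$ via $y$, and similarly $\+_x\Baar_y(A) = \mathbf{1}_y \otimes_A \mathbf{1}_x$ identifies the maps through $\mathbf{1}_x$ via $x$ and through $\mathbf{1}_y$ via $y$. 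Concatenating these homotopies, both compositions become canonically equivalent to the common map $A \xrightarrow{y} \mathbf{1} \to \+_y\Baar_z(A) \otimes \+_x\Baar_y(A)$ obtained from the unit of the symmetric monoidal structure, which supplies the sought coherence. The universal property of the pushout then yields the desired morphism $\+^{}_x\Delta_z^y$, and naturality in $A$, $x$, $y$, $z$ follows by functoriality of each step of the construction.
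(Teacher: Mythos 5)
Your proof is correct, but it takes a genuinely different route from the paper. The paper constructs $\+^{}_x\Delta_z^y$ directly as a chain of equivalences and canonical maps: insert $A$ via $\mathbf{1}_z \otimes_A \mathbf{1}_x \simeq \mathbf{1}_z \otimes_A A \otimes_A \mathbf{1}_x$, map $A \to \mathbf{1}_y$ in the middle slot, and then rewrite the triple relative tensor product $\mathbf{1}_z \otimes_A \mathbf{1}_y \otimes_A \mathbf{1}_x$ as $(\mathbf{1}_z \otimes_A \mathbf{1}_y) \otimes_{\mathbf 1} (\mathbf{1}_y \otimes_A \mathbf{1}_x)$ by associativity, commutativity, and change of base algebra along $\mathbf{1}_y \simeq \mathbf{1}$ (citing Lurie's HA, Section 4.5.3). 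You instead map \emph{out of} the coproduct $\mathbf{1}_z \sqcup_A \mathbf{1}_x$ in $\CAlg_A(\mathcal{C})$ by exhibiting the two arm maps and the coherence data. The key observation underlying your approach — that the various structure maps $A \to \+_y\Baar_z(A) \otimes \+_x\Baar_y(A)$ induced by $z$, by $x$, or by $y$ on either side all agree, because each one factors through $\mathbf{1}_y \simeq \mathbf{1}$ which is initial in $\CAlg(\mathcal{C})$ — is exactly what the paper buries inside the ``change of base algebra'' step. Your approach is perhaps more conceptual (it makes visible \emph{why} the middle augmentation $y$ is what glues the two halves together), whereas the paper's direct construction is shorter and makes the map's formula explicit, which is then reused in the subsequent lemmas \ref{medlem}, \ref{bar hopf}, and \ref{lastlem}; for instance the coassociativity check in Lemma \ref{lastlem} proceeds by comparing the explicit composites, and would require a bit of rephrasing in your formulation. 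One small caveat: your exposition is phrased somewhat heuristically (``My expectation is that...''), and to be fully rigorous you should state at the outset that the $A$-algebra structure on the target is the one obtained by transporting the canonical structure on $\mathbf{1}_z \otimes_A \mathbf{1}_y \otimes_A \mathbf{1}_x$ along the change-of-base equivalence, and then verify that your two arm maps land in $\operatorname{Map}_{\CAlg_A(\mathcal{C})}$ with respect to that fixed structure.
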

\begin{proof}
    The map is built as follows:
    \begin{eqnarray*}
        \!_x\Baar_z(A) & = & \mathbf{1}_z\otimes_A \mathbf{1}_x\\
             & \simeq & \mathbf{1}_z \otimes_A A \otimes_A \mathbf{1}_x\\
             & \rightarrow & \mathbf{1}_z \otimes_A \mathbf{1}_y \otimes_A  \mathbf{1}_x\\
             & \simeq & (\mathbf{1}_z \otimes_A \mathbf{1}_y) \otimes_\mathbf{1} (\mathbf{1}_y \otimes_A \mathbf{1}_x)\\
             & = & \!_y\Baar_z(A) \otimes \!_x\Baar_y(A).
    \end{eqnarray*}
    Note that the line 4 uses both the associativity and commutativity of the relative tensor product and a change of base algebra \cite[Section 4.5.3]{ha}.
\end{proof}

Over a single augmentation $x$, the deconcatenation map is the coproduct map on the bar construction of $A$ over $x$. Note that there is as well a canonical augmentation given by the natural morphism $$\epsilon_x: \mathbf{1}_x \sqcup_A \mathbf{1}_x \rightarrow \mathbf{1}_x.$$

We can readily prove the counit axiom:

\begin{lem}\label{medlem}
    For $x,y$ augmentations of $A$, the following equality $$(\epsilon_y \otimes \mathrm{Id})\circ \+^{}_x\Delta_y^y = \mathrm{Id}$$ and $$(\mathrm{Id} \otimes \epsilon_x)\circ \+^{}_x\Delta_y^x = \mathrm{Id},$$ hold in the homotopy category of $\mathcal{C}$.
\end{lem}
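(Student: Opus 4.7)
The plan is to verify the counit axioms by unravelling both compositions directly from the constructions in Lemma \ref{deco} and from the definition of $\epsilon_y$, then reducing everything to the unit axiom of the commutative $A$-algebra $\mathbf{1}_y$ (resp.\ $\mathbf{1}_x$).

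First I would recall that, as an object of $\mathcal{C}$, $\mathbf{1}_y$ is just the monoidal unit $\mathbf{1}$ endowed with the $A$-algebra structure $y: A \to \mathbf{1}$. Since $\,_y\Baar_y(A) = \mathbf{1}_y \otimes_A \mathbf{1}_y = \mathbf{1}_y \sqcup_A \mathbf{1}_y$ in $\CAlg_A(\mathcal{C})$, the counit $\epsilon_y$ is nothing but the codiagonal/fold map, i.e.\ the multiplication $m_y: \mathbf{1}_y \otimes_A \mathbf{1}_y \to \mathbf{1}_y$ in $\CAlg_A(\mathcal{C})$. Analogously $\epsilon_x = m_x$.

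Next I would unfold the deconcatenation map in the special case $z=y$: by the construction of Lemma \ref{deco}, $\+^{}_x\Delta_y^y$ is the composition
\begin{equation*}
\mathbf{1}_y \otimes_A \mathbf{1}_x \xrightarrow{\;\sim\;} \mathbf{1}_y \otimes_A A \otimes_A \mathbf{1}_x \xrightarrow{\mathrm{Id}\otimes\+y\+\otimes \mathrm{Id}} \mathbf{1}_y \otimes_A \mathbf{1}_y \otimes_A \mathbf{1}_x \xrightarrow{\;\sim\;} (\mathbf{1}_y \otimes_A \mathbf{1}_y) \otimes (\mathbf{1}_y \otimes_A \mathbf{1}_x),
\end{equation*}
where the final isomorphism is the change-of-base identification from \cite[Section 4.5.3]{ha}. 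Postcomposing with $\epsilon_y \otimes \mathrm{Id} = m_y \otimes \mathrm{Id}$ and re-tracing through the change-of-base equivalence, the composite $(\epsilon_y \otimes \mathrm{Id})\circ\+^{}_x\Delta_y^y$ becomes
\begin{equation*}
\mathbf{1}_y \otimes_A \mathbf{1}_x \xrightarrow{\;\sim\;} \mathbf{1}_y \otimes_A A \otimes_A \mathbf{1}_x \xrightarrow{\mathrm{Id}\otimes\+ y \+\otimes \mathrm{Id}} \mathbf{1}_y \otimes_A \mathbf{1}_y \otimes_A \mathbf{1}_x \xrightarrow{m_y \otimes_A \mathrm{Id}} \mathbf{1}_y \otimes_A \mathbf{1}_x.
\end{equation*}
The last two arrows are (tensored on the right by $\mathbf{1}_x$) exactly the composition $m_y \circ (\mathrm{Id}\otimes_A y)$, i.e.\ the multiplication of $\mathbf{1}_y$ precomposed with its unit map. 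The unit axiom for the commutative $A$-algebra $\mathbf{1}_y$ then yields $m_y\circ(\mathrm{Id}\otimes_A y) \simeq \mathrm{Id}_{\mathbf{1}_y}$, so the full composite is homotopic to the identity of $\mathbf{1}_y \otimes_A \mathbf{1}_x$, as required.

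The second identity is entirely symmetric: applying the analogous unfolding to $\+^{}_x\Delta_y^x$ and postcomposing with $\mathrm{Id}\otimes \epsilon_x = \mathrm{Id}\otimes m_x$ reduces the composite to $(\mathrm{Id}\otimes_A m_x)\circ(\mathrm{Id}\otimes_A x\otimes_A \mathrm{Id})$, which is again the identity by the unit axiom of $\mathbf{1}_x$. The only real subtlety, and the step I would be most careful about, is the identification of the natural isomorphism $\mathbf{1}_y\otimes_A\mathbf{1}_y\otimes_A\mathbf{1}_x \simeq (\mathbf{1}_y\otimes_A\mathbf{1}_y)\otimes(\mathbf{1}_y\otimes_A\mathbf{1}_x)$ with the change-of-base equivalence, so that tensoring by $\epsilon_y$ on the left really does collapse to the multiplication $m_y$ of the first two factors; since we only need the equalities in $\mathrm{ho}(\mathcal{C})$, the higher coherences do not concern us here and the verification reduces to the $1$-categorical unit axiom.
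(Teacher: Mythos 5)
Your proof is correct and follows essentially the same route as the paper: unfold $\+^{}_x\Delta_y^y$ as in Lemma \ref{deco}, postcompose with $\epsilon_y\otimes\mathrm{Id}$, retrace the change-of-base identification, and observe that the composite reduces on the $\mathbf{1}_y$-factor to the multiplication precomposed with the unit, which is homotopic to the identity by the unit axiom. The paper phrases the final step as the composite $\mathbf{1}_y\to\mathbf{1}_y\otimes_A\mathbf{1}_y\to\mathbf{1}_y$ being the identity by the algebra structure of $\mathbf{1}_y\otimes_A\mathbf{1}_y$, which is the same observation you make when writing $m_y\circ(\mathrm{Id}\otimes_A y)\simeq\mathrm{Id}_{\mathbf{1}_y}$.
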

\begin{proof}
Let us treat the first case, as the second one is similar. The composition is \begin{eqnarray*}
        \mathbf{1}_y\otimes_A \mathbf{1}_x & \simeq & \mathbf{1}_y \otimes_A A \otimes_A \mathbf{1}_x\\
        & \rightarrow & \mathbf{1}_y \otimes_A \mathbf{1}_y \otimes_A \mathbf{1}_x\\
        & \simeq & (\mathbf{1}_y \otimes_A \mathbf{1}_y) \otimes (\mathbf{1}_y \otimes_A \mathbf{1}_x)\\
        & \rightarrow & \mathbf{1}_y \otimes (\mathbf{1}_y \otimes_A \mathbf{1}_x)\\
        & \simeq & \mathbf{1}_y \otimes_A \mathbf{1}_x
        \end{eqnarray*}
        this morphism is given by the canonical morphism $$\mathbf{1}_x \rightarrow \mathbf{1}_y \otimes_A \mathbf{1}_x$$ and by the morphism on the left component $$\mathbf{1}_y \rightarrow \mathbf{1}_y \otimes_A \mathbf{1}_y \rightarrow \mathbf{1}_y$$ which is the identity by the algebra structure of $\mathbf{1}_y \otimes_A \mathbf{1}_y$.
\end{proof}

Let us now consider the involution $$\!_xs_y: \;_x\Baar_y(A) = \mathbf{1}_y \otimes_A \mathbf{1}_x \rightarrow \mathbf{1}_x \otimes_A \mathbf{1}_y = \;_y\Baar_x(A)$$ exchanging the two augmentations. Then, we can check the coinverse axiom. This proves in particular that $\+_x\Baar_x(A)$ is a homotopy Hopf algebra for all augmentations $x$ of $A$.

\begin{lem}\label{bar hopf}
    The algebras $\+_x\Baar_x(A)$, $\+_x\Baar_y(A)$ and $\+_y\Baar_x(A)$ equipped with the deconcatenation $\+_x^{}\Delta_x^y$, product $\+_x\nabla_y$, unit $\+_x\eta_y: \mathbf{1} \rightarrow \+_x\Baar_y(A)$, counit $\epsilon_x: \+_x\Baar_x(A) \rightarrow \mathbf{1}$ and involution $\+_xs_y$  fit in the homotopy commutative diagrams:\begin{center}\begin{tikzcd}[row sep=large, column sep=tiny]
        & \+_y\Baar_x(A) \otimes \+_x\Baar_y(A) \arrow[rr] && \+_x\Baar_y(A) \otimes \+_x\Baar_y(A) \arrow[rd] & \\
        \+_x\Baar_x(A) \arrow[ru] \arrow[rr] & & \mathbf{1} \arrow[rr] && \+_x\Baar_y(A)\\
        
    \end{tikzcd}\end{center}
    and 
    \begin{center}
        \begin{tikzcd}[row sep=large, column sep= tiny]
         \+_x\Baar_x(A) \arrow[rr] \arrow[rd] & & \mathbf{1} \arrow[rr] && \+_y\Baar_x(A)\\
             & \+_y\Baar_x(A) \otimes \+_x\Baar_y(A) \arrow[rr] && \+_y\Baar_x(A) \otimes \+_y\Baar_x(A) \arrow[ru] & 
        \end{tikzcd}
    \end{center}
\end{lem}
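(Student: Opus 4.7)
The plan is to prove both diagrams by direct unwinding of the structure maps using the explicit descriptions from Lemmas \ref{premlem}, \ref{deco}, and \ref{medlem}, and then verifying in the homotopy category of $\mathcal{C}$ that the two composites coincide. Both diagrams assert a coinverse identity of Hopf-algebroid type: the first reads
\[
\+_x\nabla_y \circ (\+_ys_x \otimes \mathrm{Id}) \circ \+_x^{}\Delta_x^y \;=\; \+_x\eta_y \circ \epsilon_x,
\]
and the second is its mirror obtained by exchanging the roles of the two tensor factors. By the commutativity of the symmetric monoidal structure on $\mathrm{Mod}_A(\mathcal{C})$, the second diagram reduces to the first, so I focus on the first.

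Using Lemma \ref{deco}, I would rewrite $\+_x^{}\Delta_x^y$ as the composition
\[
\mathbf{1}_x \otimes_A \mathbf{1}_x \;\simeq\; \mathbf{1}_x \otimes_A A \otimes_A \mathbf{1}_x \;\xrightarrow{\mathrm{Id} \otimes y \otimes \mathrm{Id}}\; \mathbf{1}_x \otimes_A \mathbf{1}_y \otimes_A \mathbf{1}_x \;\simeq\; (\mathbf{1}_x \otimes_A \mathbf{1}_y) \otimes (\mathbf{1}_y \otimes_A \mathbf{1}_x),
\]
observe that $\+_ys_x$ is the symmetric monoidal braiding of $\mathrm{Mod}_A(\mathcal{C})$, and use that $\+_x\nabla_y$ is the multiplication on the pushout $\mathbf{1}_y \sqcup_A \mathbf{1}_x$ in $\CAlg_A(\mathcal{C})$, which is induced componentwise by the algebra structures on $\mathbf{1}_y$ and $\mathbf{1}_x$. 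The key observation is that these structure maps $\mathbf{1}_y \otimes_A \mathbf{1}_y \to \mathbf{1}_y$ and $\mathbf{1}_x \otimes_A \mathbf{1}_x \to \mathbf{1}_x$ are simply the canonical identifications of $\mathbf{1}$ with itself (post-composed with the respective augmentations). Combined with the fact that the middle $A$ factor introduced by $\+_x^{}\Delta_x^y$ is sent by $y$ to $\mathbf{1}$, after applying the braiding and regrouping the tensor factors the entire top composite factors as
\[
\mathbf{1}_x \otimes_A \mathbf{1}_x \;\xrightarrow{\epsilon_x}\; \mathbf{1} \;\longrightarrow\; A \;\longrightarrow\; \mathbf{1}_y \otimes_A \mathbf{1}_x,
\]
which is exactly $\+_x\eta_y \circ \epsilon_x$ by the definitions of $\epsilon_x$ (as the multiplication $\mathbf{1}_x \sqcup_A \mathbf{1}_x \to \mathbf{1}_x \to \mathbf{1}$) and $\+_x\eta_y$ (as the unit morphism $\mathbf{1} \to A \to \mathbf{1}_y \sqcup_A \mathbf{1}_x$).

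The main obstacle is the coherent bookkeeping of the identification $(\mathbf{1}_x \otimes_A \mathbf{1}_y) \otimes (\mathbf{1}_y \otimes_A \mathbf{1}_x) \simeq \mathbf{1}_x \otimes_A \mathbf{1}_y \otimes_A \mathbf{1}_x$ from Lemma \ref{deco}, which relies on the change of base algebra formalism of \cite[Section 4.5.3]{ha} together with the commutativity of the symmetric monoidal structure, and tracking how the braiding interacts with the pushout multiplication. These equivalences are homotopy coherent rather than strict, but because the statement is only required in $\mathrm{ho}(\mathcal{C})$, the naturality of the symmetric monoidal structure, together with the algebra axioms for $A$ and the universal property of pushouts in $\CAlg_A(\mathcal{C})$, suffice to match the two composites up to homotopy.
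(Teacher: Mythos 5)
Your proof takes essentially the same route as the paper's: you express $\+_x^{}\Delta_x^y$ by inserting the middle $A$ factor and applying $y$, use the braiding for $\+_ys_x$, invoke the change-of-base-algebra identification to reorganize the relative tensor factors, and reduce to the composite $\epsilon_x$ followed by the unit. The paper proves the first diagram by exactly this chain of rewrites and dismisses the second by symmetry, as you do.

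One sentence of your write-up is, however, false as stated and should be removed or corrected: the maps $\mathbf{1}_y\otimes_A\mathbf{1}_y \to \mathbf{1}_y$ and $\mathbf{1}_x\otimes_A\mathbf{1}_x \to \mathbf{1}_x$ are \emph{not} ``canonical identifications of $\mathbf{1}$ with itself.'' The relative tensor product $\mathbf{1}_x\otimes_A\mathbf{1}_x$ is precisely the bar construction $\+_x\Baar_x(A)$, which is not equivalent to $\mathbf{1}$ unless $A\simeq\mathbf{1}$; the map in question is the counit $\epsilon_x$, which is not generally an equivalence. The confusion is between the tensor over $\mathbf{1}$ (where $\mathbf{1}\otimes\mathbf{1}\simeq\mathbf{1}$) and the tensor over $A$. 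Fortunately this misstatement does not enter the actual chain of reductions you perform, since you correctly identify the final factorization as $\+_x\eta_y\circ\epsilon_x$; still, the ``key observation'' should be replaced by the accurate statement that the componentwise multiplication map can be rewritten, via change of base algebra, so that it factors through the single product $\mathbf{1}_x\otimes_A\mathbf{1}_x\to\mathbf{1}_x$, which is exactly the counit — this is the move the paper makes explicitly.
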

\begin{proof}
    We only prove the proposition in the upper diagram since the other case works in the same way.\\
    The upper composition can be described by the sequence \begin{eqnarray*}
        \mathbf{1}_x\otimes_A \mathbf{1}_x & \simeq & \mathbf{1}_x \otimes_A A \otimes_A \mathbf{1}_x\\
        & \rightarrow & \mathbf{1}_x \otimes_A \mathbf{1}_y \otimes_A \mathbf{1}_x\\
        & \simeq & (\mathbf{1}_x \otimes_A \mathbf{1}_y) \otimes (\mathbf{1}_y \otimes_A \mathbf{1}_x)\\
        & \simeq & (\mathbf{1}_y \otimes_A \mathbf{1}_x) \otimes (\mathbf{1}_y \otimes_A \mathbf{1}_x)\\
        & \rightarrow & \mathbf{1}_y \otimes_A \mathbf{1}_x
        \end{eqnarray*}
    where the last arrow is identified with the componentwise product. Note that this last arrow can be described by the change of base ring equivalence:
    $$(\mathbf{1}_y \otimes_A \mathbf{1}_x) \otimes (\mathbf{1}_y \otimes_A \mathbf{1}_x) \simeq \mathbf{1}_y \otimes_A (\mathbf{1}_x \otimes_A \mathbf{1}_x) \rightarrow \mathbf{1}_y \otimes_A \mathbf{1}_x$$ where the last arrow is induced by the product on $\mathbf{1}_x$.
    Then the full composition is homotopic to the composition \begin{eqnarray*}
        \mathbf{1}_x \otimes_A \mathbf{1}_x & \simeq & A \otimes_A \mathbf{1}_x \otimes_A \mathbf{1}_x\\
        & \rightarrow & \mathbf{1}_y \otimes_A \mathbf{1}_x \otimes_A \mathbf{1}_x\\
        & \rightarrow & \mathbf{1}_y \otimes_A \mathbf{1}_x
    \end{eqnarray*}
    where the last arrow is again induced by the product on $\mathbf{1}_x$.
    This is itself identified with the map \begin{eqnarray*}
        \mathbf{1}_x \otimes_A \mathbf{1}_x & \rightarrow & \mathbf{1}_x\\
    & \simeq & A \otimes_A \mathbf{1}_x\\
    & \rightarrow & \mathbf{1}_y \otimes_A \mathbf{1}_x
        \end{eqnarray*}
        and this composition is homotopic to $\+_y\eta_x\circ\epsilon_x$.
        \end{proof}
        
Finally it remains to check the coassociativity axiom:

\begin{lem}\label{lastlem}
    Let $x,y,z,t$ four augmentations of $A$. Then the equality $$(\+^{}_y\Delta_t^z \otimes \mathrm{Id})\circ\+^{}_x\Delta_t^y = (\mathrm{Id} \otimes \+^{}_x\Delta_z^y)\circ\+^{}_x\Delta_t^z$$ holds in the homotopy category of $\mathcal{C}$.
\end{lem}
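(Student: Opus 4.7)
The plan is to unfold both sides of the equation using the explicit description of the deconcatenation map given in the proof of lemma \ref{deco} and show that each composition is canonically homotopic to a common "triple deconcatenation" map. More precisely, I would introduce the auxiliary morphism
\begin{equation*}
\+^{}_x\Delta_t^{y,z} \;:\; \mathbf{1}_t \otimes_A \mathbf{1}_x \;\longrightarrow\; (\mathbf{1}_t \otimes_A \mathbf{1}_z) \otimes (\mathbf{1}_z \otimes_A \mathbf{1}_y) \otimes (\mathbf{1}_y \otimes_A \mathbf{1}_x)
\end{equation*}
obtained by the sequence
\begin{equation*}
\mathbf{1}_t \otimes_A \mathbf{1}_x \simeq \mathbf{1}_t \otimes_A A \otimes_A A \otimes_A \mathbf{1}_x \longrightarrow \mathbf{1}_t \otimes_A \mathbf{1}_z \otimes_A \mathbf{1}_y \otimes_A \mathbf{1}_x,
\end{equation*}
followed by the evident equivalence decomposing the fourfold relative tensor product into an ordinary tensor product of three pointed bar constructions via associativity and commutativity of $\otimes_A$, together with the change of base rings \cite[Section 4.5.3]{ha}.

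Then I would verify that both sides of the identity are homotopic to $\+^{}_x\Delta_t^{y,z}$. For $(\mathrm{Id} \otimes \+^{}_x\Delta_z^y)\circ \+^{}_x\Delta_t^z$, unfolding the definition gives the composite
\begin{equation*}
\mathbf{1}_t \otimes_A \mathbf{1}_x \to \mathbf{1}_t \otimes_A \mathbf{1}_z \otimes_A \mathbf{1}_x \to \mathbf{1}_t \otimes_A \mathbf{1}_z \otimes_A \mathbf{1}_y \otimes_A \mathbf{1}_x,
\end{equation*}
which factors canonically through the insertion $A \otimes_A A \to \mathbf{1}_z \otimes_A \mathbf{1}_y$ inserted in the middle position; this is precisely $\+^{}_x\Delta_t^{y,z}$ after regrouping. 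For $(\+^{}_y\Delta_t^z \otimes \mathrm{Id})\circ \+^{}_x\Delta_t^y$, the same kind of unfolding inserts first $\mathbf{1}_y$ and then $\mathbf{1}_z$, producing the same fourfold relative tensor product and the same regrouping isomorphism. The comparison of the two orders of insertion is controlled by the naturality of the unit $A \to \mathbf{1}_z$ and $A \to \mathbf{1}_y$ and by the commutation of the relative tensor product with itself, both of which are encoded in the symmetric monoidal structure of $\mathrm{Mod}_A(\mathcal{C})$.

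The main (mild) obstacle is bookkeeping: one has to keep track of which factor $A \to \mathbf{1}_w$ is inserted at which position and then verify that the canonical reassociation isomorphisms used to rewrite $\mathbf{1}_t \otimes_A \mathbf{1}_z \otimes_A \mathbf{1}_y \otimes_A \mathbf{1}_x$ in the two different ways agree in the homotopy category. Since we only require the equality in $\mathrm{ho}(\mathcal{C})$ and all the reassociations involved come from the coherences of the symmetric monoidal $\infty$-category structure on $\mathrm{Mod}_A(\mathcal{C})$ established in \cite[Theorem 4.5.2.1]{lurie2008higher}, these coherences give the required homotopy automatically. Thus the argument reduces to the observation that both iterated deconcatenations are two different ways of presenting the same insertion of $\mathbf{1}_z$ and $\mathbf{1}_y$ between $\mathbf{1}_t$ and $\mathbf{1}_x$, which is exactly the content of coassociativity.
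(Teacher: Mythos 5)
Your proposal is correct and follows essentially the same route as the paper's own (very terse) proof: both identify a common "triple insertion" composite $\mathbf{1}_t \otimes_A \mathbf{1}_x \simeq \mathbf{1}_t \otimes_A A \otimes_A A \otimes_A \mathbf{1}_x \to \mathbf{1}_t \otimes_A \mathbf{1}_z \otimes_A \mathbf{1}_y \otimes_A \mathbf{1}_x$ and check that each side is obtained from it by reassociation and change of base algebra. Your version simply spells out the bookkeeping that the paper leaves implicit.
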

\begin{proof}
The lemma is proved by checking that the two compositions are described (up to associativity of the tensor product) by the composition $$\mathbf{1}_t \otimes_A \mathbf{1}_x \simeq \mathbf{1}_t \otimes_A A \otimes_A A \otimes_A \mathbf{1}_x \rightarrow \mathbf{1}_t \otimes_A \mathbf{1}_z \otimes_A \mathbf{1}_y \otimes_A \mathbf{1}_x$$
and then two successive change of base algebras.
\end{proof}
        
\begin{rem}
In their recent work, I. Dan-Cohen and A. Horev proved that \cite[Corollary 8.4.3]{dancohen2024relativetensorproductskoszul} the deconcatenation maps from lemma \ref{deco} endow $\+_{x}\Baar_{y}(A)$ with left (resp. right) coalgebra structure under the coaction of $\+_{x}\Baar_{x}(A)$ (resp. $\+_{y}\Baar_{y}(A)$). This lifts part of the structure described above $\infty$-categorically.
\end{rem}
    
\begin{rem}
Since it is defined as the colimit of a simplicial object in $\mathcal{C}$, the bar construction is a filtered object in $\mathcal{C}$ \cite[Remark 1.2.4.2]{ha}. The filtration is given by the geometric realization of skeletons that we denote for $n \in \mathbb{N}$ by $$\Baar_{\leq n}:= \colim_\Delta\;\mathrm{sk}_n\Baar_\bullet.$$ In a $1$-categorical context, $\!_x\Baar_x(A)$ has a structure of filtered Hopf algebra since the operations we introduced are compatible with the filtration, see \cite{wojtkowiak:hal-01293611}, \cite[Section 3.5]{Javier} for a full description. Furthermore, $\+_x\Baar_y(A)$ has the structure of a filtered comodule under the coactions of $\+_x\Baar_x(A)$ and $\+_y\Baar_y(A)$. We do not know if such a result can be extended to an $\infty$-categorical context, but we will not need it in the rest of this paper, since we will only describe the filtered structure at the level of realizations.
\end{rem}

\begin{exmp}\label{classical}
Let us detail the construction in the classical case of a commutative differential graded algebra $A$, viewed as an object of the $\infty$-category of chain complexes over a field $K$, endowed with two augmentations $x,y: A \rightarrow K$, since this will come in handy in the computation of the realizations of the motivic fundamental groupoid. By \cite[Corollary 3.13]{arakawa2023homotopylimitshomotopycolimits}, the geometric realization of a simplicial object $X_\bullet$ in $\mathcal{M}\mathrm{od}_K$ is given by the total complex associated with the bicomplex $\mathrm{M}(X_\bullet)$ where $\mathrm{M}$ is the Moore complex functor. Hence,
$$\!_x\Baar_y(A) \simeq \mathrm{Tot}\Big(\mathrm{M}(\Baar_A(\mathbf{1}_x,\mathbf{1}_y)_\bullet)\Big)$$
If we use cohomological conventions, then $$\!_x\Baar^n_y(A) \simeq \bigoplus_{n = p - q} \bigoplus_{i_1 + \dots + i_q = p} A^{i_1} \otimes \dots \otimes A^{i_q}.$$ Hence, a degree $n$ element of the bar complex is denoted by $$[\omega_1|\cdots|\omega_q], \text{ with } \omega_i \in A, \text{ and } \sum_i \mathrm{deg}(\omega_i) = n + q.$$
In this construction, the filtration on the bar complex is interpreted as the classical length filtration, that is $\;_x\Baar_y(A)_{\leq n}$ is the subcomplex spanned by elements of the form given above, with $q \leq n$.

\end{exmp}
\subsection{The motivic pointed path space based at tangential basepoints}\label{pointedpathspace}\;\smallskip

We now have all the tools in hand to build the motivic pointed path space based at tangential basepoints. Let $X\in \mathbf{Div}_k$, $\mathbf{x}, \mathbf{y}$ two tangential basepoints of $X$ and $h(\mathbf{x}), h(\mathbf{y}): h(X) \rightarrow \Lambda$ the corresponding motivic tangential augmentations. Then $h(X)$ has the structure of a biaugmented commutative algebra object in $\mathcal{DA}(k,\Lambda)$. We can then apply the bar construction to this datum.  

\begin{defi}\label{pathspace}
 We call motivic pointed path space of $X$ based at $\mathbf{x}$ and $\mathbf{y}$ the algebra object $$\+^{}_\mathbf{x}\mathrm{P}^\mathrm{m}_\mathbf{y}X:= h(\mathbf{1_y}) \otimes_{h(X)} h(\mathbf{1_x}):= \;_{h(\mathbf{x})}\Baar_{h(\mathbf{y})}(h(X))$$  
 This determines a functor
$$\mathbf{Div}^{\mathbf{v}*//}_{k} \rightarrow  \CAlg(\mathcal{DA}^\mathrm{log}(k,\Lambda))$$
\end{defi}

\begin{rem}\label{ordre}
Note that we chose this order for points to be aligned with the convention chosen in \cite{ASENS_2005_4_38_1_1_0,Javier}. However, we noted by $``\+^{}_\mathbf{x}\mathrm{P}^\mathrm{m}_\mathbf{y}X"$ what would be written $``\+^{}_\mathbf{y}\mathrm{P}^\mathrm{m}_\mathbf{x}X"$ in these papers because we wanted this notation to represent the space of paths going from $\mathbf{x}$ to $\mathbf{y}$.
\end{rem}

By the results of the previous subsection, the motivic based path spaces is a highly structured algebra in $\mathcal{DA}^\mathrm{log}(k,\Lambda)$. Precisely, it is a straightforward consequence of theorem \ref{bar hopf} that the following proposition holds:

\begin{prop}\label{truetan}
The family $\{\+^{}_\mathbf{x}\mathrm{P}^m_\mathbf{y}X\}_{\mathbf{x},\mathbf{y}}$ for all tangential basepoints $\mathbf{x},\mathbf{y}$ has the structure of a homotopy Hopf algebroid in $\mathcal{DA}^\mathrm{log}(k,\Lambda)$.
\end{prop}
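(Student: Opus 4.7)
The plan is to derive this proposition directly from Theorem \ref{groupoid}, applied to the commutative algebra object $h(X) \in \CAlg(\mathcal{DA}^\mathrm{log}(k,\Lambda))$ equipped with the family of augmentations $\{h(\mathbf{x})\colon h(X) \to \Lambda\}_\mathbf{x}$ indexed by the tangential basepoints of $X$ and furnished by the motivic tangential augmentation functor of Definition \ref{mottan}. For any pair $\mathbf{x},\mathbf{y}$, the datum of the two augmentations $h(\mathbf{x}), h(\mathbf{y})$ exhibits $h(X)$ as a biaugmented commutative algebra in the sense of Subsection \ref{The bar construction}.

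First I would check that the general setup of Theorem \ref{groupoid} applies in the present context. Since $\mathcal{DA}^\mathrm{log}(k,\Lambda)^\otimes \in \CAlg(\PrL)$, it is a presentable symmetric monoidal $\infty$-category in which the tensor product preserves colimits separately in each variable, so the pointed bar construction and its iterated versions over $h(X)$ are well defined there. By Definition \ref{pathspace}, for each pair $(\mathbf{x},\mathbf{y})$ the object $\+^{}_\mathbf{x}\mathrm{P}^m_\mathbf{y}X$ coincides with $\+_{h(\mathbf{x})}\Baar_{h(\mathbf{y})}(h(X))$, so Theorem \ref{groupoid} produces on the family $\{\+^{}_\mathbf{x}\mathrm{P}^m_\mathbf{y}X\}_{\mathbf{x},\mathbf{y}}$ all the required operations: the multiplications $\+_\mathbf{x}\nabla_\mathbf{y}$, units $\+_\mathbf{x}\eta_\mathbf{y}$, deconcatenation maps $\+^{}_\mathbf{x}\Delta^\mathbf{y}_\mathbf{z}$ for any third augmentation $\mathbf{z}$, involutions $\+_\mathbf{x}s_\mathbf{y}$, and counits $\epsilon_\mathbf{x}$ when $\mathbf{x} = \mathbf{y}$. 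The counit, coinverse and coassociativity axioms then hold in $\mathrm{ho}(\mathcal{DA}^\mathrm{log}(k,\Lambda))$ by direct transport from Theorem \ref{groupoid}, which gives precisely the homotopy Hopf algebroid structure claimed.

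There is no substantive obstacle: the only potentially delicate point is that the augmentations $h(\mathbf{x})$ attached to tangential basepoints must be genuine $\CAlg(\mathcal{DA}^\mathrm{log}(k,\Lambda))$-morphisms $h(X) \to \Lambda$, i.e.\ behave exactly as ordinary augmentations, so that the abstract bar machinery applies without modification. This is precisely what Definition \ref{mottan} provides, and was in fact the motivation for constructing motivic tangential basepoints in the form of augmentations rather than as exotic data. Granted this, the proof of the proposition amounts to restricting the family produced by Theorem \ref{groupoid} from \emph{all} augmentations of $h(X)$ to those of the form $h(\mathbf{x})$.
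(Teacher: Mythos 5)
Your proposal is correct and matches the paper's own (implicit) proof: the paper states that Proposition \ref{truetan} is a straightforward consequence of the bar-construction structure theorem applied to $h(X)$ with the augmentations $h(\mathbf{x})$ furnished by Definition \ref{mottan}. You correctly identify the one point that needs to be guaranteed in advance, namely that each $h(\mathbf{x})$ is a genuine morphism in $\CAlg(\mathcal{DA}^\mathrm{log}(k,\Lambda))$, which is exactly what the motivic tangential augmentation functor provides.
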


\begin{rem}
We draw inspiration from \cite{dancohen2021rationalmotivicpathspaces}, \cite{spitzweck2010derivedfundamentalgroupstate}, and \cite{iwanari2017motivicrationalhomotopytype}, where respectively I. Dan-Cohen and T. Schlank, M. Spitzweck, and I. Iwanari defined the motivic fundamental group in an $\infty$-categorical/model theoretic context.
\end{rem}

\subsubsection{The cosimplicial pointed path space}\;\smallskip \label{The cosimplicial pointed path space}

When $\mathbf{x}, \mathbf{y}$ are ordinary basepoints, as mentioned in the introduction of this article, the definition of the motivic pointed path space in the case of ordinary basepoints as explained by Deligne and Goncharov in \cite{ASENS_2005_4_38_1_1_0} or by Levine and H. Esnault in \cite{esnault2007tatemotivesfundamentalgroup} relies on a translation to the language of algebraic geometry, due to Wojtkowiak in \cite{wojtkowiak:hal-01293611}, of the topological definition of the pointed path space. We generalize this definition in our context, since we will need it to compute the realization of the motivic pointed path space in the next section:

    For $X \in \mathbf{lSch}_k$ let us consider the cosimplicial log scheme given by \begin{center}
        $X^{[0,1]}:= X^{\Delta[1]}.$ 
    \end{center}
    For $n \in \mathbb{N}$, $$(X^{\Delta[1]})^\mathbf{n} = \prod_{s\in \Delta[1]_n}X = X \times X^n \times X,$$
    the i-th coface map for $0 \leq i \leq n+1$ is given by $$(x_0,\dots, x_i, \dots,x_{n+1}) \longmapsto (x_0, \dots, x_i,x_i,x_{i+1}, \dots, x_{n+1})$$
    and the i-th codegeneracy map for $0 \leq i \leq n-1$ by the projection $$X^{i+1} \times X \times X^{n-i} \rightarrow X^{i+1} \times X^{n-i}.$$

\begin{rem}
    $X^{[0,1]}$ comes equipped with a map $$X^{[0,1]} \rightarrow X^{\partial\Delta[1]} = X \times X$$ induced by the inclusion $\partial\Delta[1] \subset \Delta[1]$, where the right term is the constant cosimplicial log scheme. This map is degreewise the projection on the first and last factor.
\end{rem}

\begin{defi}\label{cosimplicial path space general}
For $\mathbf{x},\mathbf{y}$ two tangential basepoints on $X\in \mathbf{Div}_k$, we call \textbf{cosimplicial path space of} $X$ based at $\mathbf{x},\mathbf{y}$ the cosimplicial object of $\+^\mathbf{v}\mathbf{lSch}_k$ defined by the pullback \begin{center}\begin{tikzcd}
\+^{}_{\mathbf{x}}\mathrm{P}^\bullet_{\mathbf{y}}X \arrow[dr, phantom, "\usebox\pullback" , very near start, color=black] \arrow[r] \arrow[d] & X^{[0,1]} \arrow[d]\\
\mathrm{Spec}(k) \arrow[r,"{\mathbf{y},\mathbf{x}}"] & X\times X
\end{tikzcd}\end{center}
\end{defi}

Let us explain how this fiber product is defined in the category of cosimplicial diagrams of log schemes with virtual morphisms between them. Its $n$-cosimplices are $X^n$, the degeneracies are all the projections $X^{n} \rightarrow X^{n-1}$, and the cofaces are the same as the ones the upper description, except for the $0$-th and the $(n+1)$-th, defined by the product of the virtual point $\mathbf{y}$ with the identity of $X^{n+1}$ and by the product of the identity of $X^{n+1}$ with the virtual point $\mathbf{x}$.

\medskip

Since we cannot view this object in a log motivic formalism in a canonical way (there are no category of virtual log motives) we cannot compare the bar construction of $h(X)$ based at $\mathbf{x},\mathbf{y}$ with a \textit{simplicial motive associated with the cosimplicial path space based at} $\mathbf{x},\mathbf{y}$, as in the case with ordinary basepoints. Nevertheless we can check that our definition coincide with the latter in the case with ordinary basepoints.

\begin{prop}\label{path space facile}
    When $\mathbf{x},\mathbf{y}$ are rational points of $\underline{X}\setminus \underline{D}$, there is a natural equivalence in $\CAlg (\mathcal{DA}^\mathrm{log}(k,\Lambda))$: $$\displaystyle\+^{}_\mathbf{x}\mathrm{P}^m_\mathbf{y}X \simeq \colim_{\Delta} h(\+_\mathbf{x}^{}\mathrm{P}^\bullet_\mathbf{y} X).$$
\end{prop}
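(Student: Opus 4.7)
The plan is to identify the simplicial object $h(\+_\mathbf{x}^{}\mathrm{P}^\bullet_\mathbf{y} X)$ in $\CAlg(\DAlog(k,\Lambda))$ with the bar simplicial object $\Baar_{h(X)}(\mathbf{1}_\mathbf{y},\mathbf{1}_\mathbf{x})_\bullet$, whose colimit over $\Delta$ is by definition $\+^{}_\mathbf{x}\mathrm{P}^m_\mathbf{y} X$.

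First, since $\mathbf{x}$ and $\mathbf{y}$ are ordinary morphisms $\mathrm{Spec}(k)\rightarrow X$, the pullback defining the cosimplicial path space is computed degreewise. From the explicit description $(X^{[0,1]})^n\simeq X\times X^n\times X$ recalled in subsection \ref{The cosimplicial pointed path space}, pinning the first and last factors to $\mathbf{y}$ and $\mathbf{x}$ gives $(\+_\mathbf{x}^{}\mathrm{P}^\bullet_\mathbf{y} X)^n\simeq X^n$. The cofaces split into three types: the $0$-th and $(n+1)$-th insert $\mathbf{y}$ and $\mathbf{x}$ respectively, while the intermediate ones duplicate a coordinate (diagonals of $X$ at internal positions); the codegeneracies are projections forgetting an internal coordinate.

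Applying the cohomological motive functor $h$, which is contravariant and lands in $\CAlg(\DAlog(k,\Lambda))$, produces a simplicial object. Identifying $h(X^n)\simeq h(X)^{\otimes n}$ via Künneth --- which follows from the projection formula for $f_\sharp$ in the six-functor formalism of \cite{park2024motivicsixfunctorformalismlog}, or equivalently by transporting the classical statement along the equivalence $\DAlog(k,\Lambda)\simeq \mathcal{DA}(k,\Lambda)$ of Proposition \ref{log motives motives} --- the face maps become $h(\mathbf{y})\otimes\mathrm{id}^{\otimes n}$ and $\mathrm{id}^{\otimes n}\otimes h(\mathbf{x})$ at the boundaries and the multiplication $h(X)\otimes h(X)\rightarrow h(X)$ on two adjacent factors in the middle, while the degeneracies become insertions of the unit $\Lambda\rightarrow h(X)$. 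This is exactly the structure of $\Baar_{h(X)}(\mathbf{1}_\mathbf{y},\mathbf{1}_\mathbf{x})_\bullet$, so passing to the colimit over $\Delta$ yields the claimed equivalence.

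The main difficulty is to lift this identification to an equivalence of simplicial objects, rather than merely a degreewise equivalence, in the $\infty$-categorical sense: the Künneth isomorphisms $h(X^n)\simeq h(X)^{\otimes n}$ must be compatible with all structure maps in a homotopy-coherent way. A clean way to package this is to observe that both simplicial objects are functorially obtained from the augmented commutative algebra $(h(X),h(\mathbf{x}),h(\mathbf{y}))\in\CAlg(\DAlog(k,\Lambda))^{//\Lambda}$, and then to match them as outputs of universal constructions. This reduces the coherence problem to functoriality of the bar construction and of the Künneth equivalence, avoiding a simplex-by-simplex verification.
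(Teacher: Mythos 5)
Your approach is sound in outline but differs from the paper's: the paper reduces the statement to (a) the observation, made in Remark~\ref{usual}, that when $\mathbf{x},\mathbf{y}$ are rational points of $\underline{X}\setminus\underline{D}$ the motivic tangential augmentations $h(\mathbf{x}),h(\mathbf{y})$ of Definition~\ref{mottan} coincide with the usual augmentations $h(x),h(y)$, and then (b) a citation of \cite[Theorem~A.7]{iwanari2017motivicrationalhomotopytype} (or alternatively the Betti-side argument after Theorem~\ref{thm Betti real pi}, which works via the relative tensor product over $h(X)\otimes h(X)$ and the fact that $h(X^{[0,1]})$ is a resolution of $h(X)$) for the classical identification of the bar simplicial object with $h(\+_\mathbf{x}^{}\mathrm{P}^\bullet_\mathbf{y} X)$. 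You instead aim to prove (b) directly by a degreewise Künneth identification, which is a legitimate and more self-contained route.

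Two issues deserve attention. First, you never address step (a): the motivic pointed path space $\+^{}_\mathbf{x}\mathrm{P}^m_\mathbf{y}X$ is defined using the \emph{tangential} augmentations $h(\mathbf{x}),h(\mathbf{y})$ obtained from the pointed-diagram construction, and it is a nontrivial (if easy) verification that these agree, up to homotopy, with the ordinary augmentations coming from the rational points $x,y$; this is exactly the content of Remark~\ref{usual}, and without invoking it your identification of the bar faces with $h(\mathbf{y})\otimes\mathrm{id}$ and $\mathrm{id}\otimes h(\mathbf{x})$ is not justified from the paper's definitions. Second, the coherence problem you flag — upgrading the level-wise Künneth equivalences $h(X^n)\simeq h(X)^{\otimes n}$ to a map of simplicial objects in $\CAlg(\DAlog(k,\Lambda))$ — is the genuinely technical heart of the matter, and your closing paragraph only sketches a strategy ("match them as outputs of universal constructions") without actually producing the comparison. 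The paper does not reprove this either, but it outsources it to a specific reference; if you want your argument to stand alone you would need to make precise how the symmetric monoidal structure of $h$ on the Čech conerve of the diagonal $X\to X\times X$ yields, functorially in $(h(X),h(\mathbf{x}),h(\mathbf{y}))\in\CAlg(\DAlog(k,\Lambda))^{//\Lambda}$, the bar simplicial diagram. As written, the proposal correctly identifies all the pieces but leaves the two key verifications (the augmentation comparison and the simplicial coherence) as gestures rather than proofs.
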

\begin{proof}
Once we check that the motivic augmentations associated with $\mathbf{x},\mathbf{y}$ are the usual augmentation associated with the corresponding rational points $x,y$ which is done in remark \ref{usual}, this is a classical fact, proved in \cite[Theorem A.7]{iwanari2017motivicrationalhomotopytype} for instance, or see the same proof applied in the Betti case in the discussion following theorem \ref{thm Betti real pi}.
\end{proof}

\subsection{The motivic fundamental groupoid at tangential basepoints}\label{fundamentalgroup}\;\smallskip

We can now define the motivic fundamental groupoid based at tangential basepoints. Although we work in the context of mixed Tate motives, what follows works in a similar fashion in the general case of the existence of a motivic $t$-structure.

Recall that the triangulated category of mixed Tate motives over a field $k$ is classically defined to be the smallest triangulated full subcategory of the triangulated category of mixed motives \textit{à la Voevodsky}  $\mathbf{DM}(k,\mathbb{Q})$ \cite{Voevodsky+2000+188+238} generated by the Tate objects $\mathbb{Q}(n)$ for $n\in \mathbb{Z}$. In the language of $\infty$-categories, the right definition is (see \cite[Section 6.2]{iwanari2017motivicrationalhomotopytype} for details):

\begin{defi}
    Let $\mathcal{DM}(k,\mathbb{Q})$ be the stable $\infty$-category of mixed motives over $k$ with $\mathbb{Q}$ coefficients (see for instance \cite{iwanari2017motivicrationalhomotopytype,park2023mathbba1homotopytheorylogschemes} for a definition). Then we let $\mathcal{DMT}_\mathrm{gm}(k,\mathbb{Q})$ be the smallest stable subcategory of $\mathcal{DM}(k,\mathbb{Q})$ generated by the Tate objects $\mathbb{Q}(n)$. The stable $\infty$-category of mixed Tate motives over $k$ is the full subcategory of $\mathcal{DM}(k,\mathbb{Q})$ defined by $$\mathcal{DMT}(k,\mathbb{Q}):= \mathrm{Ind}\mathcal{DMT}_\mathrm{gm}(k,\mathbb{Q}).$$ 
\end{defi}

Suppose now that $k$ satisfies the Beilinson--Soulé vanishing conjecture, $\mathcal{DMT}(k,\mathbb{Q})$ and $\mathcal{DMT}_\mathrm{gm}(k,\mathbb{Q})$ come with a t-structure \cite{Levine1993}, \cite[Section 7]{iwanari2013barconstructiontannakization} (defined on the homotopy categories see \cite[Definition 1.2.1.4]{ha}). The heart of $\mathcal{DMT}_\mathrm{gm}(k,\mathbb{Q})$ is the neutral tannakian (1-)category of mixed Tate motives over $k$, denoted by $\mathbf{MT}(k,\mathbb{Q})$. There is a $0$-cohomology functor $$\mathrm{H}_\mathrm{mot}^0 : \mathcal{DMT}_\mathrm{gm}(k,\mathbb{Q}) \rightarrow \mathbf{MT}(k,\mathbb{Q}).$$ The heart of the $t$-structure on $\mathcal{DMT}(k,\mathbb{Q})$ is $\mathrm{Ind}\mathbf{MT}(k,\mathbb{Q})$ and the corresponding $0$-cohomology functor $\mathcal{DMT}(k,\mathbb{Q})$ is the indization of $\mathrm{H}^0_\mathrm{mot}$.

\medskip

Recall as well from \cite[Theorem 16.2.18, Theorem 16.1.4]{Cisinski_2019} that there is an equivalence of symmetric monoidal $\infty$-categories $$\mathcal{DA}_{\text{ét}}(k,\mathbb{Q})^\otimes \simeq\mathcal{DM}(k,\mathbb{Q})^\otimes$$
so that $\mathcal{DMT}(k,\mathbb{Q})$ can be viewed as a full subcategory of $\mathcal{DA}_\text{ét}(k,\mathbb{Q})$.

\begin{defi}
    We say that a log smooth log scheme $X$ over $k$ is of mixed Tate type if $M^{\text{ét}}_k(X) \in \mathcal{DMT}(k,\mathbb{Q})$.
\end{defi}

If $X \in \mathbf{Div}_k$ then the structure of homotopy Hopf algebroid of the family of motivic path spaces at tangential basepoint is preserved after étale localization, i.e. after applying the functor induced by étale sheafification. Furthermore, for $\mathbf{x},\mathbf{y}$ tangential basepoints on $X$, the motivic path space based at $\mathbf{x},\mathbf{y}$ is an element of $\mathcal{DMT}(k,\mathbb{Q})$ by definition. We now show that taking its $0$-th cohomology group carries the whole structure to $\mathrm{Ind}\mathbf{MT}(k,\mathbb{Q})$:

\begin{thm}
    The family of $\mathrm{Ind}\mathbf{MT}(k,\mathbb{Q})$ objects $\{\mathrm{H}^0_\mathrm{mot}((\+^{}_\mathbf{x}\mathrm{P}_\mathbf{y}^mX)^\text{ét})\}_{\mathbf{x},\mathbf{y}}$ has the structure of a Hopf algebroid deduced from the structure of theorem \ref{groupoid}, i.e. the corresponding family in the opposite category $\mathrm{Ind}\mathbf{MT}(k,\mathbb{Q})^\mathrm{op}$ has a structure of a groupoid.
\end{thm}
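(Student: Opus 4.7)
The plan is to transport the homotopy Hopf algebroid structure of Theorem \ref{groupoid} through two functors: étale sheafification and the $t$-structure truncation. First, the étale localization $\mathrm{L}_\text{\'et} : \mathcal{DA}^\mathrm{log}(k,\mathbb{Q}) \to \mathcal{DA}^\mathrm{log}_\text{\'et}(k,\mathbb{Q})$ is a symmetric monoidal left adjoint, so it preserves commutative algebra objects, relative tensor products, and therefore the bar construction and all the structure morphisms $\{\+_\mathbf{x}\nabla_\mathbf{y}, \+_\mathbf{x}\eta_\mathbf{y}, \+^{}_\mathbf{x}\Delta_\mathbf{z}^\mathbf{y}, \+_\mathbf{x}s_\mathbf{y}, \epsilon_\mathbf{x}\}$ constructed in Subsection \ref{The bar construction}. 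Via the equivalence $\mathcal{DA}^\mathrm{log}_\text{\'et}(k,\mathbb{Q}) \simeq \mathcal{DA}_\text{\'et}(k,\mathbb{Q}) \simeq \mathcal{DM}(k,\mathbb{Q})$, the family $\{(\+_{\mathbf{x}}^{}\mathrm{P}^m_\mathbf{y}X)^\text{\'et}\}_{\mathbf{x},\mathbf{y}}$ thus inherits a homotopy Hopf algebroid structure in $\mathcal{DM}(k,\mathbb{Q})$; since $X$ is assumed of mixed Tate type and $\mathcal{DMT}(k,\mathbb{Q})$ is stable under small colimits and tensor products, this family lands in $\mathcal{DMT}(k,\mathbb{Q})$.

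Next I would apply the cohomology functor $\mathrm{H}^0_\mathrm{mot} : \mathcal{DMT}(k,\mathbb{Q}) \to \mathrm{Ind}\mathbf{MT}(k,\mathbb{Q})$. Because the motivic $t$-structure on $\mathcal{DMT}(k,\mathbb{Q})$ is compatible with the symmetric monoidal structure, the truncation $\tau_{\leq 0}$ and hence $\mathrm{H}^0_\mathrm{mot}$ are lax symmetric monoidal; they therefore send the commutative algebra structure $(\+_\mathbf{x}\nabla_\mathbf{y}, \+_\mathbf{x}\eta_\mathbf{y})$ to a bona fide commutative algebra structure on $\mathrm{H}^0_\mathrm{mot}((\+^{}_\mathbf{x}\mathrm{P}^m_\mathbf{y}X)^\text{\'et})$ in the abelian category $\mathrm{Ind}\mathbf{MT}(k,\mathbb{Q})$. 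To transport the deconcatenation $\+^{}_\mathbf{x}\Delta_\mathbf{z}^\mathbf{y}$, the counit $\epsilon_\mathbf{x}$, and the involution $\+_\mathbf{x}s_\mathbf{y}$, whose targets or sources involve external tensor products of path spaces, I would establish a K\"unneth property: the lax constraint
\[
\mathrm{H}^0_\mathrm{mot}(\+^{}_\mathbf{y}\mathrm{P}^m_\mathbf{z}X) \otimes \mathrm{H}^0_\mathrm{mot}(\+^{}_\mathbf{x}\mathrm{P}^m_\mathbf{y}X) \longrightarrow \mathrm{H}^0_\mathrm{mot}\bigl(\+^{}_\mathbf{y}\mathrm{P}^m_\mathbf{z}X \otimes \+^{}_\mathbf{x}\mathrm{P}^m_\mathbf{y}X\bigr)
\]
is an isomorphism. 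This is where the Beilinson--Soul\'e vanishing conjecture is used essentially: it guarantees that the motives $(\+^{}_\mathbf{x}\mathrm{P}^m_\mathbf{y}X)^\text{\'et}$ live in nonnegative degrees of the motivic $t$-structure (through a bar-spectral-sequence argument applied to the simplicial presentation of the bar construction), so that the higher Tor contributions to the tensor product vanish.

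Finally, once the K\"unneth isomorphism is in hand, every morphism from Theorem \ref{groupoid} admits a unique image under $\mathrm{H}^0_\mathrm{mot}$, and the homotopy counit, coinverse and coassociativity identities of that theorem, which hold in $\mathrm{ho}(\mathcal{DMT}(k,\mathbb{Q}))$, descend to genuine identities in $\mathrm{Ind}\mathbf{MT}(k,\mathbb{Q})$ because $\mathrm{H}^0_\mathrm{mot}$ factors through the homotopy category and the target is an honest $1$-category. This gives the family $\{\mathrm{H}^0_\mathrm{mot}((\+^{}_\mathbf{x}\mathrm{P}^m_\mathbf{y}X)^\text{\'et})\}_{\mathbf{x},\mathbf{y}}$ the structure of a Hopf algebroid in the neutral Tannakian category $\mathrm{Ind}\mathbf{MT}(k,\mathbb{Q})$, as required. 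The main obstacle is really the second step, verifying the K\"unneth property for the motivic pointed path spaces; I expect this to reduce to a concrete connectivity estimate for the skeleta of the bar construction, using that $h(X)^\text{\'et}$ is a connective mixed Tate algebra under Beilinson--Soul\'e's vanishing.
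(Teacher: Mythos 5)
Your proposal follows essentially the same overall strategy as the paper: note that the only obstruction is the failure of $\mathrm{H}^0_\mathrm{mot}$ to be symmetric monoidal, reduce to establishing a K\"unneth isomorphism for the relevant tensor products, observe that the motivic $t$-structure on $\mathcal{DMT}_\mathrm{gm}(k,\mathbb{Q})$ is compatible with the monoidal structure (so the K\"unneth spectral sequence exists and, over $\mathbb{Q}$, has no Tor problem), and then reduce further to showing that each $(\+^{}_\mathbf{x}\mathrm{P}_\mathbf{y}^mX)^\text{\'et}$ is concentrated in nonnegative cohomological degrees. The one place where you deviate is in how you prove that last concentration statement: you propose an intrinsic spectral-sequence/connectivity argument on the skeletal filtration of the simplicial bar construction, using Beilinson--Soul\'e to get coconnectivity of $h(X)^\text{\'et}$, whereas the paper instead argues via conservativity of the Betti realization on $\mathcal{DMT}(k,\mathbb{Q})$ and the explicit cochain description of $\mathrm{R}_\mathrm{B}(\+^{}_\mathbf{x}\mathrm{P}_\mathbf{y}^mX)_{\leq n}$ already established in Section~\ref{Bettikato}. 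Your route is plausible but not carried out: a naive connectivity estimate on $h(X)^{\otimes n}[n]$ fails (the shifts push things arbitrarily far into negative degrees), and you must pass to the reduced bar complex, which requires knowing $\mathrm{H}^0_\mathrm{mot}(h(X)^\text{\'et}) = \mathbb{Q}$ (i.e.\ $X$ geometrically connected) so that the augmentation ideal sits in degrees $\geq 1$. The paper's detour through the Betti realization avoids that intrinsic bookkeeping at the cost of invoking conservativity, and is the more economical choice given that the explicit Betti description was already needed elsewhere.
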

\begin{proof}
The only potential obstruction to this lies in the fact that the functor $\mathrm{H}^0_\mathrm{mot}$ is not symmetric monoidal. Let us show that for $\mathbf{x,x',y,y'}$ tangential basepoints on $X$ it satisfies $$\mathrm{H}^0_\mathrm{mot}((\+^{}_\mathbf{x}\mathrm{P}_\mathbf{x'}^mX)^{\text{ét}}\otimes (\+^{}_\mathbf{y}\mathrm{P}_\mathbf{y'}^mX)^{\text{ét}}) \simeq \mathrm{H}^0_\mathrm{mot}((\+^{}_\mathbf{x}\mathrm{P}_\mathbf{x'}^mX)^{\text{ét}})\otimes \mathrm{H}^0_\mathrm{mot}((\+^{}_\mathbf{y}\mathrm{P}_\mathbf{y'}^mX)^{\text{ét}})).$$ Which naturally implies the theorem. We know from \cite[Example 3.3.2]{BIGLARI2007645} that the $t$-structure on $\mathcal{DMT}_\mathrm{gm}(k,\mathbb{Q})$ is compatible with the tensor structure, so that the associated family of cohomology functors $\mathrm{H}^i_\mathrm{mot}$ satisfies the Künneth formula. This implies that the family $\mathrm{H}^i_\mathrm{mot}$ satisfies the Künneth formula as well, since the functors $\mathrm{H}^i_\mathrm{mot}$ commute with filtered colimits.

It is now clear that it suffices to show that for $\mathbf{x},\mathbf{y}$ tangential baspoints on $X$, the cohomology in negative degrees of $\+^{}_\mathbf{x}\mathrm{P}_\mathbf{y}^mX$ is zero. It suffices to check this for the filtered pieces $\+^{}_\mathbf{x}\mathrm{P}_\mathbf{y}^mX_{\leq n}$. We only need to check that the Betti realization of this motive is concentrated in nonpositive degree because the Betti realization is conservative. This is trivial from the description at the end of section \ref{Bettikato}.
\end{proof}

From this, we are able to give the following definition of the motivic fundamental groupoid at tangential basepoints:

\begin{defi}\label{mixed}
    Let $X \in \mathbf{Div}_k$, with tangential basepoints $\mathbf{x},\mathbf{y}$, and suppose $X$ is of mixed Tate type. Then the motivic fundamental torsor at $\mathbf{x},\mathbf{y}$ is defined by $$\pi_1^m(X,\mathbf{x},\mathbf{y}):= \operatorname{Spec}\mathrm{H}^0_\mathrm{mot}((\+^{}_\mathbf{x}\mathrm{P}_\mathbf{y}^mX)^\text{ét}) \in \mathrm{Pro}\mathbf{MT}(k,\mathbb{Q})-\mathbf{Sch}$$ Where $\mathrm{Pro}\mathbf{MT}(k,\mathbb{Q})-\mathbf{Sch}$ is the opposite of the category of commutative algebra objects in $\mathrm{Ind}\mathbf{MT}(k,\mathbb{Q}).$
\end{defi}

The following proposition is trivial from the proposition \ref{truetan}

\begin{prop}
The family of the motivic fundamental torsors $\pi_1^m(X,\mathbf{x},\mathbf{y})$ for all tangential basepoints $\mathbf{x},\mathbf{y}$ of $X$, equipped with the morphisms duals of the morphisms constructed in theorem \ref{bar hopf}, has the structure of a groupoid.
\end{prop}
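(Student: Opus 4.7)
The plan is to observe that this is a purely formal consequence of the Hopf algebroid structure on cohomology established in the theorem preceding Definition \ref{mixed}, obtained by applying the $\operatorname{Spec}$ functor. Indeed, by definition the category $\mathrm{Pro}\mathbf{MT}(k,\mathbb{Q})$-$\mathbf{Sch}$ is the opposite of $\CAlg(\mathrm{Ind}\mathbf{MT}(k,\mathbb{Q}))$, so $\operatorname{Spec}$ is tautologically an anti-equivalence between these two categories. The statement that the family $\{\mathrm{H}^0_\mathrm{mot}((\+^{}_\mathbf{x}\mathrm{P}^m_\mathbf{y}X)^{\text{ét}})\}_{\mathbf{x},\mathbf{y}}$ carries a Hopf algebroid structure is, by definition, the statement that its image $\{\pi_1^m(X,\mathbf{x},\mathbf{y})\}_{\mathbf{x},\mathbf{y}}$ under this anti-equivalence carries a groupoid structure.

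To spell out the dictionary: the composition of paths $\pi_1^m(X,\mathbf{y},\mathbf{z}) \times \pi_1^m(X,\mathbf{x},\mathbf{y}) \to \pi_1^m(X,\mathbf{x},\mathbf{z})$ is dual to the deconcatenation map $\+^{}_\mathbf{x}\Delta^\mathbf{y}_\mathbf{z}$; the source and target morphisms are dual to the units $\+_\mathbf{x}\eta_\mathbf{y}$; the identity element at a basepoint $\mathbf{x}$, viewed as a morphism into $\pi_1^m(X,\mathbf{x},\mathbf{x})$, is dual to the counit $\epsilon_\mathbf{x}$; and path inversion $\pi_1^m(X,\mathbf{x},\mathbf{y}) \to \pi_1^m(X,\mathbf{y},\mathbf{x})$ is dual to the involution $\+_\mathbf{x}s_\mathbf{y}$. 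The groupoid axioms (associativity and unitality of composition, invertibility of inverses) are precisely the duals of the coassociativity, counit, and coinverse axioms established in Theorem \ref{bar hopf} and transported to $\mathrm{Ind}\mathbf{MT}(k,\mathbb{Q})$ by the preceding theorem.

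There is essentially no obstacle. The only point worth recording is that the axioms of Theorem \ref{bar hopf} are verified only in the homotopy category of $\DAlog(k,\Lambda)$, but this suffices: the target $\mathrm{Ind}\mathbf{MT}(k,\mathbb{Q})$ is an ordinary $1$-category and $\mathrm{H}^0_\mathrm{mot}$ factors through the homotopy category, so homotopy-commutative diagrams map to strictly commutative ones after applying $\mathrm{H}^0_\mathrm{mot}$.
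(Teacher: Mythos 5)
Your proposal is correct and takes essentially the same approach as the paper, which records this proposition as immediate from the Hopf algebroid structure on $\{\mathrm{H}^0_\mathrm{mot}((\+^{}_\mathbf{x}\mathrm{P}^m_\mathbf{y}X)^{\text{ét}})\}_{\mathbf{x},\mathbf{y}}$ established in the preceding theorem, transported along the tautological anti-equivalence $\operatorname{Spec}$; your remark that passing to $\mathrm{H}^0_\mathrm{mot}$, which factors through the homotopy category and lands in a $1$-category, turns homotopy-commutative diagrams into strictly commutative ones is exactly the point being left tacit. One small imprecision in your dictionary that does not affect the argument: the algebra units $\+_\mathbf{x}\eta_\mathbf{y}\colon \mathbf{1} \to \+_\mathbf{x}\Baar_\mathbf{y}(A)$ dualize to the structure morphisms $\pi_1^m(X,\mathbf{x},\mathbf{y}) \to \operatorname{Spec}(\mathbb{Q})$, not to source and target maps; here the objects of the groupoid form a discrete set of tangential basepoints, so there are no source/target morphisms in the scheme-theoretic sense.
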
 

Precisely, for $\mathbf{x},\mathbf{y},\mathbf{z}$ three tangential basepoints of $X$, there is an operation of composition of paths $$\pi^m_1(X,\mathbf{y},\mathbf{z}) \times \pi^m_1(X,\mathbf{x},\mathbf{y}) \rightarrow \pi^m_1(X,\mathbf{x},\mathbf{z})$$ which is associative in the obvious sense,
an isomorphism of inversion of paths $$\pi_1^m(X,\mathbf{x},\mathbf{y}) \simeq \pi_1^m(X,\mathbf{y},\mathbf{x}),$$ which yields an inverse for the composition of paths, and each $$\pi_1^m(X,\mathbf{x}):= \pi_1^m(X,\mathbf{x},\mathbf{x})$$ as the structure of a pro-$\mathbf{MT}(k,\mathbb{Q})$ group scheme. All these data satisfy the axioms dual to the axioms given in theorem \ref{bar hopf}.

We call the family $\{\pi_1^m(X,\mathbf{x},\mathbf{y})\}_{\mathbf{x},\mathbf{y}}$ equipped with the operations described above the \textbf{motivic fundamental groupoid at tangential basepoints.}

\begin{rem}
In particular, for all $\mathbf{x,y}$ tangential basepoints of $X$, $\pi_1^m(X,\mathbf{x},\mathbf{y})$ is endowed with a left (resp. right) action of $\pi_1^m(X,\mathbf{x})$ (resp. $\pi_1^m(X,\mathbf{y})$).
\end{rem}

\newpage
\section{Realizations}\label{realizations}
In this last section, we show that the degree zero cohomology of the Betti and de Rham realizations of the motivic pointed path space recover the expected Betti--de Rham theory of the prounipotent completion of the fundamental torsor at tangential basepoints. To do so, in the first subsection we explain how to build realization functors for $\mathbb{A}^1$-invariants log motives. An approach to do this in an $\infty$-categorical context is to employ the language of mixed Weil cohomology theories. We follow and adapt Drew's work in \cite{drew2018motivichodgemodules}. We compute the Betti (resp. the de Rham) realization functor, and we compute the realizations of the motivic pointed path space in the second subsection (resp. the third). In both cases we find explicit complexes representing this realization. We navigate between the different approaches of tangential basepoints explained in subsection \ref{pointed}, and between $\infty$ and $1$-categories according to our needs. In the last subsection, we compute the periods of the motivic fundamental groupoid via the notion of \textit{regularized iterated integrals}, adapted from regularized integration as defined in \cite{dupont2024regularizedintegralsmanifoldslog}.
\subsection{Realization functors}\label{build}\;\smallskip

In this subsection, $k$ is an arbitrary field. We can naturally generalize the definition of mixed Weil theories given in \cite{drew2018motivichodgemodules} to a log geometric context:

\begin{defi}\label{log Weil}
     A $\Lambda$-linear log mixed Weil theory over $k$ is a symmetric monoidal functor $\mathbf{\Gamma}^\otimes: \mathbf{lSm}_k^{op,\times} \rightarrow \mathcal{M}\mathrm{od}_\Lambda^\otimes$ such that if we let $\mathbf{\Gamma}$ be the underlying functor of $\infty$-categories, the following conditions are satisfied: \begin{itemize}
        \item $\mathbf{\Gamma}$ has its values in $\operatorname{Perf}(\Lambda)$, the sub-$\infty$-category of bounded complexes of finite projective modules.
        \item $\mathbf{\Gamma}$ is a $\mathcal{M}\mathrm{od}_\Lambda$-valued $\mathrm{dNis}$-sheaf.
        \item $\mathbf{\Gamma}^\mathrm{op}: \mathbf{lSm}_k \rightarrow \mathcal{M}\mathrm{od}_\Lambda^\mathrm{op}$ is $\mathbb{A}^1$-invariant and $\mathrm{ver}$-invariant.
        \item $\mathbf{\Gamma}$ sends $\cofib(1\rightarrow \mathbb{G}^\mathrm{log}_m)$ to a $\otimes$-invertible object.
    \end{itemize}
    Furthermore, we say that a log mixed Weil theory is an étale log mixed Weil theory if it is as well a strict étale sheaf.
\end{defi}

Log mixed Weil theories, modeling log cohomology theories, are the starting point to build realization functors. We prove a logarithmic version of the theorem 3.7 in \cite{drew2018motivichodgemodules}. Note that it is a log and $\Lambda$-linear version of \cite[Theorem 2.9]{ROBALO2015399}:

\begin{prop}\label{log linear yoneda}
    The homological motive functor induces a homotopy equivalence of Kan complexes $$\operatorname{Fun}_{\CAlg(\PrLotimes)^{\mathcal{M}\mathrm{od}_\Lambda^\otimes/}}(\DAlog(k,\Lambda)^\otimes,\mathcal{M}\mathrm{od}_\Lambda^\otimes) \xrightarrow{\sim} \operatorname{Fun}_{\CAlg(\Catinf^\times)}(\mathrm{N}(\mathbf{lSm}_k)^\times,\mathcal{M}\mathrm{od}_\Lambda^\otimes)_{\mathrm{P}^\mathrm{log}}.$$
\end{prop}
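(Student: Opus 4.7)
The plan is to realize $\DAlog(k,\Lambda)^\otimes$ as a universal presentable symmetric monoidal $\infty$-category under $\mathcal{M}\mathrm{od}_\Lambda^\otimes$ by stacking the standard universal properties (free cocompletion, Bousfield localization, stabilization), each carried out monoidally. The target side of the equivalence will then automatically be the space of functors satisfying the log motivic conditions, which is precisely the subspace we denote by $\mathrm{P}^\mathrm{log}$.

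More concretely, I would proceed in four steps, mirroring the construction of $\DAlog(k,\Lambda)^\otimes$ recalled in subsection~\ref{log motives}. First, the Yoneda embedding $\mathrm{N}(\mathbf{lSm}_k)^\times \hookrightarrow \mathcal{P}(\mathbf{lSm}_k)^\otimes$ is the free symmetric monoidal cocompletion, so tensoring over $\Spc^\times$ with $\mathcal{M}\mathrm{od}_\Lambda^\otimes$ in $\CAlg(\PrL)$ exhibits $\mathcal{P}(\mathbf{lSm}_k,\mathcal{M}\mathrm{od}_\Lambda)^\otimes$ as the free presentable symmetric monoidal $\mathcal{M}\mathrm{od}_\Lambda^\otimes$-linear $\infty$-category on $\mathrm{N}(\mathbf{lSm}_k)^\times$ (this is the $\Lambda$-linear analogue of \cite[Theorem~2.9]{ROBALO2015399} and the input of Drew's \cite[Theorem~3.7]{drew2018motivichodgemodules}). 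Equivalently, functors of $\mathcal{M}\mathrm{od}_\Lambda^\otimes$-modules out of it correspond to symmetric monoidal functors $\mathrm{N}(\mathbf{lSm}_k)^\times\to \mathcal{M}\mathrm{od}_\Lambda^\otimes$. Second, I apply the universal property of symmetric monoidal Bousfield localization in $\CAlg(\PrL)$ successively at the $\mathrm{dNis}$-, $\mathbb{A}^1$-, and $\mathrm{ver}$-local equivalences; since the three topologies/localizations are compatible with the monoidal structure (the relevant classes are stable under tensoring with representables, see \cite{park2023mathbba1homotopytheorylogschemes}), the resulting presentable symmetric monoidal $\infty$-category corepresents those symmetric monoidal functors from $\mathrm{N}(\mathbf{lSm}_k)^\times$ that are $\mathrm{dNis}$-sheaves, $\mathbb{A}^1$-invariant, and $\mathrm{ver}$-invariant after passing to the underlying $\infty$-category. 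Third, I invert the Tate object $\mathbb{G}_m^{\log}/1$ via the stabilization construction $\mathrm{Stab}_{\mathbb{G}_m^\mathrm{log}/1}$ and invoke Robalo's universal property of symmetric monoidal stabilization \cite[Corollary~2.22]{ROBALO2015399}: mapping out of the stabilized category is the same as mapping out of the localized category by a functor sending $\mathbb{G}_m^{\log}/1$ to a $\otimes$-invertible object. Composing these universal properties identifies the left-hand mapping space with the subspace of $\operatorname{Fun}_{\CAlg(\Catinf^\times)}(\mathrm{N}(\mathbf{lSm}_k)^\times,\mathcal{M}\mathrm{od}_\Lambda^\otimes)$ cut out by exactly the four conditions of definition~\ref{log Weil}; this subspace is what $\mathrm{P}^\mathrm{log}$ denotes.

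Finally, I would verify that, under this identification, the forward map is indeed given by precomposition with the (homological) motive functor $M\colon \mathrm{N}(\mathbf{lSm}_k)^\times\to \DAlog(k,\Lambda)^\otimes$: this is immediate since $M$ is by construction the composite of the Yoneda embedding with the successive localizations and the infinite suspension $\Sigma^\infty_{\mathbb{G}_m^\mathrm{log}}$, each of which is the unit map of the corresponding universal property.

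The step I expect to require the most care is the symmetric monoidality of the $\mathrm{ver}$-localization: the $\mathrm{ver}$-topology is not a Grothendieck topology generated by a smooth-closed type structure and one must check that the class of $\mathrm{ver}$-equivalences is stable under tensoring with objects of $\mathbf{lSm}_k$, so that the localization actually lifts to $\CAlg(\PrL)$. Once this compatibility is in hand (it follows from the description of $\mathrm{ver}$-covers and from the fact that tensor products of vertical opens remain vertical), all three localizations fit into a single symmetric monoidal Bousfield localization, and the stabilization step proceeds exactly as in \cite{ROBALO2015399}, yielding the desired equivalence of Kan complexes.
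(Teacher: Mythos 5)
Your proposal is correct, and it arrives at the statement by a route that is genuinely rearranged relative to the paper's. The paper factors the argument through two lemmas: Lemma~\ref{da sh} shows that $\DAlog(k,\Lambda)^\otimes$ is canonically equivalent to $\SHlog(k)^\otimes \otimes \Mod_\Lambda^\otimes$ in $\CAlg(\PrL)$ (by commuting the $\Mod_\Lambda$-tensoring through all the localizations and the stabilization at once, using that these are colimits in $\PrL$ and that the tensor product in $\PrL$ preserves colimits in each variable); then Lemma~\ref{linear yoneda} establishes the Robalo-style universal property for the \emph{unlinearized} $\SHlog(k)^\otimes$; finally the adjunction $\CAlg(\PrLotimes)\rightleftarrows\CAlg(\PrLotimes)^{\Mod_\Lambda^\otimes/}$ pastes the two together. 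You instead linearize at the very first step (tensoring the free cocompletion with $\Mod_\Lambda^\otimes$) and then run the $\mathrm{dNis}$/$\mathbb{A}^1$/$\mathrm{ver}$ localizations and the $\mathbb{G}_m^\mathrm{log}$-stabilization entirely inside the $\Mod_\Lambda^\otimes$-linear world, checking monoidality of each localization as you go. Both strategies are sound; yours is more direct but requires distributing the verification that each localization is compatible with $\Mod_\Lambda$-linearity across the steps, whereas the paper concentrates all of that commutativity into the single Lemma~\ref{da sh} and then invokes the existing unlinearized result essentially as a black box. Your flagged concern about symmetric monoidality of the $\mathrm{ver}$-localization is a real one, but it is handled in the cited literature (the $\mathrm{ver}$-topology is a genuine Grothendieck topology on $\mathbf{lSm}_k$, and the associated sheafification lifts to $\CAlg(\PrL)$), so it is a point to cite rather than a gap in the argument. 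One small benefit of the paper's decomposition is that Lemma~\ref{da sh} is stated for arbitrary $S\in\mathbf{lSch}$ and is reused elsewhere, so the modularity pays off beyond this proposition.
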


The subscript $\mathrm{P}^\mathrm{log}$ means the subcomplex generated by the functors $\mathbf{\Gamma}$ such that $\mathbf{\Gamma}^\mathrm{op}$ satisfies the three last points of definition \ref{log Weil}.

\begin{rem}
    The proposition means that the category $\DAlog(k,\Lambda)$ has the universal property that any functor defined from the category of log smooth schemes over $k$ to the category of perfect complexes of $\Lambda$-modules which satisfies $dNis$-descent, which trivializes $\mathbb{A}^1$, $\mathrm{ver}$-covers and $\mathbb{G}_m^\mathrm{log}/1$ factorizes by it. We will use this fact as follows: from a log mixed Weil theory $\mathbf{\Gamma}$, since $\mathrm{Perf}(\Lambda)$ is the sub-$\infty$-category of dualizable objects in $\mathcal{M}\mathrm{od}_\Lambda$, we can consider the functor $$(\mathbf{\Gamma}^\mathrm{op})^\vee: \mathbf{lSm}_k \xrightarrow{\mathbf{\Gamma}^\mathrm{op}} \mathrm{Perf}(\Lambda)^\mathrm{op} \xrightarrow{(-)^\vee} \mathrm{Perf}(\Lambda)\hookrightarrow \mathcal{M}\mathrm{od}_\Lambda.$$ which has the property $\mathrm{P}^\mathrm{log}$ so that it induces a realization functor $$\mathrm{R}^\otimes: \DAlog(k,\Lambda)^\otimes \rightarrow \mathcal{M}\mathrm{od}^\otimes_\Lambda$$ satisfying for $X \in \mathbf{lSm}_k$, $$\mathbf{R}(M(X)) \simeq \mathbf{\Gamma}(X)^\vee.$$
\end{rem}

In order to prove the proposition \ref{log linear yoneda}, we first prove the following, based on scattered remarks in \cite{drew2018motivichodgemodules} (see for instance the proof of proposition 3.14):

\begin{lem}\label{da sh}
    For $S\in \mathbf{lSch}$, there is a canonical equivalence $$\DAlog(S,\Lambda) \simeq \SHlog(S) \otimes \mathcal{M}\mathrm{od}_\Lambda,$$ where the tensor product is taken in $\PrL$.
\end{lem}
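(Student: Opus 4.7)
The plan is to unwind the definition of $\SHlog(S)$ step by step and verify that tensoring with $\mathcal{M}\mathrm{od}_\Lambda$ in $\PrL$ commutes with each of the operations appearing in its construction. These operations are of four types: (i) taking $\Spt$-valued presheaves on $\mathbf{lSm}_S$, (ii) dividing Nisnevich sheafification, (iii) Bousfield localization at $\mathrm{ver}$ and at $\mathbb{A}^1$, and (iv) stabilization at $\mathbb{G}_m^{\mathrm{log}}/1$. Each of these is of a form that commutes with the tensor product in $\PrL$.

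For step (i), the universal property of presheaves in $\PrL$ gives a natural equivalence
$$\mathcal{P}\mathrm{sh}(\mathbf{lSm}_S, \Spt) \otimes \mathcal{M}\mathrm{od}_\Lambda \simeq \mathcal{P}\mathrm{sh}(\mathbf{lSm}_S, \mathcal{M}\mathrm{od}_\Lambda),$$
using that $\mathcal{M}\mathrm{od}_\Lambda$ is a $\Spt$-module in $\PrL$ (indeed, $\Spt$ is the unit of $\PrL^\mathrm{stab}$). For steps (ii) and (iii), I would invoke the standard fact that a Bousfield localization $\Loc{W}$ at a small set of morphisms $W$ satisfies $\Loc{W}(\mathcal{C}) \otimes \mathcal{D} \simeq \Loc{W \otimes \mathcal{D}}(\mathcal{C} \otimes \mathcal{D})$ in $\PrL$, and observe that the generating classes of morphisms for $\mathrm{dNis}$-sheafification, for $\mathrm{ver}$-invariance, and for $\mathbb{A}^1$-invariance are defined purely at the level of $\mathbf{lSm}_S$ and therefore correspond to each other under the identification of step (i). Finally, for (iv), since $\mathrm{Stab}_X$ is computed as a sequential colimit in $\PrL$ and the tensor product in $\PrL$ preserves colimits in each variable, one obtains
$$\mathrm{Stab}_X(\mathcal{C}) \otimes \mathcal{D} \simeq \mathrm{Stab}_{X \otimes \mathbf{1}_\mathcal{D}}(\mathcal{C} \otimes \mathcal{D}).$$
Applied to $X = \mathbb{G}_m^{\mathrm{log}}/1$, this exchanges the two possible stabilization targets; chaining the four resulting equivalences yields the desired identification $\SHlog(S) \otimes \mathcal{M}\mathrm{od}_\Lambda \simeq \DAlog(S,\Lambda)$.

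The main subtlety is not in any single step but in the bookkeeping required so that the final equivalence is promoted to $\CAlg(\PrL)$, i.e.\ is an equivalence of presentable symmetric monoidal $\infty$-categories. This demands that each of the localizations be monoidal (i.e.\ that the inverted classes of maps be closed under tensoring with arbitrary objects) and that stabilization at $\mathbb{G}_m^{\mathrm{log}}/1$ also refines to an operation on $\CAlg(\PrL)$. Both points are already built into the construction of $\DAlog$ and $\SHlog$ as $\CAlg(\PrL)$-valued functors in \cite{park2023mathbba1homotopytheorylogschemes}, so no further verification is needed beyond tracking the monoidal structure through the four compatibilities above.
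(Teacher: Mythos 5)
Your plan—unwind the tower of constructions defining $\SHlog(S)$ and verify that each stage commutes with tensoring by $\Mod_\Lambda$—is the same strategy the paper uses, but the technical packaging differs at two points. Where you split the first stage into presheaves plus $\mathrm{dNis}$-sheafification as a Bousfield localization, the paper invokes Drew's \cite[Proposition 2.4(1)]{drew2018motivichodgemodules} as a single black box giving $\Sh_{\mathrm{dNis}}(\mathbf{lSm}_S,\Spc)\otimes\Mod_\Lambda \simeq \Sh_{\mathrm{dNis}}(\mathbf{lSm}_S,\Mod_\Lambda)$; and where you use the formula $\Loc{W}(\mathcal{C})\otimes\mathcal{D}\simeq\Loc{W\otimes\mathcal{D}}(\mathcal{C}\otimes\mathcal{D})$ for the $\mathrm{ver}$- and $\mathbb{A}^1$-localizations, the paper instead exploits stability of $\Sh_{\mathrm{dNis}}(\mathbf{lSm}_S,\Mod_\Lambda)$ to present these localizations as Verdier quotients (cofibers in $\PrL_{\mathrm{st}}$) and then appeals to colimit preservation of $-\otimes\Mod_\Lambda$; your step (iv) on stabilization is identical. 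The one place you should be more careful is the sheafification step: the paper explicitly flags that sheaves and hypersheaves coincide for the $\mathrm{dNis}$ topology, citing \cite[Remark 2.4.7]{binda2024logarithmicmotivichomotopytheory}. In general, tensoring a category of sheaves with $\Mod_\Lambda$ in $\PrL$ is not automatically the category of $\Mod_\Lambda$-valued sheaves—the discrepancy is measured by hypercompleteness—so some form of this observation is needed to ensure that your Bousfield localization at covering sieves lands on the same object $\Sh_{\mathrm{dNis}}(\mathbf{lSm}_S,\Mod_\Lambda)$ that appears in Park's definition of $\DAlog$. Your closing remark on promoting the equivalence to $\CAlg(\PrL)$ is accurate but beyond what the lemma asserts, which is only a $\PrL$-equivalence.
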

\begin{proof}
    By \cite[Proposition 2.4 (1)]{drew2018motivichodgemodules}, we have the natural equivalence (note that hypersheaves and sheaves coincide here by \cite[Remark 2.4.7]{binda2024logarithmicmotivichomotopytheory}) $$\Sh_{\mathrm{dNis}}(\mathbf{lSm}_S,\mathcal{M}\mathrm{od}_\Lambda) \simeq \Sh_{\mathrm{dNis}}(\mathbf{lSm}_S,\mathcal{S}\mathrm{pc})\otimes \mathcal{M}\mathrm{od}_\Lambda.$$
    Now, the tensor product of presentable categories commutes with small colimits separately in each variables by \cite[4.8.1.24]{ha}, and the inclusion $\PrL_{st} \subset \PrL$ preserves small colimits. Therefore, since localization of stable presentable $\infty$-categories are cofibers in $\PrL_{st}$ (Verdier quotients see \cite[Proposition 5.6]{Blumberg_2013}), the localization commutes with tensor product with $\Mod_\Lambda$: \begin{center}$\begin{array}{ccc}
        \mathrm{L}_{\mathbb{A}^1}\mathrm{L}_{\mathrm{ver}}\Sh_{\mathrm{dNis}}(\mathbf{lSm}_S,\Mod_\Lambda) & \simeq & (\mathrm{L}_{\mathbb{A}^1}\mathrm{L}_{\mathrm{ver}}\Sh_{\mathrm{dNis}}(\mathbf{lSm}_S,\Spc))\otimes \Mod_\Lambda\\
        &\simeq & (\mathrm{L}_{\mathbb{A}^1}\mathrm{L}_{\mathrm{ver}}\Sh_{\mathrm{dNis}}(\mathbf{lSm}_S,\Spc_*))\otimes \Mod_\Lambda 
    \end{array}$\end{center}
    where the last equivalence follows from the fact that $\Mod_\Lambda$ is pointed, see \cite[Definition 3.1 (4)]{drew2018motivichodgemodules}.

Using again the fact that the tensor product in $\PrL$ commutes with small colimits, we have the following natural equivalences: \begin{center}
    $\begin{array}{ccl}
      \DAlog(S,\Lambda)   &  \simeq &  \colim_{\Sigma_{\mathbb{G}_m^\mathrm{log}/1}} \mathrm{L}_{\mathbb{A}^1}\mathrm{L}_{\mathrm{ver}}\Sh_{\mathrm{dNis}}(\mathbf{lSm}_S,\Mod_\Lambda)\\
         & \simeq & (\colim_{\Sigma_{\mathbb{G}_m^\mathrm{log}/1}}\mathrm{L}_{\mathbb{A}^1}\mathrm{L}_{\mathrm{ver}}\Sh_{\mathrm{dNis}}(\mathbf{lSm}_S,\Spc_*)) \otimes \Mod_\Lambda\\
         & \simeq & \SHlog(S) \otimes \Mod_\Lambda
    \end{array}$
\end{center}
which concludes the proof.
\end{proof}

Let us now check the log version of \cite[Theorem 2.39]{ROBALO2015399}, already proven for the category $\mathrm{log}\mathcal{SH}$ in \cite{binda2024logarithmicmotivichomotopytheory}.

\begin{lem}\label{linear yoneda}
    The homological motive functor induces an equivalence of $\infty$-categories $$\operatorname{Map}_{\CAlg(\PrLotimes)}(\SHlog(k)^\otimes,\Mod_\Lambda^\otimes) \xrightarrow{\sim} \operatorname{Map}_{\CAlg(\Catinf^\times)}(\mathrm{N}(\mathbf{lSm}_k)^\times,\Mod_\Lambda^\otimes)_{\mathrm{P}^\mathrm{log}}$$
\end{lem}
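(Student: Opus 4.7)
The plan is to mirror the proof of Robalo's Theorem 2.39 in \cite{ROBALO2015399}, whose log analogue for spaces is established in \cite{binda2024logarithmicmotivichomotopytheory}. The statement follows by unwinding the construction of $\SHlog(k)^\otimes$ as a sequence of universal constructions applied to $\mathrm{N}(\mathbf{lSm}_k)^\times$ inside $\CAlg(\PrLotimes)$, where each step restricts the mapping space on the left-hand side to the subspace of functors satisfying the corresponding condition in $\mathrm{P}^\mathrm{log}$.

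Concretely, I would first invoke the universal property of the presheaf construction endowed with Day convolution (\cite[Proposition 4.8.1.10]{ha}), which yields a natural equivalence
$$\operatorname{Map}_{\CAlg(\PrLotimes)}(\mathcal{P}(\mathbf{lSm}_k)^\otimes, \Mod_\Lambda^\otimes) \simeq \operatorname{Map}_{\CAlg(\Catinf^\times)}(\mathrm{N}(\mathbf{lSm}_k)^\times, \Mod_\Lambda^\otimes).$$
Then I would successively apply the universal property of symmetric monoidal Bousfield localization (\cite[Proposition 4.1.7.4]{ha}) at the classes of maps encoding $\mathrm{dNis}$-descent, $\mathbb{A}^1$-homotopy and $\mathrm{ver}$-invariance, each step cutting the mapping space down to the functors satisfying the corresponding condition. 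The pointing step $\mathcal{P}\mapsto\mathcal{P}_*$ adds no constraint because $\Mod_\Lambda$ is already pointed (being stable). Finally, $\otimes$-stabilization by $\mathbb{G}_m^\mathrm{log}/1$ restricts to those functors sending $\mathrm{cofib}(\mathbf{1}\to\mathbb{G}_m^\mathrm{log})$ to a $\otimes$-invertible object of $\Mod_\Lambda$, by the universal property of $\otimes$-inversion established in \cite[Proposition 4.24]{robalo2013noncommutativemotivesiuniversal}. Composing these equivalences produces the desired one.

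The main obstacle is checking that each of these universal properties lifts to the symmetric monoidal setting $\CAlg(\PrLotimes)$ rather than merely $\PrL$, i.e.\ that every localization is compatible with the tensor structure. For the three topological and homotopical localizations this amounts to checking that the generating classes of maps are closed under tensoring with representables, which is standard for $\mathrm{dNis}$ and $\mathbb{A}^1$ and follows directly from the definition of $\mathrm{ver}$ via products with log smooth log schemes. For the $\mathbb{G}_m^\mathrm{log}/1$-stabilization the key input is Robalo's symmetry criterion, which asserts that the cyclic permutation on $(\mathbb{G}_m^\mathrm{log}/1)^{\otimes 3}$ becomes homotopic to the identity in $\SHlog(k)$; the log version of this criterion is verified in \cite{binda2024logarithmicmotivichomotopytheory} and ensures that the $\otimes$-inversion is realised by a sequential colimit in $\CAlg(\PrLotimes)$, so that the resulting mapping space indeed has the expected description.
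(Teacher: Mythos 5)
Your proof is correct and takes essentially the same approach as the paper: the paper likewise reduces to Robalo's Theorem 2.39 by invoking the Yoneda lemma, the universal property of localization in $\PrL$, and Robalo's universal property of $\otimes$-inversion applied to $\cofib(1\to\mathbb{G}_m^\mathrm{log})$, and it defers to \cite{ROBALO2015399} for the compatibility of each step with the symmetric monoidal structure. You have simply spelled out the intermediate localizations and the cyclic-permutation check (via \cite{binda2024logarithmicmotivichomotopytheory}) more explicitly than the paper does.
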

\begin{proof}
    Following \cite[Theorem 2.39]{ROBALO2015399} the proof is exactly the same; the Yoneda lemma combined with the universal property of localization in $\PrL$ \cite[5.5.4.20]{lurie2008higher} and the application of \cite[Proposition 2.9]{ROBALO2015399} for $X = \cofib(1 \rightarrow \mathbb{G}^\mathrm{log}_m)$. We refer to \cite{ROBALO2015399} for the discussion regarding the monoidal structures, since the construction is similar.
\end{proof}

We can now prove proposition \ref{log linear yoneda}. The proof in \cite[Theorem 3.7]{drew2018motivichodgemodules} is directly adaptable:

\begin{proof}[proof of proposition \ref{log linear yoneda}]
    The adjunction $\CAlg(\PrLotimes) \rightleftarrows \CAlg(\PrLotimes)^{\Mod_\Lambda^\otimes/}$ given by $-\otimes \Mod_\Lambda$ and the forgetful functor, together with the equivalence of lemma \ref{da sh} show that there is a homotopy commutative diagram \begin{center}
        \begin{tikzcd}
            \operatorname{Map}_{\CAlg(\PrLotimes)^{\Mod_\Lambda^\otimes/}}(\mathcal{DA}(k,\Lambda)^\otimes,\mathcal{D}^\otimes) \arrow[dd,"\sim"] \arrow[rd,bend left] & \\
             & \operatorname{Map}_{\CAlg(\Catinf^\times)}(\mathrm{N}(\mathbf{lSm}_k)^\times,\mathcal{D}^\otimes)_{\mathrm{P}^\mathrm{log}}\\
            \operatorname{Map}_{\CAlg(\PrLotimes)}(\mathcal{SH}(k)^\otimes, \mathcal{D}^\otimes) \arrow[ur, bend right]&
        \end{tikzcd}
    \end{center} 
   Since the bottom arrow is a homotopy equivalence by lemma \ref{linear yoneda}, this diagram readily gives the result.
\end{proof}

\begin{rem}
    In the previous proposition, the dividing Nisnevich topology can be straightfowardly replaced by the dividing étale topology, inducing an equivalence $$\operatorname{Map}_{\CAlg(\PrLotimes)^{\Mod_\Lambda^\otimes/}}(\mathcal{DA}_\text{ét}^\mathrm{log}(k,\Lambda)^\otimes,\Mod_\Lambda^\otimes) \xrightarrow{\sim} \operatorname{Map}_{\CAlg(\Catinf^\times)}(\mathrm{N}(\mathbf{lSm}_k)^\times,\Mod_\Lambda^\otimes)_{\mathrm{P}^\mathrm{log}_\text{ét}}$$
    Where we added strict étale descent in the subscript. Note that this equivalence is compatible with the previous construction through the étale sheafification functor, that is the following diagram commutes:
    \begin{center}
        \begin{tikzcd}
            \operatorname{Map}_{\CAlg(\PrLotimes)_{\Mod_\Lambda^\otimes/}}(\mathcal{DA}_\text{ét}^\mathrm{log}(k,\Lambda)^\otimes,\Mod_\Lambda^\otimes) \arrow[d] \arrow[r,"\sim"] & \operatorname{Map}_{\CAlg(\Catinf^\times)}(\mathrm{N}(\mathbf{lSm}_k)^\times,\Mod_\Lambda^\otimes)_{\mathrm{P}^\mathrm{log}_\text{ét}} \arrow[d,hook]\\
            \operatorname{Map}_{\CAlg(\PrLotimes)_{\Mod_\Lambda^\otimes/}}(\DAlog(k,\Lambda)^\otimes,\Mod_\Lambda^\otimes)  \arrow[r,"\sim"] & \operatorname{Map}_{\CAlg(\Catinf^\times)}(\mathrm{N}(\mathbf{lSm}_k)^\times,\Mod_\Lambda^\otimes)_{\mathrm{P}^\mathrm{log}}
        \end{tikzcd}
    \end{center}
    This means that any étale log mixed Weil theory $\mathbf{\Gamma}$ gives rise to compatible realizations that we denote by the same name $\mathrm{R}: \mathcal{DA}_{(\text{ét})}^\mathrm{log}(k,\Lambda) \rightarrow \Mod_\Lambda$.
\end{rem}

There is also a natural compatibility between log mixed Weil theories and mixed Weil theories: It is easy to see that the functor $\lambda$ from the adjunction \begin{center}
\begin{tikzcd}
\lambda: \mathbf{Sm}_k \arrow[r,shift left=.5ex]
&
\mathbf{lSm}_k \arrow[l,shift left=.5ex]: \omega
\end{tikzcd}
\end{center}
defined in subsection \ref{log motives} exchanges log mixed Weil theories and mixed Weil theories.

The induced equivalence $$\lambda^*: \DAlog(k,\Lambda) \xrightarrow{\sim} \mathcal{DA}(k,\Lambda)$$ defined by $\lambda^*(M(X)) \simeq M(X\setminus \partial X)$ fits into the following commutative diagram \begin{center}
    \begin{tikzcd}
        \operatorname{Map}_{\CAlg(\PrLotimes)^{\Mod_\Lambda^\otimes/}}(\DAlog(k,\Lambda)^\otimes,\Mod_\Lambda^\otimes) \arrow[d] \arrow[r,"\sim"] & \operatorname{Map}_{\CAlg(\Catinf^\times)}(\mathrm{N}(\mathbf{lSm}_k)^\times,\Mod_\Lambda^\otimes)_{\mathrm{P}^\mathrm{log}} \arrow[d,hook]\\
            \operatorname{Map}_{\CAlg(\PrLotimes)^{\Mod_\Lambda^\otimes/}}(\mathcal{DA}(k,\Lambda)^\otimes,\Mod_\Lambda^\otimes)  \arrow[r,"\sim"] & \operatorname{Map}_{\CAlg(\Catinf^\times)}(\mathrm{N}(\mathbf{Sm}_k)^\times,\Mod_\Lambda^\otimes)_{\mathrm{P}}
    \end{tikzcd}
\end{center}
where the bottom equivalence (and the subscript) is described in \cite[Theorem 3.7]{drew2018motivichodgemodules}.\\

    We add a useful fact, that will allow us to to compute the realization of the cohomological motives of the log point $*_\mathbb{N}$ which is a non log smooth log scheme over $k$: Let $\mathbf{\Gamma}$ be a log mixed Weil theory, and $\mathrm{R}$ a realization functor associated with $\mathbf{\Gamma}$. Since $k$ is of characteristic $0$, compact objects in $\DAlog(k,\Lambda)$ are dualizable so that since $R$ is symmetric monoidal, for all $X\in \mathbf{lSm}_k$, \begin{center}$\begin{array}{ccl}
        \mathrm{R}(h(X)) & \simeq &  \mathrm{R}(\underline{\mathrm{Hom}}(M(X),\mathbf{1}_k))\\
         & \simeq & \mathrm{R}(M(X)^\vee)\\
         & \simeq & \mathrm{R}(M(X))^\vee\\
         & \simeq &  \mathbf{\Gamma}(X).
    \end{array}$\end{center}
This property can be generalized a little bit: Suppose $\mathbf{\Gamma}$ is defined as a functor $\mathbf{lSch}^\mathrm{op}_k \rightarrow \Mod_\Lambda$. And suppose it is such that the inclusion $*_\mathbf{N} \hookrightarrow \mathbb{A}_\mathbb{N}$ induces an equivalence $$\mathbf{\Gamma}(\mathbb{A}_\mathbb{N}) \xrightarrow[]{\sim} \mathbf{\Gamma}(*_\mathbf{N})$$ In this setting,
\begin{lem}\label{log pt real}
   $$\mathrm{R}(h(*_\mathbb{N})) \simeq \mathbf{\Gamma}(*_\mathbb{N})$$
\end{lem}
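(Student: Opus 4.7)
The plan is to reduce the non-log-smooth object $*_\mathbb{N}$ to the log smooth object $\mathbb{A}_\mathbb{N}$ in two compatible ways: motivically via log-cdh-descent, and at the level of $\mathbf{\Gamma}$ via the hypothesis. The realization $\mathrm{R}$ will then bridge the two.

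First, by corollary \ref{tool}, the inclusion $*_\mathbb{N} \hookrightarrow \mathbb{A}_\mathbb{N}$ induces an equivalence in $\DAlog(k,\Lambda)$
\begin{equation*}
h(\mathbb{A}_\mathbb{N}) \xrightarrow{\sim} h(*_\mathbb{N}).
\end{equation*}
Applying $\mathrm{R}$, which preserves equivalences, gives
\begin{equation*}
\mathrm{R}(h(*_\mathbb{N})) \simeq \mathrm{R}(h(\mathbb{A}_\mathbb{N})).
\end{equation*}

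Next, since $\mathbb{A}_\mathbb{N} \in \mathbf{lSm}_k$ is log smooth and its motive is compact (hence dualizable, as $k$ has characteristic $0$), the argument recalled just above the lemma applies and yields
\begin{equation*}
\mathrm{R}(h(\mathbb{A}_\mathbb{N})) \simeq \mathbf{\Gamma}(\mathbb{A}_\mathbb{N}).
\end{equation*}

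Finally, the standing hypothesis on $\mathbf{\Gamma}$ provides the equivalence $\mathbf{\Gamma}(\mathbb{A}_\mathbb{N}) \xrightarrow{\sim} \mathbf{\Gamma}(*_\mathbb{N})$. Composing the three equivalences gives the desired result. There is no real obstacle here: the content of the lemma is entirely in arranging the hypotheses so that one can legitimately detour through the log smooth scheme $\mathbb{A}_\mathbb{N}$, on which both the motivic side (via corollary \ref{tool}) and the Weil-theoretic side (via the assumed $\mathbb{A}^1$-type invariance $\mathbf{\Gamma}(\mathbb{A}_\mathbb{N}) \simeq \mathbf{\Gamma}(*_\mathbb{N})$) can be compared directly.
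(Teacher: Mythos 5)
Your proof is correct and rests on the same essential ingredients as the paper's: a detour through the log smooth scheme $\mathbb{A}_\mathbb{N}$, the log-cdh comparison between $h(\mathbb{A}_\mathbb{N})$ and $h(*_\mathbb{N})$, and the standing hypothesis $\mathbf{\Gamma}(\mathbb{A}_\mathbb{N}) \xrightarrow{\sim} \mathbf{\Gamma}(*_\mathbb{N})$. The paper packages these by exhibiting $\mathrm{R}(h(*_\mathbb{N}))$ and $\mathbf{\Gamma}(*_\mathbb{N})$ as pushouts of the same square (using that $\mathrm{R}$ preserves colimits), whereas you chain three equivalences via Corollary~\ref{tool} and the log-smooth case --- the two presentations are equivalent, yours being perhaps marginally more streamlined.
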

\begin{proof}
    $h(*_\mathbb{N})$ is identified as the pushout of the diagram \begin{center}
        \begin{tikzcd}
            h(\mathbb{A}^1) \arrow[d] \arrow[r] & h(\mathbb{A}_\mathbb{N})\\
            h(*) & 
        \end{tikzcd}
    \end{center}
    by the discussion following proposition \ref{cdh}. This stays true after applying $\mathrm{R}$, since it is a colimit preserving functor. Since the following diagram is a pushout diagram by hypothesis,  
    \begin{center}
        \begin{tikzcd}
            \mathbf{\Gamma}(\mathbb{A}^1) \arrow[d] \arrow[r] & \mathbf{\Gamma}(\mathbb{A}_\mathbb{N}) \arrow[d]\\
            \mathbf{\Gamma}(*) \arrow[r] & \mathbf{\Gamma}(*_\mathbb{N}) 
        \end{tikzcd}
    \end{center}
    the claim is proved.
\end{proof}

\subsection{The Betti/Kato--Nakayama realization}\label{Bettikato}\;\smallskip

We can now define the Betti fundamental groupoid at tangential basepoint, and after having defined the Betti realization for log motives, compare the Betti realization of the motivic fundamental groupoid based at tangential basepoints with the classical prounipotent completion of the fundamental groupoid at tangential basepoints. We achieve this in the theorem \ref{thm Betti real pi}. In this part, we suppose $k\subset \mathbb{C}$.

\subsubsection{The topological fundamental groupoid at tangential basepoints}\;\smallskip

We first define the torsor of homotopy classes of paths based at tangential basepoints on a general smooth scheme over $k$ with a simple normal crossings divisor. The definition is a higher dimensional version of what is done in \cite[Section 3.8]{Javier} in the case of smooth projective curves minus a finite number of points.

 \begin{defi}
     Let $(\underline{X},\underline{D}) \in \mathbf{Div}_k$ with $\mathbf{x}:= (x,v), \mathbf{y}:= (y,w)$ be tangential basepoints. The path space of $\underline{X}(\mathbb{C})\setminus\underline{D}(\mathbb{C})$ based at $\mathbf{x},\mathbf{y}$ is the space of smooth maps $\gamma: [0,1] \rightarrow \underline{X}(\mathbb{C})$ such that: \begin{itemize}
    \item $\gamma$ sends the open interval $(0,1)$ to $\underline{X}(\mathbb{C})\setminus \underline{D}(\mathbb{C})$,
    \item $\gamma(0) = x$ and $\gamma(1) = y$,
    \item $\gamma'(0) = v$ and $\gamma'(1) = -w$.
\end{itemize}
We will call such paths \textbf{smooth paths of} $\underline{X}(\mathbb{C})\setminus \underline{D}(\mathbb{C})$ \textbf{based at} $\mathbf{x},\mathbf{y}$.
\end{defi}
 
The definition of homotopies of paths is adapted to the tangential datum, by generalizing the definition given in \cite[3.8.3]{Javier}:

\begin{defi}
Let $\gamma_1,\gamma_2$ be two smooth paths of $\underline{X}(\mathbb{C}) \setminus \underline{D}(\mathbb{C})$ based at tangential basepoints $\mathbf{x},\mathbf{y}$. A homotopy between $\gamma_1$ and $\gamma_2$ is a smooth map $$H : [0,1]\times [0,1] \rightarrow \underline{X}(\mathbb{C})$$ such that: \begin{itemize}
    \item $\forall t \in [0,1],$ $H(t,0) = \gamma_1(t)$ and $H(t,1) = \gamma_2(t)$,
    \item $\forall s \in [0,1]$, $H(0,s) = x$ and $H(1,s) = y$,
    \item $\forall s \in [0,1]$, $\frac{\partial H}{\partial t}(0,s) = v$ and $\frac{\partial H}{\partial t}(1,s) = -w$,
    \item $\forall s \in [0,1], \forall t \in (0,1)$, $H(s,t) \in \underline{X}(\mathbb{C}) \setminus \underline{D}(\mathbb{C}).$
\end{itemize}
\end{defi}

We can define the torsor of homotopy classes of paths in this situation:

\begin{defi}\label{enfin}
We call \textbf{torsor of homotopy classes of paths of} $\underline{X}(\mathbb{C}) \setminus \underline{D}(\mathbb{C})$ \textbf{based at} $\mathbf{x},\mathbf{y}$ the quotient of the path space of $\underline{X}(\mathbb{C}) \setminus \underline{D}(\mathbb{C})$ based at $\mathbf{x},\mathbf{y}$ by the equivalence relation generated by homotopies between paths.
\end{defi}
 
As in the classical case, the data of all the torsors of homotopy classes of paths based at tangential basepoints has a structure of a groupoid. We do not describe precisely the operations involved as they are similar to the case of ordinary basepoints. Nevertheless, the product of homotopy classes of paths is carefully defined in \cite[Definition 3.357]{Javier} in the case of $ \mathbb{P}_k^1\setminus \{0,1,\infty\}$. In higher dimension, the definition of composition can be deduced from the former by viewing $\underline{X}(\mathbb{C})$ locally as a product of $\mathbb{A}^1_k$. Recall that for $\gamma_1$ (resp. $\gamma_2$) a smooth path of $\mathbb{P}_\mathbb{C}^1\setminus \{0,1,\infty\}$ based at tangential basepoints $(x,u), (y,v)$ (resp. $(y,v), (z,w)$), the composition of classes $[\gamma_1][\gamma_2]$ is defined locally around $y$ by the following picture:

\medskip

\begin{figure}[!h]
    \centering
\begin{tikzpicture}
    \draw (-2,-2) rectangle (2,2);
    \draw (4,-2) rectangle (8,2);
    \draw[postaction=decorate,decoration={markings,mark=at position 1.4cm with {\arrow[line width=0.3mm]{>}}}] (-2,1.6) arc [start angle=90,delta angle=-90,x radius=2,y radius=1.6];
    \draw[postaction=decorate,decoration={markings,mark=at position 1.65cm with {\arrow[line width=0.3mm]{>}}}] (0,0) arc [start angle=180,delta angle=-90,x radius=2,y radius=1.6];
    \draw[postaction=decorate,decoration={markings,mark=at position 1.4cm with {\arrow[line width=0.3mm]{>}}}] (4,1.6) arc [start angle=90,delta angle=-70.5,x radius=2,y radius=1.6];
    \draw[postaction=decorate,decoration={markings,mark=at position 1.1cm with {\arrow[line width=0.3mm]{>}}}] (6,0)++(80:0.55) arc [start angle=160,delta angle=-70.5,x radius=2,y radius=1.6];
    \draw (6,0)++(80:0.55) arc [start angle=80,delta angle=-340,radius=0.55];
    \draw[postaction=decorate,decoration={markings,mark=at position 0.13cm with {\arrow[line width=0.15mm]{>}}}] (6,0)++(100:0.55) arc [start angle=100,delta angle=-20,radius=0.55];
    \filldraw[black] (0,0) circle (2pt);
    \filldraw[black] (6,0) circle (2pt);
    \draw (0,-3) node{Topological composition of paths};
    \draw (6,-3) node{Continuous path representing $[\gamma_1\gamma_2]$};
\end{tikzpicture}
\label{pt}
\end{figure}

If we take $\gamma$ to be any smoothing of the second composed path, then $$[\gamma_1][\gamma_2] := [\gamma].$$

\subsubsection{The Betti fundamental groupoid at tangential basepoints}\;\smallskip

We now define the Betti fundamental groupoid, and we compute the Betti realization of the motivic fundamental torsor at tangential basepoints. Let $X\in \mathbf{Div}_k$, endowed with tangential basepoints $\mathbf{x},\mathbf{y}$. Recall from section \ref{virtual} that $\mathbf{x}$ and $\mathbf{y}$ induce points on the Kato--Nakayama topological space $X^{\operatorname{KN}}$. Although as we explained, there is not an equivalence between tangential basepoints and points on $X^\mathrm{KN}$, we will still denote by the same letter $\mathbf{x},\mathbf{y}$ the associated points.

The log Betti cohomology of $X$ is by definition $\mathbf{R}\Gamma(X^{\mathrm{KN}},\underline{\mathbb{Q}}) \in \CAlg(\mathcal{M}\mathrm{od}_\mathbb{Q})$ where $\underline{\mathbb{Q}}$ is the constant sheaf with value $\mathbb{Q}$, and $\mathbf{R}\Gamma$ denotes the derived functor of the global section functor. It is endowed with augmentations defined by the points $\mathbf{x},\mathbf{y} \in X^{\mathrm{KN}}.$ Note that what follows works in the same way if $\mathbb{Q}$ is replaced by $\mathbb{Z}$.
\begin{defi}
    The Betti path space of $X$ based at $\mathbf{x},\mathbf{y}$ is defined by $$\+^{}_\mathbf{x}\mathrm{P}^\mathrm{B}_\mathbf{y}X:= \+_\mathbf{x}\Baar_\mathbf{y}\textbf{R}\Gamma(X^{\operatorname{KN}},\underline{\mathbb{Q}}) \in \CAlg(\mathcal{M}\mathrm{od}_\mathbb{Q}).$$
    The Betti fundamental torsor at $\mathbf{x},\mathbf{y}$ of $X$ is defined by $$\pi^\text{B}_1(X,\mathbf{x},\mathbf{y}):= \operatorname{Spec}\operatorname{H}^0(\+^ {}_\mathbf{x}\mathrm{P}^\text{B}_\mathbf{y}X).$$
    \end{defi}
    
\begin{rem}
As before, it follows from theorem \ref{bar hopf}, that the family $\{\+^{}_\mathbf{x}\mathrm{P}^\mathrm{B}_\mathbf{y}X\}_{\mathbf{x},\mathbf{y}}$ is endowed with the structure of a homotopy Hopf algebroid, so that the family $\{\pi^\text{B}_1(X,\mathbf{x},\mathbf{y})\}_{\mathbf{x},\mathbf{y}}$ has the structure of a groupoid. We call it the \textbf{Betti fundamental groupoid}.
\end{rem}

Similarly to the classical setting, the Betti fundamental torsor is compared to the prounipotent completion of the fundamental torsor of $X^{\operatorname{KN}}$, the main tool being Beilinson's theorem. The comparison that we prove is the following:
 \begin{prop}\label{pi_1 Betti}
    There is a natural isomorphism of groupoids given by isomorphisms
    $$\pi_1^{\operatorname{B}}(X,\mathbf{x},\mathbf{y}) \simeq \pi_1^{\operatorname{un}}(\underline{X}(\mathbb{C})\setminus \underline{D}(\mathbb{C}),\mathbf{x},\mathbf{y})$$ for $\mathbf{x},\mathbf{y}$ all tangential basepoints of $X$.
\end{prop}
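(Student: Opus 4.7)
The plan is to combine two classical ingredients: a topological version of Beilinson's theorem relating the bar construction on cochain algebras to the prounipotent completion of fundamental groupoids, and the topological identification of $X^{\mathrm{KN}}$ with (a compactification of) $\underline{X}(\mathbb{C})\setminus \underline{D}(\mathbb{C})$ in which tangential basepoints become ordinary basepoints, as in Deligne's picture (Figure \ref{fig:M1}).

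First I would apply Beilinson's theorem directly to the topological space $X^{\mathrm{KN}}$ pointed by $\mathbf{x},\mathbf{y}$. For a locally contractible topological space $M$ with two basepoints $p,q$, there is a natural isomorphism of Hopf algebroids
$$ \mathrm{H}^0\bigl(\+_p\Baar_q\mathbf{R}\Gamma(M,\underline{\mathbb{Q}})\bigr) \simeq \mathcal{O}(\pi_1^{\mathrm{un}}(M,p,q)), $$
compatible with deconcatenation on the bar side and the comultiplication dual to concatenation of paths on the torsor side. This is the basepointed version of Beilinson's theorem recalled in the introduction (after \cite{ASENS_2005_4_38_1_1_0}) and is obtained by computing $\mathrm{H}^0$ of the cosimplicial Betti path space $\+_p \mathrm{P}^\bullet_q M$ from Definition \ref{cosimplicial path space general} (applied to $M$) via the Eilenberg--Zilber theorem, together with the classical identification of the $0$th cohomology of the cobar of singular cochains with the dual of $\colim_n \mathbb{Q}[\pi_1(M,p,q)]/I^{n+1}$. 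Specializing to $M=X^{\mathrm{KN}}$, this gives half of the statement, at the level of the algebra of functions on the torsor.

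The second step is to match $\pi_1(X^{\mathrm{KN}},\mathbf{x},\mathbf{y})$ with the tangentially based torsor of Definition \ref{enfin}. The canonical map $X^{\mathrm{KN}}\to \underline{X}(\mathbb{C})$ is the real-oriented blow-up along $\underline{D}(\mathbb{C})$, so the inclusion $\underline{X}(\mathbb{C})\setminus \underline{D}(\mathbb{C})\hookrightarrow X^{\mathrm{KN}}$ is a homotopy equivalence whose image is the complement of the boundary $\partial X^{\mathrm{KN}}$. By the discussion at the end of subsection \ref{pointed}, a tangential basepoint $(x,v)$ corresponds, via its associated virtual morphism, to a point in the boundary torus $\prod_i S^1_i$ over $x$ with coordinates given by the unit directions of $v$ along each component. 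Any smooth path in $\underline{X}(\mathbb{C})\setminus \underline{D}(\mathbb{C})$ based at $\mathbf{x},\mathbf{y}$ in the sense of Definition \ref{enfin} extends continuously to a path in $X^{\mathrm{KN}}$ connecting the corresponding boundary points, and conversely any continuous path in $X^{\mathrm{KN}}$ from $\mathbf{x}$ to $\mathbf{y}$ can be perturbed into the interior into a smooth tangential path. Applying the same argument to homotopies yields a bijection of homotopy classes, compatible with concatenation and inversion.

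Combining these two steps gives the desired isomorphism at each pair of basepoints. The naturality of Beilinson's theorem and of the boundary-to-tangent bijection in the previous paragraph ensures compatibility with the composition, inversion and identity morphisms, so the isomorphisms assemble into an isomorphism of groupoids in the sense of theorem \ref{groupoid}. The main obstacle lies in the second step: one must verify that the length/direction mismatch is immaterial at the level of homotopy classes, namely that a positive real reparametrization of the tangent vector at an endpoint gives a canonically isomorphic torsor of homotopy classes on $\underline{X}(\mathbb{C})\setminus \underline{D}(\mathbb{C})$ and matches the fact that $X^{\mathrm{KN}}$ only remembers the unit direction. This is exactly the remark made after Example \ref{yes}, and it is handled by the standard reparametrization argument for smooth paths.
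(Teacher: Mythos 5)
Your proposal follows the same two-step structure as the paper's proof: first apply Beilinson's theorem \cite[Proposition 3.4]{ASENS_2005_4_38_1_1_0} to the Kato--Nakayama space $X^{\mathrm{KN}}$ (using that it is a paracompact, locally contractible manifold with corners with finitely generated fundamental group) to identify $\pi_1^{\mathrm{B}}(X,\mathbf{x},\mathbf{y})$ with $\pi_1^{\mathrm{un}}(X^{\mathrm{KN}},\mathbf{x},\mathbf{y})$, and second bridge $\pi_1(X^{\mathrm{KN}},\mathbf{x},\mathbf{y})$ with the torsor of homotopy classes of smooth tangential paths of Definition \ref{enfin}. Where you diverge is the technique for this second step. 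You propose a direct perturbation argument: extend a smooth tangential path to a continuous one in $X^{\mathrm{KN}}$ by continuation, and conversely deform any continuous path in $X^{\mathrm{KN}}$ (and any continuous homotopy) into the interior to get a smooth tangential one. The paper instead proves Lemma \ref{pi_1 usuel} by conjugating the continuation-of-paths map with auxiliary smooth paths $\eta_{\mathbf{x}},\eta_{\mathbf{y}}$ running from $\mathbf{x},\mathbf{y}$ to interior points $x',y'$, fitting it into a commutative square whose other three edges are visibly isomorphisms (the horizontal ones are conjugations, the right vertical one comes from the homotopy equivalence $\underline{X}(\mathbb{C})\setminus\underline{D}(\mathbb{C}) \hookrightarrow X^{\mathrm{KN}}$). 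The conjugation route is the safer of the two: your perturbation step requires showing that any continuous path or homotopy in $X^{\mathrm{KN}}$ can be replaced by a smooth one whose slices meet the \emph{exact} boundary conditions $\gamma'(0)=v$, $\gamma'(1)=-w$ imposed by Definition \ref{enfin}, which is a genuinely delicate smoothing claim you state without argument. The conjugation argument never needs to smooth anything on the nose, since it reduces everything to the ordinary homotopy equivalence between $\underline{X}(\mathbb{C})\setminus\underline{D}(\mathbb{C})$ and $X^{\mathrm{KN}}$ at the interior basepoints; the reparametrization subtlety you correctly flag (that $X^{\mathrm{KN}}$ only remembers the unit direction of the tangent vector) is then handled automatically, since the continuation map is shown to be a bijection from the torsor where $v,w$ are fixed exactly.
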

\begin{proof}
First, let us apply Beilinson's theorem \cite[Proposition 3.4]{ASENS_2005_4_38_1_1_0} to $X^\mathrm{KN}$. This theorem is proved by Deligne and Goncharov in the context of a locally path connected, locally simply connected topological space $Y$ such that its first homotopy group is of finite type, and such that the functor $\mathbf{R}p_*$ for $p: Y^m\times Y^n \rightarrow Y^n$ commutes with fibers for constant sheaves. Since $X^\mathrm{KN}$ is a manifold with boundaries \cite[Remark 1.5.1]{10.2996/kmj/1138044041} with the homotopy type of a complex algebraic variety, and as such is a paracompact locally contractible space with a finitely generated fundamental group, we can use Beilinson's theorem to see that there is a natural isomorphism of schemes
    $$\pi_1^{\operatorname{B}}(X,\mathbf{x},\mathbf{y}) \simeq \pi_1^{\operatorname{un}}(X^{\operatorname{KN}},\mathbf{x},\mathbf{y}).$$
    
We need to compare the prounipotent completion of the fundamental torsor of $X^\mathrm{KN}$ and of $\underline{X}(\mathbb{C})\setminus \underline{D}(\mathbb{C})$. Note that if $x, y \in \underline{X}(k)\setminus \underline{D}(k)$, then it is straightforward that we have $$\pi_1^{\text{B}}(X,x,y) \simeq \pi_1^{\text{un}}(X^{\text{KN}},x,y) \simeq \pi_1^{\text{un}}(\underline{X}(\mathbb{C})\setminus \underline{D}(\mathbb{C}),x,y)$$ since $X^{\text{KN}}$ and $\underline{X}(\mathbb{C})\setminus \underline{D}(\mathbb{C})$ are homotopy equivalent.
This result is still true for tangential basepoints, as we show in the following lemma concluding the proof of this proposition.
\end{proof}

\begin{lem}\label{pi_1 usuel}
    For all tangential basepoints $\mathbf{x},\mathbf{y}$ on $X$, there is a natural isomorphism $$\pi_1(\underline{X}(\mathbb{C})\setminus \underline{D}(\mathbb{C}),\mathbf{x},\mathbf{y}) \xrightarrow{\sim} \pi_1(X^{\operatorname{KN}},\mathbf{x}, \mathbf{y})$$
\end{lem}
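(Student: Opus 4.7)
The plan is to use the fact that $X^{\mathrm{KN}}$ is the real-oriented blow-up of $\underline{X}(\mathbb{C})$ along $\underline{D}(\mathbb{C})$, i.e. a manifold with corners (or with log corners) whose interior is canonically identified with $\underline{X}(\mathbb{C})\setminus \underline{D}(\mathbb{C})$ and whose boundary parametrizes directions normal to $\underline{D}$. As recalled at the end of Subsection \ref{pointed}, a tangential basepoint $\mathbf{x}=(x,v)$ determines a point of $\partial X^{\mathrm{KN}}$ via the direction $v/|v|$ of $v$ on each normal circle at $x$. I will construct $\Psi\colon \pi_1(\underline{X}(\mathbb{C})\setminus \underline{D}(\mathbb{C}),\mathbf{x},\mathbf{y}) \to \pi_1(X^{\mathrm{KN}},\mathbf{x},\mathbf{y})$ by lifting smooth tangential paths along the projection $\pi\colon X^{\mathrm{KN}}\to \underline{X}(\mathbb{C})$, and show it is a bijection compatible with composition; the naturality then follows from the functoriality of the Kato--Nakayama construction with respect to ordinary morphisms combined with Definition \ref{functo}.

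The first step is the lifting itself. Given a smooth path $\gamma\colon [0,1]\to \underline{X}(\mathbb{C})$ with $\gamma((0,1))\subset \underline{X}(\mathbb{C})\setminus \underline{D}(\mathbb{C})$, $\gamma(0)=x$, $\gamma(1)=y$, $\gamma'(0)=v$ and $\gamma'(1)=-w$, pick local coordinates $(z_1,\dots,z_n)$ around $x$ in which $\underline{D}$ is cut out by $z_1\cdots z_j=0$. The non-vanishing of the projection of $v$ to each normal component $\mathrm{N}_x(\underline{X},\underline{D}_i)$ translates into $v_i\neq 0$ for $1\le i\le j$, so that $z_i(\gamma(t))=v_i t + O(t^2)$ and the angular coordinate $z_i(\gamma(t))/|z_i(\gamma(t))|$ extends continuously to $v_i/|v_i|$ at $t=0$. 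Gluing with the obvious lift on $(0,1)$ produces a unique continuous $\widetilde{\gamma}\colon [0,1]\to X^{\mathrm{KN}}$ with $\widetilde{\gamma}(0)=\mathbf{x}$, and the analogous argument at $t=1$ gives $\widetilde{\gamma}(1)=\mathbf{y}$. A smooth tangential homotopy $H\colon [0,1]^2\to \underline{X}(\mathbb{C})$ satisfies the same derivative conditions uniformly in the homotopy parameter, so the same local argument provides a continuous lift $\widetilde{H}\colon [0,1]^2\to X^{\mathrm{KN}}$ rel $\{\mathbf{x},\mathbf{y}\}$. This gives a well-defined groupoid homomorphism $\Psi$.

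For surjectivity I would use the fact that $X^{\mathrm{KN}}$ has a collar neighborhood of its boundary. Any continuous path $\widetilde{\delta}\colon [0,1]\to X^{\mathrm{KN}}$ from $\mathbf{x}$ to $\mathbf{y}$ may be homotoped rel endpoints so that in a neighborhood of $t=0$ it lies in a collar and moves radially away from $\partial X^{\mathrm{KN}}$, and similarly at $t=1$. Projecting via $\pi$ and smoothing the result (with support compactly contained in $(0,1)$ so as not to alter the radial tangential behaviour) produces a smooth tangential path $\gamma$ based at $\mathbf{x},\mathbf{y}$ whose lift $\widetilde{\gamma}$ is homotopic to $\widetilde{\delta}$ rel endpoints by construction. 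The same collar argument applied one dimension up gives injectivity: if the lifts $\widetilde{\gamma_1}$ and $\widetilde{\gamma_2}$ of two smooth tangential paths are homotopic rel $\{\mathbf{x},\mathbf{y}\}$ in $X^{\mathrm{KN}}$, then such a homotopy can be deformed to leave the boundary radially in the collar at every $s\in [0,1]$, and its projection followed by smoothing yields a tangential homotopy between $\gamma_1$ and $\gamma_2$.

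The main obstacle is the careful book-keeping of the local normal-form analysis at a point $x\in \bigcap_{i\in I}\underline{D}_i$ where several components meet: the boundary stratum over $x$ is a torus $(S^1)^{|I|}$ and both the tangential condition on paths and the collar normalization in the surjectivity/injectivity arguments must be checked componentwise, relying on the fact that the non-vanishing of each normal projection of $v$ (resp.\ $w$) is built into Definition \ref{tangent}. Once this local picture is settled, the construction is manifestly natural, whence the stated isomorphism of groupoids.
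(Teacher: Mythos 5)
Your construction of the map agrees with the paper's (both use continuation of smooth tangential paths into the boundary of the Kato--Nakayama space), but the proof that it is a bijection follows a genuinely different route. The paper avoids any direct collar or smoothing argument: it picks auxiliary smooth tangential paths $\eta_\mathbf{x}, \eta_\mathbf{y}$ from $\mathbf{x},\mathbf{y}$ to \emph{ordinary} interior points $x',y'$, and observes that conjugating by these paths (and by their continuations in $X^{\mathrm{KN}}$) fits the continuation map into a square whose other three sides are already known bijections --- the two conjugation maps on each side, and the middle isomorphism $\pi_1(\underline{X}(\mathbb{C})\setminus \underline{D}(\mathbb{C}),x',y')\xrightarrow{\sim}\pi_1(X^{\mathrm{KN}},x',y')$ induced by the homotopy equivalence $\underline{X}(\mathbb{C})\setminus \underline{D}(\mathbb{C})\hookrightarrow X^{\mathrm{KN}}$. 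Your approach instead proves surjectivity and injectivity directly via a collar-neighborhood normalization and a projection-then-smoothing step. Both are valid; the trade-off is that the paper's reduction reuses the established equivalence for ordinary basepoints and therefore sidesteps precisely the technical points you flag as ``the main obstacle'' (componentwise verification at multi-fold intersections, smoothing that respects the exact tangential derivative $\gamma'(0)=v$ rather than just its direction), whereas your argument is more self-contained and geometrically explicit but leaves those steps to be carried out carefully. In particular, note that your smoothing must be followed by a reparametrization so that the derivative at the endpoints is \emph{exactly} $v$ and $-w$ (Definition \ref{enfin}), not merely a positive multiple; this is harmless but should be stated, since the point of $X^{\mathrm{KN}}$ determined by $\mathbf{x}$ only records the direction of $v$.
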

\begin{proof}
    The morphism is given by continuation of paths: for $\gamma: (0,1) \rightarrow \underline{X}(\mathbb{C})$ a smooth path of $\underline{X}(\mathbb{C})\setminus \underline{D}(\mathbb{C})$ from $\mathbf{x}$ to $\mathbf{y}$, we can consider the path $\overline{\gamma}: [0,1] \rightarrow X^\mathrm{KN}$ such that $\eta_{|(0,1)} = \gamma$, $\eta(0) = \mathbf{x}, \eta(1) = \mathbf{y}$.
    
To show that this defines an isomorphism between the homotopy groups, we first see that by continuation of homotopies between paths in the same fashion, this operation trivially defines a morphism, $  \pi_1(\underline{X}(\mathbb{C})\setminus \underline{D}(\mathbb{C}),\mathbf{x},\mathbf{y}) \rightarrow \pi_1(X^\mathrm{KN},\mathbf{x},\mathbf{y})$. We now exhibit this morphism as a composition of isomorphisms:

Let $\eta_\mathbf{x}:\; (0,1) \; \rightarrow \underline{X}(\mathbb{C})$ (resp. $\eta_\mathbf{y}$) be any smooth path from $\mathbf{x}$ (resp. $\mathbf{y}$) to any point denoted $x'$ (resp. $y'$) that we suppose to be in $\underline{X}(\mathbb{C})\setminus \underline{D}(\mathbb{C})$.\\

    The morphism we define is given by the following diagram \begin{center}
        \begin{tikzcd}
            \pi_1(\underline{X}(\mathbb{C})\setminus \underline{D}(\mathbb{C}),\mathbf{x},\mathbf{y})  \arrow[d,dashed] \arrow[r,"\eta_\mathbf{x}^{-1}(-)\eta_\mathbf{y}"] & \pi_1(\underline{X}(\mathbb{C})\setminus \underline{D}(\mathbb{C}),x',y') \arrow[d,"\sim"]\\
           \pi_1(X^\mathrm{KN},\mathbf{x},\mathbf{y})  & \pi_1(X^\mathrm{KN},x',y')\arrow[l,"\overline{\eta_\mathbf{x}}(-)\overline{\eta_\mathbf{y}}^{-1}"] 
        \end{tikzcd}
    \end{center}
where the middle isomorphism $$h: \pi_1(\underline{X}(\mathbb{C})\setminus \underline{D}(\mathbb{C}),x',y') \xrightarrow[]{\sim} \pi_1(X^\mathrm{KN},x',y')$$ is induced by the homotopy equivalence $\underline{X}(\mathbb{C})\setminus \underline{D}(\mathbb{C}) \hookrightarrow X^\mathrm{KN}$. The reason that this composition is the continuation of path is simple: take $\gamma$ any smooth path of $\underline{X}(\mathbb{C})\setminus \underline{D}(\mathbb{C})$ from $\mathbf{x}$ to $\mathbf{y}$. The definition of the composition of homotopy classes of smooth paths of $\underline{X}(\mathbb{C})\setminus \underline{D}(\mathbb{C})$ based at tangential basepoints shows that $[\eta_\mathbf{x}^{-1}][\gamma][\eta_\mathbf{y}]$ is represented by a smooth path $\lambda$ which locally around $x$ and $y$ looks like picture (\ref{pt}) and keeps the paths involved unchanged outside of these local pictures. Hence $h([\lambda]) = [\lambda]$, so that the image of $\gamma$ by the composition of morphisms we are studying is the class represented by $\overline{\eta_\mathbf{x}}\lambda\overline{\eta_\mathbf{y}}^{-1}$ which around $x$ and $y$ looks like (\ref{pt}) and similarly keeps everything unchanged outside. It is clear that this path is just homotopic to the continuation of $\gamma$. 
\end{proof}

Hence, the proposition \ref{pi_1 Betti} is proved. This result will be at the center of the computation of the Betti realization of the motivic pointed path space with tangential basepoints. In order to identify this realization, let us first build the Betti realization functor of $\mathbb{Q}$-linear log motives.

\subsubsection{The Betti realization functor}\;\smallskip

The Betti realization we obtain is closed to the construction for the log motivic homotopy category in  \cite{binda2024logarithmicmotivichomotopytheory}. We prove the following:
\begin{prop}\label{betti real}
     There exists a $\mathbb{Q}$-linear Betti realization functor $$\mathrm{R}^\otimes_\mathrm{B}: \mathcal{DA}^{\text{log}}(k,\mathbb{Q})^\otimes \rightarrow \Mod_\mathbb{Q}^\otimes$$ such that for $X \in \mathbf{lSm}_k$, there is a natural equivalence $$\mathrm{R}_\mathrm{B}(h(X)) \simeq \mathbf{R}\Gamma(X^{\mathrm{KN}},\underline{\mathbb{Q}})$$ and such that the following diagram commutes: \begin{center}
\begin{tikzcd}[ column sep = 0 cm]
    \DAlog(k, \mathbb{Q})^{\otimes} \arrow[rr,"\sim"] \arrow[rdd,"\mathrm{R}^\otimes_\mathrm{B}"] && \mathcal{DA}(k, \mathbb{Q})^\otimes \arrow[ddl,"\mathrm{R}^\otimes_\mathrm{B}"]\\
    &&\\
   & \Mod_\mathbb{Q}^\otimes &
\end{tikzcd}        
    \end{center}
\end{prop}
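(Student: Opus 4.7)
The plan is to apply the universal property of \(\DAlog(k,\mathbb{Q})^\otimes\) recorded in proposition \ref{log linear yoneda}. More precisely, I would first construct a candidate log mixed Weil theory
\[
\mathbf{\Gamma}_\mathrm{B}^\otimes: \mathbf{lSm}_k^{\mathrm{op},\times} \longrightarrow \Mod_\mathbb{Q}^\otimes,\qquad X\longmapsto \mathbf{R}\Gamma(X^{\mathrm{KN}},\underline{\mathbb{Q}}),
\]
where the symmetric monoidal structure comes from the Künneth map combined with the equivalence \((X\times Y)^{\mathrm{KN}}\simeq X^{\mathrm{KN}}\times Y^{\mathrm{KN}}\) (functoriality and monoidality of the Kato--Nakayama construction on \(\mathbf{lSm}_k\) is essentially in \cite{10.2996/kmj/1138044041}; on morphisms we only need ordinary log morphisms here). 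Once \(\mathbf{\Gamma}_\mathrm{B}^\otimes\) is in place, the dual functor \((\mathbf{\Gamma}_\mathrm{B}^{\mathrm{op}})^\vee\) lands in \(\mathrm{Perf}(\mathbb{Q})\subset \mathcal{M}\mathrm{od}_\mathbb{Q}\) and proposition \ref{log linear yoneda} produces a unique colimit-preserving symmetric monoidal \(\mathrm{R}_\mathrm{B}^\otimes\) satisfying \(\mathrm{R}_\mathrm{B}(M(X))\simeq\mathbf{\Gamma}_\mathrm{B}(X)^\vee\); dualising back and using that \(h(X)\simeq M(X)^\vee\) for \(X\in\mathbf{lSm}_k\) (since such motives are dualisable in characteristic zero) yields the required formula \(\mathrm{R}_\mathrm{B}(h(X))\simeq\mathbf{R}\Gamma(X^{\mathrm{KN}},\underline{\mathbb{Q}})\).

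The core of the proof is therefore to check the four axioms of definition \ref{log Weil} for \(\mathbf{\Gamma}_\mathrm{B}\). For perfectness one uses that \(X^{\mathrm{KN}}\) has the homotopy type of a finite CW complex, since it deformation retracts onto a real-oriented blow-up of the underlying complex variety along an snc divisor and the latter has the homotopy type of a finite CW complex by triangulability of real semialgebraic sets; hence \(\mathbf{R}\Gamma(X^{\mathrm{KN}},\underline{\mathbb{Q}})\) is a bounded complex of finite-dimensional \(\mathbb{Q}\)-vector spaces. For \(\mathrm{dNis}\)-descent it suffices to show strict Nisnevich descent and dividing descent separately. Strict Nisnevich descent follows because a strict Nisnevich square of log schemes induces a Nisnevich-like (in particular excisive) square of Kato--Nakayama spaces, and \(\mathbf{R}\Gamma(-,\underline{\mathbb{Q}})\) satisfies descent for such squares. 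Dividing descent follows from the fact that for a log-étale proper monomorphism \(Y\to X\) the induced map \(Y^{\mathrm{KN}}\to X^{\mathrm{KN}}\) is a homeomorphism (this is the key topological content already used in \cite{binda2024logarithmicmotivichomotopytheory} in the unstable setting). For the \(\mathbb{A}^1\)- and \(\mathrm{ver}\)-invariance axioms one notes that \((X\times\mathbb{A}^1)^{\mathrm{KN}}\simeq X^{\mathrm{KN}}\times\mathbb{C}\) is homotopy equivalent to \(X^{\mathrm{KN}}\), and that a \(\mathrm{ver}\)-covering open immersion \(U\to V\) induces a homotopy equivalence \(U^{\mathrm{KN}}\to V^{\mathrm{KN}}\) since it is an isomorphism outside the vertical boundary, along which the blow-up collapses. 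Finally, for the \(\mathbb{G}_m^{\mathrm{log}}\)-axiom one computes directly \((\mathbb{G}_m^{\mathrm{log}})^{\mathrm{KN}}\simeq S^1\) (real-oriented blow-up of \(\mathbb{P}^1(\mathbb{C})\) at \(\{0,\infty\}\) retracts onto the unit circle), so that \(\cofib(\mathbf{\Gamma}_\mathrm{B}(\mathbb{G}_m^{\mathrm{log}})\to\mathbf{\Gamma}_\mathrm{B}(*))\simeq\mathbb{Q}[-1]\) is \(\otimes\)-invertible in \(\Mod_\mathbb{Q}\); equivalently its dual sends the Tate object to an invertible object.

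To verify the commuting triangle with the classical Betti realization on \(\mathcal{DA}(k,\mathbb{Q})\), I would use the analogous commutative square displayed above the lemma \ref{log pt real}: by the universal characterization of both realization functors, compatibility reduces to the equivalence of presheaves \(\mathbf{\Gamma}_\mathrm{B}\circ\lambda\simeq \mathbf{\Gamma}_\mathrm{B}^{\mathrm{cl}}\) on \(\mathbf{Sm}_k\), which is immediate from \(\lambda(X)^{\mathrm{KN}}=X(\mathbb{C})\) (the Kato--Nakayama space of a scheme with trivial log structure is its analytification). The main obstacle, and the part that requires the most care, is the dividing descent step: proving that the functor \(X\mapsto \mathbf{R}\Gamma(X^{\mathrm{KN}},\underline{\mathbb{Q}})\) sends dividing covers to equivalences and satisfies strict Nisnevich descent in a compatible way, so that together they amount to genuine \(\mathrm{dNis}\)-descent. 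Once this topological input is isolated one can either invoke the analogous computation performed in the unstable log motivic setting in \cite{binda2024logarithmicmotivichomotopytheory} and transport it along the comparison of lemma \ref{da sh}, or verify it by hand using that proper birational modifications of snc pairs become homeomorphisms at the level of real-oriented blow-ups.
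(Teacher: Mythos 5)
Your proof is correct and follows essentially the same route as the paper: define the Betti log mixed Weil theory as the composite $\mathbf{R}\Gamma((-)^\mathrm{KN},\underline{\mathbb{Q}})$, verify the four axioms of definition \ref{log Weil}, then invoke the universal property of proposition \ref{log linear yoneda} together with the $\lambda$-compatibility of log and ordinary mixed Weil theories to produce $\mathrm{R}_\mathrm{B}^\otimes$ and the commuting triangle. The paper is terser — it simply asserts that $(-)^\mathrm{KN}$ is symmetric monoidal, satisfies $\mathrm{dNis}$-descent and sends $\mathbb{G}_m^\mathrm{log}$ to $S^1$, citing only [Binda--Park--Østvær, Prop.~A.9.2] for $\mathrm{ver}$-invariance — whereas you spell out the excision/homeomorphism arguments behind strict Nisnevich and dividing descent, and make the deduction of the formula $\mathrm{R}_\mathrm{B}(h(X))\simeq \mathbf{R}\Gamma(X^\mathrm{KN},\underline{\mathbb{Q}})$ from dualizability explicit (the paper treats this in the discussion preceding lemma \ref{log pt real} rather than inside the proof).
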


The functor $\mathrm{R}_\mathrm{B}$ on the right is the classical Betti realization functor as defined in \cite{Ayoub_2010}.

In order to show this, we apply the results of subsection \ref{build} to the log mixed Weil theory over $k$ \begin{center}
$\begin{array}{ccccc}
\mathbf{\Gamma}^\otimes_\mathrm{B} &: & \mathbf{lSm}^{op,\times}_S & \to & \Mod_\mathbb{Q}^\otimes.\\
\end{array}$
\end{center}
defined as follows:

\begin{defi}
    The log Betti $\Mod_\mathbb{Q}$-valued log mixed Weil theory over $k$ is the symmetric monoidal functor \begin{center}
$\begin{array}{ccccc}
\mathbf{\Gamma}^\otimes_{\text{B}} &: & \mathbf{lSm}^{op,\times}_S & \to & \Mod_\mathbb{Q}^\otimes.\\
\end{array}$
\end{center} defined as the composition of symmetric monoidal functors $$\mathbf{lSm}_S^{op,\times} \xrightarrow{(-)^{\text{KN}}} \Spc^{op,\times} \xrightarrow{\mathbf{R}\Gamma(-,\underline{\mathbb{Q}})} \Mod_\mathbb{Q}^\otimes.$$
\end{defi}

Note that up to a canonical equivalence, this functor sends $X \in \mathbf{lSm}_k$ to $\operatorname{C}^*(X^{\text{KN}},\mathbb{Q})$ the singular cochain complex of $X^{\text{KN}}$. Hence, it takes values in $\operatorname{Perf}(\mathbb{Q})$ the sub-$\infty$-category of perfect complexes of $\mathbb{Q}$-modules, since the term in each degree of the singular cochain complex is a free $\mathbb{Q}$-module, and $X^{\text{KN}}$ has the homotopy type of a finite dimensional CW-complex. 

\begin{proof}[Proof of proposition \ref{betti real}.]

       The functor $(-)^{\text{KN}}$ is symmetric monoidal, satisfies $\mathrm{dNis}$-descent and sends $\mathbb{G}_m^\mathrm{log}$ to a space homotopically equivalent to $S^1$. It is as well $\operatorname{ver}$-invariant since $X^\mathrm{KN}$ and $\underline{X}(\mathbb{C})\setminus \underline{D}(\mathbb{C})$ are homotopically equivalent \cite[Proposition A.9.2]{binda2021triangulatedcategorieslogarithmicmotives}. 
     
     Since $\mathbf{R}\Gamma(-,\underline{\mathbb{Q}})$ is symmetric monoidal, so is $\mathbf{\Gamma}_{\text{KN}}$. Thus it is a $\mathrm{dNis}$-sheaf, $\text{ver}$-invariant, it sends $\mathbb{G}_m^\mathrm{log}$ on a $\otimes$-invertible object, and it is trivially $\mathbb{A}^1$-invariant.
     Hence, $\mathbf{\Gamma}_{\operatorname{KN}}$ is a $\Mod_\mathbb{Q}$-valued log mixed Weil theory, from which the proposition follows.
\end{proof}

\begin{rem}\label{log pt betti}
    Note that since $*_\mathbb{N}^{\operatorname{KN}}$ and $\mathbb{A}_\mathbb{N}^{\operatorname{KN}}$ are homotopically equivalent, the morphism $$\mathbf{\Gamma}(\mathbb{A}_\mathbb{N}) \rightarrow \mathbf{\Gamma}(*_\mathbb{N})$$ is an equivalence.
\end{rem}

\subsubsection{Computation of the Betti realization of the motivic fundamental groupoid}\;\smallskip

Now that the Betti realization functor is built, we can get back to the motivic fundamental group. We will show the following:
\begin{thm}
\label{thm Betti real pi}
Let $X \in \mathbf{Div}_k$ and $\mathbf{x}, \mathbf{y}$ tangential basepoints on $X$. There is a natural equivalence of algebras
    $$\mathrm{H}^0(\mathrm{R}_\mathrm{B}(\mpspace{\mathbf{x}}{m}{\mathbf{y}})) \simeq \mathcal{O}(\pi^\mathrm{un}_1(\underline{X}(\mathbb{C})\setminus \underline{D}(\mathbb{C}), \mathbf{x},\mathbf{y})).$$
    In particular, when $X$ is such that $M(X)$ is a mixed Tate motive, there is a natural isomorphism of groupoid induced by $$\mathrm{R}_\mathrm{B}\pi_1^m(X,\mathbf{x},\mathbf{y}) \simeq \pi^{\operatorname{un}}_1(\underline{X}(\mathbb{C})\setminus \underline{D}(\mathbb{C}),\mathbf{x},\mathbf{y}).$$
\end{thm}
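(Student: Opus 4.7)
The plan is to commute the Betti realization with the bar construction, identify the realized motivic augmentation $h(\mathbf{x})$ with the topological augmentation at the point $\mathbf{x} \in X^\mathrm{KN}$, and then invoke Proposition \ref{pi_1 Betti}. The Betti realization $\mathrm{R}_\mathrm{B}$ constructed in Proposition \ref{betti real} is a symmetric monoidal colimit-preserving functor, so it preserves relative tensor products and commutes with the bar construction, yielding
$$\mathrm{R}_\mathrm{B}(\+^{}_\mathbf{x}\mathrm{P}^m_\mathbf{y}X) \simeq \+_{\mathrm{R}_\mathrm{B}(h(\mathbf{x}))}\Baar_{\mathrm{R}_\mathrm{B}(h(\mathbf{y}))}(\mathrm{R}_\mathrm{B}(h(X))),$$
with $\mathrm{R}_\mathrm{B}(h(X)) \simeq \mathbf{R}\Gamma(X^\mathrm{KN}, \underline{\mathbb{Q}})$.

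The core of the proof is to identify $\mathrm{R}_\mathrm{B}(h(\mathbf{x}))$ with the augmentation of $\mathbf{R}\Gamma(X^\mathrm{KN}, \underline{\mathbb{Q}})$ defined by the point $\mathbf{x} \in X^\mathrm{KN}$ (obtained from the virtual morphism $\mathbf{x}: * \to X$ via subsection \ref{pointed}). By Definition \ref{mottan}, $h(\mathbf{x})$ is the zigzag
$$h(X) \to h(x^\mathrm{log}) \xleftarrow{\sim} h(\mathrm{N}^\mathrm{log}_x(\underline{X},\underline{D})) \to \Lambda$$
obtained from the pointed diagram of ordinary morphisms of log schemes. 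Applying $\mathrm{R}_\mathrm{B}$ term by term, the only nontrivial check is on the non-log-smooth scheme $x^\mathrm{log} \simeq *_{\mathbb{N}^j}$: Lemma \ref{log pt real} combined with Remark \ref{log pt betti}, the Künneth formula for log points (established right after Corollary \ref{tool}), and the symmetric monoidality of $\mathrm{R}_\mathrm{B}$ give $\mathrm{R}_\mathrm{B}(h(*_{\mathbb{N}^j})) \simeq \mathbf{R}\Gamma((S^1)^j, \underline{\mathbb{Q}})$. The functoriality of $(-)^\mathrm{KN}$ on ordinary morphisms then identifies $\mathrm{R}_\mathrm{B}(h(\mathbf{x}))$ with
$$\mathbf{R}\Gamma(X^\mathrm{KN},\underline{\mathbb{Q}}) \to \mathbf{R}\Gamma((S^1)^j,\underline{\mathbb{Q}}) \xleftarrow{\sim} \mathbf{R}\Gamma(\mathrm{N}^\mathrm{log}_x(\underline{X},\underline{D})^\mathrm{KN},\underline{\mathbb{Q}}) \to \mathbb{Q},$$
where the last arrow is evaluation at the image of the log rational point $\mathbf{x}: * \to \mathrm{N}^\mathrm{log}_x(\underline{X},\underline{D})$. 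By the virtual-point / Kato--Nakayama dictionary of subsection \ref{pointed} (together with Proposition \ref{pointeddia}), this composite is precisely the augmentation at the point of $X^\mathrm{KN}$ associated with the tangential basepoint $\mathbf{x}$.

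Combining these steps gives $\mathrm{R}_\mathrm{B}(\+^{}_\mathbf{x}\mathrm{P}^m_\mathbf{y}X) \simeq \+^{}_\mathbf{x}\mathrm{P}^\mathrm{B}_\mathbf{y}X$; taking $\mathrm{H}^0$ and applying Proposition \ref{pi_1 Betti} proves the first assertion. The assertion in the mixed Tate case is then a formal consequence: the filtered pieces $(\+^{}_\mathbf{x}\mathrm{P}^m_\mathbf{y}X)_{\leq n}$ have Betti realization concentrated in nonpositive degrees (used in the proof of the motivic groupoid structure in subsection \ref{fundamentalgroup}), so $\mathrm{R}_\mathrm{B} \circ \mathrm{H}^0_\mathrm{mot}$ and $\mathrm{H}^0 \circ \mathrm{R}_\mathrm{B}$ agree on them, and the groupoid operations are transported by naturality of the bar construction and of $\mathrm{R}_\mathrm{B}$. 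The main obstacle lies in the middle step, where we must trace the motivic augmentation through the non-log-smooth intermediate $x^\mathrm{log}$ and the log normal space, and verify that the resulting composite in cohomology is the evaluation at the correct point of $X^\mathrm{KN}$; this relies crucially on the three-way compatibility between virtual points, log rational points of $\mathrm{N}^\mathrm{log}_x(\underline{X},\underline{D})$, and points on $X^\mathrm{KN}$ set up in subsection \ref{pointed}.
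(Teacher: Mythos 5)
Your proposal follows essentially the same route as the paper's proof: commute $\mathrm{R}_\mathrm{B}$ with the bar construction using colimit-preservation, trace the zigzag defining $h(\mathbf{x})$ through the Betti realization, use Lemma \ref{log pt real} and the retraction of $\mathrm{N}^\mathrm{log}_x(\underline{X},\underline{D})^\mathrm{KN}$ onto $(S^1)^j$ to identify the realized augmentation with evaluation at the point of $X^\mathrm{KN}$ determined by $\mathbf{x}$, and conclude via Proposition \ref{pi_1 Betti}. Your addendum on the mixed Tate case (matching $\mathrm{H}^0 \circ \mathrm{R}_\mathrm{B}$ with $\mathrm{R}_\mathrm{B} \circ \mathrm{H}^0_\mathrm{mot}$ via nonpositive concentration) is a slightly more explicit spelling-out than the paper gives, but both arguments are the same in substance.
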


\begin{proof}
By the proposition \ref{pi_1 Betti}, we only need to show that there is a natural equivalence in $\CAlg(\mathcal{M}\mathrm{od}^\otimes_\mathbb{Q})$ $$\mathrm{R}_{\mathrm{B}}(\+^{}_\mathbf{x}\mathrm{P}_{\mathbf{y}}^mX) \simeq \+^{}_\mathbf{x}\mathrm{P}^\mathrm{B}_{\mathbf{y}}X.$$
Since $\mathrm{R}^\otimes_{\mathrm{B}}$ is colimit preserving, it commutes with the bar construction, hence $$\mathrm{R}_{\mathrm{B}}(\+^{}_\mathbf{x}\mathrm{P}_{\mathbf{y}}^mX) \simeq \!_{\mathrm{R}_{\mathrm{B}}(\mathbf{x})}\Baar_{\mathrm{R}_{\operatorname{B}}(\mathbf{y})}\mathbf{R}\Gamma(X^{\operatorname{KN}},\underline{\mathbb{Q}}).$$
    Recall that up to homotopy equivalence, $h(\mathbf{x}): h(X) \rightarrow \mathbb{Q}$ is defined by the choice of a homotopy inverse of $$h(\mathrm{N}^\mathrm{log}_x(\underline{X},\underline{D})) \rightarrow h(x^\mathrm{log}).$$ The realization of this morphism is $$C^*(\mathrm{N}_x^\text{log}(\underline{X},\underline{D})^{\operatorname{KN}},\mathbb{Q}) \rightarrow C^*((x^\text{log})^{\operatorname{KN}},\mathbb{Q})$$ (see lemma \ref{log pt real} and remark \ref{log pt betti}) induced by the embedding $$(x^\mathrm{log})^{\operatorname{KN}} \hookrightarrow \mathrm{N}_x^\mathrm{log}(\underline{X},\underline{D})^{\operatorname{KN}}.$$ A natural homotopy inverse to this morphism is the map induced by the retraction of the real blow-up of $\mathbb{C}^i$ along its axes on the product of infinitesimal spheres at the origin $(S^1)^i$. Therefore, the augmentation $\mathrm{R}_{\mathrm{B}}(h(\mathbf{x}))$ is up to homotopy the map obtained from the sequence $$* \xrightarrow{v} \mathrm{N}_x^\mathrm{log}(\underline{X},\underline{D})^{\mathrm{KN}} \rightarrow (S^1)^i \rightarrow X^{\operatorname{KN}}$$ by applying $C^*(-,\mathbb{Q})$. The composition of these maps is just the point $$* \rightarrow X^{\operatorname{KN}}$$ defined by $\frac{v}{||v||} \in S^i = (x^\mathrm{log})^{\operatorname{KN}}$. The theorem is proved.
\end{proof}

We identified the Betti realization of the motivic pointed path space. Let us now relate it to the cosimplicial path space construction. Passing to the Betti cohomology allows us to bypass the limitations of proposition \ref{path space facile}. Indeed, we can see that the Betti path space can be classically extracted from the cosimplicial space \begin{center}
    \begin{tikzcd}
        \+^{}_\mathbf{x}\mathrm{P}^\bullet_\mathbf{y}X^{\operatorname{KN}} \arrow[r] \arrow[d] \arrow[dr, phantom, "\usebox\pullback" , very near start, color=black] & (X^{\operatorname{KN}})^{[0,1]} \arrow[d]\\
        * \arrow[r,"\mathbf{y,x}"] & X^{\operatorname{KN}}\times X^{\operatorname{KN}}
    \end{tikzcd}
\end{center}
after applying the functor $\text{R}\Gamma(-,\underline{\mathbb{Q}})$ and taking the colimit on the simplicial diagram thereby obtained. Let us explain this point further. Is is easy to see that $$\+^{}_\mathbf{x}\mathrm{P}^\mathrm{B}_\mathbf{y}X \simeq \mathbb{Q} \otimes_{\mathbf{\Gamma}_\mathrm{B}(X)\otimes \mathbf{\Gamma}_\mathrm{B}(X)} \mathbf{\Gamma}_\mathrm{B}(X) \simeq \mathbb{Q} \otimes_{\mathrm{R}\mathbf{\Gamma}(X^\mathrm{KN},\underline{\mathbb{Q}})\otimes \mathrm{R}\mathbf{\Gamma}(X^\mathrm{KN},\underline{\mathbb{Q}})} \mathrm{R}\mathbf{\Gamma}(X^\mathrm{KN},\underline{\mathbb{Q}}).$$ Since $\mathrm{R}\mathbf{\Gamma}(X^\mathrm{KN},\underline{\mathbb{Q}}) \simeq \mathrm{colim}_\Delta \mathrm{R}\mathbf{\Gamma}((X^{\mathrm{KN}})^{[0,1]},\underline{\mathbb{Q}})$, we see that $$\+^{}_\mathbf{x}\mathrm{P}^\mathrm{B}_\mathbf{y}X \simeq \colim_\Delta \mathbb{Q} \otimes_{\mathrm{R}\mathbf{\Gamma}(X^\mathrm{KN},\underline{\mathbb{Q}})\otimes \mathrm{R}\mathbf{\Gamma}(X^\mathrm{KN},\underline{\mathbb{Q}})} \mathrm{R}\mathbf{\Gamma}((X^{\mathrm{KN}})^{[0,1]},\underline{\mathbb{Q}}).$$ It is now straightforward to see that the simplicial objects $\mathbf{\Gamma}_\mathrm{B}(\displaystyle \+^{}_\mathbf{x}\mathrm{P}^\bullet_\mathbf{y}X^{\mathrm{KN}},\underline{\mathbb{Q}})$ and $\mathbb{Q} \otimes_{\mathrm{R}\mathbf{\Gamma}(X^\mathrm{KN},\underline{\mathbb{Q}})\otimes \mathrm{R}\mathbf{\Gamma}(X^\mathrm{KN},\underline{\mathbb{Q}})} \mathrm{R}\mathbf{\Gamma}((X^{\mathrm{KN}})^{[0,1]},\underline{\mathbb{Q}})$ are weakly equivalent, so that $$\+^{}_\mathbf{x}\mathrm{P}^\mathrm{B}_\mathbf{y}X \simeq \colim_\Delta \mathbf{\Gamma}_\mathrm{B}( \+^{}_\mathbf{x}\mathrm{P}^\bullet_\mathbf{y}X^{\operatorname{KN}},\underline{\mathbb{Q}}).$$ By \cite[Corollary 3.13]{arakawa2023homotopylimitshomotopycolimits}, we see that there is a natural quasi-isomorphism $$\+^{}_\mathbf{x}\mathrm{P}^\text{B}_\mathbf{y}X \simeq \operatorname{Tot}\textbf{R}\Gamma(\+^{}_\mathbf{x}\mathrm{P}^\bullet_\mathbf{y}X^{\operatorname{KN}},\underline{\mathbb{Q}}).$$

The previous quasi-isomorphism has a filtered version: $$\+^{}_\mathbf{x}\mathrm{P}^\text{B}_\mathbf{y}X_{\leq n} \simeq \operatorname{Tot}\textbf{R}\Gamma(\+^{}_\mathbf{x}\mathrm{P}^\bullet_\mathbf{y}X^{\operatorname{KN}}_{\leq n},\underline{\mathbb{Q}}),$$ where the object $\!_\mathbf{x}\mathrm{P}^\bullet_\mathbf{y}X^{\operatorname{KN}}_{\leq n}$ is the diagram obtained from $\!_\mathbf{x}\mathrm{P}^\bullet_\mathbf{y}X^{\operatorname{KN}}$ by restriction to the sets of cardinality $\leq n$. Based on this result, we will now describe in a closer way to the classical point of view, the Betti realization of the filtered pieces of the path space based at tangential basepoints. This will be useful to compute the periods of the motivic fundamental groupoid. The main reference here is \cite[Section 3]{Javier}, see also \cite{enriquez2023naturaltransformationsrelatinghomotopy,enriquez2021homologicalinterpretationprounipotentcompletion}.

\begin{defi}
    Let $n$ be an nonegative integer, $\mathbf{x}, \mathbf{y}$ two tangential points on $X$ a divisorial log scheme over $k$. For $0 \leq i \leq n$, we denote by $\underline{Y_i}$ the image in $\underline{X}^n$ of the i-th coface map (which is a virtual morphism of log schemes when $i = 0,n$) of the cosimplicial based path space $\+_\mathbf{x}\mathrm{P}_\mathbf{y}X$ $$\partial_i^n: X^{n-1} \rightarrow X^n.$$ Note that $\underline{\partial^n_i}$ is an immersion of schemes, so that we can endow $\underline{Y_i}$ with the log structure induced by the isomorphisms $\underline{X}^{n-1} \simeq \underline{Y_i}$. It comes equipped with a virtual morphism $$Y_i \rightarrow X^n.$$
\end{defi}

\begin{rem}
    The scheme $\underline{Y_i}$ can be described explicitly. If we write $X^n$ in coordinates $(x_1,\dots, x_n)$, then  $\underline{Y_i}$ is \begin{itemize}
        \item $\{x_1 = y \}$ if $i = 0$,
        \item $\{x_i = x_{i+1}\}$ if $0 < i < n$,
        \item $\{x_n = x\}$ if $i = n$.
    \end{itemize} \end{rem}

 Naturally, the virtual morphism $Y_i \rightarrow X^n$ is \begin{itemize}
        \item The product of the virtual point $\mathbf{y}$ and the identity on $X^{n-1}$ if $i = 0$.
        \item The ordinary morphism of log schemes induced by the i-th coface of $\!_\mathbf{x}\mathrm{P}_\mathbf{y}X$.
        \item The product of the identity on $X^{n-1}$ and the virtual point $\mathbf{x}$ if $i= n$.
    \end{itemize}

\smallskip

For $\mathrm{I} \subsetneq \{0,\dots, n\}$, we let $\underline{Y_\mathrm{I}} = \bigcap_{i\in \mathrm{I}} \underline{Y_i}$. If we denote by $\mathrm{I}^c:= \Delta^n\setminus \mathrm{I}$, there are isomorphisms $$\underline{Y_\mathrm{I}} \simeq \underline{X}^{|\mathrm{I}^c|-1},$$ obtained by deleting redundant coordinates, which induces a log structure on $\underline{Y_\mathrm{I}}$.

\smallskip 

For $i \in \mathbb{N}$, we define $$Y^i:= \bigsqcup_{\mathrm{I} \subsetneq \{0,\dots,n\}, |\mathrm{I}| = i}Y_\mathrm{I}$$
and we denote by $(X^n,Y^\bullet)$ the family thus defined. Note that for $i>0$, there are $i$ natural morphisms \begin{center}
\begin{tikzcd}
    Y^i  \arrow[r, draw=none, "\raisebox{+1.5ex}{\vdots}" description] \arrow[r, shift left = 3] \arrow[r, shift right = 3] & Y^{i-1} 
\end{tikzcd}
\end{center}
given by the diagram for $\mathrm{I} \subsetneq \{0,\dots,n\}$, and $k \in \mathrm{I}$ \begin{center}
    \begin{tikzcd}
    Y_\mathrm{I} \arrow[r, "\sim"] \arrow[d,dotted] &  X^{|\mathrm{I}^c|-1} \arrow[d]\\
     Y_{\mathrm{I}\setminus \{k\}} \arrow[r,"\sim"] & X^{|(\mathrm{I}\setminus \{k\})^c|-1} = X^{|\mathrm{I}^c|}
    \end{tikzcd}
\end{center}
where the right morphism is the $k$-th coface of $\!_\mathbf{x}\mathrm{P}_\mathbf{y}X$.

\smallskip

These morphisms endow $(X^n,Y^\bullet)$ with a natural structure of an augmented symmetric semi-simplicial object in $\!^{\mathbf{v}}\mathbf{Div}_k$, that is a diagram in $\!^{\mathbf{v}}\mathbf{Div}_k$ indexed by the category of finite sets, with injective morphisms between them (see \cite[Section 3]{Chan_2021}). 

\begin{defi}\label{finfin}
We let $\mathbf{\Gamma}_\mathrm{B}(X^n,Y^\bullet)$ be the complex in $\mathcal{M}\mathrm{od}_\mathbb{Q}$ associated with the augmented symmetric semi-simplicial variety with log corners $(X^n,Y^\bullet)$ after applying $\mathbf{\Gamma}_\mathrm{B}^\otimes$.
\end{defi}

\begin{rem}
If we denote by $Y:= \cup_i Y_i$, then it is a classical fact (see \cite{ASENS_2005_4_38_1_1_0}) that the complex $\mathbf{\Gamma}_\mathrm{B}(X^n,Y^\bullet)$ is closely related to the relative Betti cohomology of the pair $(X^n,Y)$, that we denote by $\mathbf{\Gamma}_\mathrm{B}(X^n,Y)$. More precisely, there is a fiber sequence $$\underline{\mathbb{Q}}_{|Y_{\Delta^n}}[-n-1] \rightarrow \mathbf{\Gamma}_\mathrm{B}(X^n,Y) \rightarrow \mathbf{\Gamma}_\mathrm{B}(X^n,Y^\bullet).$$ Note that $Y_{\Delta^n}$ is either a point if $\mathbf{x} = \mathbf{y}$, or the empty set if $\mathbf{x} \neq \mathbf{y}$. Hence the fiber sequence induces an isomorphism $$\mathbf{\Gamma}_\mathrm{B}(X^n,Y) \simeq \mathbf{\Gamma}_\mathrm{B}(X^n,Y^\bullet)$$ when $\mathbf{x} \neq \mathbf{y}$.
\end{rem}

In general, the complex $\mathbf{\Gamma}_\mathrm{B}(X^n,Y^\bullet)$ is used to compute the filtered pieces of the bar construction. Indeed, with \cite[3.297, Lemma 3.298]{Javier} we know that there are natural quasi-isomorphisms of complexes $$ \+^{}_\mathbf{x}\mathrm{P}^\mathrm{B}_\mathbf{y}X_{\leq n} \simeq \mathbf{\Gamma}_\mathrm{B}(\+^{}_\mathbf{x}\mathrm{P}^\bullet_\mathbf{y}X_{\leq n}) \simeq  \mathbf{\Gamma}_\mathrm{B}(X^n,Y^\bullet)[n].$$

The quasi-isomorphisms can be described this way: The complex $\mathbf{\Gamma}_\mathrm{B}(X^n,Y^\bullet)$ can be represented by the total complex of the double complex \begin{equation}\label{complex 1}
    \bigoplus_{\mathrm{I} \subsetneq \{0,\dots, n\},\\ |\mathrm{I}| = \bullet} C^*(Y_\mathrm{I}^\mathrm{KN},\mathbb{Q}),
\end{equation} where the horizontal differentials are given by the semi-simplicial structure of $(X^n,Y^\bullet)$ (see a full description in \cite[A.8.3]{Javier}).

The complex $\mathbf{\Gamma}_\mathrm{B}(\+^{}_\mathbf{x}\mathrm{P}^\bullet_\mathbf{y}X_{\leq n})$ is obtained as the total complex of the double complex \begin{equation}\label{complex 2}
    C^*(\+^{}_\mathbf{x}\mathrm{P}^\bullet_\mathbf{y}X^\mathrm{KN}_{\leq n})
\end{equation} where the horizontal differentials are given by the cosimplicial structure of $\+^{}_\mathbf{x}\mathrm{P}^\bullet_\mathbf{y}X^\mathrm{KN}$.

\smallskip

The complexes (\ref{complex 1}) and (\ref{complex 2}) are isomorphic up to a shift of degree $n$, and the isomorphism is given explicitely in degree $k$ by: $$\begin{array}{ccc}
     \bigoplus_{0 \leq i \leq n} C^{k+i}((X^\mathrm{KN})^i,\mathbb{Q}) & \rightarrow & \bigoplus_{0 \leq i \leq n} \bigoplus_{\mathrm{I} \subsetneq \{0,\dots, n\}, |\mathrm{I}| = i} C^{k+n-i}(Y_\mathrm{I}^\mathrm{KN},\mathbb{Q}) \\
   (\omega_i)_{0 \leq i \leq n}  & \mapsto  & (\omega_\mathrm{I})_{\mathrm{I} \subsetneq \Delta^n}
\end{array}$$ where $(\omega_\mathrm{I})$ is defined by $$\omega_\mathrm{I} = \left\{
    \begin{array}{ll}
        (-1)^{n|\mathrm{I}^c|}\epsilon(\mathrm{I}^c)\omega_{i} & \mbox{if } \mathrm{I} = \{0,\dots,i\}\\
        0 & \mbox{else}
    \end{array} \right. $$
where we let $\epsilon(\mathrm{I}):= \prod_{i \in \mathrm{I}} (-1)^i$, and where $\omega_i = \omega_{|\mathrm{I}^c|-1}$ as a cochain on $(X^\mathrm{KN})^{|\mathrm{I}^c|-1}$, is viewed as a cochain on $Y_\mathrm{I}^\mathrm{KN}$.

\begin{rem}\label{signe}
The signs appearing above are conventions, and they are constrained by the signs chosen for the differentials of the complexes involved. We followed here the conventions from \cite{Javier}.
\end{rem}

Using this representation, the isomorphism given by the Beilinson lemma is classically described as follows: \\

For $i\in \mathbb{N}$, let us denote by \begin{equation}\label{model}
    \Delta^i:= \{(t_1,\dots,t_i)\in \mathbb{R}^i, 0 \leq t_i \leq \dots \leq t_1 \leq 1\}
\end{equation} the standard $i$-simplex. Let $\gamma: [0,1] \rightarrow X^\mathrm{KN}$ be a continuous path based at $\mathbf{x},\mathbf{y}$. It induces a singular chain \begin{center}
        $\begin{array}{cccc}
        \sigma_{\gamma,i}:     &  \Delta^i & \rightarrow & (X^\mathrm{KN})^i\\
             & (t_1,\dots, t_i) & \mapsto  & (\gamma(t_1),\cdots,\gamma(t_i))
        \end{array}$
    \end{center}
    
Moreover, for $\mathrm{I} \subsetneq \{0,\dots, n \}$, $\sigma_{\gamma,|\mathrm{I}^c|-1}$ defines a map $$\sigma_{\gamma,\mathrm{I}}: \Delta^{|\mathrm{I}^c|-1} \rightarrow (X^\mathrm{KN})^{|\mathrm{I}^c|-1} \simeq Y^\mathrm{KN}_\mathrm{I}$$

\begin{prop}\cite[Example 3.8]{ASENS_2005_4_38_1_1_0},\cite[Section 3.6.4]{Javier}
    The isomorphism $$\mathrm{R}^0_\mathrm{B}(\mpspace{\mathbf{x}}{m}{\mathbf{y}})_{\leq n} \simeq  (\mathbb{Q}[\pi_1(X^\mathrm{KN},\mathbf{x},\mathbf{y})]/I^{n+1})^\vee$$
    is dual to the map obtained via $$\begin{array}{ccc}
        \pi_1(X,\mathbf{x},\mathbf{y}) & \rightarrow & (\mathbf{\Gamma}_\mathrm{B}(X^n,Y^\bullet)^n)^\vee\\
        \gamma & \mapsto & ((-1)^\frac{|\mathrm{I}|(|\mathrm{I}|+1)}{2}(-1)^\frac{n(n-1)}{2}\epsilon(\mathrm{I}^c)\sigma_{\gamma,\mathrm{I}})_{\mathrm{I}\subsetneq [n]}.  \end{array}$$
\end{prop}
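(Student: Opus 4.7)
The plan is as follows. By the discussion following theorem \ref{thm Betti real pi}, we already have a chain of quasi-isomorphisms
$$\+^{}_\mathbf{x}\mathrm{P}^\mathrm{B}_\mathbf{y}X_{\leq n} \simeq \mathrm{Tot}\,\mathbf{R}\Gamma(\+^{}_\mathbf{x}\mathrm{P}^\bullet_\mathbf{y}X^\mathrm{KN}_{\leq n},\underline{\mathbb{Q}}) \simeq \mathbf{\Gamma}_\mathrm{B}(X^n,Y^\bullet)[n],$$
the second of which is realized at the cochain level by the explicit map recalled just before the statement. The existence of the abstract isomorphism with $(\mathbb{Q}[\pi_1(X^\mathrm{KN},\mathbf{x},\mathbf{y})]/I^{n+1})^\vee$ is then provided by theorem \ref{thm Betti real pi} combined with the filtration. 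All that remains is to identify the pairing.

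I would first describe the pairing on the middle complex. Beilinson's theorem, which is the core input used in proposition \ref{pi_1 Betti}, produces the isomorphism
$$\mathrm{H}^0(\mathrm{Tot}\,\mathbf{R}\Gamma(\+^{}_\mathbf{x}\mathrm{P}^\bullet_\mathbf{y}X^\mathrm{KN}_{\leq n},\underline{\mathbb{Q}})) \simeq \bigl(\mathbb{Q}[\pi_1(X^\mathrm{KN},\mathbf{x},\mathbf{y})]/I^{n+1}\bigr)^\vee$$
whose dual map sends a path $\gamma$ to the family of singular chains
$$\sigma_{\gamma,i}\colon \Delta^i \to (X^\mathrm{KN})^i,\qquad (t_1,\dots,t_i)\mapsto(\gamma(t_1),\dots,\gamma(t_i)).$$
Concretely, a $0$-cocycle $(\omega_i)_{0\le i\le n}$ pairs with $\gamma$ by the sum $\sum_i\langle\omega_i,\sigma_{\gamma,i}\rangle$; this is Beilinson's construction, recorded in \cite[Example 3.8]{ASENS_2005_4_38_1_1_0} and \cite[Section 3.6.4]{Javier}.

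Next, I would transport this pairing through the explicit quasi-isomorphism between the double complexes (\ref{complex 1}) and (\ref{complex 2}) recalled before the statement. Since this map sends a family $(\omega_i)$ to the family $(\omega_\mathrm{I})_{\mathrm{I}\subsetneq [n]}$ which vanishes unless $\mathrm{I}=\{0,\dots,i\}$, in which case $\omega_\mathrm{I}=(-1)^{n|\mathrm{I}^c|}\epsilon(\mathrm{I}^c)\omega_i$, inverting it and pairing with $\sigma_{\gamma,\mathrm{I}}$ (viewed as a chain on $Y_\mathrm{I}^\mathrm{KN}\simeq (X^\mathrm{KN})^{|\mathrm{I}^c|-1}$) produces exactly the formula claimed. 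The sign $(-1)^{|\mathrm{I}|(|\mathrm{I}|+1)/2}(-1)^{n(n-1)/2}\epsilon(\mathrm{I}^c)$ in the statement arises from combining the inverse of $(-1)^{n|\mathrm{I}^c|}\epsilon(\mathrm{I}^c)$ with the Koszul sign produced by the degree shift $[n]$, using the identity $|\mathrm{I}|+|\mathrm{I}^c|=n+1$.

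The main obstacle is sign bookkeeping: one has to chase the conventions for the differentials of both total complexes, for the degree shift by $n$, for the ordering of faces in the semi-simplicial structure of $(X^n,Y^\bullet)$, and for the duality pairing between cochains and chains. As emphasized in remark \ref{signe} these conventions are fixed once and for all, following \cite{Javier}, and the verification of the formula is then a direct computation.
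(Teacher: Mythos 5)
Your overall strategy — pass through the chain of quasi-isomorphisms from the discussion after theorem \ref{thm Betti real pi}, invoke the classical Beilinson description on the cosimplicial side, and then transport the pairing through the explicit map between the double complexes (\ref{complex 1}) and (\ref{complex 2}) — is exactly the implicit logic behind the paper's statement. The paper in fact gives no proof at all: it cites Deligne--Goncharov and \cite{Javier} for the bare fact and then disclaims the signs as conventions in the remark immediately following, so your sketch is no less rigorous than what appears in print.

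However, the specific sign-provenance claim is wrong. You assert that the cosimplicial pairing is the unsigned sum $\sum_i\langle\omega_i,\sigma_{\gamma,i}\rangle$ and that the sign $(-1)^{|\mathrm{I}|(|\mathrm{I}|+1)/2}(-1)^{n(n-1)/2}\epsilon(\mathrm{I}^c)$ in the proposition is obtained from inverting $(-1)^{n|\mathrm{I}^c|}\epsilon(\mathrm{I}^c)$ together with a Koszul sign from the $[n]$-shift. Run the bookkeeping: for an initial segment $\mathrm{I}$ with $|\mathrm{I}^c|-1 = i$, pairing $\omega_\mathrm{I} = (-1)^{n|\mathrm{I}^c|}\epsilon(\mathrm{I}^c)\omega_i$ against the statement's chain $(-1)^{|\mathrm{I}|(|\mathrm{I}|+1)/2}(-1)^{n(n-1)/2}\epsilon(\mathrm{I}^c)\sigma_{\gamma,\mathrm{I}}$ yields, after cancelling $\epsilon(\mathrm{I}^c)^2$, the term $(-1)^{n|\mathrm{I}^c|+|\mathrm{I}|(|\mathrm{I}|+1)/2+n(n-1)/2}\langle\omega_i,\sigma_{\gamma,i}\rangle$, and using $|\mathrm{I}|+|\mathrm{I}^c|=n+1$ this reduces to $(-1)^{i(i-1)/2}\langle\omega_i,\sigma_{\gamma,i}\rangle$ rather than $\langle\omega_i,\sigma_{\gamma,i}\rangle$ (check $n=2$, $\mathrm{I}=\emptyset$: the exponents give $-1$, not $+1$). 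So either the cosimplicial pairing you are attributing to Beilinson already carries the nontrivial sign $(-1)^{i(i-1)/2}$ in degree $i$, or there is an additional sign in the duality between cochains and chains arising from the double-complex conventions — in neither case can the proposition's sign be recovered purely by "inverting the isomorphism's sign plus a $[n]$-shift Koszul sign," as you claim. You do flag that sign bookkeeping is the main work, which is fair, but the explanation you offer for where the signs come from would not survive the computation, so a reader trying to follow it would get stuck.
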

 \begin{rem}
 The signs are again conventions fixed in a coherent way with the conventions chosen above.
 \end{rem}

\subsection{The de Rham realization}\label{deRhamreal2}
\subsubsection{The de Rham fundamental groupoid at tangential basepoints}\;\smallskip

We now give a definition of the de Rham fundamental torsor at tangential basepoints, and compare it with the de Rham realization of the motivic pointed path space based at tangential basepoints. We suppose $k$ of characteristic $0$ in this section. Let us take again $X\in \mathbf{Div}_k$, with tangential basepoints $\mathbf{x},\mathbf{y}$. We will emphasize the point of view, taken from section \ref{virtual}, on $\mathbf{x},\mathbf{y}$ as virtual morphisms $* \rightarrow X$.\\
Recall that the sheaf of log de Rham differentials $\Omega^1_{-/k}$ is functorial with respect to virtual morphism with reduced source \cite[Lemma 5.9]{dupont2024logarithmicmorphismstangentialbasepoints}. Therefore the log de Rham cohomology $\textbf{R}\Gamma(X,\Omega_{X/k}) \in \operatorname{\CAlg}(\Mod_k)$ comes with canonical augmentations defined by functoriality $$\textbf{R}\Gamma(X,\Omega_{X/k}) \xrightarrow{\mathbf{x,y}} k.$$
\begin{defi}\label{larefimportante}
    The de Rham path space of $X$ based at $\mathbf{x},\mathbf{y}$ is defined by $$\+^{}_\mathbf{x}\mathrm{P}^\text{dR}_\mathbf{y}X:= \!_\mathbf{x}\Baar_\mathbf{y}\textbf{R}\Gamma(X,\Omega_{X/k}) \in \Mod_k.$$
    The de Rham fundamental torsor at $\mathbf{x},\mathbf{y}$ of $X$ is defined by $$\pi^\text{dR}_1(X,\mathbf{x},\mathbf{y}):= \operatorname{Spec}\operatorname{H}^0(\+^{}_\mathbf{x}\mathrm{P}^\text{dR}_\mathbf{y}X).$$
\end{defi}

\begin{rem}
As before, it follows from theorem \ref{bar hopf}, that the family $\{\+^{}_\mathbf{x}\mathrm{P}^\mathrm{dR}_\mathbf{y}X\}_{\mathbf{x},\mathbf{y}}$ is endowed with the structure of a homotopy Hopf algebroid, so that the family $\{\pi^\text{dR}_1(X,\mathbf{x},\mathbf{y})\}_{\mathbf{x},\mathbf{y}}$ has the structure of a groupoid. We call it the \textbf{de Rham fundamental groupoid}.
\end{rem}

\subsubsection{The de Rham realization functor}\;\smallskip

We can now move to the identification of the de Rham realization of the motivic pointed path space based at tangential basepoints. Let us first build the log de Rham realization functor.

\begin{prop}\label{de rham real}
 There exists a log de Rham realization functor $$\mathrm{R}^\otimes_\mathrm{dR}: \DAlog(k,k)^\otimes \rightarrow \Mod_k^\otimes$$ such that for $X \in \mathbf{lSm}_k$, there is a natural equivalence $$\mathrm{R}_\mathrm{dR}(h(X)) \simeq \mathbf{R}\Gamma(X,\Omega_{X/k})$$ and such that the following diagram commutes: \begin{center}
\begin{tikzcd}[ column sep = 0 cm]
    \DAlog(k, k)^\otimes \arrow[rr,"\sim"] \arrow[rdd,"\mathrm{R}^\otimes_\mathrm{dR}"] && \mathcal{DA}(k, k)^\otimes\arrow[ddl,"\mathrm{R}^\otimes_\mathrm{dR}"]\\
    &&\\
   & \Mod^\otimes_k &
\end{tikzcd}        
    \end{center}
\end{prop}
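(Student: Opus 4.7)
The strategy is to mirror the construction of the Betti realization from Proposition \ref{betti real} by exhibiting an explicit log mixed Weil theory over $k$. Set
$$\mathbf{\Gamma}_{\mathrm{dR}}^\otimes : \mathbf{lSm}_k^{\mathrm{op},\times} \longrightarrow \mathcal{M}\mathrm{od}_k^\otimes, \qquad X \longmapsto \mathbf{R}\Gamma(X,\Omega_{X/k}),$$
with symmetric monoidal structure provided by the Künneth formula for the log de Rham complex of log smooth log schemes in characteristic zero. Once we verify that $\mathbf{\Gamma}_{\mathrm{dR}}$ satisfies the four conditions of Definition \ref{log Weil}, Proposition \ref{log linear yoneda} applied to $(\mathbf{\Gamma}_{\mathrm{dR}}^{\mathrm{op}})^\vee$ will immediately produce the symmetric monoidal realization $\mathrm{R}_{\mathrm{dR}}^\otimes : \DAlog(k,k)^\otimes \to \mathcal{M}\mathrm{od}_k^\otimes$ satisfying $\mathrm{R}_{\mathrm{dR}}(h(X)) \simeq \mathbf{\Gamma}_{\mathrm{dR}}(X)$ for $X \in \mathbf{lSm}_k$ (using, as in the discussion preceding Lemma \ref{log pt real}, that compact objects of $\DAlog(k,k)$ are dualizable since $k$ has characteristic zero).

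The verification of the axioms proceeds as follows. Working Zariski-locally, every log smooth log scheme over $k$ admits a neighborhood that is divisorial in the sense of \cite[A.5.10]{binda2021triangulatedcategorieslogarithmicmotives}, so by the classical log de Rham comparison theorem of Deligne there is a natural quasi-isomorphism $\mathbf{R}\Gamma(X,\Omega_{X/k}) \simeq \mathbf{R}\Gamma(\underline{X}\setminus\underline{D},\Omega_{(\underline{X}\setminus\underline{D})/k})$. This reduces three of the four axioms to their classical counterparts: values lie in $\mathrm{Perf}(k)$ by finite-dimensionality of algebraic de Rham cohomology in characteristic zero; $\mathrm{ver}$-invariance holds because a $\mathrm{ver}$-cover $U \hookrightarrow V$ induces an isomorphism $\underline{U}\setminus\partial\underline{U} \simeq \underline{V}\setminus\partial\underline{V}$ by definition; and $\mathbb{A}^1$-invariance follows from the algebraic Poincaré lemma. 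Dividing Nisnevich descent is the one nontrivial point: strict Nisnevich descent is classical, and descent along log blow-ups follows from the standard computation that the log de Rham complex is preserved by log blow-ups of smooth pairs. Finally, $\mathbb{G}_m^{\mathrm{log}} = (\mathbb{P}^1,\{0,\infty\})$ has log de Rham cohomology naturally identified with $H^*_{\mathrm{dR}}(\mathbb{G}_m/k)$, so $\cofib(k \to \mathbf{\Gamma}_{\mathrm{dR}}(\mathbb{G}_m^{\mathrm{log}}))$ is concentrated in degree one and of rank one, hence $\otimes$-invertible.

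For the commutativity of the comparison triangle, we invoke the compatibility square recorded after Lemma \ref{log pt real}, which reduces the problem to checking that the composite
$$\mathbf{Sm}_k^{\mathrm{op}} \xrightarrow{\ \lambda\ } \mathbf{lSm}_k^{\mathrm{op}} \xrightarrow{\ \mathbf{\Gamma}_{\mathrm{dR}}\ } \mathcal{M}\mathrm{od}_k$$
is naturally equivalent to the classical algebraic de Rham Weil theory $Y \mapsto \mathbf{R}\Gamma(Y,\Omega_{Y/k})$ defining the classical de Rham realization of \cite{Ayoub_2010}. But for $Y \in \mathbf{Sm}_k$ equipped with the trivial log structure, $\Omega_{\lambda(Y)/k}$ is tautologically the classical algebraic de Rham complex of $Y$, so this is immediate.

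The main obstacle will be the $\infty$-categorical enhancement of the Künneth quasi-isomorphism into a genuinely symmetric monoidal structure on the functor $\mathbf{\Gamma}_{\mathrm{dR}}^\otimes$, together with the careful verification of dividing Nisnevich hyperdescent. Both are by now essentially standard — the first can be handled exactly as in \cite{binda2024logarithmicmotivichomotopytheory} where the analogous structure is constructed for the log Betti and log étale theories, and the second follows from the computations of log de Rham cohomology under log blow-ups of smooth pairs via the comparison with the open part.
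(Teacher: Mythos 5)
Your overall strategy matches the paper's: define the log de Rham log mixed Weil theory $\mathbf{\Gamma}_{\mathrm{dR}}$, verify the axioms of Definition \ref{log Weil}, and invoke Proposition \ref{log linear yoneda}. However, there is a genuine gap in your verification. You begin by claiming that ``every log smooth log scheme over $k$ admits a [Zariski-local] neighborhood that is divisorial,'' citing \cite[A.5.10]{binda2021triangulatedcategorieslogarithmicmotives}; but that result says $\mathbf{Div}_k$ is equivalent to the category of log smooth log schemes \emph{whose underlying scheme is smooth}, and log smoothness over a field does not imply scheme-theoretic smoothness. An affine toric chart $\mathrm{Spec}(k[P])$ with $P$ a non-free fs monoid (e.g. $k[u,v,w]/(uw-v^2)$) is log smooth over $k$ but has a singular underlying scheme, so it is not Zariski-locally (or even étale-locally) divisorial. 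Your reduction therefore does not cover all objects of $\mathbf{lSm}_k$, and the subsequent appeal to Deligne's comparison on the open complement is not available in general.

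The paper avoids this entirely. It first establishes $\mathrm{ver}$-invariance of $\mathbf{\Gamma}_{\mathrm{dR}}$ by citing Ogus \cite[Proposition 4.2.5]{Ogus_2018}, which is a direct statement for log smooth log schemes, with no smoothness assumption on the underlying scheme. Once $\mathrm{ver}$-invariance is known, $\mathrm{dNis}$-descent follows \emph{for free}: the functor $\omega(X)=X\setminus\partial X$ takes dividing Nisnevich covers to ordinary Nisnevich covers, and classical de Rham cohomology satisfies Nisnevich descent. This supersedes your separate handling of strict Nisnevich descent and log blow-ups, and means no computation about preservation of the log de Rham complex under log blow-ups of smooth pairs is needed. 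Also, where you wave at the $\infty$-categorical enhancement of the symmetric monoidal structure as ``essentially standard,'' the paper actually specifies the mechanism: it applies the criterion of \cite[Proposition 3.5]{blumberg2015uniquenessmultiplicativecyclotomictrace} after choosing a symmetric monoidal resolution functor (the Thom--Whitney normalization of the Godement resolution), reducing the $\infty$-categorical symmetric monoidal structure to the $1$-categorical lax monoidal structure plus two quasi-isomorphism checks; you should incorporate an argument of this kind rather than defer to an analogy with the Betti case.
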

\begin{defi} \label{deRham}
    The log de Rham $\Mod_k$-valued log mixed Weil theory over $k$ is the symmetric monoidal functor 
    \begin{center}
    $\begin{array}{ccccc}
\mathbf{\Gamma}^\otimes_{\operatorname{dR}} &: & \mathbf{lSm}^{op,\times}_k & \to & \Mod_k^\otimes\\
& & X & \mapsto & \mathbf{R}\Gamma(X,\Omega_{X/k})
\end{array}$
\end{center}
\end{defi}
This indeed defines a symmetric monoidal $\infty$-functor: from \cite[Proposition 3.5]{blumberg2015uniquenessmultiplicativecyclotomictrace}, we see that it suffices to show that the functor of 1-categories $$\begin{array}{ccccc}
  \mathbf{\Gamma}&: &\mathbf{lSm}_k^\mathrm{op}  & \rightarrow & \mathbf{D}(\mathbf{Mod}_k)  \\
    & & X  & \mapsto & \mathbf{R}\Gamma(X,\Omega_{X/k})
\end{array}  $$
where $\mathbf{R}\Gamma(-,\Omega_{-/k})$ denotes by an abuse of notation a symmetric monoidal resolution of complexes of sheaves functor (e.g. the Thom-Whitney normalization of the Godement resolution \cite{Nava}) is such that \begin{itemize}
    \item it is lax symmetric monoidal,
    \item $k \rightarrow \mathbf{\Gamma}(\mathrm{Spec}(k))$ is a quasi-isomorphism,
    \item $\mathbf{\Gamma}(X) \otimes \mathbf{\Gamma}(Y) \rightarrow \mathbf{\Gamma}(X\times Y)$ is a quasi-isomorphism for all $X,Y \in \mathbf{lSm}_k.$
\end{itemize} These conditions are straightforward properties of the log de Rham cohomology. The proposition cited above defines a symmetric monoidal functor of $\infty$-categories.

\begin{proof}[Poof of proposition \ref{de rham real}.]

    The functor $\mathbf{\Gamma}_{\operatorname{dR}}$ is $\text{ver}$-invariant by \cite[Proposition 4.2.5]{Ogus_2018}, hence it satisfies $\mathrm{dNis}$-descent ($\omega: X \mapsto X\setminus \partial X$ sends $\mathrm{dNis}$ covers to $\mathrm{Nis}$ covers) since the classical de Rham cohomology satisfies Nisnevich descent. Moreover, $\mathbf{\Gamma}_\mathrm{dR}(\mathbb{G}_m^\mathrm{log}) \simeq \mathbf{\Gamma}_\mathrm{dR}(\mathbb{G}_m)$ so that it sends $\mathbb{G}^\mathrm{log}_m/1$ on a $\otimes$-invertible object, and naturally $ \mathbf{\Gamma}_\mathrm{dR}(\mathbb{A}^1)\simeq k$.\\ 
    Finally, it takes values in $\mathrm{Perf}(k)$ since the de Rham cohomology has finite dimension.
\end{proof}

\begin{rem}
    Since the log de Rham cohomology of the log point $*_\mathbb{N}$ and of the log scheme $\mathbb{A}_\mathbb{N}$ are computed by applying the functor of global sections of the corresponding log de Rham complexes, it is easy to see that the morphism $$\mathbf{\Gamma}_\mathrm{dR}(\mathbb{A}_\mathbb{N})\rightarrow\mathbf{\Gamma}_\mathrm{dR}(*_\mathbb{N})$$ is an equivalence.
\end{rem}

\subsubsection{Computation of the de Rham realization of the motivic fundamental groupoid}\;\smallskip

Let us now compute the realization of the motivic pointed path space:

\begin{thm}\label{deRhamreal}
Let $X\in \mathbf{Div}_k.$ For $\mathbf{x}, \mathbf{y}$ tangential basepoints on $X$, there is a natural equivalence
    $$\mathrm{R}_{\mathrm{dR}}(\mpspace{\mathbf{x}}{m}{\mathbf{y}}) \simeq \+^{}_\mathbf{x}\mathrm{P}^\mathrm{dR}_\mathbf{y}X.$$
     In particular, when $X$ is such that $M(X)$ is a mixed Tate motive, there is a natural isomorphism of groupoid induced by $$\mathrm{R}_\mathrm{dR}\pi_1^m(X,\mathbf{x},\mathbf{y}) \simeq \pi^{\operatorname{dR}}_1(X,\mathbf{x},\mathbf{y}).$$
\end{thm}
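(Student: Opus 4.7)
The plan is to mirror the Betti argument of Theorem \ref{thm Betti real pi}. Since $\mathrm{R}_\mathrm{dR}^\otimes$ lives in $\CAlg(\PrL)$ it is a symmetric monoidal left adjoint, hence preserves all small colimits and in particular commutes with the bar construction:
$$\mathrm{R}_\mathrm{dR}(\+^{}_\mathbf{x}\mathrm{P}^m_\mathbf{y}X) \simeq \,_{\mathrm{R}_\mathrm{dR}(h(\mathbf{x}))}\Baar_{\mathrm{R}_\mathrm{dR}(h(\mathbf{y}))}(\mathrm{R}_\mathrm{dR}(h(X))).$$
By Proposition \ref{de rham real}, $\mathrm{R}_\mathrm{dR}(h(X))$ is naturally identified, as a commutative algebra, with $\mathbf{R}\Gamma(X,\Omega_{X/k})$. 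Everything therefore reduces to identifying the realized augmentation $\mathrm{R}_\mathrm{dR}(h(\mathbf{x}))$ with the virtual-morphism augmentation of Definition \ref{larefimportante}, and likewise for $\mathbf{y}$.

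\textbf{Identifying the motivic augmentation.} Unwinding Definition \ref{mottan}, $h(\mathbf{x})$ is the composition
$$h(X) \to h(x^\mathrm{log}) \xleftarrow{\sim} h(\mathrm{N}_x^\mathrm{log}(\underline{X},\underline{D})) \to h(*) = k,$$
where the backward arrow is the log-cdh equivalence of Corollary \ref{invariance} and the last arrow corresponds via Lemma \ref{ola} to the ordinary morphism $* \to \mathrm{N}_x^\mathrm{log}(\underline{X},\underline{D})$ encoding $\mathbf{x}$. Since $x^\mathrm{log} \simeq *_{\mathbb{N}^j}$ and $\mathrm{N}_x^\mathrm{log}(\underline{X},\underline{D}) \simeq \mathbb{A}_{\mathbb{N}^j}$, the de Rham counterpart of Lemma \ref{log pt real} applies: one checks directly that $\mathbf{\Gamma}_\mathrm{dR}(\mathbb{A}_{\mathbb{N}^j}) \to \mathbf{\Gamma}_\mathrm{dR}(*_{\mathbb{N}^j})$ is a quasi-isomorphism, both sides being the exterior algebra on $d\log x_1,\dots,d\log x_j$. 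The realized sequence therefore becomes
$$\mathbf{R}\Gamma(X,\Omega_{X/k}) \to \mathbf{R}\Gamma(x^\mathrm{log},\Omega) \xleftarrow{\sim} \mathbf{R}\Gamma(\mathrm{N}_x^\mathrm{log}(\underline{X},\underline{D}),\Omega) \to k,$$
obtained by applying $\mathbf{R}\Gamma(-,\Omega)$ pointwise to the pointed diagram.

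\textbf{Comparison with virtual morphisms and conclusion.} By Proposition \ref{pointeddia} the pointed diagram of $\mathbf{x}$ is equivalent data to the virtual morphism $\mathbf{x}: * \to X$, and the functoriality of $\Omega^1$ along virtual morphisms with reduced source \cite[Lemma 5.9]{dupont2024logarithmicmorphismstangentialbasepoints} produces a commutative diagram identifying the composition above with the virtual pullback of log forms. A clean local check confirms this: in coordinates $x_1,\dots,x_j$ of $\underline{D}$ at $x$, a tangential basepoint corresponds to $(\lambda_1,\dots,\lambda_j) \in (k^\times)^j$, and both augmentations send $\sum f_I\, d\log x_{i_1} \wedge \cdots \wedge d\log x_{i_r}$ to $f_\emptyset(x)$ — factoring through the projection onto $\mathrm{H}^0$, since $d\log \lambda_i = 0$. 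This gives the main equivalence, and the mixed Tate consequence then follows by passing to the étale version and applying $\mathrm{H}^0$, using compatibility of the motivic $t$-structure with $\mathrm{R}_\mathrm{dR}$. The main technical subtlety I anticipate is keeping track of the homotopy inverse implicit in Definition \ref{mottan} against the honest virtual-morphism formalism of \cite{dupont2024logarithmicmorphismstangentialbasepoints}: at the $\infty$-categorical level this is resolved by the contractibility of the space of homotopy inverses, while at the cochain level the local computation above makes the matching explicit and essentially forces the compatibility.
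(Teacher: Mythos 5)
Your proposal is correct and follows the same overall strategy as the paper: use that $\mathrm{R}^\otimes_\mathrm{dR}$ is a colimit-preserving symmetric monoidal functor to commute it past the bar construction, then match the realized motivic augmentation with the virtual-morphism augmentation of Definition \ref{larefimportante}. Where you diverge is in the crux: the paper does not need a coordinate check. It produces an explicit homotopy inverse to $\mathbf{R}\Gamma(\mathrm{N}_x^\mathrm{log}(\underline{X},\underline{D}),\Omega) \to \mathbf{R}\Gamma(x^\mathrm{log},\Omega)$ by applying functoriality of $\Omega^\bullet$ to the \emph{virtual retraction} $\mathrm{N}_x^\mathrm{log}(\underline{X},\underline{D}) \to x^\mathrm{log}$ (trivial on underlying schemes, with the obvious map on monoids), and then observes that the composite of virtual morphisms $* \to \mathrm{N}_x^\mathrm{log}(\underline{X},\underline{D}) \to x^\mathrm{log} \to X$ is literally the virtual point $\mathbf{x}$, so the realized augmentation is the virtual pullback by construction. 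Your route instead argues by contractibility of the space of homotopy inverses plus a local computation in coordinates matching the two augmentations on $\mathrm{H}^0$ (which suffices, since $k$ sits in degree $0$ and the log de Rham complex is nonnegatively graded). Both are valid; the paper's is cleaner because it sidesteps the choice of homotopy inverse entirely by identifying the canonical one, whereas your version — while correct in outcome — leaves the matching between the chosen homotopy inverse in Definition \ref{mottan} and the virtual pullback as something to verify rather than something that holds by naturality. It would strengthen your write-up to name the virtual retraction explicitly rather than invoking ``functoriality of $\Omega^1$ along virtual morphisms'' loosely.
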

\begin{proof}
    Once again, using the fact that $\mathrm{R}_{\mathrm{dR}}$ is colimit preserving, it commutes with the bar construction, so that $$\mathrm{R}_{\mathrm{dR}}(\mpspace{\mathbf{x}}{m}{\mathbf{y}}) \simeq \!_{\mathrm{R}_{\mathrm{dR}}(\mathbf{x})}\Baar_{\mathrm{R}_{\mathrm{dR}}(\mathbf{y})}\mathbf{R}\Gamma(X,\Omega_{X/k}).$$ Now, the morphism $\mathrm{R}_\mathrm{dR}(\mathbf{x})$ (similarly for $\mathrm{R}_\mathrm{dR}(\mathbf{y})$) is defined after the choice of a homotopy inverse to the equivalence $$\mathbf{R}\Gamma(\mathrm{N}_x^\mathrm{log}(\underline{X},\underline{D}),\Omega_{\mathrm{N}_x^\mathrm{log}(X,D)/k}) \rightarrow \mathbf{R}\Gamma(x^\mathrm{log},\Omega_{x^\mathrm{log}/k}).$$
    Such a homotopy equivalence is found by functoriality applied to the virtual retraction $$\mathrm{N}_x^\mathrm{log}(\underline{X},\underline{D}) \rightarrow x^\mathrm{log}$$ which is defined trivially on the underlying schemes, and by the obvious morphism of sheaves in monoids $$\mathcal{M}_{X|_{\mathbf{x}}} \rightarrow \mathcal{M}_{\mathrm{N}_{x}^\mathrm{log}(\underline{X},\underline{D})}$$ (note that this is indeed a retraction of the natural morphism $-^\mathrm{log} \rightarrow \mathrm{N}_{\mathbf{x}}^\mathrm{log}(X,D)$) so that the composition $$* \rightarrow \mathrm{N}_x^\mathrm{log}(\underline{X},\underline{D}) \rightarrow x^\mathrm{log} \rightarrow X$$ is nothing but the virtual morphism defining $h(\mathbf{x}),h(\mathbf{y})$.
    This concludes the proof.
\end{proof}

Arguing as in the previous subsection, we see that the de Rham path space can be canonically obtained from the cosimplicial log scheme introduced in subsubsection \ref{The cosimplicial pointed path space} \begin{center}
    \begin{tikzcd}
        \+^{}_\mathbf{x}\mathrm{P}^\bullet_\mathbf{y}X \arrow[r] \arrow[d] \arrow[dr, phantom, "\usebox\pullback" , very near start, color=black] & X^{[0,1]} \arrow[d]\\
        * \arrow[r,"\mathbf{x,y}"] & X\times X
    \end{tikzcd}
\end{center}
after applying the functor $\textbf{R}\Gamma(-,\Omega_{-/k})$ and taking the associated total complex.

\medskip

As in the Betti case, we can go a little further and give a more detailed description of the de Rham fundamental groupoid after extension of the scalar to $\mathbb{C}$. In order to compute the periods of the motivic fundamental groupoid in the next sections, we interpret the de Rham fundamental groupoid via the de Rham theory of \textit{manifolds with log corners}.

\smallskip

 Recall from \cite{dupont2024regularizedintegralsmanifoldslog} that a manifold with log corners $(\Sigma,\mathcal{M}_\Sigma)$ is a manifold with corners $\underline{\Sigma}$ endowed with a \textit{positive log structure} (this notion is developped in \cite{gillam2015logdifferentiablespacesmanifolds}): a sheaf of monoids $\mathcal{M}_\Sigma$ on $\underline{\Sigma}$ and a morphism $\alpha: \mathcal{M}_\Sigma \rightarrow \mathcal{C}^{\infty,\geq 0}_\Sigma$ where $\mathcal{C}^{\infty,\geq 0}_\Sigma$ is the sheaf of non-negative smooth functions on $\underline{\Sigma}$, with the property that it induces an isomorphism $\alpha^{-1}(\mathcal{C}^{\infty,> 0}_\Sigma) \xrightarrow{\sim} \mathcal{C}^{\infty,> 0}_\Sigma$. We furthermore ask $X$ to be locally isomorphic (as a manifodl with corners equipped with a positive log structure) to an open subset of the standard corner $[0)^k\times [0,\infty)^n$.

The latter is defined by the underlying manifold with corners $[0,\infty)^n$ equipped with the sheaf $$\mathcal{M}_{[0)^k\times [0,\infty)^n}:= \mathcal{C}^{\infty,> 0}_{[0,\infty)^n}t_1^\mathbb{N}\dots t_k^\mathbb{N}r_1^\mathbb{N}\dots r_n^\mathbb{N}$$ where the $r_i$ are coordinates on $[0,\infty)^n$ and the $t_i$ are formal ``phantom" coordinates evaluated to $0$ in $\mathcal{C}^{\infty,\geq 0}_{[0,\infty)^n}.$ See \cite[Section 3.2]{dupont2024regularizedintegralsmanifoldslog} for details.
A manifold with log corners is locally obtained as the restriction at a closed submanifold with corners of the manifold with log corners $[0,\infty)^n$ for a non-negative integer $n$ (e.g. $[0) \times [0,\infty)$ is obtained as the restriction of the positive log structure $[0,\infty)^2$ on the line $r_1 = 0$. The coordinate $r_1$ is turned into the phantom coordinate $t_1$).

\begin{exmp}
\begin{itemize}
    \item The manifold with corners $[0,\infty)$ with coordinate $r$ is equipped with the structure of a manifold with log corners via the positive log structure $$\alpha: \mathcal{C}^{\infty,>0}_{[0,\infty)}r^\mathbb{N} \hookrightarrow \mathcal{C}^{\infty,\geq 0}_{[0,\infty)}.$$
    \item The Kato--Nakayama space of the log scheme $\mathbb{A}_\mathbb{N}$ is the oriented real blow-up of $\mathbb{R}^2$ at $0$. It is then endowed with the structure of a manifold with log corners $S^1 \times [0,\infty)$ where $S^1$ is endowed with the trivial positive log structure.
\end{itemize}
\end{exmp}

In general, for $X\in \mathbf{Div}_k$, $X^\mathrm{KN}$ has the structure of a manifold with log corners \cite[Proposition 8.4]{dupont2024regularizedintegralsmanifoldslog}, and virtual points $* \rightarrow X$ define virtual points of $X^\mathrm{KN}$, that is virtual morphisms in the sense of \cite[Definition 4.1]{dupont2024regularizedintegralsmanifoldslog} (the definition is similar to definition \ref{virtualmo}). Such a virtual point keeps track of the whole datum of the tangential basepoint, not only its direction, so that when $X$ is defined over $\mathbb{C}$, there is a bijection between tangential basepoints of $X$ and virtual points of $X^\mathrm{KN}$ \cite[Proposition 8.8]{dupont2024regularizedintegralsmanifoldslog}. Hence, this fills the gap between tangential basepoints on $X$, and points on $X^\mathrm{KN}$ explained at the end of subsection \ref{pointed}.

\begin{exmp}
A virtual morphism from the point to $\mathbb{A_N}^\mathrm{KN}$ landing on the infinitesimal unit circle at $0$ is the same as the choice of a direction $u$ and of the pullback of the phantom coordinate $t$ associated with $r = 0$ in $\mathcal{C}^{\infty,>0}_* \simeq \mathbb{R}_+^*$. The last datum allows to differentiate between tangential basepoints induced by $\lambda$ or $a\lambda$ for $a$ a strictly positive constant, see example \ref{yes}.
\end{exmp}

 A manifold with log corners $\Sigma$ is endowed with a de Rham complex of logarithmic differential forms $\mathcal{A}_\Sigma$, functorial with respect to virtual morphisms (\cite[Section 6.2]{dupont2024regularizedintegralsmanifoldslog}), which adds to the complex of classical differential forms, formal logarithmic differential forms $\mathrm{dlog}(r_i),\mathrm{dlog}(t_j)$. This construction is similar to the construction of the algebraic logarithmic de Rham complex so that for $X \in \mathbf{Div}_k$ there are canonical quasi-isomorphisms (\cite[Sections 8.5,6]{dupont2024regularizedintegralsmanifoldslog}) $$\mathbf{R}\Gamma(X,\Omega_{X/k})\otimes_k \mathbb{C}\simeq \mathbf{R}\Gamma(X^\mathrm{KN},\mathcal{A}_{X^\mathrm{KN}})\otimes_\mathbb{R}\mathbb{C} \simeq \mathcal{A}({X^\mathrm{KN}})\otimes_\mathbb{R}\mathbb{C}$$ where we denoted $\mathcal{A}(\Sigma):= \Gamma(\Sigma,\mathcal{A}_\Sigma)$.  In particular, for $\mathbf{x,y}$ tangential basepoints on $X$, the de Rham path space based at $\mathbf{x},\mathbf{y}$ satisfies $$\+^{}_\mathbf{x}\mathrm{P}^\mathrm{dR}_\mathbf{y}X \otimes_k \mathbb{C} \simeq \+_\mathbf{x}\Baar_\mathbf{y}(\mathcal{A}(X^{\mathrm{KN}})) \otimes_\mathbb{R} \mathbb{C}.$$ Where $\mathbf{x},\mathbf{y}$ are viewed as virtual morphisms $*\rightarrow X^\mathrm{KN}$.
 
Note that $\mathcal{A}(X^\mathrm{KN})$ is a commutative differential graded algebra, so that we can use the description from example \ref{classical}: $\;_\mathbf{x}\Baar_\mathbf{y}(\mathcal{A}(X^{\mathrm{KN}}))$ is interpreted as a complex of formal iterated integrals; its elements are denoted by $$[\omega_1|\dots| \omega_k]:= \omega_1 \otimes \dots \otimes \omega_k,\; \omega_i \in \mathcal{A}^{n_i}(X^\mathrm{KN})$$ in degree $\sum_i n_i - k$, and the differentials are described for instance in \cite[3.250]{Javier}.

\smallskip

The equivalence above can be refined at the filtered level: $$\+^{}_\mathbf{x}\mathrm{P}^\mathrm{dR}_\mathbf{y}X_{\leq n} \otimes_k \mathbb{C} \simeq \+_\mathbf{x}\Baar_\mathbf{y}(\mathcal{A}(X^{\mathrm{KN}}))_{\leq n} \otimes_\mathbb{R} \mathbb{C}.$$
The complex on the right is the subcomplex $\!_\mathbf{x}\Baar_\mathbf{y}(\mathcal{A}(X^{\mathrm{KN}})) \otimes_\mathbb{R} \mathbb{C}$ spanned by elements $[\omega_1|\dots| \omega_k]$ with $k \leq n$. As before, we can see that there is an isomorphism of complexes $$\!_\mathbf{x}\Baar_\mathbf{y}(\mathcal{A}(X^\mathrm{KN}))_{\leq n} \simeq \mathrm{Tot}\mathcal{A}(\+^{}_\mathbf{x}\mathrm{P}^\bullet_\mathbf{y}X^\mathrm{KN}_{\leq n}).$$

We will use in the following subsections the description of this isomorphim in \cite[3.284]{Javier} that we specialize into the following:

\begin{lem}\label{de rham explicit}
The isomorphism is given by \begin{center}$\begin{array}{ccc}
    \!_\mathbf{x}\Baar_\mathbf{y}(\mathcal{A}(X^\mathrm{KN}))_{\leq n} & \rightarrow & \mathrm{Tot}\mathcal{A}(\+_\mathbf{x}\mathrm{P}^\bullet_\mathbf{y}X^\mathrm{KN}_{\leq n})\\
    \omega_1,\dots,\omega_k & \mapsto & (\omega'_i)_{0 \leq i \leq n}
\end{array}$ \end{center}
where $$\omega'_i = \left\{
    \begin{array}{ll}
        (-1)^{\sum(k-i)\mathrm{deg}(\omega_i)}\mathrm{pr}_1^*\omega_1 \wedge \cdots \wedge \mathrm{pr}_k^*\omega_k  & \mbox{if } i = k\\
        0 & \mbox{else.}
    \end{array} \right. $$
\end{lem}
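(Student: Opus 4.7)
The plan is to reduce the statement to the known description of the bar-construction-to-totalization isomorphism for a commutative differential graded algebra of the form $\mathcal{A}(\Sigma)$ where $\Sigma$ is a manifold with log corners, as worked out in detail in \cite[3.284]{Javier}. The only new ingredient is the identification of the relevant cdga, its augmentations and the cosimplicial object on which we evaluate $\mathcal{A}$.

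First I would identify the two complexes levelwise. By definition of the cosimplicial path space (definition \ref{cosimplicial path space general}), the $k$-th cosimplex of $\+_\mathbf{x}\mathrm{P}^\bullet_\mathbf{y}X^\mathrm{KN}_{\leq n}$ is $(X^\mathrm{KN})^k$ for $k\leq n$, with the $0$-th and $(k+1)$-th cofaces encoded by the virtual morphisms $\mathbf{x}$ and $\mathbf{y}$. Since $\mathcal{A}$ on manifolds with log corners satisfies a Künneth isomorphism (each $\mathcal{A}((X^\mathrm{KN})^k) \simeq \mathcal{A}(X^\mathrm{KN})^{\otimes k}$ via the pullbacks along the projections $\mathrm{pr}_i$), the total complex $\mathrm{Tot}\,\mathcal{A}(\+_\mathbf{x}\mathrm{P}^\bullet_\mathbf{y}X^\mathrm{KN}_{\leq n})$ becomes the total complex of the bicomplex whose rows are $\mathcal{A}(X^\mathrm{KN})^{\otimes k}$ for $k\leq n$, with horizontal differentials computed from the cosimplicial structure: products by $1$ at the inner cofaces and by the pullbacks of the augmentations $\mathbf{x}, \mathbf{y}$ at the outer cofaces. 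This is precisely the shape of the normalized Moore bicomplex computing the geometric realization $\+_\mathbf{x}\Baar_\mathbf{y}(\mathcal{A}(X^\mathrm{KN}))_{\leq n}$ as recalled in example \ref{classical}.

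Second I would write down the comparison map. The natural candidate, sending $[\omega_1|\dots|\omega_k]$ (which lives in the $k$-th row of the bar bicomplex) to $\mathrm{pr}_1^*\omega_1\wedge\dots\wedge \mathrm{pr}_k^*\omega_k \in \mathcal{A}((X^\mathrm{KN})^k)$ and to zero in all other rows, is an isomorphism of bigraded vector spaces by the Künneth identification above; it is filtered by construction, since restricting to length $\leq n$ on the bar side corresponds exactly to the skeletal filtration on the cosimplicial side. The vanishing for $i\neq k$ in the formula reflects precisely this levelwise identification.

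The remaining and only delicate point is the sign $(-1)^{\sum (k-i)\deg(\omega_i)}$, which is not intrinsic but is dictated by the Koszul sign conventions fixed in the paper (cf. remark \ref{signe}) for the differentials of both the bar complex and the Moore total complex. I would verify compatibility with the differentials by a direct computation on elements $[\omega_1|\dots|\omega_k]$: applying the bar differential produces inner multiplications $\omega_i\wedge\omega_{i+1}$ (up to Koszul signs from moving $\omega_i$ past $\omega_1,\dots,\omega_{i-1}$) together with the outer augmentations, and the prescribed sign is exactly the one that makes each of these signed terms agree with the corresponding cofaces of the cosimplicial de Rham complex. The main obstacle is purely bookkeeping: one must keep track of the Koszul signs arising from permuting wedge factors of various degrees and compare them against the signs implicit in the Moore total differential with our chosen conventions. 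Since this check is carried out in full generality for cdga's in \cite[3.284]{Javier}, the argument concludes by specializing that computation to $A = \mathcal{A}(X^\mathrm{KN})$ with augmentations given by the virtual points $\mathbf{x}$ and $\mathbf{y}$.
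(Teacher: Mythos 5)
Your proposal is correct and follows essentially the same route as the paper: the paper states this lemma as a direct specialization of \cite[3.284]{Javier} to the cdga $\mathcal{A}(X^\mathrm{KN})$ with augmentations $\mathbf{x},\mathbf{y}$, and your argument (levelwise Künneth identification, the obvious comparison map, deferral of the Koszul sign bookkeeping to the cited general cdga computation) is exactly the content of that specialization.
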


\begin{rem}
For $[\omega_1|\dots|\omega_k] \in \+_\mathbf{x}\Baar_\mathbf{y}(\mathcal{A}(X^\mathrm{KN}))_{\leq n}$ with $k\leq n$ and $deg(\omega_i) = 1$, we see that the outcome of the previous isomorphism is $(-1)^{\frac{k(k-1)}{2}}\mathrm{pr}_1^*\omega_1 \wedge \cdots \wedge \mathrm{pr}_k^*\omega_k$ in position $k$.
\end{rem}

\subsection{The comparison isomorphism}\label{compa}\;\smallskip

We now describe the Betti--de Rham comparison isomorphism applied to the motivic pointed path space based at tangential basepoints. This will allow us in the next subsection to compute the periods of the motivic fundamental groupoid at tangential basepoints. Let us remind that the log Betti and log de Rham cohomologies are, as in the classical setting, compared in a canonical way by the work of Kato and Nakayama \cite{10.2996/kmj/1138044041}. Namely, for $X$ an \textit{ideally log smooth log scheme over} $k$, there is a natural isomorphism $$\mathrm{H}^*(X,\Omega_{X/k})\otimes \mathbb{C} \simeq \mathrm{H}^*(X^\mathrm{KN},\mathbb{C}).$$
\begin{rem}\label{logptcomp}
The definition of ideally log smooth log schemes over a field $k$, as given in \cite{Ogus_2018}, characterizes log schemes étale locally modeled on $\mathrm{Spec}(k[P]/(\Sigma))$ where $P$ is an fs monoid, and $\Sigma$ is the ideal of $k[P]$ generated by an ideal $\Sigma$ of $P$. This definition includes in particular the standard log point $*_\mathbb{N} \simeq \mathrm{Spec}(k[\mathbb{N}]/\mathbb{N}^*)$.
\end{rem}

The comparison theorem refines into:

\begin{thm}
    For $\Lambda$ a subring of $\mathbb{C}$, there is an equivalence of symmetric monoidal $\infty$-functors $$\mathrm{R}^\otimes_\mathrm{B}\otimes_\mathbb{Q}\mathbb{C} \simeq \mathrm{R}^\otimes_\mathrm{dR}\otimes_k \mathbb{C} $$
\end{thm}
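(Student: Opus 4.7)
The plan is to exhibit the desired equivalence at the level of log mixed Weil theories and then invoke the universal property of log motives established in Proposition \ref{log linear yoneda}. Observe first that both $\mathrm{R}^\otimes_\mathrm{B}\otimes_\mathbb{Q}\mathbb{C}$ and $\mathrm{R}^\otimes_\mathrm{dR}\otimes_k\mathbb{C}$ can be regarded as $\mathbb{C}$-linear symmetric monoidal, colimit preserving functors out of $\DAlog(k,\mathbb{C})^\otimes$ (obtained after extension of scalars along $\Lambda\to\mathbb{C}$) landing in $\mathcal{M}\mathrm{od}_\mathbb{C}^\otimes$. By the $\mathbb{C}$-linear version of Proposition \ref{log linear yoneda}, such functors are determined up to canonical equivalence by their underlying $\mathbb{C}$-valued log mixed Weil theory on $\mathbf{lSm}_k$. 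Hence it suffices to produce a natural equivalence of the two associated symmetric monoidal functors $\mathbf{lSm}_k^{\mathrm{op},\times}\to \mathcal{M}\mathrm{od}_\mathbb{C}^\otimes$.

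On a log smooth log scheme $X\in\mathbf{lSm}_k$ these functors compute
\[
\mathbf{\Gamma}_\mathrm{B}(X)\otimes_\mathbb{Q}\mathbb{C}\simeq \mathbf{R}\Gamma(X^{\mathrm{KN}},\underline{\mathbb{C}})\qquad\text{and}\qquad \mathbf{\Gamma}_\mathrm{dR}(X)\otimes_k\mathbb{C}\simeq \mathbf{R}\Gamma(X,\Omega_{X/k})\otimes_k\mathbb{C}
\]
and I would invoke the Kato--Nakayama comparison theorem \cite{10.2996/kmj/1138044041}, which provides, functorially in $X$, a quasi-isomorphism of commutative differential graded $\mathbb{C}$-algebras between these two objects. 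The comparison is induced by the morphism of complexes of sheaves $\tau^{-1}\Omega_{X/k}\to \mathcal{A}_{X^{\mathrm{KN}}}\otimes_\mathbb{R}\mathbb{C}$, where $\tau:X^{\mathrm{KN}}\to \underline{X}(\mathbb{C})$ denotes the canonical map, combined with the Poincaré lemma on $X^\mathrm{KN}$ identifying the analytic log de Rham complex with $\underline{\mathbb{C}}$.

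The technical core of the proof will be promoting this classical, manifestly natural quasi-isomorphism to an equivalence of symmetric monoidal $\infty$-functors. For this, I would replace both log mixed Weil theories by symmetric monoidal dg-models (as in Definition \ref{deRham}, using e.g.\ Thom--Whitney normalization so that $\mathbf{\Gamma}_\mathrm{dR}$ is strictly symmetric monoidal, and a similar multiplicative model for $\mathbf{\Gamma}_\mathrm{B}$ via singular cochains with a shuffle product or via logarithmic smooth forms $\mathcal{A}_{-^\mathrm{KN}}\otimes_\mathbb{R}\mathbb{C}$) and then exhibit the Kato--Nakayama map as a zig-zag of lax symmetric monoidal natural transformations, each leg of which is a quasi-isomorphism compatible with products. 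Applying \cite[Proposition 3.5]{blumberg2015uniquenessmultiplicativecyclotomictrace} (used already for Definition \ref{deRham}) then upgrades this to an equivalence in $\operatorname{Fun}_{\CAlg(\Catinf^\times)}(\mathrm{N}(\mathbf{lSm}_k)^\times,\mathcal{M}\mathrm{od}_\mathbb{C}^\otimes)$.

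The principal obstacle is entirely at the level of higher coherences: the Kato--Nakayama quasi-isomorphism is traditionally described in a $1$-categorical language, and turning it into a morphism of $\mathbb{E}_\infty$-algebra objects naturally in $X$ requires a careful choice of multiplicative resolutions on both sides so that the intermediate zig-zag lives entirely in $\CAlg(\mathcal{M}\mathrm{od}_\mathbb{C})$. Once the natural equivalence of $\mathbb{C}$-valued symmetric monoidal log mixed Weil theories is in hand, the universal property recalled above produces the desired equivalence $\mathrm{R}^\otimes_\mathrm{B}\otimes_\mathbb{Q}\mathbb{C}\simeq \mathrm{R}^\otimes_\mathrm{dR}\otimes_k\mathbb{C}$ as $\mathbb{C}$-linear symmetric monoidal functors on $\DAlog(k,\mathbb{C})^\otimes$, and hence by pullback as equivalences of functors $\DAlog(k,\Lambda)^\otimes\to \mathcal{M}\mathrm{od}_\mathbb{C}^\otimes$ for any subring $\Lambda\subset\mathbb{C}$.
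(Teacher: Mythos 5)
Your proposal matches the paper's proof essentially line for line: both reduce via the universal property of Proposition \ref{log linear yoneda} to comparing log mixed Weil theories, then use \cite[Proposition 3.5]{blumberg2015uniquenessmultiplicativecyclotomictrace} to reduce to a $1$-categorical comparison of lax symmetric monoidal functors into complexes, and finally invoke the Kato--Nakayama comparison theorem for the underlying natural quasi-isomorphism. Your discussion of multiplicative models and zig-zags is a more explicit unpacking of the same final step the paper handles in one sentence, so the content is the same.
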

\begin{proof}

   By the result of lemma \ref{linear yoneda}, to prove the theorem it suffices to compare the associated log mixed Weil theories. Note that by the construction of these functors and after \cite[Proposition 3.5]{blumberg2015uniquenessmultiplicativecyclotomictrace}, we only need to compare the associated functors of $1$-categories $$\mathbf{lSm}_k^{\times,op} \rightarrow \mathbf{C}(\mathbf{Mod}_\Lambda)^\otimes.$$
   The log Betti-log de Rham comparison theorem \cite[0.2]{10.2996/kmj/1138044041} gives a weak equivalence of lax monoidal functors $$\mathbf{R}\Gamma((-)^{\operatorname{KN}},\mathbb{Q})\otimes\mathbb{C} \simeq \mathbf{R}\Gamma(-,\Omega_{-/k})\otimes\mathbb{C}$$ which directly shows the expected result. 
\end{proof}

It then suffices to apply the Betti--de Rham comparison to the motivic pointed path space to see that for $X\in \mathbf{Div}_k$, and $\mathbf{x,y}$ tangential basepoints on $X$, there is natural isomorphisms of schemes compatible with the groupoid structure
    $$\pi_1^{\mathrm{un}}(X^{\mathrm{KN}},\mathbf{x},\mathbf{y})\times_{\mathbb{Q}}\mathbb{C} \simeq \pi_1^\mathrm{B}(X,\mathbf{x},\mathbf{y})\times_{\mathbb{Q}} \mathbb{C} \simeq \pi_1^\mathrm{dR}(X,\mathbf{x},\mathbf{y})\times_{k} \mathbb{C}.$$
The only thing that needs to be checked is that the Betti and de Rham realizations of the motivic augmentations associated with $\mathbf{x}$ and $\mathbf{y}$ are naturally isomorphic. This is trivial using remark \ref{logptcomp} and lemma \ref{log pt real}.

This result can be refined by applying the Betti--de Rham comparison to the filtered pieces of the motivic pointed path space to obtain an isomorphism of $k$-algebras for all $n\in \mathbb{N}$
  $$\mathcal{O}(\pi_1^\mathrm{B}(X,\mathbf{x},\mathbf{y}))_{\leq n} \otimes \mathbb{C} \simeq \mathcal{O}(\pi_1^\mathrm{dR}(X,\mathbf{x},\mathbf{y}))_{\leq n} \otimes \mathbb{C}$$.

Recall now that for $\Sigma$ a manifold with log corners (see the end of subsection \ref{deRhamreal2}), its de Rham and singular cohomologies are compared via a log Poincaré lemma. Namely, the canonical inclusion of complex of sheaves $\Sigma$ $$\underline{\mathbb{R}} \hookrightarrow \mathcal{A}_{\Sigma}$$ is a quasi-isomorphism (\cite[Corollary 6.9]{dupont2024regularizedintegralsmanifoldslog}). This is compatible with the algebraic log de Rham, log Betti comparison theorem in the sense that for $X\in \mathbf{Div}_k$, the comparison isomorphism between its log de Rham and log Betti cohomologies is given after tensoring by $\mathbb{C}$ the quasi-isomorphism $$C^*(X^\mathrm{KN},\mathbb{R}) \simeq \mathcal{A}(X^\mathrm{KN}) $$ in a natural way. From the descriptions at the end of the last two sections, we see that the filtered comparison isomorphism applied to the motivic path space based at $\mathbf{x},\mathbf{y}$ is given after tensoring by $\mathbb{C}$ and taking the $0$-th homology on the following quasi-isomorphism of complexes:

     $$\mathrm{Tot}\+C^*(\+^{}_\mathbf{x}\mathrm{P}^\bullet_\mathbf{y}X^\mathrm{KN}_{\leq n},\mathbb{R}) \simeq \mathrm{Tot}\mathcal{A}(\+^{}_\mathbf{x}\mathrm{P}^\bullet_\mathbf{y}X^\mathrm{KN}_{\leq n}). $$

\subsection{Periods of the motivic fundamental groupoid}\label{periods}\;\smallskip

With the previous interpretation of the comparison isomorphism, we can now compute the periods of the motivic fundamental groupoid at tangential basepoints via \textit{regularized integration}. This will allow us to prove in general a variant of Chen's theorem (see theorem \ref{end}) for tangential basepoints relating the prounipotent completion of the fundamental group with the right notion of iterated integrals. In this last section, $k \subset \mathbb{C}$.

\subsubsection{Regularized integration}\;\smallskip

Let us first recall some facts on regularized integrals introduced in full generality in \cite{dupont2024regularizedintegralsmanifoldslog}. We refer the reader to the introduction to positive log geometry at the end of subsection \ref{deRhamreal2}.

\medskip

For $\Sigma$ a manifold with log corners and for $i > 0$, we denote by $\partial^i\Sigma$, and we call i-th boundary of $\Sigma$ the manifold with log corners described locally by iterating on the following $$\partial([0,\infty)^n\times[0)^k) = \bigsqcup_{i=0}^n[0,\infty)^{i-1}\times[0)\times[0,\infty)^{n-i}\times [0)^k.$$
We omit the $i$ in the notation $\partial^i\Sigma$ when $i = 1$, and we let $\partial^0 \Sigma := \Sigma$.

\smallskip

If $i >0$, there are $i$ natural ordinary morphisms of manifold with log corners \begin{center}
\begin{tikzcd}
    \partial^i \Sigma  \arrow[r, draw=none, "\raisebox{+1.5ex}{\vdots}" description] \arrow[r, shift left = 3] \arrow[r, shift right = 3] & \partial^{i-1} \Sigma.
\end{tikzcd}
\end{center}
This way, the family $(\Sigma, \partial^\bullet\Sigma)$ defines an augmented symmetric semi-simplicial manifold with log corners, as defined in the discussion introducing the definition \ref{finfin}.

\begin{exmp}
The topological $n$-simplex $\Delta^n$ (with the model described in \ref{model}) is endowed with the structure of a manifold with corners hence of a manifold with log corners. Recall that one needs to be careful in the choice of the associated positive log structure. It is not true in general that, when equipped with the trivial positive log structure $\mathcal{C}^{\infty,>0}_\Sigma$, a manifold with corners $\Sigma$ is a manifold with log corners. We can instead use \cite[Section 3.2.3]{dupont2024regularizedintegralsmanifoldslog} where the choice consists in pulling back the positive log structures on each local chart $U \rightarrow [0,\infty)^n$.
   \begin{itemize}
       \item When $n = 2$, the boundary of $\Delta^2$ can be described in a simple way as the disjoint union of its faces, each isomorphic as manifold with log corners to $\Delta^1\times [0)$.
    The picture in this case is:
    \begin{center}
    \begin{tikzpicture}
    \draw  (-5, -1) -- (-4, 0);
    \draw  (-4,0) -- (-3,-1);
    \draw  (-5,-1) -- (-3,-1);
    \draw (-5, -1) node {$\bullet$};
    \draw (-4,0) node {$\bullet$};
    \draw (-3,-1) node {$\bullet$};
    \draw [-stealth] (-1.5,-0.5)--(-2.5,-0.5);
    \draw  (-1.1,-0.9) -- (-0.1,0.1);
    \draw (0.1,0.1) -- (1.1,-0.9);
    \draw (-1,-1.1) -- (1,-1.1);
    \draw [-stealth] (2.5,-0.75) -- (1.5,-0.75);
    \draw [-stealth] (2.5,-0.25) -- (1.5,-0.25);
    \draw (0.1,0.1) node {$\bullet$};
    \draw (-1.1,-0.9) node {$\bullet$};
    \draw (1.1,-0.9) node {$\bullet$};
    \draw (-1,-1.1) node {$\bullet$};
    \draw (1,-1.1) node {$\bullet$};
    \draw (-0.1,0.1) node {$\bullet$};
    
    \draw (3.9,0.1) node {$\bullet$};
    \draw (2.9,-0.9) node {$\bullet$};
    \draw (5.1,-0.9) node {$\bullet$};
    \draw (3,-1.1) node {$\bullet$};
    \draw (5,-1.1) node {$\bullet$};
    \draw (4.1,0.1) node {$\bullet$};
    \node[below] at (-4,-1.5) {$\Delta^2$};
    \node[below] at (0,-1.5) {$\partial\Delta^2$};
    \node[below] at (4,-1.5) {$\partial^2\Delta^2$};
    
    \fill [fill=black,opacity = 0.2]
     (-5, -1)  
  -- (-4,0) 
  -- (-3,-1);
    
\end{tikzpicture}
\end{center}
   The second boundary $\partial^2\Delta^2$ is a disjoint union of two copies of each vertex of $\Delta^2$ endowed with the positive log structure $[0)^2$. The two rightmost morphisms are the natural immersion, and the morphism deduced via the action of the symmetric group $\mathfrak{S}_2$ with exchanges copies of the same vertex. 
   Note that $\partial^i \Delta^2$ is empty for $i > 2$.

    \item In general, $\partial^i\Delta^n$ is a disjoint union of $i!$ copies of each $(n-i)$-dimensional face in $\Delta^n$, and there is an action of the symmetric group $\mathfrak{S}_i$ permuting them.

   \end{itemize} 
\end{exmp}

For $\Sigma$ a manifold with log corners, we can consider the associated basic manifold with log corners $\Sigma^\mathrm{bas}$, defined locally by the property that $$([0,\infty)^n \times [0)^k)^\mathrm{bas} := [0,\infty)^n. $$ In other words, the underlying manifolds with log corners of $\Sigma$ and its basic version are the same, and the positive log structure of $\Sigma^\mathrm{bas}$ is obtained from the original on $\Sigma$ by restricting to the submonoid of functions whose germ at each point of $\Sigma$ is not sent to zero by the germ of the morphism $\mathcal{M}_\Sigma \rightarrow \mathcal{C}_\Sigma^{\infty,\geq 0}.$ There is a morphism of manifolds with log corners $$\Sigma \rightarrow \Sigma^\mathrm{bas}$$ induced by the inclusion $\mathcal{M}_{\Sigma^\mathrm{bas}}\hookrightarrow \mathcal{M}_{\Sigma}.$ We say that $\Sigma$ is basic if $\Sigma^\mathrm{bas} \simeq \Sigma$.

\medskip

In \cite[Section 3.8.6]{Javier}, the definition of the regularized integral of a differential form on the projective line with logaritmic poles at $\{0,1,\infty\}$ is based on the choice of tangential basepoints at these points. Indeed, it is defined \cite[Definition 3.364]{Javier} as the residue of a \textit{logarithmic asymptotic expansion} of the integral along a path whose starting and endpoints converge to removed points in the direction prescribed by the tangential datum. In higher dimension, the full definition appears in \cite{dupont2024regularizedintegralsmanifoldslog}. The choice of tangential basepoints at a finite set of points is replaced by the definition of the \textit{regularization of a manifold with log corners}. Although this definition, and the definition of regularized integration work in a very general setting, we focus on the case of $\Sigma = \Delta^n$.

\medskip

\begin{defi}
A nondegenerate regularization on $\Delta^n$ is defined as a positive section of the normal bundle of the immersion of the underlying manifold with corners $$\underline{\partial\Delta}^n \hookrightarrow \underline{\Delta}^n.$$
\end{defi}
This can be viewed as the datum of a family of tangential basepoints on the boundary. Tangential basepoints in this setting are defined in a similar way as in the algebraic setting, by the data of a point and of a positive normal vector at this point (outside the boundary it is just a point) see \cite[Definition 4.8]{dupont2024regularizedintegralsmanifoldslog} for details.

By \cite[Proposition 4.9]{dupont2024regularizedintegralsmanifoldslog}, we know that there is still a bijection between tangential basepoints on a basic manifold with log corners, and virtual morphisms from the point, equipped with the trivial positive log structure.

\begin{exmp} \begin{itemize}
    \item A virtual morphism $$* \rightarrow [0,\infty)$$ to $0$ is determined by the  pullback $\lambda \in \mathbb{R}_{>0}$ of the coordinate $r \in [0,\infty)$. As in the algebraic case, this is associated to the normal vector $\lambda \partial_{r}|_0.$
    \item More generally, a virtual morphism $$[0,\infty) \rightarrow [0,\infty)^2$$ which identifies the underlying manifold with corners $[0,\infty[$ with the component $r_2 = 0$ of the boundary of $[0,\infty)^2$ (where we let $(r_1,r_2)$ be coordinates for $[0,\infty)^2$) is the same thing as a family of tangential basepoints on the component $r_2 = 0$, determined by the pullback of $r_2$ on $\mathcal{M}_{[0,\infty)}$.
    Note that this datum is equivalent to the datum of a virtual morphism $$ [0,\infty) \simeq ([0,\infty)\times [0))^\mathrm{bas} \rightarrow [0,\infty)\times [0),$$ which is a section of the morphism $[0,\infty)\times [0) \rightarrow ([0,\infty)\times [0))^\mathrm{bas}$ and where the tangential datum is now given by the pullback of the phantom coordinate $t_2.$
\end{itemize}

\end{exmp}

In the same fashion as this last example, if we let $\partial_i\Delta^n$ be the i-th face of $\Delta^n$ with the induced positive log structure, a family of positive normal vectors to $\underline{\partial_i\Delta}^n$ in $\underline{\Delta}^n$ is the same as the datum of a section $$(\partial_i\Delta^n)^\mathrm{bas} \rightarrow \partial_i\Delta^n$$ to the morphism $\partial_i\Delta^n \rightarrow (\partial_i\Delta^n)^\mathrm{bas}$, the normal vector being determined by the pullback of the coordinate vanishing on the face $\partial_i\Delta^n$ to $\mathcal{M}_{(\partial_i\Delta^n)^\mathrm{bas}}$ ($t_{i+1}-t_i$ if $0 < i <n$, $t_0-1$ if $i = 0$ and $t_n$ if $i = n$).

Therefore, there is a commutative diagram of manifolds with log corners
\begin{center}
\begin{tikzcd}
\Delta^n \arrow[d,equal] & \partial\Delta^n \arrow[l]\\
(\Delta^n)^\mathrm{bas} & (\partial\Delta^n)^\mathrm{bas} \arrow[l], \arrow[u]
\end{tikzcd}
\end{center}
More generally, it is explained in \cite[Proposition 7.4]{dupont2024regularizedintegralsmanifoldslog}, that the datum of a nondegenerate regularization $s$ of $\Delta^n$ induces, by pulling back $s$ along all the $k$ different immersions $\underline{\partial^k \Delta^n} \hookrightarrow \underline{\partial \Delta^n}$ a family of positive normal vectors on to $\underline{\partial^k \Delta^n}$, and hence a section $$(\partial^k \Delta^n)^\mathrm{bas} \rightarrow \partial^k \Delta^n.$$ The data of these sections for $k \in \mathbb{N}$ induces in a unique way a structure of augmented symmetric simplicial manifold with log corners, with virtual morphisms, $((\Delta^n)^\mathrm{bas},(\partial^\bullet\Delta^n)^\mathrm{bas})$ (note that the operation $-^\mathrm{bas}$ is not functorial, see for instance the inclusion $[0) \hookrightarrow [0,\infty)$), and a morphism of augmented symmetric semi-simplicial manifolds with log corners $$s : ((\Delta^n)^\mathrm{bas},(\partial^\bullet\Delta^n)^\mathrm{bas}) \rightarrow (\Delta^n,\partial^\bullet \Delta^n).$$

We refer the reader to \cite[Subsection 7.1]{dupont2024regularizedintegralsmanifoldslog} for details. Note that we will not need to describe explicitely the structure of augmented symmetric semi-simplicial manifolds with log corners of $((\Delta^n)^\mathrm{bas},(\partial^\bullet\Delta^n)^\mathrm{bas})$.

\begin{defi}
   Let us consider the section of the normal bundle of the immersion $\underline{\partial\Delta}^n \hookrightarrow \underline{\Delta}^n$ obtained as \begin{center}\begin{itemize}
       \item $-\frac{\partial}{\partial t_1}\big|_{\underline{\partial_1\Delta}^n}$ on $\underline{\partial_1\Delta}^n$,
       \item $(\frac{\partial}{\partial t_i} -\frac{\partial}{\partial t_{i+1}})\big|_{\underline{\partial_i\Delta}^n}$ on $\underline{\partial_i\Delta}^n$ if $0 <i <n$,
       \item $\frac{\partial}{\partial t_n}\big|_{\underline{\partial_n\Delta}^n}$ on $\underline{\partial_n\Delta}^n$.
   \end{itemize}\end{center} We denote by $$s:((\Delta^n)^\mathrm{bas},(\partial^\bullet\Delta^n)^\mathrm{bas}) \rightarrow (\Delta^n,\partial^\bullet \Delta^n)$$ the induced morphism of augmented symmetric semi-simplicial manifolds with log corners where we made implicit the structure of augmented symmetric semi-simplicial manifold with log corners of $((\Delta^n)^\mathrm{bas},(\partial\Delta^n)^\mathrm{bas})$
\end{defi}

With this regularization at hand, for $\omega \in \mathcal{A}^{n}(\Delta^n)$ a logarithmic differential form on $\Delta^n$, the regularized integral of $\omega$ on $\Delta^n$ denoted $$\int_{(\Delta^n,s)}\omega$$ is defined as the classical integral of the cohomology class $[s^*\omega]$ in $$\mathrm{H}^n_\mathrm{dR}((\Delta^n)^\mathrm{bas},(\partial^\bullet\Delta^n)^\mathrm{bas}) \simeq \mathrm{H}^n_\mathrm{dR}(\underline{\Delta}^n,\underline{\partial^\bullet\Delta}^n)$$ on $\underline{\Delta}^n$. More generally, for $(X,Y^\bullet)$ an augmented symmetric semi-simplicial manifold with log corners, and $\omega \in \mathcal{A}^n(X)$ a logarithmic differential form of degree $n$ vanishing on $Y^\mathbf{1}$, if there is a morphism $$\phi: ((\Delta^n)^\mathrm{bas},(\partial^\bullet\Delta^n)^\mathrm{bas};s) \rightarrow (X,Y^\bullet)$$ of symmetric semi-simplicial manifolds with log corners, then the regularized integral of $\omega$ on $\phi$ is defined by $$\int_{\phi}\omega := \int_{(\Delta^n,s)}\phi^*\omega.$$ By functoriality of the comparison theorem we have $$\langle \phi_*[\underline \Delta^n], [\omega] \rangle = \int_{(\Delta^n,s)}\phi^*\omega,$$ via the pairing $$\mathrm{H}_*^\mathrm{Sing}(X,Y^\bullet)\otimes\mathrm{H}_\mathrm{dR}^*(X,Y^\bullet) \rightarrow \mathbb{R}$$ see \cite[Proposition 8.11]{dupont2024regularizedintegralsmanifoldslog}.

\subsubsection{Regularized iterated integration}\;\smallskip

Let us now compute the periods of the motivic fundamental groupoid in terms of \textit{regularized iterated integrals}. We first define this notion in definition \ref{regularized}. Let $X \in \mathbf{Div}_k$ and $\mathbf{x},\mathbf{y}$ tangential basepoints on $X$.

\medskip

Let $\gamma:\, (0,1)\, \rightarrow \underline{X}(\mathbb{C})\setminus \underline{D}(\mathbb{C})$ be a smooth path of $\underline{X}(\mathbb{C})\setminus \underline{D}(\mathbb{C})$ based at the tangential basepoints $\mathbf{x},\mathbf{y}$ (definition \ref{enfin}). Recall that since $\gamma$ can be continuously extended to a map $[0,1] \rightarrow X^\mathrm{KN}$, we can define a continuous map $$\sigma_{\gamma,i}: \Delta^i \rightarrow (X^\mathrm{KN})^i$$ for $i \in \mathbb{N}$ as in the discussion following remark \ref{signe}.

\begin{rem}\label{coordinates} Since $\gamma$ is smooth, it induces a morphism of manifolds with corners $[0,1] \rightarrow X^\mathrm{KN}$.
In coordinates, locally around $\mathbf{x}$ or $\mathbf{y}$, $X^\mathrm{KN} \simeq (\mathbb{R}_{+}\times S^1)^k\times (\mathbb{R}_+^{*})^{k'}$ so that locally around $0$ for instance, $\gamma(t) = (r_1(t), \theta_1(t), \dots, r_k(t),\theta_k(t), x'_1(t), \dots, x'_{k'}(t))$ and $r_i$ satisfies $r_i(t) \sim_0 t\lambda_i$ with $\lambda_i > 0$. Therefore, using the condition specified in the discussion following \cite[Definition 3.14]{dupont2024regularizedintegralsmanifoldslog}, we see that $\gamma$ is uniquely lifted into an ordinary morphism of manifolds with log corners $$[0,1] \rightarrow X^\mathrm{KN}.$$ 
\end{rem}

More generally the following lemma is proven in the same fashion working locally in coordinates:

\begin{lem}
    For $i \in \mathbb{N}$ and under the hypotheses on $\gamma$, the map $\sigma_{\gamma,i}$ has the structure of an ordinary morphism of manifolds with log corners.
\end{lem}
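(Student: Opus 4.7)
The plan is to verify locally on $\Delta^i$ that $\sigma_{\gamma,i}$ admits a (unique) lift of its underlying smooth map to an ordinary morphism of manifolds with log corners. Since being such a morphism is a local condition and since the positive log structure on $(X^{\mathrm{KN}})^i$ is trivial outside the boundary, the verification reduces to a neighbourhood of the preimage of that boundary inside $\Delta^i$, namely the union of the faces $\{t_j=0\}$ and $\{t_j=1\}$ for $1\leq j\leq i$.

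At a point where at most one coordinate $t_j$ attains a boundary value of $[0,1]$, only the $j$-th factor of the target lies on the boundary, and by remark \ref{coordinates} the composition $\gamma\circ\mathrm{pr}_j\colon\Delta^i\to[0,1]\to X^{\mathrm{KN}}$ is already an ordinary morphism of manifolds with log corners near the face in question. The universal property of the product positive log structure on $(X^{\mathrm{KN}})^i$ then assembles these factor-wise lifts into a lift of $\sigma_{\gamma,i}$ on such a neighbourhood. Uniqueness of the lift will follow from the fact that the positive log structure on $(X^{\mathrm{KN}})^i$ is trivial on a dense open subset of $\Delta^i$, which pins down the morphism on units and hence on all of $\mathcal{M}_{(X^{\mathrm{KN}})^i}$.

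The main obstacle is the check at the vertices $V_0=(0,\ldots,0)$ and $V_i=(1,\ldots,1)$ of $\Delta^i$, where every factor simultaneously maps to $\mathbf{x}$ respectively $\mathbf{y}$, so that all $i$ log generators contribute at once. Near $V_0$ one introduces the simplicial coordinates $s_\ell=t_\ell-t_{\ell+1}$ (with the convention $t_{i+1}=0$), which realise a local chart $[0,\infty)^i$ on $\Delta^i$, and writes $t_j=s_j+s_{j+1}+\cdots+s_i$. By the asymptotic expansion $r_m(\gamma(t))=t\cdot u_m(t)$ with $u_m(0)>0$ inherited from remark \ref{coordinates}, the pullback of each log generator $r_m^{(j)}$ coming from the $j$-th factor becomes $\sigma_{\gamma,i}^*(r_m^{(j)})=t_j\cdot u_m(t_j)$; the task is to check that these functions lie in the positive log structure of $\Delta^i$ at $V_0$, which I would do by appealing to the chart-wise construction of that structure recalled in the excerpt and detailed in \cite[Section 3.2.3]{dupont2024regularizedintegralsmanifoldslog}. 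The case of $V_i$ is symmetric after substituting $1-t_j$ for $t_j$ and using the corresponding asymptotics of $\gamma$ at $t=1$, and the local lifts produced in each region glue along faces because they are pinned down by the underlying smooth map on the dense open locus where the target log structure is trivial.
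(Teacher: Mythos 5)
Your plan follows the same route the paper sketches (reduce to a local verification in coordinates and invoke Remark \ref{coordinates}), but the verification you defer to ``the chart-wise construction'' at the vertices — exactly the step you flag as the main obstacle — does not go through as you set it up, and this is a genuine gap, not just an omitted routine check.

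Near $V_0$ you pass to the corner chart $s_\ell = t_\ell - t_{\ell+1}$, in which the positive log structure induced by the standard manifold-with-corners atlas is $\mathcal{C}^{\infty,>0}\,s_1^{\mathbb{N}}\cdots s_i^{\mathbb{N}}$. You correctly compute $\sigma_{\gamma,i}^*(r_m^{(j)}) = t_j\,u_m(t_j)$ with $t_j = s_j + \cdots + s_i$ and $u_m(0)>0$. But for $j<i$ this function does \emph{not} lie in that monoid: it vanishes exactly on the codimension-$(i-j+1)$ corner $\{s_j=\cdots=s_i=0\}$, whereas every element $f\,s_1^{a_1}\cdots s_i^{a_i}$ with $f>0$ vanishes along a union of coordinate hyperplanes of the chart. (Concretely, evaluating at $(1,0,\dots,0)$ and then at $(0,1,0,\dots,0)$ forces all exponents $a_\ell$ to be $0$, after which $f = t_j u_m(t_j)$ would have to be strictly positive yet vanishes at $V_0$.) So in the chart you propose, $\sigma_{\gamma,i}$ is \emph{not} an interior $b$-map near $V_0$, and the factor-wise gluing via the product log structure cannot produce the required lift there. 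The same phenomenon recurs at any vertex of $\Delta^i$ where two or more coordinates $t_j$ collide at the same endpoint $0$ or $1$ — not only at $V_0$ and $V_i$ as you state. This is precisely the point of the paper's warning that ``one needs to be careful in the choice of the associated positive log structure'' on $\Delta^n$: the relevant structure from \cite[Section 3.2.3]{dupont2024regularizedintegralsmanifoldslog} is not the naive one attached to the simplicial coordinates $t_\ell - t_{\ell+1}$. Without identifying the correct structure (under which the relevant pullbacks $t_j$ and $1-t_j$ do become monomial up to units) and redoing the check in that structure, the proof is incomplete at its central step.
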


The following proposition will allow us to define regularized iterated integration. Recall that we defined at the end of subsection \ref{Bettikato} an augmented symmetric semi-simplicial object in $\!^\mathbf{v}\mathbf{Div}_k$ denoted $(X^k,Y^\bullet).$ Since the Kato-Nakayama space is functorial with respect to virtual morphisms, this defines an augmented symmetric semi-simplicial manifold with log corners (with virtual morphisms between the objects) $((X^\mathrm{KN})^k,(Y^\bullet)^\mathrm{KN})$.
\begin{prop}\label{structure}
For $\gamma$ defined as above, $\sigma_{\gamma,k}$ defines an ordinary morphism of augmented symmetric semi-simplicial manifolds with log corners $$(\Delta^k,(\partial^\bullet\Delta^k)^\mathrm{bas};s) \rightarrow ((X^\mathrm{KN})^k,(Y^\bullet)^\mathrm{KN}).$$
\end{prop}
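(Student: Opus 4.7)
The plan is to establish the proposition in three steps: first, verify the compatibility of $\sigma_{\gamma,k}$ with the augmented symmetric semi-simplicial face structures at the underlying topological level; second, lift this to an ordinary morphism of manifolds with log corners on each stratum, using the preceding lemma; and third, match the regularization $s$ on the source with the virtual structure on $(Y^\bullet)^{\mathrm{KN}}$ induced by $\mathbf{x},\mathbf{y}$, under pushforward by $\sigma_{\gamma,k}$.

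For the combinatorial compatibility, the face $\partial_0\Delta^k=\{t_1=1\}$ is sent by $\sigma_{\gamma,k}$ into $\{y\}\times (X^{\mathrm{KN}})^{k-1}=Y_0^{\mathrm{KN}}$; the face $\partial_k\Delta^k=\{t_k=0\}$ into $(X^{\mathrm{KN}})^{k-1}\times\{x\}=Y_k^{\mathrm{KN}}$; and each interior face $\partial_i\Delta^k=\{t_i=t_{i+1}\}$ factors through the diagonal of the $i$-th and $(i{+}1)$-th factors, landing in $Y_i^{\mathrm{KN}}$. Iterating these restrictions and tracking the action of the symmetric group permuting copies of the higher-codimension strata yields the compatibility with the whole structure $(\partial^\bullet\Delta^k,(Y^\bullet)^{\mathrm{KN}})$.

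The lift to ordinary morphisms of manifolds with log corners on each stratum reduces to a local product computation: by the preceding lemma, $\gamma\colon[0,1]\to X^{\mathrm{KN}}$ is an ordinary morphism of manifolds with log corners, with phantom coordinates at $0$ and $1$ given by the tangential data of $\mathbf{x}$ and $\mathbf{y}$ respectively. The product $\gamma^{\times k}\colon[0,1]^k\to(X^{\mathrm{KN}})^k$ is therefore an ordinary morphism of manifolds with log corners, and $\sigma_{\gamma,k}$ is the restriction of $\gamma^{\times k}$ to the closed sub-manifold-with-log-corners $\Delta^k\subset[0,1]^k$; a direct computation in the standard corner charts $[0,\infty)^a\times[0)^b$ then shows that the induced maps on every iterated boundary are ordinary, not merely virtual.

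The essential content is the matching of virtual structures, which is precisely why one takes $(\partial^\bullet\Delta^k)^{\mathrm{bas}}$ on the source. At the outer face $\partial_0\Delta^k$, the regularization vector $-\partial/\partial t_1$ pushes forward under $\sigma_{\gamma,k}$ to $-\gamma'(1)=w$ in the first factor of $(X^{\mathrm{KN}})^k$, which by Theorem \ref{clement} together with Example \ref{yes} is precisely the virtual point defining the structure of $Y_0^{\mathrm{KN}}\hookrightarrow (X^{\mathrm{KN}})^k$ via $\mathbf{y}$; symmetrically, $\partial/\partial t_k$ at $\partial_k\Delta^k$ pushes forward to $\gamma'(0)=v$, matching $\mathbf{x}$. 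On interior faces $\partial_i\Delta^k$ for $0<i<k$, both source and target carry only their ordinary smooth structure and there is nothing to verify. The main obstacle is the bookkeeping at deeper strata $\partial^j\Delta^k$ for $j\geq2$, where outer boundaries $t_1=1$ or $t_k=0$ may collide with interior equalities $t_i=t_{i+1}$; this reduces to the codimension-one case by induction, together with the functoriality of the regularization construction of \cite[Proposition~7.4]{dupont2024regularizedintegralsmanifoldslog} and the compatibility of the symmetric group action on copies of strata with the action on $(Y^\bullet)^{\mathrm{KN}}$ inherited from the semi-simplicial structure.
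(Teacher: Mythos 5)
Your proof is essentially correct and follows the paper's strategy: induction on $k$ reduces the verification to the codimension-one faces, where the interior faces $\{t_i=t_{i+1}\}$ require nothing and the outer faces $\{t_1=1\}$, $\{t_k=0\}$ require matching the regularization $s$ with the virtual data coming from $\mathbf{x},\mathbf{y}$. The only presentational difference is that the paper carries out the outer-face matching by invoking the continuity principle of \cite[Proposition 4.3]{dupont2024regularizedintegralsmanifoldslog} to reduce the equality of virtual morphisms to a comparison of monoid pullbacks, checked in local coordinates, whereas you argue directly on the pushforward of the normal vector through the tangential-basepoint/virtual-point correspondence of Theorem \ref{clement} and Example \ref{yes}; both encode the same computation, namely $\mathbf{x}^{*}r_i = r_i'(0)$.
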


First, note that since for $0 \leq i \leq k$, the i-th face of the standard $k$-simplex is given by \begin{itemize}
        \item $\{t_1 = 1\}$ if $i = 0$ 
        \item $\{t_i = t_{i+1} \}$ if $0 < i < k $
        \item $\{t_k = 0\}$ if $i = k$
    \end{itemize}
    we remark that $\sigma_{\gamma,k}$ induces a morphism of manifolds with corners  $$(\sigma_{\gamma,k})_{|\partial_i\Delta^k}: \underline{\partial_i\Delta}^k \rightarrow \underline{Y_i^\mathrm{KN}}.$$ This map induces a morphism of manifolds with log corners by the following identification \begin{center}
\begin{tikzcd}
    (\partial_i\Delta^k)^\mathrm{bas} \arrow[r,"\sim"] \arrow[d,"(\sigma_{\gamma,k})_{|\partial^i\Delta^k}"] & \Delta^{k-1} \arrow[d,"\sigma_{\gamma,k-1}"]\\
    Y_i^\mathrm{KN} \arrow[r,"\sim"] & (X^\mathrm{KN})^{k-1}
\end{tikzcd}
    \end{center}
    
Iterating this construction, we see that for $\mathrm{I} = (i_1,\dots, i_j) \subset [k]$ with $i_1 < \dots < i_k$, $\sigma_{\gamma,k}$ induces by restriction a morphism of manifolds with log corners $$(\partial_{i_1}\dots\partial_{i_k}\Delta^k)^\mathrm{bas} \rightarrow Y_\mathrm{I}^\mathrm{KN}$$ identified with the morphism $\sigma_{\gamma,k-j}.$

Therefore, there is a diagram of manifold with log corners 

\begin{center}
        \begin{tikzcd}
        (X^{KN})^k & (Y^1)^\mathrm{KN} \arrow[l] & (Y^2)^\mathrm{KN} \arrow[l,shift left = 2] \arrow[l,shift right = 2] & (Y^3)^\mathrm{KN} \arrow[l,shift left = 3] \arrow[l] \arrow[l,shift right = 3] & \dots\\
        \Delta^k \arrow[u] & (\partial^1 \Delta^k)^\mathrm{bas} \arrow[l] \arrow[u] & (\partial^{2} \Delta^k)^\mathrm{bas} \arrow[l,shift left = 2] \arrow[u] \arrow[l,shift right = 2] & (\partial^3\Delta^k)^\mathrm{bas} \arrow[l,shift left = 3] \arrow[u] \arrow[l] \arrow[l,shift right = 3]  & \dots
        \end{tikzcd}
    \end{center}
    
such that the underlying diagram of underlying manifolds with corners commutes. Let us prove the proposition \ref{structure}.
\begin{proof}
    We can work by induction on $k$. When $k = 0$ nothing needs to be done.\\
    Suppose that the statement is true for $k - 1 \geq 0$. Then we only need to show that the following square of manifolds with log corners commutes
    \begin{center}
        \begin{tikzcd}
        (X^k)^\mathrm{KN} & (Y^1)^\mathrm{KN} \arrow[l] \\
         \Delta^k \arrow[u] & (\partial^1 \Delta^k)^\mathrm{bas}.  \arrow[l] \arrow[u]
        \end{tikzcd}
    \end{center}
    We work on each face separately, we wish to show that for $0 \leq i \leq k$.  the diagram \begin{center}
        \begin{tikzcd}
        (X^k)^\mathrm{KN} & Y_i^\mathrm{KN} \arrow[l] \\
         \Delta^k \arrow[u] & (\partial_i \Delta^k)^\mathrm{bas}.  \arrow[l] \arrow[u]
        \end{tikzcd}
    \end{center}
    commutes. Note that by the so called ``continuity principle" \cite[Proposition 4.3]{dupont2024regularizedintegralsmanifoldslog}, we only need to check that for each section of $\mathcal{M}_{(X^k)^\mathrm{KN}}$ pulled back to the zero function in $\mathcal{C}^{\infty,\geq 0}_{(\partial_i \Delta^k)^\mathrm{bas}}$ by the down left composition $$f: (\partial_i\Delta^k)^\mathrm{bas} \rightarrow \Delta^k \rightarrow (X^k)^\mathrm{KN},$$ its pull back in $\mathcal{M}_{(\partial_i \Delta^k)^\mathrm{bas}}$ by $f$ and by the upper right composition $$g:(\partial_i\Delta^k)^\mathrm{bas} \rightarrow Y_i \rightarrow (X^k)^\mathrm{KN}$$ coincide. The rest of the sections behave according to the morphism of manifolds with corners structure.
    \begin{itemize}
        \item If $0 < i < k$, by working locally in coordinates we see that no sections of $\mathcal{M}_{(X^k)^\mathrm{KN}}$ are pulled back to the zero section by $f$.
        \item If $i = k$, then $(\partial_i \Delta^k)^\mathrm{bas} = \{t_1 = 0\}^\mathrm{bas}$ so that $\sigma_{\gamma,k}$ when restricted is the morphism $$(t_1 = 0,t_2, \dots, t_k) \rightarrow (\mathbf{x},\gamma(t_2),\dots, \gamma(t_k)).$$
        In coordinates, close to  $\{\mathbf{x}\}\times (X^{k-1})^\mathrm{KN}$ (we use the same coordinates as in the remark \ref{coordinates}), we can write in a neighborhood of $t_1 = 0$ $$\sigma_{\gamma,k}(t_1,\dots,t_k) = (r_1(t_1), \theta_1(t_1), \dots, r_k(t_1),\theta_k(t_1), x'_1(t_1), \dots, x'_{k'}(t_1), \gamma(t_2),\dots,\gamma(t_k))$$ we then see clearly that the only coordinates pulled back on the zero section are $r_1,\dots, r_k$.\\
        
        Let $\lambda_i:= \mathbf{x}^*r_i$. On one hand we see that $\sigma_{\gamma,k}^*r_i = \lambda_i$, which readily shows that $g^*r_i = \lambda_i$.
        On the other hand, since $\gamma$ is a path based at the tangential basepoint $\mathbf{x}$ in zero, we have that $r'_i(0) = \lambda_i$. The pullback of $r_i$ in $\mathcal{M}_{(Y_i)^\mathrm{KN}}$ by the morphism $Y_i \rightarrow (X^k)^\mathrm{KN}$ is  $r_i(t_1)$, and the pullback of $r_i(t_1)$ by the virtual morphism $(\partial_i\Delta^k)^\mathrm{bas} \rightarrow \Delta^k$ in $\mathcal{M}_{(\partial_i\Delta^k)^\mathrm{bas}}$ is $r_1'(0)$ since this morphism is determined by the virtual morphism $\partial_{t_1 = 0}$ by the choice of the section of the normal bundle. Hence the result.
   
    \item If $i = 0$ the situation is the same as the previous one.   
    \end{itemize}
    \end{proof}

We can then make the following definition:

\begin{defi}\label{regularized}
Let $X\in \mathbf{Div}_k$, $\mathbf{x},\mathbf{y}$ tangential basepoints on $X$, and $\gamma: \,(0,1)\, \rightarrow \underline{X}(\mathbb{C})\setminus \underline{D}(\mathbb{C})$ a smooth path of $\underline{X}(\mathbb{C})\setminus \underline{D}(\mathbb{C})$ based at $\mathbf{x},\mathbf{y}$. Let $\omega_1,\dots,\omega_k$ be logarithmic differential $1$-forms. We define $$\int_\gamma \omega_1\dots\omega_k:= \int_{\sigma_{\gamma,k}} \mathrm{pr}_1^*\omega_1 \wedge \cdots \wedge \mathrm{pr}_k^*\omega_k = \int_{(\Delta^k,s)}\sigma_{\gamma,k}^*(\mathrm{pr}_1^*\omega_1 \wedge \cdots \wedge \mathrm{pr}_k^*\omega_k)
$$ the \textbf{regularized iterated integral} of $\omega_1,\dots,\omega_k$ along $\gamma$.
\end{defi}

\begin{rem}
When $\omega_1,\dots,\omega_k$ are smooth forms of the manifold with corners $\underline{X^\mathrm{KN}}$ (for instance for smooth algebraic forms), we know from \cite[Proposition 7.2]{dupont2024regularizedintegralsmanifoldslog} that this definition coincide with the usual definition of iterated integral introduced by Chen, see \cite{bams/1183539443}. This is true in particular when $\mathbf{x}, \mathbf{y}$ are rational basepoints of $\underline{X}\setminus \underline{D}$.
\end{rem}

\subsubsection{The periods of the motivic fundamental groupoid}\;\smallskip

We are now able to completely determine the periods of the motivic fundamental groupoid with tangential basepoints:

\begin{thm}\label{end}
    The isomorphism of algebras $$\mathcal{O}(\pi_1^\mathrm{B}(X,\mathbf{x},\mathbf{y}))_{\leq n}\otimes_\mathbb{Q} \mathbb{C} \simeq \mathcal{O}(\pi_1^\mathrm{dR}(X,\mathbf{x},\mathbf{y}))_{\leq n} \otimes_k \mathbb{C}$$ is induced by the pairing  \begin{center}
        $\begin{array}{ccc}
            \mathbb{C}[\pi_1(\underline{X}(\mathbb{C})\setminus \underline{D}(\mathbb{C}),\mathbf{x},\mathbf{y})] \otimes_\mathbb{R} \mathrm{H}^0\big(\+_\mathbf{x}\Baar_\mathbf{y}(\mathcal{A}(X^\mathrm{KN}))\big) & \rightarrow & \mathbb{C} \\
            \gamma \otimes [\omega_1|\dots|\omega_k] & \mapsto & \int_\gamma \omega_1\dots\omega_k
        \end{array}$
    \end{center}
    
\end{thm}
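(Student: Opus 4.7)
The plan is to unwind the Betti--de Rham comparison at the filtered level using the explicit cochain models developed at the ends of subsections \ref{Bettikato} and \ref{deRhamreal2}, and then to identify the resulting pairing with regularized iterated integration via Proposition \ref{structure}. Since both sides and the comparison map respect the length filtration, I would reduce to proving the claim for $\+_\mathbf{x}^{}\mathrm{P}^m_\mathbf{y}X_{\leq n}$ for each $n$. From subsection \ref{compa}, the filtered comparison is then induced, after tensoring with $\mathbb{C}$, by the logarithmic Poincaré quasi-isomorphism $\mathrm{Tot}\, C^*(\+_\mathbf{x}^{}\mathrm{P}^\bullet_\mathbf{y}X^\mathrm{KN}_{\leq n}, \mathbb{R}) \simeq \mathrm{Tot}\, \mathcal{A}(\+_\mathbf{x}^{}\mathrm{P}^\bullet_\mathbf{y}X^\mathrm{KN}_{\leq n})$ between the total complexes of the two relevant double complexes.

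Next, I would plug in the explicit descriptions from both sides. On the Betti side, Beilinson's proposition identifies a homotopy class $[\gamma] \in \pi_1(\underline{X}(\mathbb{C})\setminus \underline{D}(\mathbb{C}),\mathbf{x},\mathbf{y})$, viewed as an element of $\mathrm{H}^0(\mathrm{R}_\mathrm{B}(\+_\mathbf{x}^{}\mathrm{P}^m_\mathbf{y}X)_{\leq n})^\vee$, with the collection of chains $(\pm \epsilon(\mathrm{I}^c)\, \sigma_{\gamma,\mathrm{I}})_{\mathrm{I}\subsetneq[n]}$ in $\mathbf{\Gamma}_\mathrm{B}(X^n,Y^\bullet)^\vee$. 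On the de Rham side, Lemma \ref{de rham explicit} identifies the class of $[\omega_1|\cdots|\omega_k]$, with each $\omega_i$ of degree one and $k \leq n$, with the element of $\mathrm{Tot}\,\mathcal{A}(\+_\mathbf{x}^{}\mathrm{P}^\bullet_\mathbf{y}X^\mathrm{KN}_{\leq n})$ equal to $(-1)^{k(k-1)/2}\, \mathrm{pr}_1^*\omega_1 \wedge \cdots \wedge \mathrm{pr}_k^*\omega_k$ in position $k$ and zero elsewhere. After the bicomplex reindexing, the duality pairing reduces to evaluating the restricted form on the chain $\sigma_{\gamma,\mathrm{I}}$ with $\mathrm{I} = \{0,\ldots,k-1\}$, all other positions contributing zero by degree.

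The crucial step is then to recognize this integral as a regularized iterated integral. By Proposition \ref{structure}, the map $\sigma_{\gamma,k}$ lifts canonically to a morphism of augmented symmetric semi-simplicial manifolds with log corners
\begin{equation*}
\sigma_{\gamma,k} : \bigl((\Delta^k)^\mathrm{bas}, (\partial^\bullet \Delta^k)^\mathrm{bas}; s\bigr) \longrightarrow \bigl((X^\mathrm{KN})^k, (Y^\bullet)^\mathrm{KN}\bigr),
\end{equation*}
where $s$ is the distinguished regularization of $\Delta^k$. Applying the relative pairing formula $\langle \phi_*[\underline{\Delta^k}], [\omega]\rangle = \int_{(\Delta^k,s)} \phi^*\omega$ from \cite[Proposition 8.11]{dupont2024regularizedintegralsmanifoldslog}, which is exactly the logarithmic Betti--de Rham pairing on the relative cohomology of $(X^k, Y^\bullet)$, the computation becomes
\begin{equation*}
\int_{(\Delta^k,s)} \sigma_{\gamma,k}^*(\mathrm{pr}_1^*\omega_1 \wedge \cdots \wedge \mathrm{pr}_k^*\omega_k),
\end{equation*}
which is by definition $\int_\gamma \omega_1 \cdots \omega_k$.

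The main obstacle is the careful bookkeeping of signs and boundary data. Concretely, one must check that the signs $\epsilon(\mathrm{I}^c)$ and $(-1)^{n(n-1)/2}$ appearing in the Beilinson cycle representative exactly cancel the sign $(-1)^{k(k-1)/2}$ appearing in Lemma \ref{de rham explicit} and the differentials in the two total complexes, so that the pairing comes out with a $+1$ coefficient. Equally important, and this is the heart of Proposition \ref{structure}, one must verify that the tangential behavior of $\gamma$ at its endpoints matches the virtual morphism structure induced by the regularization $s$ on the boundary of $\Delta^k$: the condition $\gamma'(0) = v$, $\gamma'(1) = -w$ precisely matches the pullback of the normal coordinates dictated by $s$, ensuring compatibility of the two Betti--de Rham comparisons (the algebraic one on $(X^k, Y^\bullet)$ and the manifold-with-log-corners one on $(\Delta^k, \partial^\bullet \Delta^k; s)$) through $\sigma_{\gamma,k}$.
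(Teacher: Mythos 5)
Your proposal tracks the paper's argument essentially step for step: restrict to the filtered pieces, trace $[\omega_1|\cdots|\omega_k]$ through the quasi-isomorphism of subsection \ref{compa} using Lemma \ref{de rham explicit} and the sign-laden Betti/de Rham bicomplex identifications, pair against the Beilinson representative of $[\gamma]$, and then invoke Proposition \ref{structure} together with the relative pairing of \cite[Proposition 8.11]{dupont2024regularizedintegralsmanifoldslog} to recognize the outcome as a regularized iterated integral. Two small points you should tighten. First, you silently assume every class in $\mathrm{H}^0\big(\+_\mathbf{x}\Baar_\mathbf{y}(\mathcal{A}(X^\mathrm{KN}))\big)$ is represented by a tensor of $1$-forms; this is needed for $\int_\gamma\omega_1\cdots\omega_k$ in Definition \ref{regularized} to even be defined, and the paper reduces to this case first (for $X$ connected, by the analogue of \cite[Lemma 3.263]{Javier}). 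Second, the support you identify, $\mathrm{I}=\{0,\dots,k-1\}$, is mis-indexed: one needs $|\mathrm{I}^c|-1=k$ so that $Y_\mathrm{I}\simeq(X^\mathrm{KN})^k$ matches the degree of $\mathrm{pr}_1^*\omega_1\wedge\cdots\wedge\mathrm{pr}_k^*\omega_k$, hence $\mathrm{I}=[k]^c=\{k+1,\dots,n\}$ as in the paper. These are bookkeeping slips rather than conceptual gaps, and you already flag bookkeeping as the main obstacle.
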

\begin{proof}
We can suppose $X$ to be connected and we know that in this case every class in $\mathrm{H}^0\big(\+_\mathbf{x}\Baar_\mathbf{y}(\mathcal{A}(X^\mathrm{KN}))\big)$ is represented by elements of the form $[\omega_1| \cdots | \omega_k]$ where the $\omega_i$ are $1$-forms by similar arguments as \cite[Lemma 3.263]{Javier}. We have seen in subsection \ref{compa} (joined with the results at the end of subsections \ref{deRhamreal2} and \ref{Bettikato}) that the comparison isomorphism applied to the filtered pieces of the fundamental groupoid with tangential basepoints is described by the following sequence of isomorphisms (where the upper terms are defined similarly as their Betti counterpart from the end of subsection \ref{Bettikato}): \begin{center}
    \begin{tikzcd}
   & \mathbf{\Gamma}_\mathrm{Sing}(\+_\mathbf{x}\mathrm{P}_\mathbf{y}X^\mathrm{KN}_{\leq n}) \arrow[r,"\sim"] & \mathbf{\Gamma}_\mathrm{Sing}((X^\mathrm{KN})^n,(Y^\bullet)^\mathrm{KN}) \\
     \+_\mathbf{x}\Baar_\mathbf{y}(\mathcal{A}(X^\mathrm{KN}))_{\leq n} \arrow[r, "\sim"] & \mathcal{A}(\+_\mathbf{x}\mathrm{P}_\mathbf{y}X^\mathrm{KN})_{\leq n} \arrow[u,"\rotatebox{90}{\(\sim\)}"] \arrow[r,"\sim"] & \mathcal{A}((X^\mathrm{KN})^n,(Y^\bullet)^\mathrm{KN}) \arrow[u,"\rotatebox{90}{\(\sim\)}"]
    \end{tikzcd}
\end{center}   
For $[\omega_1,\dots, \omega_k] \in \+_\mathbf{x}\Baar_\mathbf{y}(\mathcal{A}(X^\mathrm{KN}))_{\leq n}$ with $\omega_i$ $1$-forms, its image in $\mathcal{A}(\+_\mathbf{x}\mathrm{P}_\mathbf{y}X^\mathrm{KN}_{\leq n})$ is $(\omega'_i)_{0 \leq i \leq n}$ where $$\omega'_i = \left\{
    \begin{array}{ll}
        (-1)^{\frac{k(k-1)}{2}}\mathrm{pr}_1^*\omega_1 \wedge \dots \boxtimes \mathrm{pr}_k^*\omega_k  & \mbox{if } i = k\\
        0 & \mbox{else.}
    \end{array} \right.$$
which is sent to $(\omega'_\mathrm{I})_{\mathrm{I} \subset [n]}$ in $\mathcal{A}((X^\mathrm{KN})^n,(Y^\bullet)^\mathrm{KN}) \simeq \mathbf{\Gamma}_\mathrm{Sing}((X^\mathrm{KN})^n,(Y^\bullet)^\mathrm{KN})$, where  $$\omega'_\mathrm{I} = \left\{
    \begin{array}{ll}
        (-1)^{n|\mathrm{I}^c|}\epsilon(\mathrm{I}^c)(-1)^{\frac{k(k-1)}{2}}\mathrm{pr}_1^*\omega_1 \wedge \cdots \wedge \mathrm{pr}_k^*\omega_k & \mbox{si } \mathrm{I} = \{k+1,\dots,n\} = [k]^c\\
        0 & \mbox{sinon}
    \end{array} \right. $$
Note that the form $\mathrm{pr}_1^*\omega_1 \wedge \cdots \wedge \mathrm{pr}_k^*\omega_k$ restrict to $0$ on $Y^\mathbf{1}$.

After the description of the isomorphism $$\mathbb{C}[\pi_1(\underline{X}(\mathbb{C})\setminus \underline{D}(\mathbb{C}),\mathbf{x},\mathbf{y})]/I^{n+1} \xrightarrow{\sim} \mathbf{\Gamma}_\mathrm{Sing}((X^\mathrm{KN})^n,(Y^\bullet)^\mathrm{KN})^\vee,$$ the pairing applied to $[\gamma]$ and $[\omega_1|\dots|\omega_k]$ is $$\begin{array}{cc}
    & \langle((-1)^\frac{|\mathrm{I}|(|\mathrm{I}|+1)}{2}(-1)^\frac{n(n-1)}{2}\epsilon(\mathrm{I}^c)\sigma_{\gamma,\mathrm{I}})_{\mathrm{I}\subsetneq [n]}, (\omega'_\mathrm{I})_{\mathrm{I}\subsetneq [n]}\rangle     \\
      = &  \langle (-1)^\frac{|[k]^c|(|[k]^c|+1)}{2}(-1)^\frac{n(n-1)}{2}\epsilon([k])\sigma_{\gamma,[k]^c},\omega'_{|[k]|^c}\rangle\\
      = &  (-1)^\frac{|[k]^c|(|[k]^c|+1)}{2}(-1)^\frac{n(n-1)}{2}\epsilon([k])(-1)^{n|[k]|}\epsilon([k])(-1)^{\frac{k(k-1)}{2}} \langle \sigma_{\gamma,k},\mathrm{pr}_1^*\omega_1 \wedge \cdots \wedge \mathrm{pr}_k^*\omega_k\rangle
\end{array}$$
Note that the signs cancels so that we are left with $$\langle \sigma_{\gamma,k},\mathrm{pr}_1^*\omega_1 \wedge \cdots \wedge \mathrm{pr}_k^*\omega_k\rangle.$$

By proposition \ref{structure}, the map $\sigma_{\gamma,k}$ has a structure of a morphism of augmented symmetric semi-simplicial manifolds with log corners so that $$\langle \sigma_{\gamma,k},\mathrm{pr}_1^*\omega_1 \wedge \cdots \wedge \mathrm{pr}_k^*\omega_k\rangle = \int_\gamma \omega_1\dots\omega_k.$$
\end{proof}

\begin{rem}
If we apply \cite[Example 7.12]{dupont2024regularizedintegralsmanifoldslog}, we can see that for $\omega$ an algebraic $1$-form of $\mathbb{P}^1_\mathbb{C}\setminus \{0,1,\infty\}$ with logarithmic poles at $0$, $1$ or $\infty$, then the regularized integral of $\omega$ along the straight path from $0$ to $1$ denoted $\mathbf{dch}$ yields the same result as the regularization process defined in \cite[Section 3.8]{Javier}, which consists in introducing the restricted path $\mathbf{dch}_\eta : [\eta, 1-\eta] \rightarrow \mathbb{P}^1_\mathbb{C}(\mathbb{C})\setminus \{0,1,\infty\}$ for $\eta \in (0,1/2)$ and taking the residue of the logarithmic expansion of the function $$\eta \mapsto \int_{\mathbf{dch}_\eta} \omega.$$
More generally it is possible to prove that the regularized iterated integrals $$\int_\mathbf{dch} \omega_1\dots\omega_k$$ when $\omega_i \in \{\frac{dz}{1-z},\frac{dz}{z}\}$ yields the correct value of the corresponding regularized multiple zeta value as define in \cite[Section 1.7]{Javier}.
\end{rem}

 \newpage
\bibliography{main}
\bibliographystyle{amsalpha}

\end{document}